\providecommand{\U}[1]{\protect\rule{.1in}{.1in}}
\newtheorem{theorem}{Theorem}
\newtheorem{corollary}[theorem]{Corollary}
\newtheorem{definition}[theorem]{Definition}
\newtheorem{lemma}[theorem]{Lemma}
\newtheorem{proposition}[theorem]{Proposition}
\newtheorem{remark}[theorem]{Remark}
\newenvironment{proof}[1][Proof]{\noindent\textbf{#1.} }{\ \rule{0.5em}{0.5em}}
\begin{document}

\title{Stochastic stability of the classical Lorenz flow under impulsive type forcing}
\author{Michele Gianfelice$%
%TCIMACRO{\U{b0}}%
%BeginExpansion
{{}^\circ}%
%EndExpansion
$, Sandro Vaienti$^{\S }$\\{\small $%
%TCIMACRO{\U{b0}}%
%BeginExpansion
{{}^\circ}%
%EndExpansion
$Dipartimento di Matematica e Informatica, Universit\`{a} della Calabria} \\{\small Ponte Pietro Bucci, cubo 30B, I-87036 Arcavacata di Rende (CS)} \\{\small \texttt{gianfelice@mat.unical.it}} \\{\small $^{\S }$Aix Marseille Universit\'{e}, Universit\'{e} de Toulon, CNRS,
CPT, Marseille, France} \\{\small \texttt{vaienti@cpt.univ-mrs.fr}}}
\maketitle

\begin{abstract}
We introduce a novel type of random perturbation for the classical Lorenz flow
in order to better model phenomena slowly varying in time such as
anthropogenic forcing in climatology and prove stochastic stability for the
unperturbed flow. The perturbation acts on the system in an impulsive way,
hence is not of diffusive type as those already discussed in \cite{Ki},
\cite{Ke}, \cite{Me}. Namely, given a cross-section $\mathcal{M}$ for the
unperturbed flow, each time the trajectory of the system crosses $\mathcal{M}$
the phase velocity field is changed with a new one sampled at random from a
suitable neighborhood of the unperturbed one. The resulting random evolution
is therefore described by a piecewise deterministic Markov process. The proof
of the stochastic stability for the umperturbed flow is then carryed on
working either in the framework of the Random Dynamical Systems or in that of
semi-Markov processes.

\end{abstract}
\tableofcontents

\bigskip

\begin{description}
\item[AMS\ subject classification:] {\small 34F05, 93E15. }

\item[Keywords and phrases:] {\small Random perturbations of dynamical
systems, classical Lorenz model, random dynamical systems, semi-Markov random
evolutions, piecewise deterministic Markov processes, Lorenz'63 model,
anthropogenic forcing. }

\item[Acknowledgement:] {\small M. Gianfelice was partially supported by LIA
LYSM AMU-CNRS-ECM-INdAM. S. Vaienti was supported by the Leverhulme Trust for
support thorough the Network Grant IN-2014-021 and by the project APEX
\textit{Syst\`{e}mes dynamiques: Probabilit\'{e}s et Approximation
Diophantienne PAD} funded by the R\'{e}gion PACA (France). S.Vaienti warmly
thanks the Laboratoire International Associ\'{e} LIA LYSM, the LabEx
Archim\`{e}de (AMU University, Marseille), the INdAM (Italy) }and the UMI-CNRS
3483 Laboratoire Fibonacci (Pisa) where this work has been completed under a
CNRS delegation.
\end{description}

\bigskip

\part{Introduction, notations and results}

\section{The classical Lorenz flow}

The physical behaviour of turbulent systems such the atmosphere are usually
modeled by flows exhibiting a sensitive dependence on the initial conditions.
The behaviour of the trajectories of the system in the phase space for large
times is usually numerically very hard to compute and consequently the same
computational difficulty affects also the computation of the phase averages of
physically relevant observables. A way to overcome this problem is to select a
few of these relevant observables under the hypothesis that the statistical
properties of the smaller system defined by the evolution of such quantities
can capture the important features of the statistical behaviour of the
original system \cite{NVKDF}.

As a matter of fact this turns out to be the case when considering
\emph{classical Lorenz model}, a.k.a. \emph{Lorenz'63 model} in the physics
literature, i.e. the system of equation
\begin{equation}
\left\{
\begin{array}
[c]{l}%
\dot{x}_{1}=-\zeta x_{1}+\zeta x_{2}\\
\dot{x}_{2}=-x_{1}x_{3}+\gamma x_{1}-x_{2}\\
\dot{x}_{3}=x_{1}x_{2}-\beta x_{3}%
\end{array}
\right.  \ , \label{L}%
\end{equation}
which was introduced by Lorenz in his celebrated paper \cite{Lo} as a
simplified yet non trivial model for thermal convection of the atmosphere and
since then it has been pointed out as the typical real example of a
non-hyperbolic three-dimensional flow whose trajectories show a sensitive
dependence on initial conditions. In fact, the classical Lorenz flow, for
$\zeta=10,\gamma=28,\beta=8/3,$ has been proved in \cite{Tu}, and more
recently in \cite{AM}, to show the same dynamical features of its ideal
counterpart the so called \emph{geometric Lorenz flow}, introduced in
\cite{ABS} and in \cite{GW}, which represents the prototype of a
three-dimensional flow exhibiting a partially hyperbolic attractor \cite{AP}.
The Lorenz'63 model, indeed, has the interesting feature that it can be
rewritten as
\begin{equation}
\left\{
\begin{array}
[c]{l}%
\dot{y}_{1}=-\zeta y_{1}+\zeta y_{2}\\
\dot{y}_{2}=-y_{1}y_{3}-\gamma y_{1}-y_{2}\\
\dot{y}_{3}=y_{1}y_{2}-\beta y_{3}-\beta\left(  \gamma+\zeta\right)
\end{array}
\right.  \ , \label{L1}%
\end{equation}
showing the corresponding flow to be generated by the sum of a Hamiltonian
$SO\left(  3\right)  $-invariant field and a gradient field (we refer the
reader to \cite{GMPV} and references therein). Therefore, as it has been
proved in \cite{GMPV}, the invariant measure of the classical Lorenz flow can
be constructed starting from the invariant measure of the one-dimensional
system describing the evolution of the extrema of the first integrals of the
associated Hamiltonian flow.

\subsection{Stability of the invariant measure of the Lorenz'63 flow}

Since $C^{1}$ perturbations of the classical Lorenz vector field admit a
$C^{1+\epsilon}$ stable foliation \cite{AM} and since the geometric Lorenz
attractor is robust in the $C^{1}$ topology \cite{AP}, it is natural to
discuss the statistical and the stochastic stability of the classical Lorenz
flow under this kind of perturbations.

Indeed, in applications to climate dynamics, when considering the Lorenz'63
flow as a model for the atmospheric circulation, the analysis of the stability
of the statistical properties of the unperturbed flow under perturbations of
the velocity phase field of this kind can turn out\ to be a useful tool in the
study of the so called \emph{anthropogenic climate change} \cite{CMP}.

\subsubsection{Statistical stability}

For what concerns the statistical stability, in \cite{GMPV} it has been shown
that the effect of an additive constant perturbation term to the classical
Lorenz vector field results into a particular kind of perturbation of the map
of the interval describing the evolution of the maxima of the Casimir function
for the (+) Lie-Poisson brackets associated to the $so\left(  3\right)  $
algebra. Moreover, it has been proved that the invariant measures for the
perturbed and for the unperturbed 1-$d$ maps of this kind have Lipschitz
continuous density and that the unperturbed invariant measure is strongly
statistically stable. Since the SRB measure of the classical Lorenz flow can
be constructed starting from the invariant measure of the one-dimensional map
obtained through reduction to the quotient leaf space of the Poincar\'{e} map
on a two-dimensional manifold transverse to the flow \cite{AP}, the
statistical stability for the invariant measure of this map implies that of
the SRB measure of the unperturbed flow. Other results in this direction are
given in \cite{AS}, \cite{BR} and \cite{GL} where strong statistical stability
of the geometric Lorenz flow is analysed.

\subsubsection{Random perturbations}

Random perturbations of the classical Lorenz flow have been studied in the
framework of stochastic differential equations \cite{Sc}, \cite{CSG},
\cite{Ke} (see also \cite{Ar} and reference therein). The main interest of
these studies was bifurcation theory and the existence and the
characterization of the random attractor. The existence of the stationary
measure for this stochastic version of the system of equations given in
(\ref{L1}) is proved in \cite{Ke}.

Stochastic stability under diffusive type perturbations has been studied in
\cite{Ki} for the geometric Lorenz flow and in \cite{Me} for the contracting
Lorenz flow.

\section{Physical motivation}

The analysis of the stability of the statistical properties of the classical
Lorenz flow can provide a theoretical framework for the study of climate
changes, in particular those induced by the anthropogenic influence on climate dynamics.

A possible way to study this problem is to add a weak perturbing term to the
phase vector field generating the atmospheric flow which model the atmospheric
circulation: the so called \emph{anthropogenic forcing}. Assuming that the
atmospheric circulation is described by a model exhibiting a robust singular
hyperbolic attractor, as it is the case for the classical Lorenz flow, it has
been shown empirically that the effect of the perturbation can possibly affect
just the statistical properties of the system \cite{Pa}, \cite{CMP}.
Therefore, because of its very weak nature (small intensity and slow
variability in time), a practical way to measure the impact of the
anthropogenic forcing on climate statistics is to look at the extreme value
statistics of those particular observables whose evolution may be more
sensitive to it \cite{Su}. In the particular case these observables are given
by bounded (real valued) functions on the phase space, an effective way to
look at their extreme value statistics is to look first at the statistics of
their extrema and then eventually to the extreme value statistics of these.

We stress that the result presented in \cite{GMPV} fit indeed in this
framework since, starting from the assumption made in \cite{Pa} and \cite{CMP}
that, taking the classical Lorenz flow as a model for the atmospheric
circulation, the effect of the anthropogenic influence on climate dynamics can
be modeled by the addition of a small constant term to the unperturbed phase
vector field, it has been shown that the statistics of the extrema of the
first integrals of the Hamiltonian flow underlying the classical Lorenz one,
which are global observables for this system, are very sensitive to this kind
of perturbation (see e.g. Example 8 in \cite{GMPV}).

Of course, a more realistic model for the anthropogenic forcing should take
into account random perturbations of the phase vector field rather than
deterministic ones. Anyway it seems unlikely that the resulting process can be
a diffusion, since in this case the driving process fluctuates faster than
what it is assumed to do in principle a perturbing term of the type just described.

\subsection{Modeling random perturbations of impulsive type}

We introduce a random perturbation of the Lorenz'63 flow which, being of
impulsive nature, differ from diffusion-type perturbations.

For any realization of the noise $\eta\in\left[  -\varepsilon,\varepsilon
\right]  ,$ we consider a flow $\left(  \Phi_{\eta}^{t},t\geq0\right)  $
generated by the phase vector field $\phi_{\eta}$ belonging to a sufficiently
small neighborhood of the classical Lorenz one in the $C^{1}$ topology. For
$\varepsilon$ small enough, the realizations of the perturbed phase vector
field $\phi_{\eta}$ can be chosen such that there exists an open neighborhood
$U$ of the unperturbed attractor in $\mathbb{R}^{3},$ independent of the noise
parameter $\eta,$ containing the attractor of any realization of $\phi_{\eta}$
and, moreover, such that a given Poincar\'{e} section $\mathcal{M}$ for the
unperturbed flow is also transversal to any realization of the perturbed one.
Thus, given $\mathcal{M},$ the random process describing the perturbation is
constructed selecting at random, in an independent way, the value of
$\phi_{\eta}$ at the crossing of $\mathcal{M}$ by the phase trajectory.

This procedure defines a semi-Markov random evolution \cite{KS}, in fact a
piecewise deterministic Markov process (PDMP) \cite{Da}.

Therefore, the major object of this paper will be to show the existence of a
stationary measure for the imbedded Markov chain driving the random process
just described as well as to prove that the stationary process weakly
converges, as $\varepsilon$ tends to $0,$ to the physical measure of the
unperturbed one.

More specifically, let $\hat{\tau}_{\eta}:U\rightarrow\mathcal{M}$ and
$\tau_{\eta}:\mathcal{M}\circlearrowleft$ be respectively the hitting time of
$\mathcal{M}$ and the return time map on $\mathcal{M}$ for $\left(  \Phi
_{\eta}^{t},t\geq0\right)  .$ If $\eta$ is sampled according to a given law
$\lambda_{\varepsilon}$ supported on $\left[  -\varepsilon,\varepsilon\right]
,$ the sequence $\left\{  \mathfrak{x}_{i}\right\}  _{i\geq0}$ such that
$\mathfrak{x}_{0}\in\mathcal{M}$ and, for $i\geq0,\mathfrak{x}_{i+1}%
:=\Phi_{\eta}^{\tau_{\eta}\left(  \mathfrak{x}_{i}\right)  }\left(
\mathfrak{x}_{i}\right)  $ is a homogeneous Markov chain on $\mathcal{M}$ with
transition probability measure
\begin{equation}
\mathbb{P}\left\{  \mathfrak{x}_{1}\in dz|\mathfrak{x}_{0}\right\}
=\lambda_{\varepsilon}\left\{  \eta\in\left[  -1,1\right]  :R_{\eta}\left(
\mathfrak{x}_{0}\right)  \in dz\right\}  \ .
\end{equation}
Considering the collection of sequences of i.i.d.r.v's $\left\{  \eta
_{i}\right\}  _{i\geq0}$ distributed according to $\lambda_{\varepsilon},$ we
define the random sequence $\left\{  \sigma_{n}\right\}  _{n\geq1}%
\in\mathbb{R}$ such that $\sigma_{n}:=\sum_{i=0}^{n-1}\tau_{\eta_{i-1}}\left(
\mathfrak{x}_{i-1}\right)  ,n\geq1.$ Then, it is easily checked that the
sequence $\left\{  \left(  \mathfrak{x}_{n},\mathbf{t}_{n}\right)  \right\}
_{n\geq0}$ such that $\mathbf{t}_{0}:=\sigma_{1}$ and, for $n\geq
0,\mathbf{t}_{n}:=\sigma_{n+1}-\sigma_{n}$ is a Markov renewal process (MRP)
\cite{As}, \cite{KS}. Therefore, denoting by $\left(  \mathbf{N}_{t}%
,t\geq0\right)  ,$ such that $\mathbf{N}_{0}:=0$ and $\mathbf{N}_{t}%
:=\sum_{n\geq0}\mathbf{1}_{\left[  0,t\right]  }\left(  \sigma_{n}\right)  ,$
the associated counting process and defining:

\begin{itemize}
\item $\left(  \mathfrak{x}_{t},t\geq0\right)  ,$ such that $\mathfrak{x}%
_{t}:=\mathfrak{x}_{\mathbf{N}_{t}},$ the associated semi-Markov process;

\item $\left(  \mathfrak{l}_{t},t\geq0\right)  ,$ such that $\mathfrak{l}%
_{t}:=t-\sigma_{\mathbf{N}_{t}},$ the \emph{age} (\emph{residual life}) of the MRP;

\item $\left(  \eta_{t},t\geq0\right)  $ such that $\eta_{t}:=\eta
_{\mathbf{N}_{t}},$
\end{itemize}

setting $\sigma_{0}:=\hat{\tau}_{\eta},$ we introduce the random process
$\left(  \mathfrak{u}_{t},t\geq0\right)  ,$ such that
\begin{equation}
\mathfrak{u}_{t}\left(  y_{0}\right)  :=\left\{
\begin{array}
[c]{ll}%
\Phi_{\eta}^{t}\left(  y_{0}\right)  \mathbf{1}_{[0,\sigma_{0}\left(
y_{0}\right)  )}\left(  t\right)  +\mathbf{1}_{\left\{  \Phi_{\eta}%
^{\sigma_{0}\left(  y_{0}\right)  }\left(  y_{0}\right)  \right\}  }\left(
\mathfrak{x}_{0}\right)  \Phi_{\eta_{\mathbf{N}_{t-\sigma_{0}\left(
y_{0}\right)  }}}^{\mathfrak{l}_{t-\sigma_{0}\left(  y_{0}\right)  }}%
\circ\mathfrak{x}_{t-\sigma_{0}\left(  y_{0}\right)  } & y_{0}\in
U\backslash\mathcal{M}\\
\mathbf{1}_{\left\{  y_{0}\right\}  }\left(  \mathfrak{x}_{0}\right)
\Phi_{\eta_{\mathbf{N}_{t}}}^{\mathfrak{l}_{t}}\circ\mathfrak{x}_{t} &
y_{0}\in\mathcal{M}%
\end{array}
\right.  \;;\;y_{0}\in U,t\geq0\ , \label{u_t}%
\end{equation}
describes the system evolution started at $y_{0}.$ We prove

\begin{theorem}
\label{main}There exists a measure $\mu_{\varepsilon}$ on the measurable space
$\left(  U,\mathcal{B}\left(  U\right)  \right)  ,$ with $\mathcal{B}\left(
U\right)  $ the trace $\sigma$algebra of the Borel $\sigma$algebra of
$\mathbb{R}^{3},$ such that, for any bounded real-valued measurable function
$f$ on $U,$%
\begin{equation}
\lim_{t\rightarrow\infty}\frac{1}{T}\int_{0}^{T}f\circ\mathfrak{u}_{t}%
=\mu_{\varepsilon}\left(  f\right)
\end{equation}
and
\begin{equation}
\lim_{\varepsilon\downarrow0}\mu_{\varepsilon}\left(  f\right)  =\mu
_{0}\left(  f\right)
\end{equation}
where $\mu_{0}$ is the physical measure of the classical Lorenz flow.
\end{theorem}

A more precise definition of the quantities involved in the construction of
$\left(  \mathfrak{u}_{t},t\geq0\right)  $ is given in the second part of the
paper where we also present a different characterization of this random
process, which follows from the representation of the Markov chain $\left\{
\mathfrak{x}_{n}\right\}  _{n\geq0}$ as Random Dynamical System (RDS), and
study its asymptotic stationary properties. In the third part of the paper we
present the construction of $\left(  \mathfrak{u}_{t},t\geq0\right)  $ just
given in a more rigorous way and reprhase the analysis carried on in the
second part of the paper in the framework of PDMP's.

One may argue that the perturbation should act modifying the phase velocity
field of the system at any point of $U$ and not just at the crossing of a
given cross-section. In fact, let $\left\{  \mathfrak{t}_{n}\right\}
_{n\geq0}$ be the sequence of i.i.d.r.v's representing the jump times of this
process, which we choose independent of the noise parameter $\eta.\left\{
\mathfrak{S}_{n}\right\}  _{n\geq1},$ such that $\mathfrak{S}_{n}:=\sum
_{k=0}^{n-1}\mathfrak{t}_{n},$ is the associated renewal process and $\left(
\mathfrak{n}_{t},t\geq0\right)  ,$ such that $\mathfrak{n}_{t}:=\sum_{n\geq
0}\mathbf{1}_{\left[  0,t\right]  }\left(  \mathfrak{S}_{n}\right)  ,$ is the
associated counting process. The sequence $\left\{  \mathfrak{z}_{n}\right\}
_{n\geq0},$ such that $\mathfrak{z}_{0}\in U$ and for $n\geq0,\mathfrak{z}%
_{n+1}:=\Phi_{\eta}^{\mathfrak{t}\left(  \mathfrak{z}_{n}\right)  }\left(
\mathfrak{z}_{n}\right)  $ is a homogeneous Markov chain and now the system
evolution is given by the random process $\left(  \mathbf{u}_{t}%
,t\geq0\right)  $ such that, when started at $y_{0}\in U,\mathbf{u}_{t}\left(
y_{0}\right)  $ has the form (\ref{u_t}) with $\left\{  \left(  \mathfrak{x}%
_{n},\mathbf{t}_{n}\right)  \right\}  _{n\geq0}$ replaced by $\left\{  \left(
\mathfrak{z}_{n},\mathfrak{t}_{n}\right)  \right\}  _{n\geq0}.$\ Let now
$\sigma_{0}$ be the hitting time of $\mathcal{M}$ for $\left\{  \mathfrak{z}%
_{n}\right\}  _{n\geq0}.$ Under the reasonable assumptions on the renewal
process that, for any $t>0,\mathfrak{z}_{0}\in U,\mathbb{E}\left[
\mathfrak{n}_{t}|\mathfrak{z}_{0}\right]  <\infty$ and for any $\mathfrak{z}%
_{0}\notin\mathcal{M},\lim_{n\uparrow\infty}\mathbb{P}\left\{  \sigma
_{0}>\mathfrak{S}_{n}|\mathfrak{z}_{0}\right\}  =0,$ this case can be reduced
to the one treated in this article. Indeed, if $\sigma_{\mathcal{M}}$ is the
hitting time for $\left(  \mathbf{u}_{t},t\geq0\right)  $ of the cross-section
$\mathcal{M},$ since by definition $\mathbf{u}_{0}=\mathfrak{z}_{0}$ for any
$\mathbf{u}_{0}\in U,\mathbb{P}\left\{  \sigma_{\mathcal{M}}>\mathfrak{S}%
_{n}|\mathbf{u}_{0}\right\}  \leq\mathbb{P}\left\{  \sigma_{0}>\mathfrak{S}%
_{n}|\mathfrak{z}_{0}\right\}  .$ Hence, $\mathbb{P}\left\{  \sigma
_{\mathcal{M}}=\infty|\mathbf{u}_{0}\right\}  =\lim_{n\rightarrow\infty
}\mathbb{P}\left\{  \sigma_{\mathcal{M}}>\mathfrak{S}_{n}|\mathbf{u}%
_{0}\right\}  =0.$ Therefore we can analyze the trajectories of the system by
looking at the sequence of return times to $\mathcal{M},$ that is we can
reduce ourselves to study a random evolution of the kind given in (\ref{u_t}).

\section{Structure of the paper and results}

The paper is divided into four parts.

The first part, together with the introduction, contains the notations used
throughout the paper as well as the definition of the unperturbed dynamical
system and of its perturbation for given realizations of the noise.

In the second part we set up the problem of the stochastic stability of the
classical Lorenz flow under the stochastic perturbation scheme just descrided
in the framework of RDS. In order to simplify the exposition, which contains
many technical details and requires the introduction of several quantities, we
will list here the main steps we will go through to get to the proof deferring
the reader to the next sections for a detailed and precise description.

We consider a Poincar\'{e} section $\mathcal{M}$ for the unperturbed flow
$\left(  \Phi_{0}^{t},t\geq0\right)  $ associated to the smooth vector field
$\phi_{0}.$ This cross-section is transverse to the flows generated by smooth
perturbation $\phi_{\eta}$ of the original vector field if $\eta$ is chosen at
random in $\left[  -\varepsilon,\varepsilon\right]  $ according to some
probability measure $\lambda_{\varepsilon}$ for sufficiently small
$\varepsilon.$

\begin{itemize}
\item[\textbf{Step 1}] For any $\eta\in\left[  -\varepsilon,\varepsilon
\right]  ,$ the perturbed phase field $\phi_{\eta}$ is such that the
associated flows $\left(  \Phi_{\eta}^{t},t\geq0\right)  $ admit a $C^{1}$
stable foliation in a neighborhood of the corresponding attractor. In order to
study the RDS defined by the composition of the maps $R_{\eta}:=\Phi_{\eta
}^{\tau_{\eta}}:\mathcal{M}\circlearrowleft,$ with $\tau_{\eta}:\mathcal{M}%
\circlearrowleft$ the return time map on $\mathcal{M}$ for $\left(  \Phi
_{\eta}^{t},t\geq0\right)  ,$ we show that we can restrict ourselves to study
a RDS given by the composition of maps $\bar{R}_{\eta}:\mathcal{M}%
\circlearrowleft,$ conjugated to the maps $R_{\eta}$ via a diffeomorphism
$\kappa_{\eta}:\mathcal{M}\circlearrowleft,$ leaving invariant the unperturbed
stable foliation for any realization of the noise. Namely, we can reduce the
cross-section to a unit square foliated by vertical stable leaves, as for the
geometric Lorenz flow. By collapsing these leaves on their base points via the
diffeomorphism $q,$ we conjugate the first return map $\bar{R}_{\eta}$ on
$\mathcal{M}$ to a piecewise map $\bar{T}_{\eta}$ of the interval $I.$ This
one-dimensional quotient map is expanding with the first derivative blowing up
to infinity at some point.

\item[\textbf{Step 2}] We introduce the random perturbations of the
unperturbed quotient map $T_{0}.$ Suppose $\omega=(\eta_{0},\eta_{1}%
,\cdots,\eta_{k},\cdots)$ is a sequence of values in $\left[  -\varepsilon
,\varepsilon\right]  $ each chosen independently of the others according to
the probability $\lambda_{\varepsilon}.$ We construct the concatenation
$\bar{T}_{\eta_{k}}\circ\cdots\circ\bar{T}_{\eta_{0}}$ and prove that there
exists a stationary measure $\nu_{1}^{\varepsilon},$ i.e. such that for any
bounded measurable function $g$ and $k\geq0,\int g(\bar{T}_{\eta_{k}}%
\circ\cdots\circ\bar{T}_{\eta_{0}})(x)\nu_{1}^{\varepsilon}\left(  dx\right)
\lambda_{\varepsilon}^{\otimes k}(d\eta)=\int gd\nu_{1}^{\varepsilon}.$
Clearly, $\mu_{\mathbf{T}}^{\varepsilon}:=\nu_{1}^{\varepsilon}\otimes
\mathbb{P}_{\varepsilon},$ with $\mathbb{P}_{\varepsilon}$ the probability
measure on the i.i.d. random sequences $\omega,$ is an invariant measure for
the associated RDS (see (\ref{defTT})).

\item[\textbf{Step 3}] We lift the random process just defined to a Markov
process on the Poincar\'{e} surface $\mathcal{M}$ given by the sequences
$\bar{R}_{\eta_{k}}\circ\cdots\circ\bar{R}_{\eta_{0}}$ and show that the
stationary measure $\nu_{2}^{\varepsilon}$ for this process can be constructed
from $\nu_{1}^{\varepsilon}.$ We set $\mu_{\overline{\mathbf{R}}}%
^{\varepsilon}:=\bar{\nu}_{2}^{\varepsilon}\otimes\mathbb{P}_{\varepsilon}$
the corresponding invariant measure for the RDS (see (\ref{defRR})).

We remark that, by construction, the conjugation property linking $R_{\eta}$
with $\bar{R}_{\eta}$ lifts to the associated RDS's. This allows us to recover
from $\mu_{\overline{\mathbf{R}}}^{\varepsilon}$ the invariant measure
$\mu_{\mathbf{R}}^{\varepsilon}$ for the RDS generated by composing the
$R_{\eta}$'s.

\item[\textbf{Step 4}] Let $\mathbf{R}:\mathcal{M}\times\Omega\circlearrowleft
$ be the map defining the RDS corresponding to the compositions of the
realizations of $R_{\eta}$ (see (\ref{RK=KR})). We identify the set
\begin{equation}
(\mathcal{M}\times\Omega)_{\mathbf{t}}:=\{(x,\omega,s)\in\mathcal{M}%
\times\Omega\times\mathbb{R}^{+}:s\in\lbrack0,\mathbf{t}(x,\omega))\}\ ,
\end{equation}
where $\Omega:=\left[  -\varepsilon,\varepsilon\right]  ^{\mathbb{N}%
},\mathbf{t}(x,\omega):=\tau_{\pi(\omega)}(x)$ is the \emph{random roof
function} and $\pi(\omega):=\eta_{0}$ is the first coordinate of $\omega,$
with the set $\mathfrak{V}$ of equivalence classes of points $\left(
x,\omega,t\right)  $ in $\mathcal{M}\times\Omega\times\mathbb{R}^{+}$ such
that $t=s+\sum_{k=0}^{n-1}\mathbf{t}\left(  \mathbf{R}^{k}\left(
x,\omega\right)  \right)  $ for some $s\in\lbrack0,\mathbf{t}(x,\omega
)),n\geq1.$ Then, if $\hat{\pi}:\mathcal{M}\times\Omega\times\mathbb{R}%
^{+}\longrightarrow\mathfrak{V}$ is the canonical projection and, for any
$t>0,N_{t}:=\max\left\{  n\in\mathbb{Z}^{+}:\sum_{k=0}^{n-1}\mathbf{t}%
\circ\mathbf{R}^{k}\leq t\right\}  ,$ we define the \emph{random suspension
semi-flow}
\begin{equation}
(\mathcal{M}\times\Omega)_{\mathbf{t}}\ni\left(  x,\omega,s\right)
\longmapsto\mathbf{S}^{t}(x,\omega,s):=\hat{\pi}(\mathbf{R}^{N_{s+t}}\left(
x,\omega\right)  ,s+t)\in(\mathcal{M}\times\Omega)_{\mathbf{t}}\ .
\end{equation}
In particular, for instance, if $\mathbf{s}_{2}(x,\omega)=\tau_{\eta_{0}%
}(x)+\tau_{\eta_{1}}(R_{\eta_{1}}(x))\leq s+t,$ we have
\begin{equation}
\mathbf{S}^{t}(x,\omega,s)=((R_{\eta_{1}}\circ R_{\eta_{0}}(x)),\theta
^{2}\omega,s+t-\mathbf{s}_{2}(x,\omega))\ ,
\end{equation}
where $\theta:\Omega\ni\omega=(\eta_{0},\eta_{1},\cdots,\eta_{k}%
,\cdots)\longmapsto\theta\omega:=(\eta_{1},\eta_{2},\cdots,\eta_{k+1}%
,\cdots)\in\Omega$ is the left shift.

\item[\textbf{Step 5}] We build up a conjugation between the random suspension
semi-flow and a semi-flow on $U\times\Omega,$ which we will call $\left(
X^{t},t\geq0\right)  ,$ such that its projection on $U$ is a representation of
(\ref{u_t}). The rough idea is that each time the orbit crosses the
Poincar\'{e} section $\mathcal{M},$ the vector fields will change randomly.
Therefore, we start by fixing the \emph{initial condition} $(y,\omega)$ with
$y\in U$ yet not necessarily on $\mathcal{M}.$ We now begin to define the
\emph{random flow} $\left(  X^{t},t\geq0\right)  .$ Let $\pi:\Omega
\mapsto\left[  -\varepsilon,\varepsilon\right]  $ be the projection of
$\omega=(\eta_{0},\eta_{1},\cdots,\eta_{k},\cdots)$ onto the first coordinate
and call $t_{\eta_{0}}\left(  y\right)  =t_{\pi\left(  \omega\right)  }\left(
y\right)  $ the time the orbit $\Phi_{\eta_{0}}^{t}(y)=\Phi_{\pi\left(
\omega\right)  }^{t}(y)$ takes to meet $\mathcal{M}$ and set $y_{1}%
:=\Phi_{\eta_{0}}^{t_{\eta_{0}}\left(  y\right)  }(y)=\Phi_{\pi\left(
\omega\right)  }^{t_{\pi\left(  \omega\right)  }\left(  y\right)  }(y).$Then,
since $\forall\omega\in\Omega,n\geq0,\pi\left(  \theta^{n}\omega\right)
=\eta_{n},$%
\begin{align}
X^{t}(y,\omega)  &  :=\left(  \Phi_{\pi\left(  \omega\right)  }^{t}%
(y),\omega\right)  ,\ 0\leq t\leq t_{\eta_{0}}\left(  y\right)  \ ;\label{X_t}%
\\
X^{t}(y,\omega)  &  =\left(  \Phi_{\pi\left(  \theta\omega\right)  }%
^{t-t_{\pi\left(  \omega\right)  }\left(  y\right)  }(y_{1}),\theta
\omega\right)  ,\ t_{\eta_{0}}\left(  y\right)  <t\leq t_{\eta_{0}}\left(
y\right)  +\tau_{\eta_{1}}(y_{1})\ ;\nonumber\\
X^{t}(y,\omega)  &  =\left(  \Phi_{\pi\left(  \theta^{2}\omega\right)
}^{t-t_{\pi\left(  \omega\right)  }\left(  y\right)  -\tau_{\pi\left(
\theta\omega\right)  }(y_{1})}(R_{\pi\left(  \theta\omega\right)  }%
(y_{1})),\theta^{2}\omega\right)  ,\ t_{\eta_{0}}\left(  y\right)  +\tau
_{\eta_{1}}(y_{1})<t\leq t_{\eta_{0}}\left(  y\right)  +\tau_{\eta_{1}}%
(y_{1})+\tau_{\eta_{2}}(R_{\eta_{1}}(y_{1}))\ ,\nonumber
\end{align}
where $R_{\pi\left(  \theta\omega\right)  }(y_{1})=R_{\eta_{1}}(y_{1}),$ and
so on.

\item[\textbf{Step 6}] We are now ready to define the conjugation
$\mathbf{V}:\mathcal{M}\times\Omega\times\mathbb{R}^{+}\rightarrow
\mathbb{R}^{3}\times\Omega$ in the following way:
\begin{align}
\mathbf{V}(x,\omega,s)  &  =\left(  \Phi_{\pi\left(  \omega\right)  }%
^{s}(x),\omega\right)  ,\ x\in\mathcal{M};\ \omega=(\eta_{0},\eta_{1}%
,\cdots,\eta_{k},\cdots)\in\Omega;\ 0\leq s<\tau_{\eta_{0}}(x)\\
\mathbf{V}(x,\omega,s)  &  =\left(  \Phi_{\pi\left(  \theta\omega\right)
}^{s-\tau_{\pi\left(  \omega\right)  }(x)}(R_{\pi\left(  \omega\right)
}(x)),\theta\omega\right)  ;\ \tau_{\eta_{0}}(x)\leq s<\tau_{\eta_{0}}%
(x)+\tau_{\eta_{1}}(R_{\eta_{0}}(x))\ ,\nonumber
\end{align}
where $R_{\pi\left(  \omega\right)  }(x)=R_{\eta_{0}}(x),$ and so on. By
collecting the expressions given above it is not difficult to check that
$\left(  X^{t},t\geq0\right)  $ must satisfy the equation
\begin{equation}
\mathbf{V}\circ\mathbf{S}^{t}=X^{t}\circ\mathbf{V}\ .
\end{equation}
For instance, if $s+t<\tau_{\eta_{0}}(x),$ we have $X^{t}\circ\mathbf{V}%
(x,\omega,s)=\left(  X^{t}(\Phi_{\eta_{0}}^{s}(x)),\omega\right)  =\left(
\Phi_{\eta_{0}}^{t}(\Phi_{\eta_{0}}^{s}(x)),\omega\right)  =\left(  \Phi
_{\eta_{0}}^{s+t}(x),\omega\right)  ,$ while $\mathbf{V}\circ\mathbf{S}%
^{t}(x,\omega,s)=\mathbf{V}(x,\omega,s+t)=\left(  \Phi_{\eta_{0}}%
^{s+t}(x),\omega\right)  .$

\item[\textbf{Step 7}] We lift the measure $\mu_{\mathbf{R}}^{\varepsilon}$ on
the random suspension in order to get an invariant measure for $\left(
\mathbf{S}^{t},t\geq0\right)  .$ Under the assumption that the random roof
function $\mathbf{t}$ is $\mu_{\mathbf{R}}^{\varepsilon}$-summable, the
invariant measure $\mu_{\mathbf{S}}^{\varepsilon}$ for the random suspension
semi-flow acts on bounded real functions $f$ as
\begin{equation}
\int d\mu_{\mathbf{S}}^{\varepsilon}f=\left(  \int d\mu_{\mathbf{R}%
}^{\varepsilon}\mathbf{t}\right)  ^{-1}\int d\mu_{\mathbf{R}}^{\varepsilon
}\left(  \int_{0}^{\mathbf{t}}f\circ\mathbf{S}^{t}dt\right)  \ .
\end{equation}

The invariant measure for the random flow $\left(  X^{t},t\geq0\right)  $ will
then be push forward $\mu_{\mathbf{S}}^{\varepsilon}$ under the conjugacy
$\mathbf{V},$ i.e.%
\begin{equation}
\mu_{\mathbf{V}}^{\varepsilon}=\mu_{\mathbf{S}}^{\varepsilon}\circ
\mathbf{V}^{-1}\ .
\end{equation}

\item[\textbf{Step 8}] We show that the correspondence $\mu_{\mathbf{T}%
}^{\varepsilon}\longrightarrow\mu_{\mathbf{R}}^{\varepsilon}\longrightarrow
\mu_{\mathbf{V}}^{\varepsilon}$ is injective and so that the stochastic
stability of $T_{0}$ (which in fact we prove to hold in the $L^{1}\left(
I,dx\right)  $ topology) implies that of the physical measure $\mu_{0}$ of the
unperturbed flow. More precisely, we lift the evolutions defined by the
unperturbed maps $T_{0}$ and $R_{0},$ as well as that represented by the
unperturbed suspension semi-flow $\left(  S_{0}^{t},t\geq0\right)  ,$ to
evolutions defined respectively on $I\times\Omega,\mathcal{M}\times\Omega$ and
on $(\mathcal{M}\times\Omega)_{\tau_{0}}:=\{(x,\omega,s)\in\mathcal{M}%
\times\Omega\times\mathbb{R}^{+}:s\in\lbrack0,\tau_{0}(x))\}.$ By
construction, the invariant measures for these evolutions are $\mu_{T_{0}%
}\otimes\delta_{\bar{0}},\mu_{R_{0}}\otimes\delta_{\bar{0}},\mu_{S_{0}}%
\otimes\delta_{\bar{0}},$ where $\bar{0}$ denotes the sequence in $\Omega$
whose entries are all equal to $0,\delta_{\bar{0}}$ is the Dirac mass at
$\bar{0}$ and $\mu_{T_{0}},\mu_{R_{0}},\mu_{S_{0}}$ are respectively the
invariant measures for $T_{0},R_{0}$ and $S_{0}.$ Then, we prove the weak
convergence, as $\varepsilon\downarrow0,$ of $\mu_{\mathbf{T}}^{\varepsilon}$
to $\mu_{T_{0}}\otimes\delta_{\bar{0}}$ and consequently the weak convergence
of $\mu_{\mathbf{R}}^{\varepsilon}$ to $\mu_{T_{0}}\otimes\delta_{\bar{0}}.$
This will imply the weak convergence of $\mu_{\mathbf{S}}^{\varepsilon}$ to
$\mu_{S_{0}}\otimes\delta_{\bar{0}}$ and therefore the weak convergence of
$\mu_{\mathbf{V}}^{\varepsilon}$ to $\mu_{0}\otimes\delta_{\bar{0}}$ proving
Theorem \ref{main}.
\end{itemize}

In the third part we will take a more probabilistic point of view and
formulate the question about the stochastic stability for the unperturbed flow
in the framework of PDMP. More precisely, we will show that we can recover the
physical measure of the unperturbed flow as weak limit, as the intensity of
the perturbation vanishes, of the measure on the phase space of the system
obtained by looking at the law of large numbers for cumulative processes
defined as the integral over $\left[  0,t\right]  $ of functionals on the path
space of the stationary process representing the perturbed system's dynamics.
Therefore, we will be reduced to prove that the imbedded Markov chain driving
the random process that describes the evolution of the system is stationary,
that its stationary (invariant) measure is unique and that it will converge
weakly to the invariant measure of the unperturbed Poincar\'{e} map
corresponding to $\mathcal{M}.$ To prove existence and uniqueness of the
stationary initial distribution of a Markov chain with uncountable state space
is not an easy task in general (we refer the reader to \cite{MT} for an
account on this subject). To overcome this difficulty we will make use of the
skew-product structure of the first return maps $R_{\eta}$ as it will be
outlined more precisely in the next section. However, if the perturbation of
the phase velocity field is given by the addition to the unperturbed one of a
small constant term, namely $\phi_{\eta}:=\phi_{0}+\eta H,H\in\mathbb{S}^{2},$
the proof of the stochastic stability of invariant measure for the unperturbed
Poincar\'{e} map will follow a more direct strategy; we refer the reader to
Section \ref{CF}.

The fourth part of the paper contains an appendix where we give examples of
the Poincar\'{e} section $\mathcal{M}$ and therefore of the maps $R_{\eta}$
and $T_{\eta},$ as well as we take the chance to comment on some results
achieved in our previous paper \cite{GMPV} about the statistical stability of
the classical Lorenz flow which will be recalled along the present work.

\section{Notations}

If $\mathfrak{X}$ is a Borel space we denote by $\mathcal{B}\left(
\mathfrak{X}\right)  $ its Borel $\sigma$algebra and by $M_{b}\left(
\mathfrak{X}\right)  $ the Banach space of bounded $\mathcal{B}\left(
\mathfrak{X}\right)  $-measurable functions on $\mathfrak{X}$ equipped with
the uniform norm. Moreover, we denote by $\mathfrak{M}\left(  \mathfrak{X}%
\right)  $ the Banach space of finite Radon measures on $\left(
\mathfrak{X},\mathcal{B}\left(  \mathfrak{X}\right)  \right)  $ such that, for
any $\mu\in\mathfrak{M}\left(  \mathfrak{X}\right)  ,\left\Vert \mu\right\Vert
:=\sup_{g\in C\left(  \mathfrak{X}\right)  :\left\Vert g\right\Vert _{\infty
}=1}\left\vert \mu\left(  g\right)  \right\vert =\left\vert \mu\right\vert
\left(  \mathfrak{X}\right)  ,$ where $\left\vert \mu\right\vert :=\mu_{+}%
+\mu_{-}$ with $\mu_{\pm}$ the elements of the canonical decomposition of
$\mu.$ Furthermore, $\mathfrak{P}\left(  \mathfrak{X}\right)  $ denotes the
set of probability measures on $\left(  \mathfrak{X},\mathcal{B}\left(
\mathfrak{X}\right)  \right)  $ and, if $\mu\in\mathfrak{P}\left(
\mathfrak{X}\right)  ,spt\mu\subseteq\mathfrak{X}$ denotes its support.
Finally, if $\mu\in\mathfrak{M}\left(  \mathfrak{X}\right)  $ is positive, we
denote by $\hat{\mu}:=\frac{\mu}{\mu\left(  \mathfrak{X}\right)  }$ its
associated probability measure.

We denote by $\left\langle \cdot,\cdot\right\rangle $ the Euclidean scalar
product in $\mathbb{R}^{d},$ by $\left\Vert \cdot\right\Vert $ the associated
norm and by $\lambda^{d}$ the Lebesgue measure on $\mathbb{R}^{d}.$ We set
$\lambda^{1}:=\lambda.$

Let $\varepsilon>0$ and $\lambda_{\varepsilon}$ a probability measure on the
measurable space $\left(  \left[  -1,1\right]  ,\mathcal{B}\left(  \left[
-1,1\right]  \right)  \right)  $ such that in the limit of $\varepsilon$
tending to zero, $\lambda_{\varepsilon}$ weakly converges to the atomic mass
at $0.$

\subsection{Metric Dynamical System associated with the noise}

Consider the measurable space $\left(  \Omega,\mathcal{F}\right)  $ where
$\Omega:=\left[  -1,1\right]  ^{\mathbb{Z}^{+}},\mathcal{F}$ is the $\sigma
$algebra generated by the cylinder sets $\mathcal{C}_{n}\left(  A\right)
:=\left\{  \omega\in\Omega:\left(  \eta_{1},...,\eta_{n}\right)  \in
A\right\}  ,$ with $A\in\mathcal{B}\left(  \left[  -1,1\right]  ^{n}\right)
,n\geq1.$ In fact, we can consider $\Omega$ endowed with the metric
$\Omega\times\Omega\ni\left(  \omega_{1},\omega_{2}\right)  \longmapsto
\rho\left(  \omega_{1},\omega_{2}\right)  :=\sum_{n\geq1}\frac{1}{2^{n}}%
\frac{\left\vert \eta_{n}^{\left(  1\right)  }-\eta_{n}^{\left(  2\right)
}\right\vert }{1+\left\vert \eta_{n}^{\left(  1\right)  }-\eta_{n}^{\left(
2\right)  }\right\vert }\in\left[  0,1\right]  $ so that, denoting again by
$\Omega,$ with abuse of notation, the metric space $\left(  \Omega
,\rho\right)  ,\mathcal{F}$ coincides with $\mathcal{B}\left(  \Omega\right)
.$ If $\varrho$ is a probability measure on $\left(  \left[  -1,1\right]
,\mathcal{B}\left(  \left[  -1,1\right]  \right)  \right)  ,$ we denote by
$\mathbb{P}_{\varrho}$ the probability measure on $\left(  \Omega
,\mathcal{F}\right)  $ such that $\mathbb{P}_{\varrho}\left(  \mathcal{C}%
_{n}\left(  A\right)  \right)  :=\int_{A}\prod\limits_{i=0}^{n-1}%
\varrho\left(  d\eta_{i}\right)  $ and set $\mathbb{P}_{\varepsilon
}:=\mathbb{P}_{\lambda_{\varepsilon}}.$ In the following, to ease the
notation, we will omit to note the subscript denoting the dependence of the
probability distribution on $\left(  \Omega,\mathcal{F}\right)  $ from that on
$\left(  \left[  -1,1\right]  ,\mathcal{B}\left(  \left[  -1,1\right]
\right)  \right)  $ unless differently specified.

Let $\theta$ be the left shift operator on $\Omega.$ We denote by $\left(
\Omega,\mathcal{F},\theta,\mathbb{P}\right)  $ the corresponding metric
dynamical system. Moreover, we set
\begin{equation}
\Omega\ni\omega\longmapsto\pi\left(  \omega\right)  :=\eta_{1}\in
spt\lambda_{\varepsilon}\ .
\end{equation}

\subsection{Random Dynamical System\label{RDS}}

If $\Xi$ is a Polish space, let $\mathbb{M}\left(  \Xi\right)  $ the set of
the measurable maps $\vartheta:\Xi\circlearrowleft.$ We denote by
$\vartheta^{\#}$ the pull-back of $\vartheta$ (or Koopman operator), namely
$\vartheta^{\#}\varphi:=\varphi\circ\vartheta$ for any real valued measurable
function $\varphi$ on $\Xi,$ and by $\vartheta_{\#}$ the push-forward of
$\vartheta$ i.e. the corresponding transfer operator acting on $L^{1}\left(
\Xi\right)  $ being the adjoint of $\vartheta^{\#}$ considered as an operator
acting on $L^{\infty}\left(  \Xi\right)  .$

Given $\left\{  \vartheta_{\eta}\right\}  _{\eta\in spt\lambda_{\varepsilon}%
}\subset\mathbb{M}\left(  \Xi\right)  ,$ the skew product
\begin{equation}
\Xi\times\Omega\ni\left(  x,\omega\right)  \longmapsto\Theta\left(
x,\omega\right)  :=\left(  \vartheta_{\pi\left(  \omega\right)  },\theta
\omega\right)  \in\Xi\times\Omega\
\end{equation}
defines a random dynamical system (RDS) on $\left(  \Xi,\mathcal{B}\left(
\Xi\right)  \right)  $ over the metric dynamical system $\left(
\Omega,\mathcal{F},\theta,\mathbb{P}\right)  $ (see \cite{Ar} Section 1.1.1).
We set:

\begin{itemize}
\item $\mathfrak{P}_{\mathbb{P}}\left(  \Xi\times\Omega\right)  $ to be the
set of probability measures $\mu$ on $\left(  \Xi\times\Omega,\mathcal{B}%
\left(  \Xi\right)  \otimes\mathcal{F}\right)  $ with marginal $\mathbb{P}$ on
$\left(  \Omega,\mathcal{F}\right)  $ and denote by $\mu\left(  \cdot
|\omega\right)  :=\frac{d\mu\left(  \cdot,\omega\right)  }{d\mathbb{P}\left(
\omega\right)  };$

\item $\mathfrak{I}_{\mathbb{P}}\left(  \Theta\right)  :=\left\{  \mu
\in\mathfrak{P}_{\mathbb{P}}\left(  \Xi\times\Omega\right)  :\Theta_{\#}%
\mu=\mu\right\}  ;$
\end{itemize}

(see \cite{Ar} Definition 1.4.1). We also define
\begin{equation}
\Xi\times\Omega\ni\left(  x,\omega\right)  \longmapsto p\left(  x,\omega
\right)  :=x\in\Xi\ .
\end{equation}

\subsection{Path space representation of a stochastic process}

Let us denote by $\mathbb{D}\left(  \mathbb{R}^{+},\Xi\right)  $\ the Skorohod
space of $\Xi$-valued functions on $\mathbb{R}^{+}$ and by $\mathfrak{B}%
\left(  \Xi\right)  $ its Borel $\sigma$algebra. Then, $\forall t\in
\mathbb{R}^{+},$ the evaluation map $\mathbb{D}\left(  \mathbb{R}^{+}%
,\Xi\right)  \ni\mathbf{Y}\longmapsto\xi_{t}\left(  \mathbf{Y}\right)
:=Y_{t}\in\Xi$ is a random element on $\left(  \mathbb{D}\left(
\mathbb{R}^{+},\Xi\right)  ,\mathfrak{B}\left(  \Xi\right)  \right)  $ with
values in $\Xi.$ We also denote by $\mathbb{D}_{y}\left(  \mathbb{R}^{+}%
,\Xi\right)  $ the Skorohod space of $\Xi$-valued functions on $\mathbb{R}%
^{+}$ started at $y\in\Xi.$

Let $\left\{  \mathfrak{F}_{t}^{\xi}\right\}  _{t\geq0},$ such that, $\forall
t\geq0,\mathfrak{F}_{t}^{\xi}:=\bigvee\limits_{s\leq t}\xi_{t}^{-1}\left(
\mathcal{B}\left(  \Xi\right)  \right)  ,$ be the natural filtration
associated to the stochastic process $\left(  \xi_{t},t\geq0\right)  .$ Then,
since $\Xi$ is Polish it is separable and so $\lim_{t\rightarrow\infty
}\mathfrak{F}_{t}^{\xi}=\bigvee\limits_{t\geq0}\mathfrak{F}_{t}^{\xi
}=\mathfrak{B}\left(  \Xi\right)  .$

Given $y\in\Xi,$ if $\left(  \mathfrak{y}_{t},t\geq0\right)  $ is a $\Xi
$-valued random process on $\left(  \Omega,\mathcal{F},\mathbb{P}\right)  $
such that,\linebreak$\forall B\in\mathcal{B}\left(  \Xi\right)  ,\mathbb{P}%
\left\{  \omega\in\Omega:\mathfrak{y}_{0}\left(  \omega\right)  \in B\right\}
=\mathbf{1}_{B}\left(  y\right)  ,$ let $\mathcal{Y}_{y}$ be the
$\mathbb{D}\left(  \mathbb{R}^{+},\Xi\right)  $-valued random element on
$\left(  \Omega,\mathcal{F}\right)  $ such that, $\forall\omega\in\Omega
,t\geq0,\xi_{t}\left(  \mathcal{Y}_{y}\left(  \omega\right)  \right)
=\mathfrak{y}_{t}\left(  \omega\right)  .$ We then set $\mathbb{Q}%
_{y}^{\mathfrak{y}}:=\mathbb{P\circ}\mathcal{Y}_{y}^{-1}.$ If $\Xi\ni
y\longmapsto\mathbb{Q}_{y}^{\mathfrak{y}}\in\mathfrak{P}\left(  \mathbb{D}%
\left(  \mathbb{R}^{+},\Xi\right)  \right)  $ is $\mathcal{B}\left(
\Xi\right)  $-measurable, it is a probability kernel from $\left(
\Xi,\mathcal{B}\left(  \Xi\right)  \right)  $ to $\left(  \mathbb{D}\left(
\mathbb{R}^{+},\Xi\right)  ,\mathfrak{B}\left(  \Xi\right)  \right)  $ such
that $\mathfrak{P}\left(  \Xi\right)  \ni\mu\longmapsto\mathbb{Q}_{\mu
}^{\mathfrak{y}}:=\mu\left(  \mathbb{Q}_{\cdot}^{\mathfrak{y}}\right)
\in\mathfrak{P}\left(  \mathbb{D}\left(  \mathbb{R}^{+},\Xi\right)  \right)
.$ Hence, denoting by $\mathfrak{F}_{t}^{\mathfrak{y}}\left(  \mu\right)  ,$
for any $t\geq0,$ the completion of $\mathfrak{F}_{t}^{\xi}$ with all the
$\mathbb{Q}_{\mu}^{\mathfrak{y}}$-null sets of $\mathfrak{B}\left(
\Xi\right)  ,$ we set $\mathfrak{F}_{t}^{\mathfrak{y}}:=\bigcap\limits_{\mu
\mathbb{\in}\mathfrak{P}\left(  \Xi\right)  }\mathfrak{F}_{t}^{\mathfrak{y}%
}\left(  \mu\right)  .$

If $\mathbb{Q}_{\cdot}^{\mathfrak{y}}$ is a probability kernel, $\forall
A\in\mathcal{F},$ the conditional probability $\mathbb{P}\left(
A|\mathfrak{y}_{0}\right)  $ admits a regular version which we denote by
$\mathbb{P}^{\mathfrak{y}}\left(  A|\cdot\right)  .$ Hence we set $\forall
t\geq0,\mathcal{F}_{t}^{\mathfrak{y}}:=\bigvee\limits_{s\leq t}\mathfrak{y}%
_{t}^{-1}\left(  \mathcal{B}\left(  \Xi\right)  \right)  ,$ denote by
$\mathcal{F}_{t}^{\mathfrak{y}}\left(  \mu\right)  $ the completion of
$\mathcal{F}_{t}^{\mathfrak{y}}$ with all the $\int_{\Xi}\mu\left(  dy\right)
\mathbb{P}^{\mathfrak{y}}\left(  \cdot|y\right)  $-null sets of $\mathcal{F}$
and set\linebreak$\overline{\mathcal{F}}_{t}^{\mathfrak{y}}:=\bigcap
\limits_{\mu\mathbb{\in}\mathfrak{P}\left(  \Xi\right)  }\mathcal{F}%
_{t}^{\mathfrak{y}}\left(  \mu\right)  .$

\section{The perturbed phase vector fields and the associated suspension
semiflows}

Given $\varepsilon>0$ sufficiently small, for any realization of the noise
$\eta\in spt\lambda_{\varepsilon},$ let $\phi_{\eta}$ be a phase field in
$\mathbb{R}^{3}$ and let $\left(  \Phi_{\eta}^{t},t\geq0\right)  $ be the
associated flow.

\subsection{The perturbed phase vector field $\phi_{\eta}$}

We assume that $\phi_{\eta}\in C^{r}\left(  \mathbb{R}^{3},\mathbb{R}%
^{3}\right)  $ for some $r\geq2$ independent of $\eta.$ In particular, we
denote by $\phi_{0}$ the Lorenz'63 vector field given in (\ref{L1}) and by
$\mathcal{M}$ be a Poincar\'{e} section for the associated flow $\left(
\Phi_{0}^{t},t\geq0\right)  .$

We further assume that, for any realization of the noise $\eta\in
spt\lambda_{\varepsilon},\phi_{\eta}$ belongs to a small neighborhood
$\mathfrak{U}$ of the unperturbed phase field $\phi_{0}$ in the $C^{1}$
topology such that there exists an open neighborhood $U$ in $\mathbb{R}^{3}$
containing the attractor $\Lambda$ of $\phi_{0}$ which also contains\linebreak%
$\Lambda_{\eta}:=\bigcap\limits_{t\geq0}\Phi_{\eta}^{t}\left(  U\right)  ,$
where the set $\Lambda_{\eta}$ is invariant for $\left(  \Phi_{\eta}^{t}%
,t\geq0\right)  ,$ is transitive and contains a hyperbolic singularity. We
choose $\mathfrak{U}$ small enough such that $\mathcal{M}$ is a Poincar\'{e}
section for any realization of the flow $\left(  \Phi_{\eta}^{t}%
,t\geq0\right)  $ (see e.g. \cite{HS} chapter 16, paragraph 2) and there
exists a stable foliation $\mathcal{I}_{\eta}$ of $\mathcal{M}$ that is at
least $C^{1+\epsilon},$ for some $\epsilon>0$ independent of $\eta,$ which can
be associated to the points of a transversal curve $I_{\eta}$ inside
$\mathcal{M}$ (see \cite{APPV} sections 5.2 and 5.3).

A good example for $\phi_{\eta}$ to keep in mind is
\begin{equation}
\phi_{\eta}:=\phi_{0}+\eta Hg_{\mathcal{M}}\ , \label{ex}%
\end{equation}
where $H\in\mathbb{S}^{2}$ and $g_{\mathcal{M}}$ is a sufficiently smooth
approximation of $\mathbf{1}_{\mathcal{M}}$ supported on $\mathcal{M}.$
Indeed, in this case, the existence and smoothness of the stable foliation can
be proved following the argument given in \cite{AM} section 4.

\subsection{The Poincar\'{e} map $R_{\eta}$}

Given $\eta\in spt\lambda_{\varepsilon},$ let $\Gamma_{\eta}$ be the leaf of
the invariant foliation of $\mathcal{M}$ corresponding to points $x$ whose
orbit $\left(  \Phi_{\eta}^{t}\left(  x\right)  ,t>0\right)  $ falls into the
local stable manifold of the hyperbolic singularity of $\phi_{\eta}.$ Then
\begin{equation}
\mathcal{M}\backslash\Gamma_{\eta}\ni x\longmapsto\tau_{\eta}\left(  x\right)
\in\mathbb{R}^{+}%
\end{equation}
is the return time map on $\mathcal{M}$ for $\left(  \Phi_{\eta}^{t}%
,t\geq0\right)  $ and
\begin{equation}
\mathcal{M}\backslash\Gamma_{\eta}\ni x\longmapsto R_{\eta}\left(  x\right)
:=\Phi_{\eta}^{\tau_{\eta}\left(  x\right)  }\left(  x\right)  \in\mathcal{M}
\label{defR}%
\end{equation}
is the Poincar\'{e} return map on $\mathcal{M}.$

Identifying $\mathcal{I}_{\eta}$ with $I_{\eta},$ let%

\begin{equation}
\mathcal{M}\ni x\longmapsto u:=q_{\eta}\left(  x\right)  \in I_{\eta}%
\end{equation}
be the canonical projection along the leaves of the foliation $\mathcal{I}%
_{\eta}.$ The assumption we made on $\phi_{\eta}$ imply that $\mathcal{I}%
_{\eta}$ is invariant and contracting, which means that there exists a map
$T_{\eta}:I_{\eta}^{\prime}\longrightarrow I_{\eta},$ with $I_{\eta}^{\prime
}\subseteq I_{\eta},$ such that \ for any $x$ in the domain of $R_{\eta}$
\begin{equation}
q_{\eta}\circ R_{\eta}\left(  x\right)  =T_{\eta}\circ q_{\eta}\left(
x\right)  \label{defT}%
\end{equation}
and if $u\in I_{\eta}$ is in the domain of $T_{\eta}$ the diameter of
$R_{\eta}^{n}\left(  q_{\eta}^{-1}\left(  u\right)  \right)  $ tends to zero
as $n$ tends to infinity.

\subsubsection{The conjugated map $\bar{R}_{\eta}$}

Since for any $\eta\in spt\lambda_{\varepsilon}$ the leaves of the stable
foliation $\mathcal{I}_{\eta}$ of $\mathcal{M}$ are rectifiables, arguing as
in \cite{APPV} sections 5.2 and 5.3 (see also Remark 3.15 in \cite{AP} and
\cite{AM}) we can construct two $C^{1}$ diffeomorphisms $\kappa_{\eta
}:\mathcal{M}\circlearrowleft$ and $\iota_{\eta}:\mathcal{I}_{\eta
}\longrightarrow\mathcal{I}:=\mathcal{I}_{0},$ such that
\begin{equation}
\iota_{\eta}\circ q_{\eta}=q\circ\kappa_{\eta}\ , \label{iq=qk}%
\end{equation}
where $q:=q_{0}$ (see Figure 1).

As a consequence, we can define $\bar{T}_{\eta}:I\circlearrowleft,$ where
$I:=I_{0},$ such that
\begin{equation}
\bar{T}_{\eta}\circ q\circ\kappa_{\eta}=\iota_{\eta}\circ T_{\eta}\circ
q_{\eta}%
\end{equation}
which, by (\ref{iq=qk}) implies
\begin{equation}
\bar{T}_{\eta}\circ\iota_{\eta}=\iota_{\eta}\circ T_{\eta}\ . \label{Ti=iT}%
\end{equation}
Defining $\bar{R}_{\eta}:\mathcal{M}\circlearrowleft$ such that
\begin{equation}
\bar{R}_{\eta}\circ\kappa_{\eta}=\kappa_{\eta}\circ R_{\eta}\ , \label{Rk=kR}%
\end{equation}
we get
\begin{equation}
\bar{T}_{\eta}\circ q=q\circ\bar{R}_{\eta}\ . \label{defT'}%
\end{equation}
We remark that the diffeomorfism $q$ does not depend on $\eta$ anymore.

Since
\begin{align}
\bar{T}_{\eta}\circ q\circ\kappa_{\eta}  &  =\bar{T}_{\eta}\circ\iota_{\eta
}\circ q_{\eta}=\iota_{\eta}\circ T_{\eta}\circ q_{\eta}\\
&  =\iota_{\eta}\circ q_{\eta}\circ R_{\eta}=q\circ\kappa_{\eta}\circ R_{\eta
}=q\circ\bar{R}_{\eta}\circ\kappa_{\eta}\ .\nonumber
\end{align}
Therefore, $\forall u\in I_{\eta},$ since $R_{\eta}\left(  q_{\eta}%
^{-1}\left(  u\right)  \right)  \subset q_{\eta}^{-1}\left(  T_{\eta}\left(
u\right)  \right)  ,$ by (\ref{iq=qk}), (\ref{Ti=iT}), (\ref{Rk=kR}) and
(\ref{defT'}) we obtain
\begin{equation}
\kappa_{\eta}^{-1}\circ\bar{R}_{\eta}\circ\kappa_{\eta}\left(  \kappa_{\eta
}^{-1}\circ q^{-1}\circ\iota_{\eta}\left(  u\right)  \right)  \subset
\kappa_{\eta}^{-1}\circ q^{-1}\circ\iota_{\eta}\left(  \iota_{\eta}^{-1}%
\circ\bar{T}_{\eta}\circ\iota_{\eta}\left(  u\right)  \right)  \ ,
\end{equation}
that is
\begin{equation}
\kappa_{\eta}^{-1}\circ\bar{R}_{\eta}\left(  q^{-1}\circ\iota_{\eta}\left(
u\right)  \right)  \subset\kappa_{\eta}^{-1}\circ q^{-1}\left(  \bar{T}_{\eta
}\circ\iota_{\eta}\left(  u\right)  \right)  \ ,
\end{equation}
which, because by definition $\kappa_{\eta}$ maps a leaf of the foliation
$\mathcal{I}_{\eta}$ to a leaf of the foliation $\mathcal{I},$ implies
\begin{equation}
\bar{R}_{\eta}\circ q^{-1}\left(  \iota_{\eta}\left(  u\right)  \right)
\subset q^{-1}\left(  \bar{T}_{\eta}\circ\iota_{\eta}\left(  u\right)
\right)
\end{equation}
and so, $\forall u\in I,$%
\begin{equation}
\bar{R}_{\eta}\circ q^{-1}\left(  u\right)  \subset q^{-1}\left(  \bar
{T}_{\eta}\left(  u\right)  \right)  \ . \label{ass1}%
\end{equation}

\begin{figure}[ptbh]
\centering
\resizebox{0.75\textwidth}{!}{\includegraphics{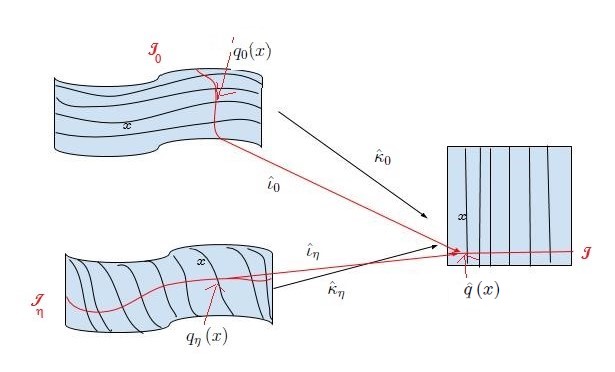}}
%If not, use
%\vspace{5cm}       % Give the correct figure height in cm
%Give a unique label
\caption{$\hat{\iota}_{0} \circ q_{0} = \hat{q} \circ\hat{\kappa}_{0}\, ,\,
\hat{\iota}_{\eta} \circ q_{\eta} = \hat{q} \circ\hat{\kappa}_{\eta}.$
Therefore, $\iota_{\eta} := \hat{\iota}_{\eta} \circ\hat{\iota}_{0}^{-1}\, ,\,
\kappa_{\eta} := \hat{\kappa}_{\eta} \circ\hat{\kappa}_{0}^{-1}$ implies
$\iota_{\eta} \circ q_{\eta} = q_{0} \circ\kappa_{\eta} .$}%
\label{fig:1}%
\end{figure}

\subsection{The suspension semi-flow}

Let us set
\begin{equation}
\mathcal{M}\backslash\Gamma_{\eta}\ni x\longmapsto\sigma_{\eta}^{n}\left(
x\right)  :=\sum_{k=0}^{n-1}\tau_{\eta}\left(  R_{\eta}^{k}\left(  x\right)
\right)  \in\mathbb{R}^{+}\ ,\;n\geq1\ ,
\end{equation}
and, $\forall x\in\mathcal{M}\backslash\Gamma_{\eta},$%
\begin{equation}
\mathbb{R}^{+}\ni t\longmapsto n_{\eta}\left(  x,t\right)  :=\max\left\{
n\in\mathbb{Z}^{+}:\sigma_{\eta}^{n}\left(  x\right)  \leq t\right\}
\in\mathbb{Z}^{+}\ .
\end{equation}
If
\begin{equation}
\mathcal{M}_{\tau_{\eta}}:=\left\{  \left(  x,s\right)  \in\mathcal{M}%
\times\mathbb{R}^{+}:s\in\lbrack0,\tau_{\eta}\left(  x\right)  )\right\}
\subset\mathbb{R}^{3}\ ,
\end{equation}
we define the suspension semiflow $\left(  S_{\eta}^{t},t\geq0\right)  $ as
\begin{equation}
\mathcal{M}_{\tau_{\eta}}\ni\left(  x,s\right)  \longmapsto S_{\eta}%
^{t}\left(  x,s\right)  :=\left(  R_{\eta}^{n_{\eta}\left(  x,t+s\right)
}\left(  x\right)  ,t+s-\sigma_{\eta}^{n_{\eta}\left(  x,s+t\right)  }\left(
x\right)  \right)  \in\mathcal{M}_{\tau_{\eta}}\ ,\;t\geq0\ .
\label{ssf_omega}%
\end{equation}

Let $\sim_{\eta}$ be the equivalence relation on $\mathcal{M}\times
\mathbb{R}^{+}$ such that any two points $\left(  x,s\right)  ,\left(
y,t\right)  $ in $\mathcal{M}\times\mathbb{R}^{+}$ belong to the same
equivalence class if there exist $\left(  x_{0},s_{0}\right)  \in
\mathcal{M}_{\tau_{\eta}},s^{\prime},s^{\prime\prime}>0$ such that $\Phi
_{\eta,\tau_{\eta}}^{s^{\prime}}\left(  x_{0},s_{0}\right)  =\left(
x,s\right)  ,\Phi_{\eta,\tau_{\eta}}^{s^{\prime\prime}}\left(  x_{0}%
,s_{0}\right)  =\left(  y,t\right)  $ and $n_{\eta}\left(  x_{0}%
,s^{\prime\prime}\vee s^{\prime}+s_{0}\right)  -n_{\eta}\left(  x_{0}%
,s^{\prime\prime}\wedge s^{\prime}+s_{0}\right)  \in\mathbb{N}.$ We denote by
$\mathcal{V}_{\eta}:=\mathcal{M}\times\mathbb{R}^{+}/\sim_{\eta}$ the
corresponding quotient space and by $\tilde{\pi}_{\eta}:\mathcal{M}%
\times\mathbb{R}^{+}\longrightarrow\mathcal{V}_{\eta}$ the canonical
projection which induces a topology and consequently a Borel $\sigma$algebra
on $\mathcal{V}_{\eta}.$ Therefore,
\begin{equation}
\mathcal{M}\times\mathbb{R}^{+}\ni\left(  x,s\right)  \longmapsto S_{\eta}%
^{t}\circ\tilde{\pi}_{\eta}\left(  x,s\right)  =\tilde{\pi}_{\eta}\left(
x,s+t\right)  \in\mathcal{V}_{\eta}\ ,\;t>0\ . \label{ssf_omega-1}%
\end{equation}

Let us define $\bar{\tau}_{\eta}:\mathcal{M}\backslash\Gamma_{0}%
\longrightarrow\mathbb{R}^{+}$ such that
\begin{equation}
\bar{\tau}_{\eta}\circ\kappa_{\eta}=\tau_{\eta}\ ,
\end{equation}
and consequently
\begin{equation}
\mathcal{M}_{\bar{\tau}_{\eta}}:=\left\{  \left(  x,s\right)  \in
\mathcal{M}\times\mathbb{R}^{+}:s\in\lbrack0,\bar{\tau}_{\eta}\left(
x\right)  )\right\}  \subset\mathbb{R}^{3}\ .
\end{equation}
Setting $\bar{\sigma}_{\eta}^{n},n\in\mathbb{Z}^{+},$ and $\bar{n}_{\eta}$
such that
\begin{equation}
\bar{\sigma}_{\eta}^{n}\circ\kappa_{\eta}=\sigma_{\eta}^{n}\;;\;\bar{n}_{\eta
}\circ\kappa_{\eta}=n_{\eta}%
\end{equation}
and
\begin{equation}
\mathcal{M}_{\bar{\tau}_{\eta}}\ni\left(  x,s\right)  \longmapsto\overline
{S}_{\eta}^{t}\left(  x,s\right)  :=\left(  \bar{R}_{\eta}^{\bar{n}_{\eta
}\left(  x,t+s\right)  }\left(  x\right)  ,t+s-\bar{\sigma}_{\eta}^{\bar
{n}_{\eta}\left(  x,s+t\right)  }\left(  x\right)  \right)  \in\mathcal{M}%
_{\bar{\tau}_{\eta}}\ ,\;t\geq0\ , \label{ssf_omega-2}%
\end{equation}
we can lift of the diffeomorphism $\kappa_{\eta}$ defined in (\ref{iq=qk}) to
the diffeomorphism
\begin{equation}
\mathcal{M}_{\tau_{\eta}}\ni\left(  x,s\right)  \longmapsto\bar{\kappa}_{\eta
}\left(  x,s\right)  :=\left(  \kappa_{\eta}\left(  x\right)  ,\frac{\bar
{\tau}_{\eta}\circ\kappa_{\eta}\left(  x\right)  }{\tau_{\eta}\left(
x\right)  }s\right)  =\left(  \kappa_{\eta}\left(  x\right)  ,s\right)
\in\mathcal{M}_{\bar{\tau}_{\eta}}\ , \label{kappabar}%
\end{equation}
so that, by (\ref{Rk=kR}),
\begin{equation}
\bar{\kappa}_{\eta}\circ S_{\eta}^{t}=\overline{S}_{\eta}^{t}\circ\bar{\kappa
}_{\eta}\ .
\end{equation}
Let $\approx_{\eta}$ to be the equivalence relation on $\mathcal{M}%
\times\mathbb{R}^{+}$ such that any two points $\left(  x,s\right)  ,\left(
y,t\right)  $ in $\mathcal{M}\times\mathbb{R}^{+}$ belong to the same
equivalence class if there exist $\left(  x_{0},s_{0}\right)  \in
\mathcal{M}_{\bar{\tau}_{\eta}},s^{\prime},s^{\prime\prime}>0$ such that
$\bar{\Phi}_{\eta,\bar{\tau}_{\eta}}^{s^{\prime}}\left(  x_{0},s_{0}\right)
=\left(  x,s\right)  ,\bar{\Phi}_{\eta,\bar{\tau}_{\eta}}^{s^{\prime\prime}%
}\left(  x_{0},s_{0}\right)  =\left(  y,t\right)  $ and $\bar{n}_{\eta}\left(
x_{0},s^{\prime\prime}\vee s^{\prime}+s_{0}\right)  -\bar{n}_{\eta}\left(
x_{0},s^{\prime\prime}\wedge s^{\prime}+s_{0}\right)  \in\mathbb{N}.$ Denoting
by $\overline{\mathcal{V}}_{\eta}:=\mathcal{M}\times\mathbb{R}^{+}%
/\approx_{\eta}$ the corresponding quotient space and by $\breve{\pi}_{\eta
}:\mathcal{M}\times\mathbb{R}^{+}\longrightarrow\overline{\mathcal{V}}_{\eta}$
the canonical projection such that
\begin{equation}
\mathcal{M}\times\mathbb{R}^{+}\ni\left(  x,s\right)  \longmapsto\overline
{S}_{\eta}^{t}\circ\breve{\pi}_{\eta}\left(  x,s\right)  =\breve{\pi}_{\eta
}\left(  x,s+t\right)  \in\overline{\mathcal{V}}_{\eta}\ ,\;t>0
\label{ssf_omega-3}%
\end{equation}
by (\ref{kappabar}) we can define a diffeomorphism $\tilde{\kappa}_{\eta
}:\mathcal{V}_{\eta}\longrightarrow\overline{\mathcal{V}}_{\eta}$ such that
\begin{equation}
\tilde{\kappa}_{\eta}\circ\tilde{\pi}_{\eta}=\breve{\pi}_{\eta}\circ
\tilde{\kappa}_{\eta}\ . \label{kpi=pik}%
\end{equation}

\part{Stochastic stability for impulsive type forcing}

As already anticipated in the introduction, in this section we will study the
weak convergence of the invariant measure of the semi-Markov random evolution
describing the random perturbations of $\left(  \Phi_{0}^{t},t\geq0\right)  $
in a neighborhood of the unperturbed attractor to the unperturbed physical measure.

To this purpose we will first consider the RDS defined by the composition of
the maps $\bar{R}_{\eta}$ given in (\ref{Rk=kR}) which, by construction,
preserve the unperturbed invariant foliation. Then, we give an explicit
representation for the invariant measure of the original process in terms of
the invariant measure for this auxiliary process which, in turn, can be
defined starting from the invariant measure for the RDS defined by the
composition of the maps $\bar{T}_{\eta}.$

Finally, we will prove that the stochastic stability of the unperturbed
physical measure follows from the stochastic stability of the invariant
measure for the one-dimensional dynamical system defined by the map $T_{0}.$

\section{The associated Random Dynamical System\label{LRDS}}

In this section we present the construction of the auxiliary random processes
needed to build up a representation of the random evolution given in
(\ref{u_t}) in the framework of RDSs. We refer the reader to \cite{Ar} Section
1.1.1 for an account on the definition of a RDS in a more general setup.

\subsection{Random maps}

\begin{enumerate}
\item
\begin{equation}
I\times\Omega\ni\left(  u,\omega\right)  \longmapsto\mathbf{T}\left(
u,\omega\right)  :=\left(  \bar{T}_{\pi\left(  \omega\right)  }\left(
u\right)  ,\theta\omega\right)  \in I\times\Omega\ , \label{defTT}%
\end{equation}
with $\mathbf{T}^{0}$ the identity operator on $I\times\Omega,$ defines a
measurable random dynamical system on $\left(  I,\mathcal{B}\left(  I\right)
\right)  $ over the metric dynamical system $\left(  \Omega,\mathcal{F}%
,\mathbb{P},\theta\right)  ;$

\item setting $\mathcal{\tilde{M}}:=\mathcal{M}\backslash\Gamma_{0},$%
\begin{equation}
\mathcal{\tilde{M}}\times\Omega\ni\left(  x,\omega\right)  \longmapsto
\overline{\mathbf{R}}\left(  x,\omega\right)  \in\left(  \bar{R}_{\pi\left(
\omega\right)  }\left(  x\right)  ,\theta\omega\right)  \in\mathcal{M}%
\times\Omega\ , \label{defRR}%
\end{equation}
with $\overline{\mathbf{R}}^{0}$ the identity operator on $\mathcal{M}%
\times\Omega,$ define two measurable random dynamical systems on $\left(
\mathcal{M},\mathcal{B}\left(  \mathcal{M}\right)  \right)  $ over the metric
dynamical system $\left(  \Omega,\mathcal{F},\mathbb{P},\theta\right)  .$

Let
\begin{equation}
\mathcal{M}\times\Omega\ni\left(  x,\omega\right)  \longmapsto Q\left(
x,\omega\right)  :=\left(  q\left(  x\right)  ,\omega\right)  \in
I\times\Omega\ . \label{defQ}%
\end{equation}
Then, $\forall\left(  x,\omega\right)  \in\mathcal{\tilde{M}}\times\Omega,$%
\begin{align}
\left(  Q\circ\overline{\mathbf{R}}\right)  \left(  x,\omega\right)   &
=Q\left(  \bar{R}_{\pi\left(  \omega\right)  }\left(  x\right)  ,\theta
\omega\right)  =\left(  q\left(  \bar{R}_{\pi\left(  \omega\right)  }\left(
x\right)  \right)  ,\theta\omega\right) \\
&  =\left(  \bar{T}_{\pi\left(  \omega\right)  }\left(  q\left(  x\right)
\right)  ,\theta\omega\right)  =\left(  \mathbf{T}\circ Q\right)  \left(
x,\omega\right) \nonumber
\end{align}
that is
\begin{equation}
Q\circ\overline{\mathbf{R}}=\mathbf{T}\circ Q\ . \label{QR=TQ}%
\end{equation}

Defining the map%
\begin{equation}
\mathcal{M}\times\Omega\ni\left(  x,\omega\right)  \longmapsto\mathbf{K}%
\left(  x,\omega\right)  :=\left(  \kappa_{\pi\left(  \omega\right)  }\left(
x\right)  ,\omega\right)  \in\mathcal{M}\times\Omega\ , \label{defK}%
\end{equation}
for any $\left(  x,\omega\right)  \in\widetilde{\mathcal{M}\times\Omega
}:=\left(  \mathcal{M}\times\Omega\right)  \backslash\left\{  \left(
x,\omega\right)  \in\mathcal{M}\times\Omega:x\in\Gamma_{\pi\left(
\omega\right)  }\right\}  ,$ we define $\mathbf{R}:\widetilde{\mathcal{M}%
\times\Omega}\longrightarrow\mathcal{M}\times\Omega$ such that
\begin{equation}
\overline{\mathbf{R}}\circ\mathbf{K}\left(  x,\omega\right)  =\mathbf{K}%
\left(  x,\omega\right)  \circ\mathbf{R}\ , \label{RK=KR}%
\end{equation}
that is
\begin{equation}
\widetilde{\mathcal{M}\times\Omega}\ni\left(  x,\omega\right)  \longmapsto
\left(  \bar{R}_{\pi\left(  \omega\right)  }\left(  x\right)  \circ\kappa
_{\pi\left(  \omega\right)  },\theta\omega\right)  =\left(  \kappa_{\pi\left(
\omega\right)  }\circ R_{\pi\left(  \omega\right)  }\left(  x\right)
,\theta\omega\right)  \in\mathcal{M}\times\Omega\ .
\end{equation}

\end{enumerate}

\subsection{The random suspension semi-flow\label{RSSF}}

Let
\begin{equation}
\mathcal{M}\times\Omega\ni\left(  x,\omega\right)  \longmapsto\mathbf{t}%
\left(  x,\omega\right)  :=\tau_{\pi\left(  \omega\right)  }\left(  x\right)
\in\overline{\mathbb{R}^{+}}\ . \label{def_tSS}%
\end{equation}

Then, $\forall n\geq1,$ we define
\begin{equation}
\mathcal{M}\times\Omega\ni\left(  x,\omega\right)  \longmapsto\mathbf{s}%
_{n}\left(  x,\omega\right)  :=\sum_{k=0}^{n-1}\mathbf{t}\left(
\mathbf{R}^{k}\left(  x,\omega\right)  \right)  \in\overline{\mathbb{R}^{+}%
}\ ,\;n\geq1\ , \label{sSS}%
\end{equation}
and denote, $\forall t>0,$%
\begin{equation}
\mathcal{M}\times\Omega\ni\left(  x,\omega\right)  \longmapsto N_{t}\left(
x,\omega\right)  :=\max\left\{  n\in\mathbb{Z}^{+}:\mathbf{s}_{n}\left(
x,\omega\right)  \leq t\right\}  \in\mathbb{Z}^{+}\mathbb{\ }.
\end{equation}
We now proceed as in the definition of standard suspension flow given in
(\ref{ssf_omega}). We define
\begin{equation}
\left(  \mathcal{M}\times\Omega\right)  _{\mathbf{t}}:=\left\{  \left(
x,\omega,s\right)  \in\widetilde{\mathcal{M}\times\Omega}\times\mathbb{R}%
^{+}:s\in\lbrack0,\mathbf{t}\left(  x,\omega\right)  )\right\}
\label{MOmegat}%
\end{equation}
and consequently the semiflow $\left(  \mathbf{S}^{t},t\geq0\right)  ,$ which
we will call \emph{random suspension semi-flow,} where
\begin{equation}
\left(  \mathcal{M}\times\Omega\right)  _{\mathbf{t}}\ni\left(  x,\omega
,s\right)  \longmapsto\mathbf{S}^{t}\left(  x,\omega,s\right)  :=\left(
\mathbf{R}^{N_{s+t}\left(  x,\omega\right)  }\left(  x,\omega\right)
,s+t-\mathbf{s}_{N_{s+t}\left(  x,\omega\right)  }\left(  x,\omega\right)
\right)  \in\left(  \mathcal{M}\times\Omega\right)  _{\mathbf{t}}\ .
\label{defSS}%
\end{equation}

Let $\sim$ be the equivalence relation on $\mathcal{M}\times\Omega
\times\mathbb{R}^{+}$ such that any two points $\left(  x,\omega,s\right)
,\left(  y,\omega^{\prime},t\right)  $ in $\mathcal{M}\times\Omega
\times\mathbb{R}^{+}$ belong to the same equivalence class if there exist
$\left(  x_{0},\omega_{0},s_{0}\right)  \in\left(  \mathcal{M}\times
\Omega\right)  _{\mathbf{t}}$ and $t^{\prime},t^{\prime\prime}>0$ such that
$\mathbf{S}^{t^{\prime}}\left(  x_{0},\omega_{0},s_{0}\right)  =\left(
x,\omega,s\right)  ,\mathbf{S}^{t^{\prime\prime}}\left(  x_{0},\omega
_{0},s_{0}\right)  =\left(  y,\omega^{\prime},t\right)  $ and $N_{t^{\prime
\prime}\vee t^{\prime}+s_{0}}\left(  x_{0},\omega_{0}\right)  -N_{t^{\prime
\prime}\wedge t^{\prime}+s_{0}}\left(  x_{0},\omega_{0}\right)  \in
\mathbb{N}.$ We denote by $\mathfrak{V}:=\mathcal{M}\times\Omega
\times\mathbb{R}^{+}/\sim$ the corresponding quotient space and by $\hat{\pi
}:\mathcal{M}\times\Omega\times\mathbb{R}^{+}\longrightarrow\mathfrak{V}$ the
canonical projection which induces a topology and consequently a Borel
$\sigma$algebra on $\mathfrak{V}.$ Therefore,
\begin{equation}
\mathcal{M}\times\Omega\times\mathbb{R}^{+}\ni\left(  x,\omega,s\right)
\longmapsto\mathbf{S}^{t}\circ\hat{\pi}\left(  x,\omega,s\right)  =\hat{\pi
}\left(  x,\omega,s+t\right)  \in\mathfrak{V}\ ,\;t>0\ . \label{defSS1}%
\end{equation}

Let us define $\mathbf{\bar{t}}:\widetilde{\mathcal{M}\times\Omega
}\longrightarrow\mathbb{R}^{+}$ such that
\begin{equation}
\mathbf{\bar{t}}\circ\mathbf{K}=\mathbf{t} \label{tK=t}%
\end{equation}
and consequently
\begin{equation}
\left(  \mathcal{M}\times\Omega\right)  _{\mathbf{\bar{t}}}:=\left\{  \left(
x,\omega,s\right)  \in\mathcal{M}\times\Omega\times\mathbb{R}^{+}:s\in
\lbrack0,\mathbf{\bar{t}}\left(  x,\omega\right)  )\right\}  \ .
\end{equation}
Setting $\mathbf{\bar{s}}_{n},n\in\mathbb{N}$ and $\overline{N}$ such that
\begin{equation}
\mathbf{\bar{s}}_{n}\circ\mathbf{K=s}_{n}\;;\;\overline{N}\circ\mathbf{K}=N
\end{equation}
and
\begin{equation}
\left(  \mathcal{M}\times\Omega\right)  _{\mathbf{t}}\ni\left(  x,\omega
,s\right)  \longmapsto\overline{\mathbf{S}}^{t}\left(  x,\omega,s\right)
:=\left(  \overline{\mathbf{R}}^{\overline{N}_{s+t}\left(  x,\omega\right)
}\left(  x,\omega\right)  ,s+t-\mathbf{\bar{s}}_{\overline{N}_{s+t}\left(
x,\omega\right)  }\left(  x,\omega\right)  \right)  \in\left(  \mathcal{M}%
\times\Omega\right)  _{\mathbf{\bar{t}}}\ , \label{defSSbar}%
\end{equation}
we can lift the map defined in (\ref{defK}), as we did to get (\ref{kappabar}%
), to the map
\begin{equation}
\left(  \mathcal{M}\times\Omega\right)  _{\mathbf{t}}\ni\left(  x,\omega
,s\right)  \longmapsto\overline{\mathbf{K}}\left(  x,\omega,s\right)
:=\left(  \mathbf{K}\left(  x,\omega\right)  ,s\right)  \in\left(
\mathcal{M}\times\Omega\right)  _{\mathbf{\bar{t}}} \label{Kbar}%
\end{equation}
so that
\begin{equation}
\overline{\mathbf{K}}\circ\mathbf{S}^{t}=\overline{\mathbf{S}}^{t}%
\circ\overline{\mathbf{K}}\ . \label{SK=KS}%
\end{equation}
Let $\approx$ be the equivalence relation on $\mathcal{M}\times\Omega
\times\mathbb{R}^{+}$ such that any two points $\left(  x,\omega,s\right)
,\left(  y,\omega^{\prime},t\right)  $ in $\mathcal{M}\times\Omega
\times\mathbb{R}^{+}$ belong to the same equivalence class if there exist
$\left(  x_{0},\omega_{0},s_{0}\right)  \in\left(  \mathcal{M}\times
\Omega\right)  _{\mathbf{\bar{t}}}$ and $t^{\prime},t^{\prime\prime}>0$ such
that $\overline{\mathbf{S}}^{t^{\prime}}\left(  x_{0},\omega_{0},s_{0}\right)
=\left(  x,\omega,s\right)  ,\overline{\mathbf{S}}^{t^{\prime\prime}}\left(
x_{0},\omega_{0},s_{0}\right)  =\left(  y,\omega^{\prime},t\right)  $ and
$\overline{N}_{t^{\prime\prime}\vee t^{\prime}+s_{0}}\left(  x_{0},\omega
_{0}\right)  -\overline{N}_{t^{\prime\prime}\wedge t^{\prime}+s_{0}}\left(
x_{0},\omega_{0}\right)  \in\mathbb{N}.$ We denote by $\overline{\mathfrak{V}%
}:=\mathcal{M}\times\Omega\times\mathbb{R}^{+}/\approx$ the corresponding
quotient space and by $\check{\pi}:\mathcal{M}\times\Omega\times\mathbb{R}%
^{+}\longrightarrow\overline{\mathfrak{V}}$ the canonical projection such
that
\begin{equation}
\mathcal{M}\times\Omega\times\mathbb{R}^{+}\ni\left(  x,\omega,s\right)
\longmapsto\overline{\mathbf{S}}^{t}\circ\check{\pi}\left(  x,\omega,s\right)
=\check{\pi}\left(  x,\omega,s+t\right)  \in\overline{\mathfrak{V}}\ ,\;t>0\ ,
\label{defSS2}%
\end{equation}
by (\ref{Kbar}) we can define a map $\mathbf{\tilde{K}}:\mathfrak{V}%
\longrightarrow\overline{\mathfrak{V}}$ such that
\begin{equation}
\mathbf{\tilde{K}}\circ\hat{\pi}=\check{\pi}\circ\mathbf{\tilde{K}\ .}
\label{Kpi=piK}%
\end{equation}

\section{The invariant measures}

\subsection{The invariant measure for the RDS's $\overline{\mathbf{R}}$ and
$\mathbf{R}$ on $\left(  \mathcal{M},\mathcal{B}\left(  \mathcal{M}\right)
\right)  $}

Let us assume $\mu_{\mathbf{T}}\in\mathfrak{I}_{\mathbb{P}}\left(
\mathbf{T}\right)  $ to be the invariant measure for $\mathbf{T.}$

The results in \cite{AP} Section 7.3.4.1 applies almost verbatim to
$\mathbf{T}$ and $\overline{\mathbf{R}}$ (see in particular Lemma 7.21 and
Corollary 7.22). Hence the proof of the following result is deferred to the appendix.

\begin{proposition}
\label{Prop1}Let $\mu_{\mathbf{T}}$ be the invariant measure for $\mathbf{T}.$
There exists a measure $\mu_{\overline{\mathbf{R}}}$ on \linebreak$\left(
\mathcal{M}\times\Omega,\mathcal{B}\left(  \mathcal{M}\right)  \otimes
\mathcal{F}\right)  ,$ invariant under $\overline{\mathbf{R}},$ such that,
$\forall\psi\in L_{\mathbb{P}}^{1}\left(  \Omega,C_{b}\left(  \mathcal{M}%
\right)  \right)  ,$%
\begin{equation}
\mu_{\overline{\mathbf{R}}}\left(  \psi\right)  :=\lim_{n\rightarrow\infty
}\int\mu_{\mathbf{T}}\left(  du,d\omega\right)  \inf_{x\in q^{-1}\left(
u\right)  }\psi\circ\overline{\mathbf{R}}^{n}\left(  x,\omega\right)
\label{mu_R}%
\end{equation}
and the correspondence $\mu_{\mathbf{T}}\longmapsto\mu_{\overline{\mathbf{R}}%
}$ is injective. Moreover, if $\mu_{\mathbf{T}}$ is ergodic, then
$\mu_{\overline{\mathbf{R}}}$ is also ergodic.
\end{proposition}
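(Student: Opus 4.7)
The plan is to follow [AP, Section 7.3.4.1] almost verbatim in the present skew-product setting, leveraging three structural facts already established in the paper: (i) by construction of $\kappa_\eta$ and by (\ref{ass1}), each $\bar R_\eta$ preserves the fixed unperturbed stable foliation with quotient map $q$, so iterating gives $\bar R_\omega^n(q^{-1}(u)) \subset q^{-1}(\bar T_\omega^n(u))$ for every $n\geq 1$ and $(u,\omega)\in I\times \Omega$, where $\bar R_\omega^n := \bar R_{\pi(\theta^{n-1}\omega)} \circ \cdots \circ \bar R_{\pi(\omega)}$ and similarly for $\bar T_\omega^n$; (ii) this inclusion is contracting in the sense that $\mathrm{diam}\,\bar R_\omega^n(q^{-1}(u)) \to 0$ uniformly in $(u,\omega)$, inherited from the uniform contraction of $\mathcal I_\eta$ under $R_\eta$ transported by $\kappa_\eta$; (iii) the projection $Q$ intertwines $\overline{\mathbf R}$ with $\mathbf T$, namely (\ref{QR=TQ}).

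For $\psi \in L^1_{\mathbb P}(\Omega, C_b(\mathcal M))$ and $n\geq 0$ I set
\[
\mu_n(\psi) := \int \mu_{\mathbf T}(du,d\omega)\inf_{x\in q^{-1}(u)}\psi\circ \overline{\mathbf R}^n(x,\omega),\quad
\mu_n^{\sup}(\psi) := \int \mu_{\mathbf T}(du,d\omega)\sup_{x\in q^{-1}(u)}\psi\circ \overline{\mathbf R}^n(x,\omega).
\]
Writing $\overline{\mathbf R}^{n+1}=\overline{\mathbf R}^n\circ\overline{\mathbf R}$ and using (i), the infimum of $\psi\circ\overline{\mathbf R}^n(\cdot,\theta\omega)$ over $\bar R_{\pi(\omega)}(q^{-1}(u))$ dominates the same infimum over the larger fibre $q^{-1}(\bar T_{\pi(\omega)}(u))$; integrating and changing variables via the $\mathbf T$-invariance of $\mu_{\mathbf T}$ yields $\mu_{n+1}(\psi)\geq \mu_n(\psi)$, and dually $\mu_{n+1}^{\sup}(\psi)\leq \mu_n^{\sup}(\psi)$. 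Both sequences are uniformly bounded by $\|\psi\|_\infty$, hence convergent. By (ii) and uniform continuity of $\psi(\cdot,\omega)$ on the compact cross-section $\mathcal M$, $\mu_n^{\sup}(\psi)-\mu_n(\psi)\to 0$, so the two limits coincide and the common value is $\mu_{\overline{\mathbf R}}(\psi)$. Linearity and boundedness of this functional, together with a standard monotone-class / Riesz extension, produce a Radon probability measure on $(\mathcal M\times\Omega,\mathcal B(\mathcal M)\otimes\mathcal F)$ with marginal $\mathbb P$ on $\Omega$.

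Invariance is immediate from $\mu_n(\psi\circ\overline{\mathbf R})=\mu_{n+1}(\psi)$ upon passing to the limit. For injectivity of $\mu_{\mathbf T}\mapsto\mu_{\overline{\mathbf R}}$ I compute $Q_\#\mu_{\overline{\mathbf R}}$: for any $\varphi\in L^1_{\mathbb P}(\Omega,C_b(I))$ the function $\varphi\circ Q$ is constant on each leaf $q^{-1}(u)\times\{\omega\}$ and, by (\ref{QR=TQ}), $(\varphi\circ Q)\circ\overline{\mathbf R}^n = (\varphi\circ\mathbf T^n)\circ Q$; the infimum defining $\mu_n(\varphi\circ Q)$ collapses and the $\mathbf T$-invariance of $\mu_{\mathbf T}$ gives $\mu_n(\varphi\circ Q)=\mu_{\mathbf T}(\varphi)$, so in the limit $Q_\#\mu_{\overline{\mathbf R}}=\mu_{\mathbf T}$; this exhibits $\mu_{\mathbf T}$ as a left inverse of the correspondence, which is therefore injective.

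For ergodicity, if $\mu_{\mathbf T}$ is ergodic and $A$ is $\overline{\mathbf R}$-invariant, approximating $\mathbf 1_A$ in $L^1(\mu_{\overline{\mathbf R}})$ by continuous functions and invoking (ii) shows that $\mathbf 1_A$ agrees $\mu_{\overline{\mathbf R}}$-a.s.\ with a function constant along stable leaves, hence with $\mathbf 1_B\circ Q$ for some $B\in\mathcal B(I)\otimes\mathcal F$; invariance of $A$ together with (\ref{QR=TQ}) forces $B$ to be $\mathbf T$-invariant, and ergodicity of $\mu_{\mathbf T}$ gives $\mu_{\overline{\mathbf R}}(A)=\mu_{\mathbf T}(B)\in\{0,1\}$. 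The main obstacle I expect is the rigorous exchange of limit and integral, resting on the measurability of $(u,\omega)\mapsto\inf_{x\in q^{-1}(u)}\psi\circ\overline{\mathbf R}^n(x,\omega)$, handled by compactness of $q^{-1}(u)$ and a measurable selection, and on a quantitative, uniform-in-$\eta$ version of the leaf contraction (ii), which is precisely where the $C^{1+\epsilon}$ regularity of the foliations $\mathcal I_\eta$ and the $C^1$-smallness of $\phi_\eta-\phi_0$ must be invoked, transporting the content of [AP, Corollary 7.22] to the random setting.
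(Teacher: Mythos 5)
Your proposal reconstructs essentially the same argument as the paper's proof, which itself transports [AP, Section 7.3.4.1] to the skew-product setting: the inclusion $\overline{\mathbf R}(Q^{-1}(u,\omega))\subset Q^{-1}(\mathbf T(u,\omega))$ together with $\mathbf T$-invariance of $\mu_{\mathbf T}$ gives the monotone squeeze between the inf- and sup-sequences, leaf contraction plus continuity of $\psi(\cdot,\omega)$ on compact fibres closes the gap, Riesz--Markov yields the measure, $Q_\#\mu_{\overline{\mathbf R}}=\mu_{\mathbf T}$ gives injectivity, and ergodicity follows from constancy of invariant sets along stable leaves. This is the paper's route step for step.
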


\begin{remark}
\label{Rem1}If $\mu_{\mathbf{T}}\in\mathfrak{I}_{\mathbb{P}}\left(
\mathbf{T}\right)  $ then $\mu_{\overline{\mathbf{R}}}\in\mathfrak{I}%
_{\mathbb{P}}\left(  \overline{\mathbf{R}}\right)  $ and, by \cite{Ar}
Proposition 1.4.3, the correspondence $\mu_{\mathbf{T}}\left(  \cdot
|\omega\right)  \longmapsto\mu_{\overline{\mathbf{R}}}\left(  \cdot
|\omega\right)  $ is injective.

Moreover, if $\mu_{\mathbf{T}}$ admits the disintegration $\mu_{\mathbf{T}%
}\left(  du,d\omega\right)  =\nu_{1}\left(  du\right)  \mathbb{P}\left(
d\omega\right)  ,$ by \cite{Ar} Theorem 2.1.7, $\nu_{1}$ is the stationary
measure for the Markov chain with transition operator
\begin{equation}
C_{b}\left(  I\right)  \ni\varphi\longmapsto P_{T}\varphi:=\mathbb{E}\left[
\varphi\circ\mathbf{q}\circ\mathbf{T}\right]  \in M_{b}\left(  I\right)  \ ,
\end{equation}
where
\begin{equation}
I\times\Omega\ni\left(  u,\omega\right)  \longmapsto\mathbf{q}\left(
u,\omega\right)  :=u\in I\ . \label{defqq}%
\end{equation}
Therefore, there exists a stationary measure $\mu_{\overline{\mathbf{R}}}$ for
the Markov chain with transition operator
\begin{equation}
C_{b}\left(  \mathcal{M}\right)  \ni\psi\longmapsto P_{\overline{R}}%
\psi:=\mathbb{E}\left[  \psi\circ p\circ\overline{\mathbf{R}}\right]  \in
M_{b}\left(  \mathcal{M}\right)  \ , \label{P_Rbar}%
\end{equation}
such that $\mu_{\overline{\mathbf{R}}}=\bar{\nu}_{2}\otimes\mathbb{P}.$
Indeed, by (\ref{mu_R}),
\begin{align}
\bar{\nu}_{2}\left(  \psi\right)   &  =\lim_{n\longrightarrow\infty}\int
\nu_{1}\left(  du\right)  \mathbb{E}\left[  \inf_{x\in q^{-1}\left(  u\right)
}\left[  \psi\circ p\circ\overline{\mathbf{R}}^{n}\right]  \left(
x,\cdot\right)  \right] \\
&  =\lim_{n\longrightarrow\infty}\int\nu_{1}\left(  du\right)  \inf_{x\in
q^{-1}\left(  u\right)  }\left(  P_{\overline{R}}^{n}\psi\right)  \left(
x\right) \nonumber
\end{align}
and, by (\ref{invmu_R})\footnote{By (\ref{P_Rbar}),
\begin{align*}
\left(  P_{\overline{R}}^{2}\psi\right)  \left(  x\right)   &  =\mathbb{E}%
\left[  \left(  P_{\overline{R}}\psi\right)  \circ p\circ\overline{\mathbf{R}%
}\right]  \left(  x\right)  =\mathbb{E}\left[  \mathbb{E}\left[  \left(
\psi\circ p\circ\overline{\mathbf{R}}\right)  \circ p\circ\overline
{\mathbf{R}}\right]  \right] \\
&  =\int d\mathbb{P}\left(  \omega\right)  \int d\mathbb{P}\left(
\omega^{\prime}\right)  \left(  \psi\circ p\right)  \left(  \bar{R}%
_{\pi\left(  \omega^{\prime}\right)  }\circ\bar{R}_{\pi\left(  \omega\right)
}x,\theta\omega^{\prime}\right) \\
&  =\int d\mathbb{P}\left(  \theta\omega\right)  \left(  \psi\circ p\right)
\left(  \bar{R}_{\pi\left(  \theta\omega\right)  }\circ\bar{R}_{\pi\left(
\omega\right)  }x,\theta^{2}\omega\right) \\
&  =\mathbb{E}\left[  \psi\circ p\circ\overline{\mathbf{R}}^{2}\right]  .
\end{align*}
},
\begin{align}
\bar{\nu}_{2}\left(  P_{\overline{R}}\psi\right)   &  =\lim_{n\longrightarrow
\infty}\int\nu_{1}\left(  du\right)  \inf_{x\in q^{-1}\left(  u\right)
}\left(  P_{\overline{R}}^{n+1}\psi\right)  \left(  x\right) \\
&  =\lim_{n\longrightarrow\infty}\int\nu_{1}\left(  du\right)  \mathbb{E}%
\left[  \inf_{x\in q^{-1}\left(  u\right)  }\left[  \psi\circ p\circ
\overline{\mathbf{R}}^{n+1}\right]  \left(  x,\cdot\right)  \right]  =\bar
{\nu}_{2}\left(  \psi\right)  \ .\nonumber
\end{align}
Moreover, for any $\varphi\in C_{b}\left(  I\right)  ,\varphi\circ q\in
C_{b}\left(  \mathcal{M}\right)  ;$ thus, by (\ref{QR=TQ}),
\begin{align}
\bar{\nu}_{2}\left(  \varphi\circ q\right)   &  =\lim_{n\longrightarrow\infty
}\int\nu_{1}\left(  du\right)  \mathbb{E}\left[  \inf_{x\in q^{-1}\left(
u\right)  }\left[  \varphi\circ q\circ p\circ\overline{\mathbf{R}}^{n}\right]
\left(  x,\cdot\right)  \right] \\
&  =\lim_{n\longrightarrow\infty}\int\nu_{1}\left(  du\right)  \mathbb{E}%
\left[  \inf_{x\in q^{-1}\left(  u\right)  }\left[  \varphi\circ
\mathbf{q}\circ Q\circ\overline{\mathbf{R}}^{n}\right]  \left(  x,\cdot
\right)  \right] \nonumber\\
&  =\lim_{n\longrightarrow\infty}\int\nu_{1}\left(  du\right)  \mathbb{E}%
\left[  \inf_{x\in q^{-1}\left(  u\right)  }\left[  \varphi\circ
\mathbf{q}\circ\mathbf{T}^{n}\circ Q\right]  \left(  x,\cdot\right)  \right]
\nonumber\\
&  =\lim_{n\longrightarrow\infty}\int\nu_{1}\left(  du\right)  \mathbb{E}%
\left[  \left[  \varphi\circ\mathbf{q}\circ\mathbf{T}^{n}\right]  \left(
u,\cdot\right)  \right] \nonumber\\
&  =\lim_{n\longrightarrow\infty}\int\nu_{1}\left(  du\right)  P_{T}%
^{n}\varphi\left(  u\right)  =\nu_{1}\left[  \varphi\right]  \ .\nonumber
\end{align}
Since $\mathcal{B}_{I}:=q^{-1}\left(  \mathcal{B}\left(  I\right)  \right)  $
is a sub-$\sigma$algebra of $\mathcal{B}\left(  \mathcal{M}\right)  $ and
since $\bar{\nu}_{2}\left(  \varphi|\mathcal{B}_{I}\right)  $ is constant on
the leaves of the invariant foliation, we get $\bar{\nu}_{2}\left(
\varphi\right)  =\bar{\nu}_{2}\left(  \bar{\nu}_{2}\left(  \varphi
|\mathcal{B}_{I}\right)  \right)  =\nu_{1}\left[  \bar{\nu}_{2}\left(
\varphi|\mathcal{B}_{I}\right)  \right]  .$ Hence, since by definition
$\forall u\in I,\omega\in\Omega,$%
\begin{equation}
\lim_{n\rightarrow\infty}\operatorname*{diam}p\left(  \overline{\mathbf{R}%
}^{n}\left(  Q^{-1}\left(  u,\omega\right)  \right)  \right)  =0\ ,
\end{equation}
$\bar{\nu}_{2}$ is singular w.r.t. the Lebesgue measure on $\left(
\mathcal{M},\mathcal{B}\left(  \mathcal{M}\right)  \right)  ,$ while the
marginal of $\bar{\nu}_{2}$ on $\left(  I,\mathcal{B}\left(  I\right)
\right)  $ coincides with $\nu_{1}.$
\end{remark}

\begin{corollary}
If $\mu_{\overline{\mathbf{R}}}\in\mathfrak{I}_{\mathbb{P}}\left(
\overline{\mathbf{R}}\right)  $ then $\mu_{\mathbf{R}}:=\mathbf{K}_{\#}%
^{-1}\mu_{\overline{\mathbf{R}}}=\mu_{\overline{\mathbf{R}}}\circ\mathbf{K}%
\in\mathfrak{I}_{\mathbb{P}}\left(  \mathbf{R}\right)  ,$ with, by
(\ref{RK=KR}),
\begin{equation}
\mathcal{M}\times\Omega\ni\left(  x,\omega\right)  \longmapsto\mathbf{K}%
^{-1}\left(  x,\omega\right)  :=\left(  \kappa_{\pi\left(  \omega\right)
}^{-1}\left(  x\right)  ,\omega\right)  \in\mathcal{M}\times\Omega\ .
\end{equation}

\end{corollary}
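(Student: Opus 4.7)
The plan is to verify directly the two defining conditions of $\mathfrak{I}_{\mathbb{P}}(\mathbf{R})$ for the candidate measure $\mu_{\mathbf{R}} := (\mathbf{K}^{-1})_{\#}\mu_{\overline{\mathbf{R}}}$, namely that (i) $\mu_{\mathbf{R}}$ has marginal $\mathbb{P}$ on $(\Omega,\mathcal{F})$, so that $\mu_{\mathbf{R}} \in \mathfrak{P}_{\mathbb{P}}(\mathcal{M}\times\Omega)$, and (ii) $\mathbf{R}_{\#}\mu_{\mathbf{R}} = \mu_{\mathbf{R}}$. The whole argument is a direct manipulation of push-forwards based on the conjugacy relation (\ref{RK=KR}).

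For (i), observe from the definition (\ref{defK}) that $\mathbf{K}(x,\omega) = (\kappa_{\pi(\omega)}(x),\omega)$ acts as the identity on the $\Omega$-factor; the same is true of its inverse. Consequently, if $p_{\Omega}:\mathcal{M}\times\Omega \to \Omega$ denotes the projection onto the second coordinate, $p_{\Omega} \circ \mathbf{K}^{-1} = p_{\Omega}$, so $(p_{\Omega})_{\#}\mu_{\mathbf{R}} = (p_{\Omega})_{\#}(\mathbf{K}^{-1})_{\#}\mu_{\overline{\mathbf{R}}} = (p_{\Omega})_{\#}\mu_{\overline{\mathbf{R}}} = \mathbb{P}$, the last equality by hypothesis $\mu_{\overline{\mathbf{R}}} \in \mathfrak{I}_{\mathbb{P}}(\overline{\mathbf{R}}) \subseteq \mathfrak{P}_{\mathbb{P}}(\mathcal{M}\times\Omega)$.

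For (ii), I would rewrite (\ref{RK=KR}) as the identity $\mathbf{R}\circ \mathbf{K}^{-1} = \mathbf{K}^{-1}\circ\overline{\mathbf{R}}$, valid on $\mathbf{K}(\widetilde{\mathcal{M}\times\Omega})$. Using functoriality of the push-forward $(f\circ g)_{\#} = f_{\#}g_{\#}$ and the invariance of $\mu_{\overline{\mathbf{R}}}$ under $\overline{\mathbf{R}}$, one computes
\begin{equation}
\mathbf{R}_{\#}\mu_{\mathbf{R}}
= \mathbf{R}_{\#}(\mathbf{K}^{-1})_{\#}\mu_{\overline{\mathbf{R}}}
= (\mathbf{R}\circ\mathbf{K}^{-1})_{\#}\mu_{\overline{\mathbf{R}}}
= (\mathbf{K}^{-1}\circ\overline{\mathbf{R}})_{\#}\mu_{\overline{\mathbf{R}}}
= (\mathbf{K}^{-1})_{\#}\overline{\mathbf{R}}_{\#}\mu_{\overline{\mathbf{R}}}
= (\mathbf{K}^{-1})_{\#}\mu_{\overline{\mathbf{R}}}
= \mu_{\mathbf{R}}\ .
\end{equation}

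The only genuine subtlety, and really the only place care is needed, concerns the domain of $\mathbf{R}$: by construction $\mathbf{R}$ is defined only on $\widetilde{\mathcal{M}\times\Omega} = (\mathcal{M}\times\Omega) \setminus \{(x,\omega) : x \in \Gamma_{\pi(\omega)}\}$. To make the pushforward computation meaningful one has to check that the complementary set carries no $\mu_{\mathbf{R}}$-mass. This is immediate since, for each $\omega$, $\Gamma_{\pi(\omega)}$ is a single stable leaf, hence Lebesgue-negligible in $\mathcal{M}$, and the conditional measures of $\mu_{\overline{\mathbf{R}}}(\cdot|\omega)$ inherited from $\nu_1 \otimes \mathbb{P}$ through Proposition \ref{Prop1} and Remark \ref{Rem1} are singular along the transverse direction but absolutely continuous with respect to $\nu_{1}$ on the quotient interval $I$, so their $\mathbf{K}^{-1}$-transport also assigns zero mass to such leaves. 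With this negligibility in hand, the chain of equalities above is rigorous and no further obstacle remains.
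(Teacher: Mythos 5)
Your proof is correct and is the paper's argument recast in push-forward language: both use the conjugacy (\ref{RK=KR}) to transfer the invariance of $\mu_{\overline{\mathbf{R}}}$ to $\mu_{\mathbf{R}}$, the paper by chasing preimages of Borel sets and you via functoriality of $(\cdot)_{\#}$. The two additional checks you supply, namely that the $\Omega$-marginal is $\mathbb{P}$ (immediate since $\mathbf{K}^{-1}$ fixes the $\Omega$-coordinate) and that the singular set $\{(x,\omega):x\in\Gamma_{\pi(\omega)}\}$ is $\mu_{\mathbf{R}}$-null so $\mathbf{R}_{\#}$ is well-defined, are both sound and are left implicit in the paper.
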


\begin{proof}
By (\ref{RK=KR}), for any $A\in\mathcal{B}\left(  \mathcal{M}\right)
\otimes\mathcal{F}$ we get
\begin{align}
\mu_{\mathbf{R}}\left(  \mathbf{R}^{-1}\left(  A\right)  \right)   &
=\mu_{\overline{\mathbf{R}}}\circ\mathbf{K}\left(  \mathbf{R}^{-1}\left(
A\right)  \right)  =\mu_{\overline{\mathbf{R}}}\circ\mathbf{K}\left(  \left(
\mathbf{R}^{-1}\circ\mathbf{K}^{-1}\right)  \left(  \mathbf{K}\left(
A\right)  \right)  \right) \\
&  =\mu_{\overline{\mathbf{R}}}\circ\mathbf{K}\left(  \left(  \mathbf{K}%
^{-1}\circ\overline{\mathbf{R}}^{-1}\right)  \left(  \mathbf{K}\left(
A\right)  \right)  \right)  =\mu_{\overline{\mathbf{R}}}\left(  \overline
{\mathbf{R}}^{-1}\left(  \mathbf{K}\left(  A\right)  \right)  \right)
=\mu_{\overline{\mathbf{R}}}\circ\mathbf{K}\left(  A\right)  \ .\nonumber
\end{align}

\end{proof}

\subsection{The invariant measure for the random semi-flow $\left(
\mathbf{S}^{t},t\geq0\right)  $}

Lemmata 7.28 and 7.29 as well as Corollary 7.30 in Section 7.3.6 of \cite{AP}
applies verbatim to the semi-flow (\ref{defSSbar}). We summarize these
statements in the following Lemma.

\begin{lemma}
Assume that the return time $\mathbf{\bar{t}}$ in (\ref{def_tSS}) is bounded
away from zero and integrable w.r.t. $\mu_{\overline{\mathbf{R}}}.$ Then the
measure on $\left(  \overline{\mathfrak{V}},\mathcal{B}\left(  \overline
{\mathfrak{V}}\right)  \right)  $ such that, for any bounded measurable
function $f:\overline{\mathfrak{V}}\longrightarrow\mathbb{R},$%
\begin{equation}
\mu_{\overline{\mathbf{S}}}\left(  f\right)  :=\frac{1}{\mu_{\overline
{\mathbf{R}}}\left(  \mathbf{\bar{t}}\right)  }\int\mu_{\overline{\mathbf{R}}%
}\left(  dx,d\omega\right)  \int_{0}^{\mathbf{\bar{t}}\left(  x,\omega\right)
}dtf\circ\check{\pi}\left(  x,\omega,t\right)  \label{mu_Sbar}%
\end{equation}
is invariant under the semi-flow defined by (\ref{defSS2}) on $\overline
{\mathfrak{V}}.$

Moreover, the correspondence $\mu_{\overline{\mathbf{R}}}\longmapsto
\mu_{\overline{\mathbf{S}}}$ (and so $\mu_{\mathbf{T}}\longmapsto
\mu_{\overline{\mathbf{R}}}\longmapsto\mu_{\overline{\mathbf{S}}}$) is injective.

Furthermore, if $\mu_{\overline{\mathbf{R}}}$ is invariant under
$\overline{\mathbf{R}},$ then
\begin{equation}
\lim_{T\longrightarrow\infty}\frac{1}{T}\int_{0}^{T}dtf\circ\check{\pi}\left(
x,\omega,t\right)  =\mu_{\overline{\mathbf{S}}}\left(  f\right)  \ .
\end{equation}
As a byproduct, if $\mu_{\overline{\mathbf{R}}}$ is ergodic $\mu
_{\overline{\mathbf{S}}}$ is also ergodic.
\end{lemma}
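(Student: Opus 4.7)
The three assertions are the standard package of facts about measure-preserving suspension semi-flows---invariance, injectivity of the correspondence base$\leftrightarrow$suspension, and transfer of Birkhoff means together with ergodicity---and the content of the Lemma is the transport of Lemmata~7.28--7.29 and Corollary~7.30 of \cite{AP} from the deterministic to the random setting. Since the random suspension $(\overline{\mathfrak{V}},\overline{\mathbf{S}}^{t},\mu_{\overline{\mathbf{S}}})$ is itself a classical suspension over the base $(\mathcal{M}\times\Omega,\overline{\mathbf{R}},\mu_{\overline{\mathbf{R}}})$ with roof $\mathbf{\bar{t}}$, the original proofs should go through with only notational changes, provided the measurability of the objects built in Subsection~\ref{RSSF} is handled. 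The two standing hypotheses $\mathbf{\bar{t}}\geq c>0$ and $\mathbf{\bar{t}}\in L^{1}(\mu_{\overline{\mathbf{R}}})$ supply exactly the finite normalisation $\mu_{\overline{\mathbf{R}}}(\mathbf{\bar{t}})<\infty$ featured in (\ref{mu_Sbar}) and the a.s.\ sublinear growth of $\overline{N}_{T}$ used later.

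\textbf{Invariance and injectivity.} For invariance I would fix $f\in M_{b}(\overline{\mathfrak{V}})$ and $s>0$, insert (\ref{defSS2}) into (\ref{mu_Sbar}), perform the change of variable $u:=s+t$, and partition the range $[s,s+\mathbf{\bar{t}}(x,\omega))$ along the jump times $\mathbf{\bar{s}}_{k}(x,\omega)$. On the $k$-th piece one has $\check{\pi}(x,\omega,u)=\check{\pi}(\overline{\mathbf{R}}^{k}(x,\omega),u-\mathbf{\bar{s}}_{k}(x,\omega))$, and summing after applying the $\overline{\mathbf{R}}$-invariance of $\mu_{\overline{\mathbf{R}}}$ term by term produces a telescoping that reassembles $\mu_{\overline{\mathbf{S}}}(f)$. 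For injectivity, I would test $\mu_{\overline{\mathbf{S}}}$ against $f_{\psi,\varepsilon}:=(\psi\circ p)\cdot\mathbf{1}_{\{s<\varepsilon\wedge\mathbf{\bar{t}}\}}$ (pulled back via $\check{\pi}$) with $\psi\in C_{b}(\mathcal{M}\times\Omega)$ and $\varepsilon>0$: dividing (\ref{mu_Sbar}) by $\varepsilon$, the lower bound $\mathbf{\bar{t}}\geq c$ together with dominated convergence give $\varepsilon^{-1}\mu_{\overline{\mathbf{S}}}(f_{\psi,\varepsilon})\to\mu_{\overline{\mathbf{R}}}(\psi)/\mu_{\overline{\mathbf{R}}}(\mathbf{\bar{t}})$ as $\varepsilon\downarrow 0$, so $\mu_{\overline{\mathbf{S}}}$ determines $\mu_{\overline{\mathbf{R}}}$; composing with Proposition~\ref{Prop1} yields the full chain $\mu_{\mathbf{T}}\longmapsto\mu_{\overline{\mathbf{R}}}\longmapsto\mu_{\overline{\mathbf{S}}}$.

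\textbf{Birkhoff averages and ergodicity transfer.} For the remaining claims I would use the renewal identity
\[
\int_{0}^{T}f\circ\check{\pi}(x,\omega,t)\,dt=\sum_{k=0}^{\overline{N}_{T}-1}F\bigl(\overline{\mathbf{R}}^{k}(x,\omega)\bigr)+\varepsilon_{T}(x,\omega)\ ,
\]
with $F(x,\omega):=\int_{0}^{\mathbf{\bar{t}}(x,\omega)}f\circ\check{\pi}(x,\omega,u)\,du$ and $|\varepsilon_{T}|\leq\|f\|_{\infty}\mathbf{\bar{t}}(\overline{\mathbf{R}}^{\overline{N}_{T}}(x,\omega))$. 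Applying Birkhoff's ergodic theorem to $(\mathcal{M}\times\Omega,\overline{\mathbf{R}},\mu_{\overline{\mathbf{R}}})$ for both observables $F$ and $\mathbf{\bar{t}}$, combined with the a.s.\ limit $\overline{N}_{T}/T\to 1/\mu_{\overline{\mathbf{R}}}(\mathbf{\bar{t}})$ (a direct consequence of the same theorem applied to $\mathbf{\bar{t}}$ and of $\mathbf{\bar{t}}\geq c$), yields the announced time-average formula. Finally, any bounded $\overline{\mathbf{S}}^{t}$-invariant function $g$ on $\overline{\mathfrak{V}}$ lifts to the $\overline{\mathbf{R}}$-invariant function $\tilde{g}(x,\omega):=g\circ\check{\pi}(x,\omega,0)$, which by ergodicity of $\mu_{\overline{\mathbf{R}}}$ is a.s.\ constant, and reconstructing $g$ via (\ref{defSS2}) then gives $\mu_{\overline{\mathbf{S}}}$-a.s.\ constancy of $g$. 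The only genuinely delicate points---control of the boundary term $\varepsilon_{T}$ and of the growth of $\overline{N}_{T}$---are precisely what the two standing hypotheses on $\mathbf{\bar{t}}$ are designed to handle, and they constitute the main (though mild) obstacle.
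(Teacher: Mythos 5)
Your proof is correct and proceeds by the same standard suspension-flow arguments that the paper invokes by pointing to Lemmata 7.28--7.29 and Corollary 7.30 of Ara\'ujo--Pacifico: partitioning along jump times for invariance, a renewal identity plus Birkhoff for the time-average claim, and lifting invariant functions to the base for ergodicity. The only small departure is in the injectivity step, where you test against thin bands $\psi\,\mathbf{1}_{\{s<\varepsilon\wedge\mathbf{\bar t}\}}$ and divide by $\varepsilon$, whereas the paper writes down the single explicit function $f(x,\omega,t)=\mu_{\overline{\mathbf R}}(\mathbf{\bar t})\,\psi(x,\omega)\,\mathbf{\bar t}(x,\omega)^{-1}\mathbf{1}_{[0,\mathbf{\bar t}(x,\omega))}(t)$ satisfying $\mu_{\overline{\mathbf S}}(f)=\mu_{\overline{\mathbf R}}(\psi)$; since $\mathbf{\bar t}\geq c>0$ your quotient is already exactly $\mu_{\overline{\mathbf R}}(\psi)/\mu_{\overline{\mathbf R}}(\mathbf{\bar t})$ once $\varepsilon<c$, so the two devices coincide.
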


\begin{proof}
The proof of the invariance of $\mu_{\overline{\mathbf{S}}}$ under $\left(
\overline{\mathbf{S}}^{t},t\geq0\right)  $ on $\overline{\mathfrak{V}}$
follows word by word that of Lemma 7.28 in Section 7.3.6 of \cite{AP}. The
injectivity of the correspondence $\mu_{\overline{\mathbf{R}}}\longmapsto
\mu_{\overline{\mathbf{S}}}$ follows from that of the correspondence
$\psi\longmapsto f$ associating to any bounded measurable function
$\psi:\mathcal{M}\times\Omega\longrightarrow\mathbb{R}$ the bounded measurable
function
\begin{equation}
\mathfrak{V}\ni\left(  x,\omega,t\right)  \longmapsto f\left(  x,\omega
,t\right)  :=\mu_{\overline{\mathbf{R}}}\left(  \mathbf{\bar{t}}\right)
\frac{\psi\left(  x,\omega\right)  }{\mathbf{\bar{t}}\left(  x,\omega\right)
}\mathbf{1}_{[0,\mathbf{\bar{t}}\left(  x,\omega\right)  )}\left(  t\right)
\in\mathbb{R} \label{f_V}%
\end{equation}
such that $\mu_{\overline{\mathbf{S}}}\left(  f\right)  =\mu_{\overline
{\mathbf{R}}}\left(  \psi\right)  .$ The proof of the last result as well as
ergodicity of $\mu_{\overline{\mathbf{S}}}$ under the hypothesis of ergodicity
of $\mu_{\overline{\mathbf{R}}}$ are identical respectively to that of Lemma
7.28 and Corollary 7.30 in Section 7.3.6 of \cite{AP}.
\end{proof}

\begin{proposition}
Under the hypothesis of the preceding lemma, the measure on $\left(
\mathfrak{V},\mathcal{B}\left(  \mathfrak{V}\right)  \right)  $ such that, for
any bounded measurable function $f:\mathfrak{V}\longrightarrow\mathbb{R},$%
\begin{equation}
\mu_{\mathbf{S}}\left(  f\right)  :=\frac{1}{\mu_{\mathbf{R}}\left(
\mathbf{t}\right)  }\int\mu_{\mathbf{R}}\left(  dx,d\omega\right)  \int
_{0}^{\mathbf{t}\left(  x,\omega\right)  }dtf\circ\hat{\pi}\left(
x,\omega,t\right)  \label{mu_S}%
\end{equation}
is invariant under the semi-flow defined by (\ref{defSS1}) on $\mathfrak{V}.$

Moreover, the correspondence $\mu_{\mathbf{T}}\longmapsto\mu_{\mathbf{R}%
}\longmapsto\mu_{\mathbf{S}}$) is injective.

Furthermore, if $\mu_{\overline{\mathbf{R}}}$ is invariant under
$\overline{\mathbf{R}},$ then
\begin{equation}
\lim_{T\longrightarrow\infty}\frac{1}{T}\int_{0}^{T}dtf\circ\hat{\pi}\left(
x,\omega,t\right)  =\mu_{\mathbf{S}}\left(  f\right)  \ .
\end{equation}
As a byproduct, if $\mu_{\overline{\mathbf{R}}}$ is ergodic $\mu_{\mathbf{S}}$
is also ergodic.
\end{proposition}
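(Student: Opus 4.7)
The approach is to transfer the conclusions already established for $\left(\overline{\mathbf{S}}^{t},t\geq 0\right)$ to $\left(\mathbf{S}^{t},t\geq 0\right)$ by means of the map $\mathbf{\tilde{K}}:\mathfrak{V}\longrightarrow\overline{\mathfrak{V}}$ introduced in (\ref{Kpi=piK}), which is a measurable bijection at the quotient level because its lift $\overline{\mathbf{K}}$ from (\ref{Kbar}) is a diffeomorphism satisfying $\overline{\mathbf{K}}\circ\mathbf{S}^{t}=\overline{\mathbf{S}}^{t}\circ\overline{\mathbf{K}}$ from (\ref{SK=KS}) and $\mathbf{\bar{t}}\circ\mathbf{K}=\mathbf{t}$ from (\ref{tK=t}). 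The natural candidate for the invariant measure is then $\mu_{\mathbf{S}}:=\mathbf{\tilde{K}}_{\#}^{-1}\mu_{\overline{\mathbf{S}}}=\mu_{\overline{\mathbf{S}}}\circ\mathbf{\tilde{K}}$, and the first step is to check that it coincides with the explicit expression (\ref{mu_S}).

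The verification is a direct change of variables: starting from
\[
\mu_{\overline{\mathbf{S}}}(f\circ\mathbf{\tilde{K}}^{-1})=\frac{1}{\mu_{\overline{\mathbf{R}}}(\mathbf{\bar{t}})}\int\mu_{\overline{\mathbf{R}}}(dy,d\omega)\int_{0}^{\mathbf{\bar{t}}(y,\omega)}f\circ\mathbf{\tilde{K}}^{-1}\circ\check{\pi}(y,\omega,t)\,dt
\]
and substituting $(y,\omega)=\mathbf{K}(x,\omega)$ in the outer integral, the identities $\check{\pi}\circ\overline{\mathbf{K}}=\mathbf{\tilde{K}}\circ\hat{\pi}$ and $\overline{\mathbf{K}}(x,\omega,t)=(\mathbf{K}(x,\omega),t)$ reduce $f\circ\mathbf{\tilde{K}}^{-1}\circ\check{\pi}(\mathbf{K}(x,\omega),t)$ to $f\circ\hat{\pi}(x,\omega,t)$, while $\mathbf{\bar{t}}(\mathbf{K}(x,\omega))=\mathbf{t}(x,\omega)$ rescales the inner integral. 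Since $\mu_{\mathbf{R}}=\mu_{\overline{\mathbf{R}}}\circ\mathbf{K}$ from the corollary, one has $\mu_{\overline{\mathbf{R}}}(\mathbf{\bar{t}})=\mu_{\mathbf{R}}(\mathbf{t})$, and (\ref{mu_S}) is recovered. Invariance of $\mu_{\mathbf{S}}$ under $\left(\mathbf{S}^{t},t\geq 0\right)$ then follows by passing (\ref{SK=KS}) to the quotient to obtain $\mathbf{\tilde{K}}\circ\mathbf{S}^{t}=\overline{\mathbf{S}}^{t}\circ\mathbf{\tilde{K}}$ and invoking the invariance of $\mu_{\overline{\mathbf{S}}}$ under $\overline{\mathbf{S}}^{t}$, namely $\mu_{\mathbf{S}}(f\circ\mathbf{S}^{t})=\mu_{\overline{\mathbf{S}}}(f\circ\mathbf{\tilde{K}}^{-1}\circ\overline{\mathbf{S}}^{t})=\mu_{\overline{\mathbf{S}}}(f\circ\mathbf{\tilde{K}}^{-1})=\mu_{\mathbf{S}}(f)$.

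The remaining assertions are handled by the same conjugation. Injectivity of the chain $\mu_{\mathbf{T}}\longmapsto\mu_{\mathbf{R}}\longmapsto\mu_{\mathbf{S}}$ is the composition of the injectivity $\mu_{\mathbf{T}}\longmapsto\mu_{\overline{\mathbf{R}}}$ from Proposition \ref{Prop1}, the injectivity $\mu_{\overline{\mathbf{R}}}\longmapsto\mu_{\overline{\mathbf{S}}}$ from the preceding lemma, together with the bijective correspondences $\mu_{\overline{\mathbf{R}}}\leftrightarrow\mu_{\mathbf{R}}$ and $\mu_{\overline{\mathbf{S}}}\leftrightarrow\mu_{\mathbf{S}}$ induced by push-forward under $\mathbf{K}^{\pm 1}$ and $\mathbf{\tilde{K}}^{\pm 1}$. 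For the time-average statement, applying the lemma's limit to $f\circ\mathbf{\tilde{K}}^{-1}$ at the point $\mathbf{K}(x,\omega)$ and invoking $\check{\pi}(\mathbf{K}(x,\omega),t)=\mathbf{\tilde{K}}\circ\hat{\pi}(x,\omega,t)$ converts the assertion into $\lim_{T\to\infty}T^{-1}\int_{0}^{T}f\circ\hat{\pi}(x,\omega,t)\,dt=\mu_{\mathbf{S}}(f)$. Ergodicity of $\mu_{\mathbf{S}}$ is inherited from that of $\mu_{\overline{\mathbf{S}}}$ through the measurable conjugacy $\mathbf{\tilde{K}}$.

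The main bookkeeping point to be verified is that $\overline{\mathbf{K}}$ maps $\sim$-equivalence classes onto $\approx$-equivalence classes, so that $\mathbf{\tilde{K}}$ is genuinely well defined as a measurable bijection of quotients. This is ensured by the intertwining (\ref{SK=KS}), by the identity $N=\overline{N}\circ\mathbf{K}$ which records that the crossing count is preserved, and by the fibrewise structure of $\overline{\mathbf{K}}$ keeping the time coordinate unchanged; once this is in place, all the computations described above are routine.
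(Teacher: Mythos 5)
Your proposal is correct and rests on exactly the same conjugacy mechanism the paper uses: the push-forward identity relating $\mu_{\mathbf{S}}$ and $\mu_{\overline{\mathbf{S}}}$ through $\mathbf{\tilde{K}}$, together with $\mathbf{\bar{t}}\circ\mathbf{K}=\mathbf{t}$ and $\mu_{\overline{\mathbf{R}}}=\mathbf{K}_{\#}\mu_{\mathbf{R}}$. The only structural difference is minor and in your favor: you derive the invariance of $\mu_{\mathbf{S}}$ directly from the intertwining $\mathbf{\tilde{K}}\circ\mathbf{S}^{t}=\overline{\mathbf{S}}^{t}\circ\mathbf{\tilde{K}}$, whereas the paper re-runs the argument of the previous lemma for invariance and invokes the conjugacy only for ergodicity, the time-average statement, and injectivity.
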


\begin{proof}
If $\mathbf{t}\in L_{\mu_{\mathbf{R}}}^{1}$the proof of the invariance of
$\mu_{\mathbf{S}}$ under $\left(  \mathbf{S}^{t},t\geq0\right)  $ on
$\mathfrak{V}$ is identical to that given in the previous lemma. Moreover, the
proof of the ergodicity of $\mu_{\mathbf{S}}$ under the hypothesis of
ergodicity of $\mu_{\mathbf{R}}$ follows the same lines of that of the
corresponing statements involving $\mu_{\overline{\mathbf{S}}}$ and
$\mu_{\overline{\mathbf{R}}}$ in view of the previous corollary and the fact
that, by (\ref{tK=t}),
\begin{equation}
\mu_{\overline{\mathbf{R}}}\left(  \mathbf{\bar{t}}\right)  =\mathbf{K}%
_{\#}\mu_{\mathbf{R}}\left(  \mathbf{\bar{t}}\right)  =\mu_{\mathbf{R}}\left(
\mathbf{\bar{t}}\circ\mathbf{K}\right)  =\mu_{\mathbf{R}}\left(
\mathbf{t}\right)  \ ,
\end{equation}
which, by (\ref{Kpi=piK}), $\forall f:\overline{\mathfrak{V}}\longrightarrow
\mathbb{R},$ imply
\begin{align}
\mu_{\overline{\mathbf{S}}}\left(  f\right)   &  =\mathbf{\tilde{K}}_{\#}%
\mu_{\mathbf{S}}\left(  f\right)  =\mu_{\mathbf{S}}\left(  f\circ
\mathbf{\tilde{K}}\right)  =\frac{1}{\mu_{\mathbf{R}}\left(  \mathbf{t}%
\right)  }\mu_{\mathbf{R}}\otimes\lambda\left[  \mathbf{1}_{\left[
0,\mathbf{t}\right]  }f\circ\mathbf{\tilde{K}}\circ\hat{\pi}\right] \\
&  =\frac{1}{\mu_{\overline{\mathbf{R}}}\left(  \mathbf{\bar{t}}\right)  }%
\mu_{\mathbf{R}}\otimes\lambda\left[  \mathbf{1}_{\left[  0,\mathbf{\bar{t}%
}\circ\mathbf{K}\right]  }f\circ\check{\pi}\circ\mathbf{\tilde{K}}\right]
\nonumber\\
&  =\frac{1}{\mu_{\overline{\mathbf{R}}}\left(  \mathbf{\bar{t}}\right)  }%
\int\mu_{\mathbf{R}}\left(  dx,d\omega\right)  \int_{0}^{\left(
\mathbf{\bar{t}}\circ\mathbf{K}\right)  \left(  x,\omega\right)  }%
dtf\circ\check{\pi}\left(  \mathbf{K}\left(  x,\omega\right)  ,s\right)
\nonumber
\end{align}
i.e., since $\mu_{\overline{\mathbf{R}}}=\mathbf{K}_{\#}\mu_{\mathbf{R}},$ the
r.h.s. of (\ref{mu_Sbar}). Then, the injectivity of the correspondence
$\mu_{\mathbf{T}}\longmapsto\mu_{\mathbf{R}}\longmapsto\mu_{\mathbf{S}}$
readily follows.
\end{proof}

By the assumption we made on $\phi_{\eta},$ it has been proven in \cite{AMV}
Lemma 2.1 (see also \cite{HM} Proposition 2.6.) that there exists a positive
constant $C_{1}$ such that, for any $x\in\mathcal{M},$%
\begin{equation}
\bar{\tau}_{\eta}\left(  x\right)  \leq C_{1}\log\frac{1}{\left\vert q\left(
x\right)  -\hat{u}_{0}\right\vert }\ ,
\end{equation}
where $\hat{u}_{0}$ is the image under $q$ of the intersection of
$\mathcal{M}$ with the stable manifold of the hyperbolic fixed point. For
example, by what stated in Section \ref{assT}, $\hat{u}_{0}$ equal to $0$ if
$\mathcal{M}=\mathcal{M}^{\prime}$ or $\left\vert \hat{u}_{0}\right\vert
\in\left(  0,1\right)  $ if $\mathcal{M}=\mathcal{M}^{\prime\prime}.$ The
integrability of $\mathbf{\bar{t}}$ w.r.t. $\mu_{\overline{\mathbf{R}}}$ then
readily follows.

\begin{lemma}
If $\mu_{\mathbf{T}}$ is a.c. w.r.t. $\lambda\otimes\mathbb{P}_{\varepsilon}$
with density bounded $\lambda\otimes\mathbb{P}_{\varepsilon}$-a.s., then
$\mathbf{\bar{t}}$ is integrable w.r.t. $\mu_{\overline{\mathbf{R}}}.$
\end{lemma}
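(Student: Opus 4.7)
The plan is to combine the logarithmic upper bound on $\bar{\tau}_{\eta}$ recalled just above with the product-measure structure $\mu_{\overline{\mathbf{R}}}=\bar{\nu}_{2}\otimes\mathbb{P}$ established in Remark \ref{Rem1}, and then to descend to the one-dimensional quotient $I$, where the hypothesis on $\mu_{\mathbf{T}}$ supplies an $L^{\infty}$ density with respect to Lebesgue measure.

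More precisely, since $\mathbf{\bar{t}}(y,\omega)=\bar{\tau}_{\pi(\omega)}(y)$ follows from the definitions $\mathbf{\bar{t}}\circ\mathbf{K}=\mathbf{t}$ and $\bar{\tau}_{\eta}\circ\kappa_{\eta}=\tau_{\eta}$, I would apply Fubini and the stated logarithmic bound with a constant $C_{1}$ chosen uniform in $\eta\in spt\,\lambda_{\varepsilon}$ (legitimate by the $C^{1}$-closeness of $\phi_{\eta}$ to $\phi_{0}$ and by \cite{AMV}~Lemma~2.1 applied in the fixed neighborhood $\mathfrak{U}$), to obtain
\begin{equation*}
\int \mathbf{\bar{t}}\,d\mu_{\overline{\mathbf{R}}}\;\leq\; C_{1}\int\mathbb{P}(d\omega)\int\log\frac{1}{|q(y)-\hat{u}_{0}(\pi(\omega))|}\,\bar{\nu}_{2}(dy).
\end{equation*}

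Next, since the integrand depends on $y$ only through $q(y)$, I would push the inner integral down to $I$ using the fact, recorded in Remark \ref{Rem1}, that the marginal of $\bar{\nu}_{2}$ on $(I,\mathcal{B}(I))$ coincides with $\nu_{1}$. The assumption that $\mu_{\mathbf{T}}$ is absolutely continuous with bounded density $h$ w.r.t.\ $\lambda\otimes\mathbb{P}_{\varepsilon}$ then gives, after integrating out $\omega$, that $\nu_{1}$ itself admits a Lebesgue density $h_{1}$ with $\|h_{1}\|_{\infty}\leq\|h\|_{\infty}$. The inner integral is therefore bounded by
\begin{equation*}
\|h\|_{\infty}\sup_{c\in I}\int_{I}\log\frac{1}{|u-c|}\,du,
\end{equation*}
which is finite because $I$ is a bounded interval and $\log|\cdot|^{-1}$ has an integrable singularity. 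The resulting bound is independent of $\omega$, so a final integration against $\mathbb{P}$ concludes the proof.

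The only delicate point I foresee is the uniformity in $\eta$ of the constant $C_{1}$, together with the fact that the perturbed fixed-point image $\hat{u}_{0}(\eta)$ remains in $I$ (so that the supremum above is attained on a bounded set); both are consequences of having fixed a sufficiently small $C^{1}$-neighborhood $\mathfrak{U}$ of $\phi_{0}$, which makes \cite{AMV}~Lemma~2.1 applicable uniformly to the family $\{\phi_{\eta}\}_{\eta\in spt\,\lambda_{\varepsilon}}$. Everything else reduces to routine Fubini and the elementary integrability of the logarithm near its singularity.
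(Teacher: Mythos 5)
Your proof is correct, and it reaches the same final estimate as the paper, namely a bound of the form $C_{1}\|h\|_{\infty}\int_{I}\log\frac{1}{|u-\hat{u}_{0}|}\,du<\infty$, but it gets there by a different route. You invoke the product factorization $\mu_{\overline{\mathbf{R}}}=\bar{\nu}_{2}\otimes\mathbb{P}$ and the identification $q_{\#}\bar{\nu}_{2}=\nu_{1}$ from Remark~\ref{Rem1}, and then apply Tonelli directly to the nonnegative integrand $\mathbf{\bar{t}}$. The paper instead stays closer to the defining representation of $\mu_{\overline{\mathbf{R}}}$ in Proposition~\ref{Prop1}: it truncates, setting $\mathbf{\bar{t}}^{M}:=\mathbf{\bar{t}}\wedge M$, applies monotone convergence, and uses the $\mathbf{T}$-invariance of $\mu_{\mathbf{T}}$ to collapse the $n$-fold iterate in the $\sup_{x\in q^{-1}(u)}$ expression down to the $n=0$ term, before reaching the same logarithmic integral. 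What your shortcut buys is brevity; what the paper's route buys is that it never appeals to the product structure of $\mu_{\overline{\mathbf{R}}}$ and hence works directly under the lemma's stated hypothesis, which is merely that $\mu_{\mathbf{T}}\ll\lambda\otimes\mathbb{P}_{\varepsilon}$ with bounded density. Your argument, by contrast, needs the additional disintegration $\mu_{\mathbf{T}}=\nu_{1}\otimes\mathbb{P}_{\varepsilon}$ (to get $\mu_{\overline{\mathbf{R}}}=\bar{\nu}_{2}\otimes\mathbb{P}$ from Remark~\ref{Rem1}); that is always the case in this paper's setting, so it is not a defect here, but it is worth noticing that it is a strictly stronger assumption than absolute continuity of $\mu_{\mathbf{T}}$ alone. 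A small further remark: the paper states $\hat{u}_{0}$ without $\eta$-dependence, since $\bar{\tau}_{\eta}=\tau_{\eta}\circ\kappa_{\eta}^{-1}$ blows up on the unperturbed leaf $\Gamma_{0}$ for every $\eta$, so the caution you raise about $\hat{u}_{0}(\eta)$ staying in $I$, while harmless, is not actually needed.
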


\begin{proof}
The proof is analogous to that of Lemma 3.7 in \cite{BR}. The sequence
$\left\{  \mathbf{\bar{t}}^{M}\right\}  _{M\in\mathbb{N}}$ such that
$\mathbf{\bar{t}}^{M}:=\mathbf{\bar{t}}\wedge M$ is monotone increasing an
converging $\mu_{\overline{\mathbf{R}}}$-a.s. to $\mathbf{\bar{t}.}$ So for
the monotone convergence theorem is enough to prove that $\mu_{\overline
{\mathbf{R}}}\left(  \mathbf{\bar{t}}^{M}\right)  $ is uniformly bounded in
$M.$ By (\ref{Prop1}),(\ref{def_tSS}) and (\ref{tK=t}) we get
\begin{align}
\mu_{\overline{\mathbf{R}}}\left(  \mathbf{\bar{t}}^{M}\right)   &
=\underline{\lim}_{n}\int\mu_{\mathbf{T}}\left(  du,d\omega\right)  \sup_{x\in
q^{-1}\left(  u\right)  }\mathbf{\bar{t}}^{M}\circ\overline{\mathbf{R}}%
^{n}\left(  x,\omega\right)  =\lim_{n\rightarrow\infty}\int\mu_{\mathbf{T}%
}\left(  du,d\omega\right)  \sup_{\left(  x,\omega^{\prime}\right)  \in
Q^{-1}\left(  u,\omega\right)  }\mathbf{\bar{t}}^{M}\circ\overline{\mathbf{R}%
}^{n}\left(  x,\omega^{\prime}\right) \\
&  \leq\lim_{n\rightarrow\infty}\int\mu_{\mathbf{T}}\left(  du,d\omega\right)
\sup_{\left(  x,\omega^{\prime}\right)  \in\left\{  \left(  y,\omega
^{\prime\prime}\right)  \in\mathcal{M}\times\Omega\ :\ Q\left(  y,\omega
^{\prime\prime}\right)  =\mathbf{T}^{n}\left(  u,\omega\right)  \right\}
}\mathbf{\bar{t}}^{M}\left(  x,\omega^{\prime}\right) \nonumber\\
&  =\int\mu_{\mathbf{T}}\left(  du,d\omega\right)  \sup_{\left(
x,\omega^{\prime}\right)  \in Q^{-1}\left(  u,\omega\right)  }\mathbf{\bar{t}%
}^{M}\left(  x,\omega^{\prime}\right)  \leq\int\mu_{\mathbf{T}}\left(
du,d\omega\right)  \sup_{x\in q^{-1}\left(  u\right)  }\bar{\tau}_{\pi\left(
\omega\right)  }\left(  x\right)  \wedge M\nonumber\\
&  \leq\left\Vert \frac{d\mu_{\mathbf{T}}}{d\left(  \lambda\otimes
\mathbb{P}_{\varepsilon}\right)  }\right\Vert _{\infty}C_{1}\int_{I}%
du\log\left\vert u-\hat{u}_{0}\right\vert <\infty\ .\nonumber
\end{align}

\end{proof}

\section{Stochastic stability\label{SS}}

Given $\eta\in spt\lambda_{\varepsilon},$ let $\bar{\eta}\in\Omega$ be such
that $\forall m\geq0,\pi\left(  \theta^{m}\bar{\eta}\right)  =\eta.$

If $\mu_{\bar{T}_{\eta}}$ denotes the measure on $\left(  I,\mathcal{B}\left(
I\right)  \right)  $ invariant under the dynamics defined by the map $\bar
{T}_{\eta}$ given in (\ref{Ti=iT}), we can lift the metric dynamical system
$\left(  I,\mathcal{B}\left(  I\right)  ,\mu_{T_{\eta}},\bar{T}_{\eta}\right)
$ to the metric dynamical system $\left(  I\times\Omega,\mathcal{B}\left(
I\right)  \otimes\mathcal{F},\mu_{\mathbf{T}_{\eta}},\mathbf{T}_{\eta}\right)
,$ where
\begin{equation}
I\times\Omega\ni\left(  u,\omega\right)  \longmapsto\mathbf{T}_{\eta}\left(
u,\omega\right)  :=\left(  \bar{T}_{\eta}\left(  u\right)  ,\theta
\omega\right)  \in I\times\Omega
\end{equation}
and $\mu_{\mathbf{T}_{\eta}}:=\mu_{\bar{T}_{\eta}}\otimes\delta_{\bar{\eta}},$
with $\delta_{\bar{\eta}}$ the Dirac mass at $\bar{\eta}.$

In the same fashion, denoting by $\mu_{R_{\eta}}$ the measure on $\left(
\mathcal{M},\mathcal{B}\left(  \mathcal{M}\right)  \right)  $ invariant under
the dynamics defined by the map $R_{\eta}$ given in (\ref{defR}), we define
the metric dynamical system \linebreak$\left(  \mathcal{M}\times
\Omega,\mathcal{B}\left(  \mathcal{M}\right)  \otimes\mathcal{F}%
,\mu_{\mathbf{R}_{\eta}},\mathbf{R}_{\eta}\right)  ,$ where
\begin{equation}
\left(  \mathcal{M}\backslash\Gamma_{\eta}\right)  \times\Omega\ni\left(
x,\omega\right)  \longmapsto\mathbf{R}_{\eta}\left(  x,\omega\right)
\in\left(  R_{\eta}\left(  x\right)  ,\theta\omega\right)  \in\mathcal{M}%
\times\Omega
\end{equation}
and $\mu_{\mathbf{R}_{\eta}}:=\mu_{R_{\eta}}\otimes\delta_{\bar{\eta}}.$

Moreover, setting
\begin{equation}
\left(  \mathcal{M}\backslash\Gamma_{\eta}\right)  \times\Omega\ni\left(
x,\omega\right)  \longmapsto\mathbf{t}_{\eta}\left(  x,\omega\right)
:=\mathbf{t}\left(  x,\bar{\eta}\right)  =\tau_{\eta}\left(  x\right)
\in\mathbb{R}^{+}\ , \label{deft_e}%
\end{equation}
we define semi-flow $\left(  \mathbf{S}_{\eta}^{t},t\geq0\right)  $ on
$\left(  \mathcal{M}\times\Omega\right)  _{\mathbf{t}_{\eta}}=\mathcal{M}%
_{\tau_{\eta}}\times\Omega$ as in (\ref{defSS}) and consequently, setting
\begin{equation}
\mathcal{M}\times\Omega\times\mathbb{R}^{+}\ni\left(  x,\omega,s\right)
\longmapsto\hat{\pi}_{\eta}\left(  x,\omega,s\right)  :=\left(  \tilde{\pi
}_{\eta}\left(  x,s\right)  ,\omega\right)  \in\mathcal{V}_{\eta}\times
\Omega\ , \label{Pp}%
\end{equation}
the semi-flow%
\begin{equation}
\mathcal{M}\times\Omega\times\mathbb{R}^{+}\ni\left(  x,\omega,s\right)
\longmapsto\mathbf{S}_{\eta}^{t}\circ\hat{\pi}_{\eta}\left(  x,\omega
,s\right)  =\hat{\pi}_{\eta}\left(  x,\omega,s+t\right)  \in\mathcal{V}_{\eta
}\times\Omega\ ,\;t>0\ ,
\end{equation}
as in (\ref{defSS1}). Furthermore, we denote by $\mu_{\mathbf{S}_{\eta}}%
:=\mu_{S_{\eta}}\otimes\delta_{\bar{\eta}},$ where $\mu_{S_{\eta}}\left(
dt,dx\right)  :=\frac{\mathbf{1}_{\left[  0,\tau_{\eta}\left(  x\right)
\right]  }\left(  t\right)  \mu_{R_{\eta}}\left(  dx\right)  }{\mu_{R_{\eta
}\left(  \tau_{\eta}\right)  }},$ the measure on $\left(  \mathcal{V}_{\eta
}\times\Omega,\mathcal{B}\left(  \mathcal{V}_{\eta}\right)  \otimes
\mathcal{F}\right)  $ invariant under $\left(  \mathbf{S}_{\eta}^{t}%
,t\geq0\right)  .$

Since, by the definition of $\lambda_{\varepsilon},$ as $\varepsilon$ tends to
$0,\mathbb{P}_{\mathbb{\varepsilon}}$ weakly converges to the Dirac mass
supported on the realization $\bar{0}\in\Omega$ whose components are all equal
to $0,$ in the following we make explicit the dependence of $\mu_{\mathbf{T}%
},\mu_{\mathbf{R}},\mu_{\mathbf{S}},$ on $\varepsilon,$ that is we set
$\mu_{\mathbf{T}}^{\varepsilon}:=\mu_{\mathbf{T}},\mu_{\mathbf{R}%
}^{\varepsilon}:=\mu_{\mathbf{R}},\mu_{\mathbf{S}}^{\varepsilon}%
:=\mu_{\mathbf{S}}.$

\begin{definition}
\label{defSS_TR}We will say that $\mu_{T_{0}},\mu_{R_{0}}$ are
\emph{stochastically stable} if, respectively, $\mu_{\mathbf{T}}^{\varepsilon
}$ weakly converges to $\mu_{\mathbf{T}_{0}},\mu_{\mathbf{R}}^{\varepsilon}$
weakly converges to $\mu_{\mathbf{R}_{0}},$ as $\varepsilon$ tends to $0.$
\end{definition}

\begin{remark}
\label{Rem2}We remark that the definition just given of stochastic stability
of $\mu_{T_{0}},\mu_{R_{0}}$ is weaker than the one usually taken into
consideration (see e.g. \cite{Vi}). Indeed, if $\mu_{\mathbf{T}}^{\varepsilon
}\in\mathfrak{I}_{\mathbb{P}_{\varepsilon}}\left(  \mathbf{T}\right)  $ admits
the disintegration $\nu_{1}^{\varepsilon}\otimes\mathbb{P}_{\varepsilon},$
which implies, by Remark \ref{Rem1},\ $\mu_{\overline{\mathbf{R}}%
}^{\varepsilon}=\bar{\nu}_{2}^{\varepsilon}\otimes\mathbb{P}_{\varepsilon},$
and $\mu_{\mathbf{R}}^{\varepsilon}\in\mathfrak{I}_{\mathbb{P}_{\varepsilon}%
}\left(  \mathbf{R}\right)  $ admits the disintegration $\nu_{2}^{\varepsilon
}\otimes\mathbb{P}_{\varepsilon},$ where $\nu_{2}^{\varepsilon}$ is the
stationary measure for the Markov chain with transition operator
\begin{equation}
C_{b}\left(  \mathcal{M}\right)  \ni\psi\longmapsto P_{R}\psi:=\mathbb{E}%
\left[  \psi\circ p\circ\mathbf{R}\right]  \in M_{b}\left(  \mathcal{M}%
\right)  \ , \label{P_R}%
\end{equation}
then the (weak) stochastic stability of $\mu_{T_{0}},\mu_{R_{0}}$ is usually
defined as the weak convergence of $\nu_{1}^{\varepsilon},\nu_{2}%
^{\varepsilon}$ respectively to $\mu_{T_{0}}$ and $\mu_{R_{0}}$ as
$\varepsilon$ tends to $0,$ which of course implies that $\mu_{\mathbf{T}_{0}%
}$ and $\mu_{\mathbf{R}_{0}}$ are the weak limit of respectively
$\mu_{\mathbf{T}}^{\varepsilon}$ and $\mu_{\mathbf{R}}^{\varepsilon}.$
Moreover, if and $\nu_{1}^{\varepsilon}$ and $\mu_{T_{0}}$ are a.c. w.r.t. the
Lebesgue measure, the convergence in $L_{\lambda}^{1}\left(  I\right)  $ of
the density of $\nu_{1}^{\varepsilon}$ to that of $\mu_{T_{0}},$ which is
equivalent to the convergence of $\nu_{1}^{\varepsilon}$ to $\mu_{T_{0}}$ in
the total variation distance, is referred to as the strong stochastic
stability of $\mu_{T_{0}}.$
\end{remark}

\begin{definition}
We will say that $\mu_{S_{0}}$ is \emph{stochastically stable} if, $\forall
f\in C_{b}\left(  \mathfrak{V}\right)  ,\mu_{\mathbf{S}}^{\varepsilon}\left(
f\right)  $ converges to $\mu_{\mathbf{S}_{0}}\left(  f\right)  ,$ as
$\varepsilon$ tends to $0.$
\end{definition}

We will now show that, since the correspondence $\mu_{\mathbf{T}}%
^{\varepsilon}\longmapsto\mu_{\overline{\mathbf{R}}}^{\varepsilon}%
\longmapsto\mu_{\overline{\mathbf{S}}}^{\varepsilon}$ is injective, the
stochastic stability of $\mu_{T_{0}}$ imply the weak convergence of
$\mu_{\overline{\mathbf{S}}}^{\varepsilon}$ to $\mu_{S_{0}}.$ Furthermore, we
will prove that if $\mu_{T_{0}}$ is stochastically stable, the injectivity of
the correspondence $\mu_{\mathbf{T}}^{\varepsilon}\longmapsto\mu_{\mathbf{R}%
}^{\varepsilon}\longmapsto\mu_{\mathbf{S}}^{\varepsilon},$ together with the
hypothesis of $R_{\eta}$ being continuous for any $\eta\in spt\lambda
_{\varepsilon},$ imply the stochastic stability of the physical measure for
the unperturbed flow that is what stated in Theorem \ref{main}. We will also
show that, in order to prove Theorem \ref{main}, we can drop the hypothesis on
the continuity of the $R_{\eta}$'s if we assume the strong stochastic
stability of $\mu_{T_{0}}.$

In the rest of the section we will always assume $\mu_{T_{0}}$ to be
stochastically stable. As an example, in Section \ref{SST} we will prove that
this is the case for the invariant measure of the Lorenz-like cusp map and for
the classical Lorenz map introduced in Section \ref{assT}.

\subsection{Stochastic stability of $\mu_{R_{0}}$}

The following result refers for example to the case where one considers the
first return maps on the Poincar\'{e} section $\mathcal{M}$ given in the
appendix in Section \ref{M"}.

\begin{theorem}
If for any $\eta\in\left[  0,\varepsilon\right]  ,R_{\eta}%
:\mathcal{M\circlearrowleft}$ is continuous and $\mu_{T_{0}}$ is
stochastically stable, then $\mu_{\overline{\mathbf{R}}}^{\varepsilon}$ weakly
converges to $\mu_{\mathbf{R}_{0}}.$
\end{theorem}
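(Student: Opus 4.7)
The plan is to pass the weak convergence of $\mu^\varepsilon_{\mathbf{T}}$ through the representation of Proposition \ref{Prop1}, which expresses $\mu^\varepsilon_{\overline{\mathbf{R}}}$ in terms of $\mu^\varepsilon_{\mathbf{T}}$ via an asymptotic formula involving the dynamics $\overline{\mathbf{R}}^n$, and then exchange the limit $n\to\infty$ with the limit $\varepsilon\to0$.

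For $\psi\in C_b(\mathcal{M}\times\Omega)$ set $\psi_n(u,\omega):=\inf_{x\in q^{-1}(u)}\psi(\overline{\mathbf{R}}^n(x,\omega))$ and $\tilde\psi_n$ analogously with $\sup$ in place of $\inf$, so that by Proposition \ref{Prop1},
\begin{equation}
\mu^\varepsilon_{\overline{\mathbf{R}}}(\psi)=\lim_{n\to\infty}\int \psi_n \, d\mu^\varepsilon_{\mathbf{T}}.
\end{equation}
The first step is to establish a uniform contraction estimate: because $\bar{R}_\eta$ preserves the foliation (relation (\ref{ass1})) and the fibre‑contraction rate is uniform in $\eta\in\mathrm{spt}\,\lambda_\varepsilon$ by shrinking $\mathfrak{U}$, one obtains $\delta_n:=\sup_{u,\omega}\mathrm{diam}\bigl(p\circ\overline{\mathbf{R}}^n(q^{-1}(u),\omega)\bigr)\to0$. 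Since $\psi$ is uniformly continuous on the compact space $\mathcal{M}\times\Omega$, this yields $\|\psi_n-\tilde\psi_n\|_\infty\le\omega_\psi(\delta_n)\to0$, with $\omega_\psi$ the modulus of continuity of $\psi$.

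Next, for each fixed $n$ I would pass to the limit $\varepsilon\to0$. The continuity of each $R_\eta$ (hence of $\bar R_\eta$ via the smooth conjugations $\kappa_\eta$ of Section 3.3) together with the $C^r$ dependence of $\phi_\eta$ on $\eta$ makes $(x,\omega)\mapsto\psi(\overline{\mathbf{R}}^n(x,\omega))$ continuous, so $\psi_n$ is upper‑semicontinuous and $\tilde\psi_n$ is lower‑semicontinuous on $I\times\Omega$ (with $q^{-1}(u)$ compact and upper‑semicontinuous in $u$). By the stochastic stability hypothesis $\mu^\varepsilon_{\mathbf{T}}\Rightarrow\mu_{T_0}\otimes\delta_{\bar{0}}$ and the Portmanteau theorem,
\begin{equation}
\limsup_{\varepsilon\to 0}\int\psi_n\,d\mu^\varepsilon_{\mathbf{T}}\le\int\psi_n(u,\bar{0})\,d\mu_{T_0}(u),\qquad \liminf_{\varepsilon\to 0}\int\tilde\psi_n\,d\mu^\varepsilon_{\mathbf{T}}\ge\int\tilde\psi_n(u,\bar{0})\,d\mu_{T_0}(u).
\end{equation}
Since $\|\tilde\psi_n-\psi_n\|_\infty\le\omega_\psi(\delta_n)$ uniformly in $\varepsilon$, these bounds can be squeezed, and the uniformity of $\delta_n\to0$ in $\varepsilon$ legitimates the exchange of $\lim_\varepsilon$ and $\lim_n$. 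At $\omega=\bar0$, $\overline{\mathbf{R}}^n(x,\bar0)=(\bar R_0^n(x),\bar0)$; by fibre contraction there is a unique point $y_\infty(u)\in\bigcap_n \bar R_0^n(q^{-1}(u))$, hence $\psi_n(u,\bar0)\to\psi(y_\infty(u),\bar0)$, and the same disintegration argument underlying Remark \ref{Rem1} identifies $\int\mu_{T_0}(du)\,\psi(y_\infty(u),\bar0)=\mu_{\mathbf{R}_0}(\psi)$.

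The main obstacle I anticipate is securing the joint continuity $(x,\eta)\mapsto\bar R_\eta(x)$ required to apply Portmanteau to $\psi_n$: the statement assumes only continuity of $R_\eta$ in $x$ for each $\eta$, so the continuity in $\eta$ must be extracted from the $C^r$ dependence of $\phi_\eta$ on $\eta$ together with smooth dependence of the conjugating diffeomorphisms $\kappa_\eta$ built in Section 3.3. A secondary technical point is showing that $\delta_n\to0$ uniformly in $\omega\in\Omega$, not merely $\mathbb{P}_\varepsilon$‑a.s.; this is what allows the double‑limit exchange to be performed safely and is ensured by choosing $\mathfrak{U}$ small enough that every $\bar R_\eta$ is uniformly hyperbolic along the stable foliation.
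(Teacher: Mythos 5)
Your proof is correct in spirit but takes a genuinely different route, essentially the opposite order of the two limits. You fix $n$, pass $\varepsilon\to0$ via Portmanteau using the semicontinuity of the $n$-dependent functions $\psi_n,\tilde\psi_n$, and then send $n\to\infty$ by a double-limit exchange relying on uniform contraction. The paper instead sends $n\to\infty$ first inside each $\mu_{\mathbf{T}}^m$-integral by (reverse) Fatou, producing the \emph{fixed} limiting functions $\overline\psi:=\overline{\lim}_n(\psi\circ\overline{\mathbf{R}}^n)_-$ and $\underline\psi:=\underline{\lim}_n(\psi\circ\overline{\mathbf{R}}^n)_+$, and then bounds $|\mu_{\overline{\mathbf{R}}}^m(\psi)-\mu_{\mathbf{R}_0}(\psi)|$ by three terms: two differences $|\mu_{\mathbf{T}}^m(\cdot)-\mu_{\mathbf{T}_0}(\cdot)|$ killed by weak convergence tested against $\overline\psi,\underline\psi$, plus the residual $|\mu_{\mathbf{T}_0}(\overline\psi)-\mu_{\mathbf{T}_0}(\underline\psi)|$. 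Because $\mu_{\mathbf{T}_0}$ has marginal $\delta_{\bar0}$ on $\Omega$, that last term reduces to a statement about the \emph{unperturbed} $R_0$ alone and is controlled by the uniform contraction of $R_0^n(q^{-1}(u))$. What the paper's ordering buys: no double-limit exchange and hence no need for uniformity-in-$\varepsilon$ estimates, and the contraction is invoked at the theorem level only for $R_0$ (the uniform-in-$\omega$ contraction is still used, but inside Proposition \ref{Prop1}). What your ordering costs: the Portmanteau step needs the $\psi_n$ and $\tilde\psi_n$ to be upper/lower semicontinuous on $I\times\Omega$ \emph{jointly}, which traces back to continuity of $(x,\eta)\mapsto\bar R_\eta(x)$ — precisely the gap you flag. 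That gap is genuine; neither the hypotheses of the theorem nor Section 3.3's construction of $\kappa_\eta$ give you the $\eta$-continuity explicitly, and your argument would need to establish it. The paper's proof structure is designed so that the $\eta$-dependence is subsumed into the single test functions $\overline\psi,\underline\psi$ before any quantitative regularity in $\eta$ is needed, which is the cleaner move here.
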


\begin{proof}
Let $\left\{  \varepsilon_{m}\right\}  _{m\geq1}$ be any sequence in $[0,1)$
converging to $0$ and set $\mu_{\mathbf{T}}^{m}:=\mu_{\mathbf{T}}%
^{\varepsilon_{m}},\mu_{\overline{\mathbf{R}}}^{m}:=\mu_{\overline{\mathbf{R}%
}}^{\varepsilon_{m}}.$

For any $\psi\in L_{\mathbb{P}_{\lambda}}^{1}\left(  \Omega,C_{b}\left(
\mathcal{M}\right)  \right)  ,$ we set
\begin{align}
I\times\Omega &  \ni\left(  u,\omega\right)  \longmapsto\psi_{+}\left(
u,\omega\right)  :=\sup_{x\in q^{-1}\left(  u\right)  }\psi\left(
x,\omega\right)  =\sup_{\left(  x,\omega^{\prime}\right)  \in Q^{-1}\left(
u,\omega\right)  }\psi\left(  x,\omega^{\prime}\right)  \ ,\\
I\times\Omega &  \ni\left(  u,\omega\right)  \longmapsto\psi_{-}\left(
u,\omega\right)  :=\inf_{x\in q^{-1}\left(  u\right)  }\psi\left(
x,\omega\right)  =\inf_{\left(  x,\omega^{\prime}\right)  \in Q^{-1}\left(
u,\omega\right)  }\psi\left(  x,\omega^{\prime}\right)  \ .
\end{align}
Suppose first that $\psi\geq0.$ Given $m\geq1,$ by Proposition \ref{Prop1},
since $\left\{  \mu_{\mathbf{T}}^{m}\left(  \psi\circ\overline{\mathbf{R}}%
^{n}\right)  _{+}\right\}  _{n\geq1}$ is decreasing,
\begin{equation}
0\leq\mu_{\overline{\mathbf{R}}}^{m}\left(  \psi\right)  =\lim_{n\rightarrow
\infty}\mu_{\mathbf{T}}^{m}\left[  \left(  \psi\circ\overline{\mathbf{R}}%
^{n}\right)  _{+}\right]  =\underline{\lim}_{n}\mu_{\mathbf{T}}^{m}\left[
\left(  \psi\circ\overline{\mathbf{R}}^{n}\right)  _{+}\right]  \ .
\end{equation}
On the other hand, since $\left\{  \mu_{\mathbf{T}}^{m}\left(  \psi
\circ\overline{\mathbf{R}}^{n}\right)  _{-}\right\}  _{n\geq1}$ is
increasing,
\begin{equation}
\mu_{\overline{\mathbf{R}}}^{m}\left(  \psi\right)  =\lim_{n\rightarrow\infty
}\mu_{\mathbf{T}}^{m}\left[  \left(  \psi\circ\overline{\mathbf{R}}%
^{n}\right)  _{-}\right]  =\overline{\lim}_{n}\mu_{\mathbf{T}}^{m}\left[
\left(  \psi\circ\overline{\mathbf{R}}^{n}\right)  _{-}\right]  \ .
\end{equation}
The same considerations also hold for $\mu_{\mathbf{R}_{0}}\left(
\psi\right)  $ and $\left\{  \mu_{\mathbf{T}_{0}}\left[  \left(  \psi
\circ\overline{\mathbf{R}}^{n}\right)  _{\pm}\right]  \right\}  _{n\geq1},$
that is
\begin{align}
0  &  \leq\mu_{\mathbf{R}_{0}}\left(  \psi\right)  =\lim_{n\rightarrow\infty
}\mu_{\mathbf{T}_{0}}\left[  \left(  \psi\circ\mathbf{R}_{0}^{n}\right)
_{+}\right]  =\underline{\lim}_{n}\mu_{\mathbf{T}_{0}}\left[  \left(
\psi\circ\mathbf{R}_{0}^{n}\right)  _{+}\right]  =\\
&  =\lim_{n\rightarrow\infty}\mu_{\mathbf{T}_{0}}\left[  \left(  \psi
\circ\mathbf{R}_{0}^{n}\right)  _{-}\right]  =\overline{\lim}_{n}%
\mu_{\mathbf{T}_{0}}\left[  \left(  \psi\circ\mathbf{R}_{0}^{n}\right)
_{-}\right] \nonumber
\end{align}
(\cite{AP} Section 7.3.4.1). Hence we get
\begin{gather}
\left\vert \mu_{\overline{\mathbf{R}}}^{m}\left(  \psi\right)  -\mu
_{\mathbf{R}_{0}}\left(  \psi\right)  \right\vert =\mu_{\overline{\mathbf{R}}%
}^{m}\left(  \psi\right)  \vee\mu_{\mathbf{R}_{0}}\left(  \psi\right)
-\mu_{\overline{\mathbf{R}}}^{m}\left(  \psi\right)  \wedge\mu_{\mathbf{R}%
_{0}}\left(  \psi\right) \\
=\lim_{n\rightarrow\infty}\mu_{\mathbf{T}}^{m}\left[  \left(  \psi
\circ\overline{\mathbf{R}}^{n}\right)  _{-}\right]  \vee\lim_{n\rightarrow
\infty}\mu_{\mathbf{T}_{0}}\left[  \left(  \psi\circ\mathbf{R}_{0}^{n}\right)
_{-}\right]  -\lim_{n\rightarrow\infty}\mu_{\mathbf{T}}^{m}\left[  \left(
\psi\circ\overline{\mathbf{R}}^{n}\right)  _{+}\right]  \wedge\lim
_{n\rightarrow\infty}\mu_{\mathbf{R}_{0}}\left[  \left(  \psi\circ
\mathbf{R}_{0}^{n}\right)  _{+}\right] \nonumber\\
=\overline{\lim}_{n}\mu_{\mathbf{T}}^{m}\left[  \left(  \psi\circ
\overline{\mathbf{R}}^{n}\right)  _{-}\right]  \vee\overline{\lim}_{n}%
\mu_{\mathbf{T}_{0}}\left[  \left(  \psi\circ\mathbf{R}_{0}^{n}\right)
_{-}\right]  -\underline{\lim}_{n}\mu_{\mathbf{T}}^{m}\left[  \left(
\psi\circ\overline{\mathbf{R}}^{n}\right)  _{+}\right]  \wedge\underline{\lim
}_{n}\mu_{\mathbf{T}_{0}}\left[  \left(  \psi\circ\mathbf{R}_{0}^{n}\right)
_{+}\right]  \ .\nonumber
\end{gather}
But, since the marginal of $\mu_{\mathbf{T}_{0}}$ on $\left(  \Omega
,\mathcal{B}\left(  \Omega\right)  \right)  $ is $\delta_{\bar{0}},$%
\begin{align}
\left\vert \mu_{\overline{\mathbf{R}}}^{m}\left(  \psi\right)  -\mu
_{\mathbf{R}_{0}}\left(  \psi\right)  \right\vert  &  =\overline{\lim}_{n}%
\mu_{\mathbf{T}}^{m}\left[  \left(  \psi\circ\overline{\mathbf{R}}^{n}\right)
_{-}\right]  \vee\overline{\lim}_{n}\mu_{\mathbf{T}_{0}}\left[  \left(
\psi\circ\mathbf{R}^{n}\right)  _{-}\right] \\
&  -\underline{\lim}_{n}\mu_{\mathbf{T}}^{m}\left[  \left(  \psi\circ
\overline{\mathbf{R}}^{n}\right)  _{+}\right]  \wedge\underline{\lim}_{n}%
\mu_{\mathbf{T}_{0}}\left[  \left(  \psi\circ\overline{\mathbf{R}}^{n}\right)
_{+}\right]  \ .\nonumber
\end{align}
Moreover, since $\psi\in\left\{  \phi\in L_{\mathbb{P}_{\lambda}}^{1}\left(
\Omega,C_{b}\left(  \mathcal{M}\right)  \right)  :\phi\geq0\right\}  ,M_{\psi
}:=\sup_{x\in\mathcal{M}}\psi\left(  \cdot,x\right)  \in L^{1}\left(
\Omega,\mathbb{P}_{\lambda}\right)  $ and $0\leq\psi_{-}\leq\psi_{+}\leq
M_{\psi},$ then, by Fatou's Lemma,
\begin{gather}
\overline{\lim}_{n}\mu_{\mathbf{T}}^{m}\left[  \left(  \psi\circ
\overline{\mathbf{R}}^{n}\right)  _{-}\right]  \vee\overline{\lim}_{n}%
\mu_{\mathbf{T}_{0}}\left[  \left(  \psi\circ\overline{\mathbf{R}}^{n}\right)
_{-}\right]  -\underline{\lim}_{n}\mu_{\mathbf{T}}^{m}\left[  \left(
\psi\circ\overline{\mathbf{R}}^{n}\right)  _{+}\right]  \wedge\underline{\lim
}_{n}\mu_{\mathbf{T}_{0}}\left[  \left(  \psi\circ\overline{\mathbf{R}}%
^{n}\right)  _{+}\right] \\
\leq\mu_{\mathbf{T}}^{m}\left[  \overline{\lim}_{n}\left(  \psi\circ
\overline{\mathbf{R}}^{n}\right)  _{-}\right]  \vee\mu_{\mathbf{T}_{0}}\left[
\overline{\lim}_{n}\left(  \psi\circ\overline{\mathbf{R}}^{n}\right)
_{-}\right]  -\mu_{\mathbf{T}}^{m}\left[  \underline{\lim}_{n}\left(
\psi\circ\overline{\mathbf{R}}^{n}\right)  _{+}\right]  \wedge\mu
_{\mathbf{T}_{0}}\left[  \underline{\lim}_{n}\left(  \psi\circ\overline
{\mathbf{R}}^{n}\right)  _{+}\right] \\
=\mu_{\mathbf{T}}^{m}\left[  \overline{\lim}_{n}\left(  \psi\circ
\overline{\mathbf{R}}^{n}\right)  _{-}\right]  \vee\mu_{\mathbf{T}_{0}}\left[
\overline{\lim}_{n}\left(  \psi\circ\overline{\mathbf{R}}^{n}\right)
_{-}\right]  -\mu_{\mathbf{T}_{0}}\left[  \overline{\lim}_{n}\left(  \psi
\circ\overline{\mathbf{R}}^{n}\right)  _{-}\right] \\
+\mu_{\mathbf{T}_{0}}\left[  \overline{\lim}_{n}\left(  \psi\circ
\overline{\mathbf{R}}^{n}\right)  _{-}\right]  -\mu_{\mathbf{T}_{0}}\left[
\underline{\lim}_{n}\left(  \psi\circ\overline{\mathbf{R}}^{n}\right)
_{+}\right] \nonumber\\
+\mu_{\mathbf{T}_{0}}\left[  \underline{\lim}_{n}\left(  \psi\circ
\overline{\mathbf{R}}^{n}\right)  _{+}\right]  -\mu_{\mathbf{T}}^{m}\left[
\underline{\lim}_{n}\left(  \psi\circ\overline{\mathbf{R}}^{n}\right)
_{+}\right]  \wedge\mu_{\mathbf{T}_{0}}\left[  \underline{\lim}_{n}\left(
\psi\circ\overline{\mathbf{R}}^{n}\right)  _{+}\right] \nonumber\\
=\left(  \mu_{\mathbf{T}}^{m}\left[  \overline{\lim}_{n}\left(  \psi
\circ\overline{\mathbf{R}}^{n}\right)  _{-}\right]  -\mu_{\mathbf{T}_{0}%
}\left[  \overline{\lim}_{n}\left(  \psi\circ\overline{\mathbf{R}}^{n}\right)
_{-}\right]  \right)  \vee0+\mu_{\mathbf{T}_{0}}\left[  \overline{\lim}%
_{n}\left(  \psi\circ\overline{\mathbf{R}}^{n}\right)  _{-}\right]
-\mu_{\mathbf{T}_{0}}\left[  \underline{\lim}_{n}\left(  \psi\circ
\overline{\mathbf{R}}^{n}\right)  _{+}\right] \\
+\left(  \mu_{\mathbf{T}_{0}}\left[  \underline{\lim}_{n}\left(  \psi
\circ\overline{\mathbf{R}}^{n}\right)  _{+}\right]  -\mu_{\mathbf{T}}%
^{m}\left[  \underline{\lim}_{n}\left(  \psi\circ\overline{\mathbf{R}}%
^{n}\right)  _{+}\right]  \right)  \vee0\nonumber\\
\leq\left\vert \mu_{\mathbf{T}}^{m}\left[  \overline{\lim}_{n}\left(
\psi\circ\overline{\mathbf{R}}^{n}\right)  _{-}\right]  -\mu_{\mathbf{T}_{0}%
}\left[  \overline{\lim}_{n}\left(  \psi\circ\overline{\mathbf{R}}^{n}\right)
_{-}\right]  \right\vert +\left\vert \mu_{\mathbf{T}_{0}}\left[
\underline{\lim}_{n}\left(  \psi\circ\overline{\mathbf{R}}^{n}\right)
_{+}\right]  -\mu_{\mathbf{T}}^{m}\left[  \underline{\lim}_{n}\left(
\psi\circ\overline{\mathbf{R}}^{n}\right)  _{+}\right]  \right\vert \\
+\left\vert \mu_{\mathbf{T}_{0}}\left[  \overline{\lim}_{n}\left(  \psi
\circ\overline{\mathbf{R}}^{n}\right)  _{-}\right]  -\mu_{\mathbf{T}_{0}%
}\left[  \underline{\lim}_{n}\left(  \psi\circ\overline{\mathbf{R}}%
^{n}\right)  _{+}\right]  \right\vert \ .\nonumber
\end{gather}

Since $\mu_{\mathbf{T}}^{m}$ weakly converges to $\mu_{\mathbf{T}_{0}},$
setting $\overline{\psi}:=\overline{\lim}_{n}\left(  \psi\circ\overline
{\mathbf{R}}^{n}\right)  _{-},\underline{\psi}:=\underline{\lim}_{n}\left(
\psi\circ\overline{\mathbf{R}}^{n}\right)  _{+}$ we have $\overline{\psi
},\underline{\psi}\in\left\{  \phi\in L_{\mathbb{P}_{\lambda}}^{1}\left(
\Omega,C_{b}\left(  I\right)  \right)  :\phi\geq0\right\}  $ and
$\forall\epsilon>0,$ there exists $n_{\epsilon}^{\prime}\left(  \overline
{\psi}\right)  $ such that $\forall m>n_{\epsilon}^{\prime}\left(
\overline{\psi}\right)  ,$\linebreak$\left\vert \mu_{\mathbf{T}}^{m}\left[
\overline{\psi}\right]  -\mu_{\mathbf{T}_{0}}\left[  \overline{\psi}\right]
\right\vert <\epsilon$ as well as there exists $n_{\epsilon}^{\prime\prime
}\left(  \underline{\psi}\right)  $ such that $\forall m>n_{\epsilon}%
^{\prime\prime}\left(  \underline{\psi}\right)  ,\left\vert \mu_{\mathbf{T}%
}^{m}\left[  \underline{\psi}\right]  -\mu_{\mathbf{T}_{0}}\left[
\underline{\psi}\right]  \right\vert <\epsilon.$

On the other hand, $\forall n\geq0,$%
\begin{equation}
\mu_{\mathbf{T}_{0}}\left(  \psi\circ\overline{\mathbf{R}}^{n}\right)  _{\pm
}=\mu_{\mathbf{T}_{0}}\left(  \psi\circ\mathbf{R}_{0}^{n}\right)  _{\pm}%
=\mu_{T_{0}}\left(  \psi_{0}\circ R_{0}^{n}\right)  _{\pm}\ ,
\end{equation}
where $\psi_{0}:=\psi\left(  \cdot,\bar{0}\right)  ,$ so that
\begin{gather}
\left\vert \mu_{\mathbf{T}_{0}}\left[  \overline{\lim}_{n}\left(  \psi
\circ\overline{\mathbf{R}}^{n}\right)  _{-}\right]  -\mu_{\mathbf{T}_{0}%
}\left[  \underline{\lim}_{n}\left(  \psi\circ\overline{\mathbf{R}}%
^{n}\right)  _{+}\right]  \right\vert =\\
\left\vert \mu_{T_{0}}\left[  \overline{\lim}_{n}\left(  \psi_{0}\circ
R_{0}^{n}\right)  _{-}\right]  -\mu_{T_{0}}\left[  \underline{\lim}_{n}\left(
\psi_{0}\circ R_{0}^{n}\right)  _{+}\right]  \right\vert \leq\nonumber\\
\mu_{T_{0}}\left[  \left\vert \overline{\lim}_{n}\left(  \psi_{0}\circ
R_{0}^{n}\right)  _{-}-\underline{\lim}_{n}\left(  \psi_{0}\circ R_{0}%
^{n}\right)  _{+}\right\vert \right]  \ .\nonumber
\end{gather}
Since $\psi_{0}\in C_{b}\left(  \mathcal{M}\right)  $ and $\forall u\in
I,q^{-1}\left(  u\right)  \subset\mathcal{M}$ is compact, by Assumption 1,
$\forall\epsilon>0,\exists\delta_{\epsilon}>0,n_{\epsilon}>0$ such that
$\forall n\geq n_{\epsilon},u\in I,\operatorname*{diam}R_{0}^{n}\left(
q^{-1}\left(  u\right)  \right)  <\delta_{\epsilon}$ and $\forall x,y\in
R_{0}^{n}\left(  q^{-1}\left(  u\right)  \right)  ,$\linebreak$\left\vert
\psi_{0}\left(  x\right)  -\psi_{0}\left(  y\right)  \right\vert <\epsilon.$
Then,
\begin{equation}
\left\vert \mu_{T_{0}}\left[  \overline{\lim}_{n}\left(  \psi_{0}\circ
R_{0}^{n}\right)  _{-}\right]  -\mu_{T_{0}}\left[  \underline{\lim}_{n}\left(
\psi_{0}\circ R_{0}^{n}\right)  _{+}\right]  \right\vert \leq\epsilon\ .
\end{equation}
Hence, $\psi\in\left\{  \phi\in L_{\mathbb{P}}^{1}\left(  \Omega,C_{b}\left(
\mathcal{M}\right)  \right)  :\phi\geq0\right\}  ,\forall m>m_{\epsilon
}\left(  \psi\right)  :=n_{\epsilon}^{\prime}\left(  \bar{\psi}\right)  \vee
n_{\epsilon}^{\prime\prime}\left(  \underline{\psi}\right)  ,$%
\begin{equation}
\left\vert \mu_{\overline{\mathbf{R}}}^{m}\left(  \psi\right)  -\mu
_{\mathbf{R}_{0}}\left(  \psi\right)  \right\vert \leq3\epsilon\ ,
\end{equation}
but decomposing any real-valued function $\psi$ on $\Omega\times\mathcal{M}$
as $\psi=\psi\vee0-\left\vert \psi\wedge0\right\vert ,$ we get that given any
$\psi\in L_{\mathbb{P}_{\lambda}}^{1}\left(  \Omega,C_{b}\left(
\mathcal{M}\right)  \right)  ,\forall\epsilon>0$ $\exists m_{\epsilon}\left(
\psi\right)  $ such that $\forall m>m_{\epsilon}\left(  \psi\right)
,\left\vert \mu_{\overline{\mathbf{R}}}^{m}\left(  \psi\right)  -\mu
_{\mathbf{R}_{0}}\left(  \psi\right)  \right\vert \leq6\epsilon.$
\end{proof}

\begin{lemma}
\label{Lf}If $\mu_{\overline{\mathbf{R}}}^{\varepsilon}$ weakly converges to
$\mu_{\mathbf{R}_{0}},$ then $\mu_{\mathbf{R}}^{\varepsilon}$ weakly converges
to $\mu_{\mathbf{R}_{0}}$ too.
\end{lemma}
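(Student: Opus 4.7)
The plan is to exploit the explicit conjugacy $\mu_{\mathbf{R}}^{\varepsilon}=\mathbf{K}_{\#}^{-1}\mu_{\overline{\mathbf{R}}}^{\varepsilon}$ established in the preceding Corollary. By change of variables, for every $f\in C_{b}(\mathcal{M}\times\Omega)$,
\begin{equation*}
\mu_{\mathbf{R}}^{\varepsilon}(f)=\int f\circ\mathbf{K}^{-1}\,d\mu_{\overline{\mathbf{R}}}^{\varepsilon},\qquad \mathbf{K}^{-1}(x,\omega)=\bigl(\kappa_{\pi(\omega)}^{-1}(x),\omega\bigr).
\end{equation*}
Thus the lemma reduces to showing that, first, the integrand $f\circ\mathbf{K}^{-1}$ is bounded continuous on $\mathcal{M}\times\Omega$, so that the assumed weak convergence can be applied, and second, that the limit $\int f\circ\mathbf{K}^{-1}\,d\mu_{\mathbf{R}_{0}}$ coincides with $\mu_{\mathbf{R}_{0}}(f)$.

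For the continuity step I would argue as follows. The family $\{\kappa_{\eta}\}_{\eta\in\mathrm{spt}\lambda_{\varepsilon}}$ consists of $C^{1}$ diffeomorphisms of $\mathcal{M}$ that, by construction (cf.\ Fig.~\ref{fig:1} and the defining relation $\kappa_{\eta}=\hat{\kappa}_{\eta}\circ\hat{\kappa}_{0}^{-1}$), depend continuously on $\eta$ together with their inverses, because the vector fields $\phi_{\eta}$ depend continuously on $\eta$ in the $C^{1}$ topology. The coordinate projection $\pi:\Omega\to[-\varepsilon,\varepsilon]$ is continuous for the metric $\rho$ on $\Omega$, so $(x,\omega)\mapsto\kappa_{\pi(\omega)}^{-1}(x)$ is jointly continuous, hence $\mathbf{K}^{-1}$ is continuous and $f\circ\mathbf{K}^{-1}\in C_{b}(\mathcal{M}\times\Omega)$. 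The weak convergence hypothesis then yields
\begin{equation*}
\lim_{\varepsilon\downarrow 0}\mu_{\mathbf{R}}^{\varepsilon}(f)=\lim_{\varepsilon\downarrow 0}\int f\circ\mathbf{K}^{-1}\,d\mu_{\overline{\mathbf{R}}}^{\varepsilon}=\int f\circ\mathbf{K}^{-1}\,d\mu_{\mathbf{R}_{0}}.
\end{equation*}

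To identify the limit, note that $\mu_{\mathbf{R}_{0}}=\mu_{R_{0}}\otimes\delta_{\bar{0}}$ is concentrated on $\mathcal{M}\times\{\bar{0}\}$, where $\pi(\bar{0})=0$. From the explicit construction $\kappa_{\eta}=\hat{\kappa}_{\eta}\circ\hat{\kappa}_{0}^{-1}$ one has $\kappa_{0}=\mathrm{id}_{\mathcal{M}}$, so $\mathbf{K}^{-1}(x,\bar{0})=(x,\bar{0})$ for $\mu_{\mathbf{R}_{0}}$-a.e.\ $(x,\bar{0})$. Therefore $\int f\circ\mathbf{K}^{-1}\,d\mu_{\mathbf{R}_{0}}=\mu_{\mathbf{R}_{0}}(f)$, and the weak convergence $\mu_{\mathbf{R}}^{\varepsilon}\to\mu_{\mathbf{R}_{0}}$ follows.

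The only genuinely delicate point is the joint continuity of $(\eta,x)\mapsto\kappa_{\eta}^{-1}(x)$; this is essentially built into the construction of the stable foliation straightening $\kappa_{\eta}$ recalled from \cite{APPV,AM}, but it is the place where one must carefully invoke the $C^{1}$ dependence of the perturbed fields $\phi_{\eta}$ on $\eta$. Everything else is a routine change of variables combined with the hypothesis.
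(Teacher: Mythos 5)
Your argument is correct and reaches the same conclusion, but it follows a genuinely different route from the paper's own proof. You reduce the claim to a change of variables, $\mu_{\mathbf{R}}^{\varepsilon}(f)=\int f\circ\mathbf{K}^{-1}\,d\mu_{\overline{\mathbf{R}}}^{\varepsilon}$, and then apply the weak-convergence hypothesis directly to the test function $f\circ\mathbf{K}^{-1}$; identifying the limit hinges on $\kappa_{0}=\mathrm{id}_{\mathcal{M}}$, which follows from the normalization $\kappa_{\eta}=\hat{\kappa}_{\eta}\circ\hat{\kappa}_{0}^{-1}$ in Fig.~\ref{fig:1}, and indeed yields $\mathbf{K}^{-1}(x,\bar{0})=(x,\bar{0})$ on the support of $\mu_{\mathbf{R}_{0}}$. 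The paper instead proceeds metrically: it notes $\mu_{\mathbf{R}}^{\varepsilon}(A)=\mu_{\overline{\mathbf{R}}}^{\varepsilon}(\mathbf{K}(A))$ and $\mu_{\mathbf{R}_{0}}(\psi)=\mu_{\mathbf{R}_{0}}(\psi\circ\mathbf{K})$ (the latter is the same observation that $\mathbf{K}$ fixes the support of $\mu_{\mathbf{R}_{0}}$), and then compares the two measures to $\mu_{\mathbf{R}_{0}}$ in the L\'{e}vy--Prokhorov metric, using that each $\kappa_{\eta}$ is a diffeomorphism preserving $\mathcal{B}(\mathcal{M})$, so that $L(\mu_{\mathbf{R}}^{\varepsilon},\mu_{\mathbf{R}_{0}})=L(\mu_{\overline{\mathbf{R}}}^{\varepsilon},\mu_{\mathbf{R}_{0}})$ and weak convergence passes over. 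The trade-off is exactly the one you flag: your version needs $f\circ\mathbf{K}^{-1}$ to be continuous, hence joint continuity of $(\eta,x)\mapsto\kappa_{\eta}^{-1}(x)$; this is plausible given the $C^{1}$ dependence of $\phi_{\eta}$ on $\eta$ and of the stable foliation on the vector field, but it is not stated explicitly in the text, so you should state it as a standing hypothesis (or verify it from \cite{AM}, \cite{APPV}) rather than treat it as free. The paper's metric formulation nominally avoids this continuity requirement, working only at the level of Borel sets, although the identification $L(\mu_{\mathbf{R}}^{\varepsilon},\mu_{\mathbf{R}_{0}})=L(\mu_{\overline{\mathbf{R}}}^{\varepsilon},\mu_{\mathbf{R}_{0}})$ also implicitly uses some compatibility between $\mathbf{K}$ and the $\epsilon$-fattenings, so the two proofs ultimately lean on comparable regularity of the $\kappa_{\eta}$. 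In short: both are valid; yours is the cleaner functional-analytic argument provided you make the continuity of $\eta\mapsto\kappa_{\eta}$ explicit.
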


\begin{proof}
For any $A\in\mathcal{B}\left(  \mathcal{M}\right)  \otimes\mathcal{F},$ we
denote by $\overline{A}$ its closure and recall that $\mu_{\mathbf{R}%
}^{\varepsilon}\left(  A\right)  =\mu_{\overline{\mathbf{R}}}^{\varepsilon
}\left(  \mathbf{1}_{\mathbf{K}\left(  A\right)  }\right)  .$ Moreover, for
any real-valued Borel function $\psi$ on $\mathcal{M}\times\Omega
,\mu_{\mathbf{R}_{0}}\left(  \psi\right)  =\mu_{\mathbf{R}_{0}}\left(
\psi\circ\mathbf{K}\right)  .$ Hence, defining, for any $B\in\mathcal{B}%
\left(  \mathcal{M}\right)  ,C\in\mathcal{F},\epsilon>0$%
\begin{equation}
\left(  B\times C\right)  _{\epsilon}:=\left\{  \left(  x,\omega\right)
\in\mathcal{M}\times\Omega:\inf_{y\in B}\left\Vert x-y\right\Vert
<\epsilon,\inf_{\omega^{\prime}\in C}\rho\left(  \omega,\omega^{\prime
}\right)  <\varepsilon\right\}
\end{equation}
we set
\begin{align}
L\left(  \mu_{\mathbf{R}}^{\varepsilon},\mu_{\mathbf{R}_{0}}\right)   &
:=\inf\left\{  \epsilon>0:\mu_{\mathbf{R}}^{\varepsilon}\left(  \overline
{B\times C}\right)  \leq\mu_{\mathbf{R}_{0}}\left(  \overline{\left(  B\times
C\right)  _{\epsilon}}\right)  +\epsilon,\forall B\in\mathcal{B}\left(
\mathcal{M}\right)  ,C\in\mathcal{F}\right\} \\
&  =\inf\left\{  \epsilon>0:\mu_{\overline{\mathbf{R}}}^{\varepsilon}\left(
\mathbf{K}\left(  \overline{B\times C}\right)  \right)  \leq\mu_{\mathbf{R}%
_{0}}\left(  \mathbf{K}\left(  \overline{\left(  B\times C\right)  _{\epsilon
}}\right)  \right)  +\epsilon,\forall B\in\mathcal{B}\left(  \mathcal{M}%
\right)  ,C\in\mathcal{F}\right\}  \ .\nonumber
\end{align}
But, for any $B\in\mathcal{B}\left(  \mathcal{M}\right)  ,C\in\mathcal{F},$
\begin{align}
\mathbf{K}\left(  B\times C\right)   &  =\left\{  \left(  x,\omega\right)
\in\mathcal{M}\times\Omega:\left(  \kappa_{\pi\left(  \omega\right)  }%
^{-1}\left(  x\right)  ,\omega\right)  \in B\times C\right\} \\
&  =\left(  \bigcap\limits_{\omega\in C}\kappa_{\pi\left(  \omega\right)
}\left(  B\right)  \right)  \times C\ ,\nonumber
\end{align}
hence, since for any $\eta\in spt\lambda_{\varepsilon},\kappa_{\eta}$ is a
diffeomorphism, $\kappa_{\eta}\left(  \mathcal{B}\left(  \mathcal{M}\right)
\right)  =\mathcal{B}\left(  \mathcal{M}\right)  ,$ i.e. $L\left(
\mu_{\mathbf{R}}^{\varepsilon},\mu_{\mathbf{R}_{0}}\right)  =L\left(
\mu_{\overline{\mathbf{R}}}^{\varepsilon},\mu_{\mathbf{R}_{0}}\right)  .$
Therefore, the distance between $\mu_{\mathbf{R}}^{\varepsilon}$ and
$\mu_{\mathbf{R}_{0}}$ in the L\'{e}vy-Prokhorov metric, namely $LP\left(
\mu_{\mathbf{R}}^{\varepsilon},\mu_{\mathbf{R}_{0}}\right)  :=L\left(
\mu_{\mathbf{R}}^{\varepsilon},\mu_{\mathbf{R}_{0}}\right)  \vee L\left(
\mu_{\mathbf{R}_{0}},\mu_{\mathbf{R}}^{\varepsilon}\right)  ,$ equal that
between $\mu_{\overline{\mathbf{R}}}^{\varepsilon}$ and $\mu_{\mathbf{R}_{0}%
}.$ Since the weak convergence of measures is equivalent to the convergence in
the $LP$ distance we get the thesis.
\end{proof}

The last two results prove the following.

\begin{corollary}
If for any $\eta\in spt\lambda_{\varepsilon},R_{\eta}%
:\mathcal{M\circlearrowleft}$ is continuous and $\mu_{T_{0}}$ is
stochastically stable, then $\mu_{R_{0}}$ is also stochastically stable.
\end{corollary}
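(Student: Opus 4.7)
The plan is simply to chain together the two results immediately preceding the corollary. Under the stated hypotheses I would first invoke the Theorem: the continuity of every $R_{\eta}$ on $\mathcal{M}$ together with the stochastic stability of $\mu_{T_{0}}$ yields
\begin{equation}
\mu_{\overline{\mathbf{R}}}^{\varepsilon}\longrightarrow\mu_{\mathbf{R}_{0}}\quad\text{weakly, as }\varepsilon\to 0.
\end{equation}
The Theorem's hypothesis is phrased for $\eta\in[0,\varepsilon]$, whereas the corollary writes $\eta\in spt\lambda_{\varepsilon}\subset[-\varepsilon,\varepsilon]$; this is only cosmetic, since our standing perturbation assumptions on $\phi_{\eta}$ give continuity of $R_{\eta}$ uniformly for $\eta$ in a neighborhood of $0$.

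Next I would apply Lemma \ref{Lf}, which transports the weak convergence from the conjugated RDS $\overline{\mathbf{R}}$ back to the original $\mathbf{R}$. Concretely, from the previous step I would obtain
\begin{equation}
\mu_{\mathbf{R}}^{\varepsilon}\longrightarrow\mu_{\mathbf{R}_{0}}\quad\text{weakly, as }\varepsilon\to 0,
\end{equation}
and this is precisely what Definition \ref{defSS_TR} demands in order to conclude that $\mu_{R_{0}}$ is stochastically stable.

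There is essentially no genuine obstacle at this stage, because the two nontrivial ingredients have already been proved: the push-down/lift between the one-dimensional quotient invariant measure $\mu_{\mathbf{T}}^{\varepsilon}$ and the two-dimensional invariant measure $\mu_{\overline{\mathbf{R}}}^{\varepsilon}$ (handled by the Theorem via the $\inf_{x\in q^{-1}(u)}$ construction of Proposition \ref{Prop1}) and the reconjugation back to $\mu_{\mathbf{R}}^{\varepsilon}$ (handled by Lemma \ref{Lf} through the identity $L(\mu_{\mathbf{R}}^{\varepsilon},\mu_{\mathbf{R}_{0}})=L(\mu_{\overline{\mathbf{R}}}^{\varepsilon},\mu_{\mathbf{R}_{0}})$ arising from the fact that $\kappa_{\eta}$ is a diffeomorphism of $\mathcal{M}$, so that $\mathbf{K}$ fiberwise maps $\mathcal{B}(\mathcal{M})$ onto itself). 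Putting them together gives the corollary immediately.
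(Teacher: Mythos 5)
Your proof is correct and follows exactly the route the paper takes: the paper introduces the corollary with the remark that it is a direct consequence of ``the last two results,'' namely the Theorem (giving $\mu_{\overline{\mathbf{R}}}^{\varepsilon}\to\mu_{\mathbf{R}_{0}}$ weakly) and Lemma \ref{Lf} (transferring this to $\mu_{\mathbf{R}}^{\varepsilon}\to\mu_{\mathbf{R}_{0}}$), combined with Definition \ref{defSS_TR}. Your side remark about $[0,\varepsilon]$ versus $spt\lambda_{\varepsilon}$ is a reasonable observation but, as you note, immaterial to the argument.
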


\begin{theorem}
\label{Th1}If $\nu_{1}^{\varepsilon}$ weakly converges to $\mu_{T_{0}},$ then
$\mu_{T_{0}}$ is stochastically stable and $\bar{\nu}_{2}^{\varepsilon}$
weakly converges to $\mu_{R_{0}}.$
\end{theorem}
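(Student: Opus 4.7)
The statement splits into two assertions which can be handled independently, each by combining the hypothesis $\nu_{1}^{\varepsilon}\Rightarrow\mu_{T_{0}}$ with material already assembled in this subsection.

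For the first assertion, namely stochastic stability of $\mu_{T_{0}}$, note that by Definition \ref{defSS_TR} this amounts to the weak convergence $\mu_{\mathbf{T}}^{\varepsilon}\Rightarrow\mu_{\mathbf{T}_{0}}$ on $I\times\Omega$. The factorizations $\mu_{\mathbf{T}}^{\varepsilon}=\nu_{1}^{\varepsilon}\otimes\mathbb{P}_{\varepsilon}$ and $\mu_{\mathbf{T}_{0}}=\mu_{T_{0}}\otimes\delta_{\bar{0}}$ are in place by Remark \ref{Rem1}, and the paper has already recorded that $\mathbb{P}_{\varepsilon}\Rightarrow\delta_{\bar{0}}$ on $\Omega$ as $\varepsilon\to 0$. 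The plan is thus to reduce to the elementary assertion that product measures on compact Polish spaces inherit weak convergence from their marginals. This in turn is handled by testing against functions of the form $(u,\omega)\mapsto f(u)g(\omega)$, with $f\in C(I)$ and $g$ a cylinder function depending on finitely many coordinates on $\Omega$: by Stone--Weierstrass the algebra they generate is dense in $C(I\times\Omega)$, and for such functions
\begin{equation}
\int f(u)g(\omega)\,d\mu_{\mathbf{T}}^{\varepsilon}=\nu_{1}^{\varepsilon}(f)\,\mathbb{P}_{\varepsilon}(g)\longrightarrow\mu_{T_{0}}(f)\,g(\bar{0})=\int f(u)g(\omega)\,d\mu_{\mathbf{T}_{0}}
\end{equation}
follows immediately from the factorization together with the two marginal convergences.

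For the second assertion, having just established stochastic stability of $\mu_{T_{0}}$, I would apply the earlier theorem of this subsection, which, under the standing continuity hypothesis on $R_{\eta}$, gives $\mu_{\overline{\mathbf{R}}}^{\varepsilon}\Rightarrow\mu_{\mathbf{R}_{0}}$ whenever $\mu_{T_{0}}$ is stochastically stable. By Remark \ref{Rem1} one has $\mu_{\overline{\mathbf{R}}}^{\varepsilon}=\bar{\nu}_{2}^{\varepsilon}\otimes\mathbb{P}_{\varepsilon}$ and $\mu_{\mathbf{R}_{0}}=\mu_{R_{0}}\otimes\delta_{\bar{0}}$, so pushing the joint weak convergence through the continuous projection $(x,\omega)\mapsto x$, or equivalently testing against $\psi\in C_{b}(\mathcal{M})$ viewed as a function on $\mathcal{M}\times\Omega$ independent of $\omega$, yields at once $\bar{\nu}_{2}^{\varepsilon}\Rightarrow\mu_{R_{0}}$.

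The only real point of care is the product-measure convergence on the infinite-dimensional factor $\Omega$. It is dispatched using compactness of $\Omega=[-1,1]^{\mathbb{Z}^{+}}$ together with the Stone--Weierstrass density of cylinder functions in $C(\Omega)$; no further dynamical input beyond what has already been developed in the preceding sections is required.
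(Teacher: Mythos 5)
Your treatment of the first assertion is fine and coincides with the paper's: the paper also simply notes that $\nu_1^\varepsilon\Rightarrow\mu_{T_0}$ together with $\mathbb{P}_\varepsilon\Rightarrow\delta_{\bar 0}$ forces $\mu_{\mathbf{T}}^\varepsilon=\nu_1^\varepsilon\otimes\mathbb{P}_\varepsilon\Rightarrow\mu_{T_0}\otimes\delta_{\bar 0}=\mu_{\mathbf{T}_0}$, which is precisely Definition \ref{defSS_TR}.

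The second assertion is where the proposal breaks down. You invoke the preceding (unlabeled) theorem of this subsection to get $\mu_{\overline{\mathbf{R}}}^\varepsilon\Rightarrow\mu_{\mathbf{R}_0}$, but that theorem carries the explicit hypothesis that $R_\eta:\mathcal{M}\circlearrowleft$ is continuous for every $\eta$, and you describe it as a ``standing continuity hypothesis.'' It is not standing: it is a hypothesis of that theorem and of the corollary following Lemma \ref{Lf}, and Theorem \ref{Th1} is deliberately stated \emph{without} it. The point of Theorem \ref{Th1} is precisely to obtain $\bar\nu_2^\varepsilon\Rightarrow\mu_{R_0}$ from convergence of $\nu_1^\varepsilon$ alone, in particular for Poincar\'e sections such as $\mathcal{M}'$ where the return map has a genuine discontinuity along $\Gamma_0$ and the earlier theorem is not available. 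Importing the continuity assumption therefore proves a strictly weaker statement than the one claimed.

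The paper avoids this by a direct argument that never touches continuity of $R_\eta$. Using $Q\circ\overline{\mathbf R}=\mathbf T\circ Q$ and the invariance of $\mu_{\mathbf T}^\varepsilon$, one first shows $\mu_{\overline{\mathbf R}}^\varepsilon[\varphi\circ Q]=\mu_{\mathbf T}^\varepsilon[\varphi]$ and the analogous identity $\mu_{R_0}[\varphi_0\circ q]=\mu_{T_0}[\varphi_0]$ for $\eta=0$; combined with the first assertion this gives $\bar\nu_2^\varepsilon[\bar\varphi\circ q]\to\mu_{R_0}[\bar\varphi\circ q]$ for every $\bar\varphi\in C_b(I)$. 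Because $C_b(I)\circ q$ is far from dense in $C_b(\mathcal{M})$, the paper then passes to closed sets: the Lipschitz cutoffs $\psi_A^\epsilon$ and $\varphi_J^\epsilon$ together with the identity $\psi_{b(A)}^\epsilon=\varphi_{q(A)}^\epsilon\circ q$ let one bound $\overline{\lim}_m\bar\nu_2^m(\overline A)$ above by $\mu_{R_0}(\overline A)$, and Portmanteau finishes the argument. To repair your proposal you would either have to add the continuity of $R_\eta$ to the hypotheses of the theorem (which changes the statement), or replace the appeal to the earlier theorem by the quotient-map-and-Portmanteau argument just described.
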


\begin{proof}
By (\ref{QR=TQ}), $\forall\varphi\in L_{\mathbb{P}_{\lambda}}^{1}\left(
\Omega,C_{b}\left(  I\right)  \right)  $ and $n\geq1,$ it follows that
\begin{equation}
\mu_{\overline{\mathbf{R}}}^{m}\left[  \varphi\circ Q\circ\overline
{\mathbf{R}}^{n}\right]  =\mu_{\overline{\mathbf{R}}}^{m}\left[  \varphi
\circ\mathbf{T}^{n}\circ Q\right]  \ .
\end{equation}
Moreover, since $\forall\left(  u,\omega\right)  \in I\times\Omega,$%
\begin{equation}
\left(  \varphi\circ Q\right)  _{-}\left(  u,\omega\right)  =\inf_{x\in
q^{-1}\left(  u\right)  }\varphi\circ Q\left(  x,\omega\right)  =\inf_{x\in
q^{-1}\left(  u\right)  }\varphi\circ q\left(  x\right)  =\varphi\left(
u\right)  \ ,
\end{equation}
as well as
\begin{equation}
\left(  \varphi\circ Q\right)  _{-}\left(  u,\omega\right)  =\sup_{x\in
q^{-1}\left(  u\right)  }\varphi\circ Q\left(  x,\omega\right)  =\sup_{x\in
q^{-1}\left(  u\right)  }\varphi\circ q\left(  x\right)  =\varphi\left(
u\right)  \ ,
\end{equation}
$\forall m\geq1,$ by the invariance of $\mu_{\mathbf{T}}^{m}$ under
$\mathbf{T},$ we get
\begin{align}
\mu_{\overline{\mathbf{R}}}^{m}\left[  \varphi\circ Q\right]   &
=\lim_{n\rightarrow\infty}\mu_{\mathbf{T}}^{m}\left[  \left(  \varphi\circ
Q\circ\overline{\mathbf{R}}^{n}\right)  _{\pm}\right]  =\lim_{n\rightarrow
\infty}\mu_{\mathbf{T}}^{m}\left[  \left(  \varphi\circ\mathbf{T}^{n}\circ
Q\right)  _{\pm}\right] \\
&  =\lim_{n\rightarrow\infty}\mu_{\mathbf{T}}^{m}\left[  \varphi
\circ\mathbf{T}^{n}\right]  =\mu_{\mathbf{T}}^{m}\left[  \varphi\right]
\ .\nonumber
\end{align}
Furthermore, by (\ref{defT}), $\forall\varphi_{0}\in C_{b}\left(  I\right)
,u\in I$ since
\begin{equation}
\left(  \varphi_{0}\circ q\right)  _{-}\left(  u\right)  =\inf_{x\in
q^{-1}\left(  u\right)  }\varphi_{0}\circ q\left(  x\right)  =\varphi
_{0}\left(  u\right)  =\sup_{x\in q^{-1}\left(  u\right)  }\varphi_{0}\circ
q\left(  x\right)  =\left(  \varphi_{0}\circ q\right)  _{+}\left(  u\right)
\ ,
\end{equation}
then
\begin{align}
\mu_{R_{0}}\left[  \varphi_{0}\circ q\right]   &  =\lim_{n\rightarrow\infty
}\mu_{T_{0}}\left[  \left(  \varphi_{0}\circ q\circ R_{0}^{n}\right)  _{\pm
}\right]  =\lim_{n\rightarrow\infty}\mu_{T_{0}}\left[  \left(  \varphi
_{0}\circ T_{0}^{n}\circ q\right)  _{\pm}\right] \label{l0}\\
&  =\lim_{n\rightarrow\infty}\mu_{T_{0}}\left[  \varphi_{0}\circ T_{0}%
^{n}\right]  =\mu_{T_{0}}\left[  \varphi_{0}\right]  \ .\nonumber
\end{align}
Thus, $\forall\varphi\in L_{\mathbb{P}_{\lambda}}^{1}\left(  \Omega
,C_{b}\left(  I\right)  \right)  ,$ setting $\varphi_{0}=\varphi\left(
\cdot,\bar{0}\right)  ,\varphi_{0}\circ q=\varphi\circ Q\left(  \cdot,\bar
{0}\right)  $ and
\begin{equation}
\mu_{T_{0}}\otimes\delta_{\bar{0}}\left[  \varphi\right]  =\mu_{T_{0}}\left[
\varphi_{0}\right]  =\mu_{R_{0}}\left[  \varphi_{0}\circ q\right]  =\mu
_{R_{0}}\otimes\delta_{\bar{0}}\left[  \varphi_{0}\circ q\right]  =\mu_{R_{0}%
}\otimes\delta_{\bar{0}}\left[  \varphi\circ Q\right]  \ .
\end{equation}

Therefore, if $\mu_{\mathbf{T}}^{m}$ weakly converges to $\mu_{\mathbf{T}_{0}%
},$ then
\begin{align}
\lim_{m\rightarrow\infty}\mu_{\overline{\mathbf{R}}}^{m}\left[  \varphi\circ
Q\right]   &  =\lim_{m\rightarrow\infty}\mu_{\mathbf{T}}^{m}\left[
\varphi\right]  =\mu_{\mathbf{T}_{0}}\left[  \varphi\right]  =\mu_{T_{0}%
}\otimes\delta_{\bar{0}}\left[  \varphi\right] \label{l1}\\
&  =\mu_{R_{0}}\otimes\delta_{\bar{0}}\left[  \varphi\circ Q\right]
=\mu_{\mathbf{R}_{0}}\left[  \varphi\circ Q\right]  \ .\nonumber
\end{align}
Clearly, if $\nu_{1}^{m}$ weakly converges to $\mu_{T_{0}},$ since
$\mathbb{P}_{m}$ weakly converges to $\delta_{\bar{0}},$ then $\mu
_{\mathbf{T}}^{m}=\nu_{1}^{m}\otimes\mathbb{P}_{m}$ weakly converges to
$\mu_{\mathbf{T}_{0}}=\mu_{T_{0}}\otimes\delta_{\bar{0}}.$ Hence, $\forall
\bar{\varphi}\in C_{b}\left(  I\right)  ,$ by (\ref{defqq}), since
$\bar{\varphi}\circ\mathbf{q}\in L_{\mathbb{P}_{\lambda}}^{1}\left(
\Omega,C_{b}\left(  I\right)  \right)  ,$ and since $\forall x\in
\mathcal{M},\omega\in\Omega,\bar{\varphi}\circ q\left(  x\right)
=\bar{\varphi}\circ\mathbf{q}\circ Q\left(  x,\omega\right)  ,$ setting
$\varphi=\bar{\varphi}\circ\mathbf{q},$ by (\ref{l1}) we have
\begin{align}
\lim_{m\rightarrow\infty}\bar{\nu}_{2}^{m}\left[  \bar{\varphi}\circ q\right]
&  =\lim_{m\rightarrow\infty}\bar{\nu}_{2}^{m}\otimes\mathbb{P}_{m}\left[
\bar{\varphi}\circ q\right]  =\lim_{m\rightarrow\infty}\bar{\nu}_{2}%
^{m}\otimes\mathbb{P}_{m}\left[  \bar{\varphi}\circ\mathbf{q}\circ Q\right]
=\lim_{m\rightarrow\infty}\mu_{\overline{\mathbf{R}}}^{m}\left[  \bar{\varphi
}\circ\mathbf{q}\circ Q\right] \label{l2}\\
&  =\mu_{\mathbf{R}_{0}}\left[  \bar{\varphi}\circ\mathbf{q}\circ Q\right]
=\mu_{R_{0}}\left[  \bar{\varphi}\circ q\right]  \ .\nonumber
\end{align}

Given $A\in\mathcal{B}\left(  \mathcal{M}\right)  ,$ let
\begin{align}
q\left(  A\right)   &  :=\bigcup\limits_{x\in A}q\left(  x\right)  =\left\{
u\in I:u=q\left(  x\right)  \ ,\ x\in A\right\}  \ ,\\
b\left(  A\right)   &  :=\left\{  x\in\mathcal{M}:q\left(  x\right)  \in
q\left(  A\right)  \right\}  \supseteq A\ .
\end{align}
Moreover, $\forall\epsilon>0$ we set
\begin{equation}
\mathcal{M}\ni x\longmapsto\psi_{A}^{\epsilon}\left(  x\right)  :=\left(
1-\inf_{y\in A}\frac{\left\Vert x-y\right\Vert }{\epsilon}\right)  \vee
0\in\left[  0,1\right]  \ ,
\end{equation}
as well as
\begin{equation}
I\ni u\longmapsto\varphi_{J}^{\epsilon}\left(  x\right)  :=\left(
1-\inf_{v\in J}\frac{\left\vert u-v\right\vert }{\epsilon}\right)  \vee
0\in\left[  0,1\right]  \;,\;J\in\mathcal{B}\left(  I\right)  \ .
\end{equation}
Since
\begin{equation}
\inf_{y\in b\left(  A\right)  }\left\Vert x-y\right\Vert =\inf_{y\in b\left(
A\right)  }\left\vert q\left(  x\right)  -q\left(  y\right)  \right\vert
=\inf_{v\in q\left(  A\right)  }\left\vert q\left(  x\right)  -v\right\vert
\end{equation}
$\forall\epsilon>0$ we get $\psi_{b\left(  A\right)  }^{\epsilon}%
=\varphi_{q\left(  A\right)  }^{\epsilon}\circ q.$

Hence, given $A\in\mathcal{B}\left(  \mathcal{M}\right)  $ and denoting by
$\overline{A}$ its closure, since $\psi_{A}^{\epsilon}\in C_{b}\left(
\mathcal{M}\right)  ,\varphi_{q\left(  A\right)  }^{\epsilon}\in C_{b}\left(
I\right)  ,$ from (\ref{l2}), (\ref{l1}) and (\ref{l0}), $\forall\epsilon>0$
we have
\begin{align}
\overline{\lim}_{m}\bar{\nu}_{2}^{m}\left(  \overline{A}\right)   &
\leq\overline{\lim}_{m}\bar{\nu}_{2}^{m}\left[  \psi_{b\left(  \overline
{A}\right)  }^{\epsilon}\right]  =\overline{\lim}_{m}\bar{\nu}_{2}^{m}\left[
\varphi_{q\left(  \overline{A}\right)  }^{\epsilon}\circ q\right] \\
&  =\overline{\lim}_{m}\mu_{\overline{\mathbf{R}}}^{m}\left[  \varphi
_{q\left(  \overline{A}\right)  }^{\epsilon}\circ\mathbf{q}\circ Q\right]
=\lim_{m\rightarrow\infty}\mu_{\overline{\mathbf{R}}}^{m}\left[
\varphi_{q\left(  \overline{A}\right)  }^{\epsilon}\circ\mathbf{q}\circ
Q\right] \nonumber\\
&  =\mu_{R_{0}}\left[  \varphi_{q\left(  \overline{A}\right)  }^{\epsilon
}\circ q\right]  \ ,\nonumber
\end{align}
that is
\begin{equation}
\overline{\lim}_{m}\bar{\nu}_{2}^{m}\left(  \overline{A}\right)  \leq
\mu_{R_{0}}\left[  \mathbf{1}_{q\left(  \overline{A}\right)  }\circ q\right]
=\mu_{R_{0}}\left(  \overline{A}\right)
\end{equation}
and the thesis follows from Portmanteau theorem and Remark \ref{Rem2}.
\end{proof}

This result together with Lemma \ref{Lf} implies the stochastic stability of
$\mu_{R_{0}}.$

\begin{corollary}
\label{C1}If $\bar{\nu}_{2}^{\varepsilon}$ weakly converges to $\mu_{R_{0}},$
then $\mu_{R_{0}}$ is stochastically stable.
\end{corollary}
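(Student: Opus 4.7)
The plan is to reduce the statement to a direct assembly of Remark \ref{Rem1} (disintegration of $\mu_{\overline{\mathbf{R}}}^{\varepsilon}$) together with Lemma \ref{Lf} (which transfers weak convergence of $\mu_{\overline{\mathbf{R}}}^{\varepsilon}$ to that of $\mu_{\mathbf{R}}^{\varepsilon}$ via the diffeomorphism $\mathbf{K}$). The key intermediate step is to upgrade the hypothesis, which is only the weak convergence of the $\mathcal{M}$-marginals, to the weak convergence of the joint measures on $\mathcal{M}\times\Omega.$

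First, by Remark \ref{Rem1} the invariant measure $\mu_{\overline{\mathbf{R}}}^{\varepsilon}$ disintegrates as
\begin{equation}
\mu_{\overline{\mathbf{R}}}^{\varepsilon}=\bar{\nu}_{2}^{\varepsilon}\otimes\mathbb{P}_{\varepsilon}.
\end{equation}
By hypothesis $\bar{\nu}_{2}^{\varepsilon}\rightharpoonup\mu_{R_{0}}$ weakly on $\left(\mathcal{M},\mathcal{B}(\mathcal{M})\right),$ and by the standing assumption on $\lambda_{\varepsilon}$ we have $\mathbb{P}_{\varepsilon}\rightharpoonup\delta_{\bar{0}}$ weakly on $\left(\Omega,\mathcal{F}\right).$ Since weak convergence of marginals implies weak convergence of the associated product measures on a Polish product space (a standard result; alternatively, here one of the limits is a Dirac mass, so the convergence is trivial to verify by applying Fubini to any $f\in C_{b}(\mathcal{M}\times\Omega)$ and using that $\omega\mapsto\int f(x,\omega)\bar{\nu}_{2}^{\varepsilon}(dx)$ converges pointwise and boundedly to $\omega\mapsto\int f(x,\omega)\mu_{R_{0}}(dx)$, with the $\mathbb{P}_{\varepsilon}$-integral collapsing to evaluation at $\bar{0}$), we deduce
\begin{equation}
\mu_{\overline{\mathbf{R}}}^{\varepsilon}=\bar{\nu}_{2}^{\varepsilon}\otimes\mathbb{P}_{\varepsilon}\underset{\varepsilon\to 0}{\rightharpoonup}\mu_{R_{0}}\otimes\delta_{\bar{0}}=\mu_{\mathbf{R}_{0}}.
\end{equation}

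Second, invoking Lemma \ref{Lf} with this weak limit yields $\mu_{\mathbf{R}}^{\varepsilon}\rightharpoonup\mu_{\mathbf{R}_{0}}$ as $\varepsilon\to 0,$ which is precisely the definition of stochastic stability of $\mu_{R_{0}}$ given in Definition \ref{defSS_TR}. The only nontrivial point is the joint-from-marginals step, which is really a one-line consequence of the limit $\mathbb{P}_{\varepsilon}\rightharpoonup\delta_{\bar{0}}$ being atomic; no tightness argument beyond the trivial tightness of $\{\bar{\nu}_{2}^{\varepsilon}\}$ (implied by weak convergence) is needed. Thus I expect no serious obstacle: the corollary is essentially a bookkeeping combination of the preceding Remark \ref{Rem1}, Lemma \ref{Lf} and the product structure of $\mu_{\overline{\mathbf{R}}}^{\varepsilon}.$
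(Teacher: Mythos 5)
Your proof is correct and follows the same route as the paper's: disintegrate $\mu_{\overline{\mathbf{R}}}^{\varepsilon}=\bar{\nu}_{2}^{\varepsilon}\otimes\mathbb{P}_{\varepsilon}$ via Remark \ref{Rem1}, pass the weak convergence of each factor ($\bar{\nu}_{2}^{\varepsilon}\rightharpoonup\mu_{R_{0}}$, $\mathbb{P}_{\varepsilon}\rightharpoonup\delta_{\bar{0}}$) to the product to get $\mu_{\overline{\mathbf{R}}}^{\varepsilon}\rightharpoonup\mu_{\mathbf{R}_{0}}$, and then invoke Lemma \ref{Lf} together with Definition \ref{defSS_TR}. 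The only difference is that you spell out the product-of-marginals step, which the paper asserts without comment; your filling in of that detail is sound (weak convergence of factor measures on Polish spaces does imply weak convergence of the product measures).
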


\begin{proof}
If $\bar{\nu}_{2}^{\varepsilon}$ weakly converges to $\mu_{R_{0}},$ then by
Remark \ref{Rem1} $\mu_{\overline{\mathbf{R}}}^{\varepsilon}=\bar{\nu}%
_{2}^{\varepsilon}\otimes\mathbb{P}_{\varepsilon}$ weakly converges to
$\mu_{\mathbf{R}_{0}}$ and, by Definition \ref{defSS_TR}, the thesis follows
from Lemma \ref{Lf}.
\end{proof}

\subsection{Stochastic stability of $\mu_{S_{0}}$}

As a corollary of the stochastic stability of $\mu_{R_{0}}$ we have the following.

\begin{proposition}
Let $\mathbf{t}$ be bounded away from zero and integrable w.r.t.
$\mu_{\mathbf{R}}.$ If $\mu_{R_{0}}$ is stochastically stable, then
$\mu_{S_{0}}$ is also stochastically stable.
\end{proposition}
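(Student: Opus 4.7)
The plan is to combine the explicit ratio formula
\[
\mu_{\mathbf{S}}^{\varepsilon}(f)=\frac{\mu_{\mathbf{R}}^{\varepsilon}(\Psi_{f})}{\mu_{\mathbf{R}}^{\varepsilon}(\mathbf{t})},\qquad \Psi_{f}(x,\omega):=\int_{0}^{\mathbf{t}(x,\omega)}f\circ\hat{\pi}(x,\omega,s)\,ds,
\]
coming from (\ref{mu_S}), with the weak convergence $\mu_{\mathbf{R}}^{\varepsilon}\longrightarrow\mu_{\mathbf{R}_{0}}$ granted by the stochastic stability of $\mu_{R_{0}}$ via Lemma \ref{Lf}. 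This reduces the claim to the two scalar limits $\mu_{\mathbf{R}}^{\varepsilon}(\mathbf{t})\to\mu_{\mathbf{R}_{0}}(\mathbf{t})$ and $\mu_{\mathbf{R}}^{\varepsilon}(\Psi_{f})\to\mu_{\mathbf{R}_{0}}(\Psi_{f})$ for every $f\in C_{b}(\mathfrak{V})$. Neither $\mathbf{t}$ nor $\Psi_{f}$ is bounded, so weak convergence cannot be applied directly: the return time $\mathbf{t}(x,\omega)=\tau_{\pi(\omega)}(x)$ blows up as $x$ approaches the stable leaf $\Gamma_{\pi(\omega)}$ of the hyperbolic singularity, and $|\Psi_{f}|\leq\|f\|_{\infty}\mathbf{t}$.

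I would close both limits by a truncation plus uniform tail control scheme. Fix $M>0$ and set $\mathbf{t}_{M}:=\mathbf{t}\wedge M$ together with $\Psi_{f,M}(x,\omega):=\int_{0}^{\mathbf{t}_{M}(x,\omega)}f\circ\hat{\pi}(x,\omega,s)\,ds$; these are bounded by $M$ and $M\|f\|_{\infty}$ respectively, and are continuous off the set $\{(x,\omega):x\in\Gamma_{\pi(\omega)}\}$, which is $\mu_{\mathbf{R}_{0}}$-negligible because the $\Omega$-marginal of $\mu_{\mathbf{R}_{0}}$ equals $\delta_{\bar{0}}$ (so only the single leaf $\Gamma_{0}$ matters) and the projected marginal $\nu_{1}$ of $\mu_{R_{0}}$ on $I$ assigns zero mass to the point $\hat{u}_{0}=q(\Gamma_{0})$. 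The Portmanteau theorem then produces, for each fixed $M$,
\[
\mu_{\mathbf{R}}^{\varepsilon}(\mathbf{t}_{M})\longrightarrow \mu_{\mathbf{R}_{0}}(\mathbf{t}_{M}),\qquad \mu_{\mathbf{R}}^{\varepsilon}(\Psi_{f,M})\longrightarrow \mu_{\mathbf{R}_{0}}(\Psi_{f,M}).
\]

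The tails will be controlled uniformly in $\varepsilon$ by reusing the logarithmic estimate of the preceding lemma: the bound $\bar{\tau}_{\eta}(x)\leq C_{1}\log|q(x)-\hat{u}_{0}|^{-1}$ combined with a sup-norm bound on the densities of $\mu_{\mathbf{T}}^{\varepsilon}$ that is uniform in $\varepsilon$ gives $\sup_{\varepsilon}\mu_{\mathbf{R}}^{\varepsilon}(\mathbf{t}\,\mathbf{1}_{\{\mathbf{t}>M\}})\to 0$ as $M\to\infty$, and the pointwise inequality $|\Psi_{f}-\Psi_{f,M}|\leq\|f\|_{\infty}\mathbf{t}\,\mathbf{1}_{\{\mathbf{t}>M\}}$ transfers the same control to $\Psi_{f}$. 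Sending $M\to\infty$ after $\varepsilon\to 0$ then yields the two scalar limits, and inserting them into the ratio produces $\mu_{\mathbf{S}}^{\varepsilon}(f)\to\mu_{\mathbf{S}_{0}}(f)$ for every $f\in C_{b}(\mathfrak{V})$.

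The main obstacle I anticipate is the uniformity in $\varepsilon$ of the tail estimate: weak convergence alone does not commute with the unbounded observable $\mathbf{t}$, and the truncation closes only if the density bound for $\mu_{\mathbf{T}}^{\varepsilon}$ is $\varepsilon$-uniform. This is exactly where the strong ($L^{1}$) form of the stochastic stability of $\mu_{T_{0}}$ mentioned in Remark \ref{Rem2} has to enter, and where one must carefully propagate a uniform density bound through Proposition \ref{Prop1} and the diffeomorphism $\mathbf{K}$ of (\ref{defK}) so as to majorise $\mathbf{t}\,\mathbf{1}_{\{\mathbf{t}>M\}}$ by an $\varepsilon$-independent integrable tail.
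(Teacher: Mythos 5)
Your proposal follows essentially the same truncation scheme as the paper's proof: write $\mu_{\mathbf{S}}^{\varepsilon}(f)$ as a ratio, truncate the return time (and hence the numerator) at level $M$, let $\varepsilon\to 0$ using weak convergence of $\mu_{\mathbf{R}}^{\varepsilon}$ on the now-bounded truncated observables, and then close the argument by sending $M\to\infty$ with a tail estimate; the paper's $\mathbf{E}_{M}(f)=\int_{0}^{\mathbf{t}\wedge M}f\circ\hat{\pi}\,dt$ is exactly your $\Psi_{f,M}$.

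Where your write-up is actually more careful than the paper's is in isolating what the tail step needs. The paper picks $M_{\epsilon}$ so that $\mu_{\mathbf{R}}^{\varepsilon}\bigl[(\mathbf{t}-\mathbf{t}\wedge M)\mathbf{1}_{(M,\infty)}(\mathbf{t})\bigr]\leq\epsilon$ using only $\mathbf{t}\in L^{1}_{\mu_{\mathbf{R}}^{\varepsilon}}$, then lets $m\to\infty$ with $M$ fixed, which silently uses the same tail bound for every $\mu_{\mathbf{R}}^{m}$ along the sequence. You correctly flag that this uniformity does not come for free from weak convergence against the unbounded $\mathbf{t}$, and must be extracted from an $\varepsilon$-uniform density bound for $\mu_{\mathbf{T}}^{\varepsilon}$ (fed through the logarithmic bound on $\bar{\tau}_{\eta}$ and the conjugacy $\mathbf{K}$), which is exactly what the Lasota--Yorke machinery of Section \ref{SST} provides. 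Likewise your observation that $\mathbf{t}\wedge M$ and $\Psi_{f,M}$ are only $\mu_{\mathbf{R}_{0}}$-a.e.\ continuous (discontinuous across $\Gamma_{\pi(\omega)}$), so that Portmanteau for a.e.-continuous bounded observables is the right tool, is a precision the paper leaves implicit. These are refinements of, not departures from, the paper's argument.
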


\begin{proof}
Given $\eta\in spt\lambda_{\varepsilon},$ if $f$ is a bounded measurable
function on $\mathfrak{V},$ there exists a bounded measurable function
$\check{f}$ on $\mathcal{V}_{\eta}$ such that, denoting by $\breve{f}$ its
extension on $\mathcal{V}_{\eta}\times\Omega$ by setting
\begin{equation}
\mathcal{V}_{\eta}\times\Omega\ni\left(  x,s,\omega\right)  \longmapsto
\breve{f}\left(  x,s,\omega\right)  :=\check{f}\left(  x,s\right)
\in\mathbb{R}\ ,
\end{equation}
by (\ref{Pp}),
\begin{equation}
\check{f}\left(  \tilde{\pi}_{\eta}\left(  \cdot,\cdot\right)  \right)
=\breve{f}\left(  \tilde{\pi}_{\eta}\left(  \cdot,\cdot\right)  ,\cdot\right)
=f\circ\hat{\pi}_{\bar{\eta}}\left(  \cdot,\cdot,\cdot\right)  \ .
\end{equation}
Then, since the marginal on $\left(  \Omega,\mathcal{B}\left(  \Omega\right)
\right)  $ of $\mu_{\mathbf{R}_{0}}$ is the Dirac mass at $\bar{0},$ by
(\ref{deft_e}),
\begin{align}
\mu_{\mathbf{R}_{0}}\left[  \int_{0}^{\mathbf{t}_{0}}dsf\circ\hat{\pi}\left(
\cdot,\bar{0},s\right)  \right]   &  =\mu_{\mathbf{R}_{0}}\left[  \int
_{0}^{\mathbf{t}_{0}}dsf\circ\hat{\pi}_{\bar{0}}\left(  \cdot,\cdot,s\right)
\right]  =\mu_{\mathbf{R}_{0}}\left[  \int_{0}^{\mathbf{t}_{0}}ds\breve
{f}\left(  \tilde{\pi}_{0}\left(  \cdot,s\right)  ,\bar{0}\right)  \right] \\
&  =\mu_{R_{0}}\left[  \int_{0}^{\tau_{0}}ds\check{f}\circ\tilde{\pi}%
_{0}\left(  \cdot,s\right)  \right] \nonumber
\end{align}
and
\begin{equation}
\mu_{S_{0}}\left[  \check{f}\right]  =\frac{\mu_{R_{0}}\left[  \int_{0}%
^{\tau_{0}}ds\check{f}\circ\tilde{\pi}_{0}\left(  \cdot,s\right)  \right]
}{\mu_{\mathbf{R}_{0}}\left[  \mathbf{t}_{0}\right]  }=\mu_{\mathbf{S}_{0}%
}\left[  f\right]  \ .
\end{equation}

Since $\mathbf{t}\in L_{\mu_{\mathbf{R}}^{\varepsilon}}^{1},\mathbf{t}_{0}\in
L_{\mu_{\mathbf{R}_{0}}}^{1},$ for any $\epsilon>0,$ there exists
$M_{\epsilon}\in\mathbb{N}$ such that, $\forall M>M_{\epsilon},$%
\begin{gather}
\left\vert \mu_{\mathbf{R}}^{\varepsilon}\left(  \mathbf{t}\right)
-\mu_{\mathbf{R}}^{\varepsilon}\left(  \mathbf{t}\wedge M\right)  \right\vert
+\left\vert \mu_{\mathbf{R}_{0}}\left(  \mathbf{t}_{0}\right)  -\mu
_{\mathbf{R}}\left(  \mathbf{t}_{0}\wedge M\right)  \right\vert =\\
\mu_{\mathbf{R}}^{\varepsilon}\left[  \left(  \mathbf{t}-\mathbf{t}\wedge
M\right)  \mathbf{1}_{\left(  M,\infty\right)  }\left(  \mathbf{t}\right)
\right]  +\mu_{\mathbf{R}_{0}}\left[  \left(  \mathbf{t}_{0}-\mathbf{t}%
_{0}\wedge M\right)  \mathbf{1}_{\left(  M,\infty\right)  }\left(
\mathbf{t}_{0}\right)  \right]  \leq\epsilon\ .\nonumber
\end{gather}
Hence, for any bounded measurable function $f$ on $\mathfrak{V},$%
\begin{align}
\mu_{\mathbf{R}}^{\varepsilon}\left[  \int_{0}^{\mathbf{t}}dsf\circ\hat{\pi
}\left(  \cdot,\cdot,s\right)  \right]   &  =\mu_{\mathbf{R}}^{\varepsilon
}\left[  \left(  \int_{0}^{\mathbf{t}}dsf\circ\hat{\pi}\left(  \cdot
,\cdot,s\right)  \right)  \left(  \mathbf{1}_{\left[  0,M\right]  }\left(
\mathbf{t}\right)  +\mathbf{1}_{\left(  M,\infty\right)  }\left(
\mathbf{t}\right)  \right)  \right] \\
&  =\mu_{\mathbf{R}}^{\varepsilon}\left[  \int_{0}^{\mathbf{t}\wedge
M}dsf\circ\hat{\pi}\left(  \cdot,\cdot,s\right)  \right]  +\mu_{\mathbf{R}%
}^{\varepsilon}\left[  \mathbf{1}_{\left(  M,\infty\right)  }\left(
\mathbf{t}\right)  \int_{M}^{\mathbf{t}}dsf\circ\hat{\pi}\left(  \cdot
,\cdot,s\right)  \right] \nonumber
\end{align}
which implies
\begin{equation}
\left\vert \mu_{\mathbf{R}}^{\varepsilon}\left[  \int_{0}^{\mathbf{t}}%
dsf\circ\hat{\pi}\left(  \cdot,\cdot,s\right)  \right]  -\mu_{\mathbf{R}%
}^{\varepsilon}\left[  \int_{0}^{\mathbf{t}\wedge M}dsf\circ\hat{\pi}\left(
\cdot,\cdot,s\right)  \right]  \right\vert \leq\epsilon\sup_{\left(
x,\omega,s\right)  \in\mathfrak{V}}\left\vert f\left(  x,\omega,s\right)
\right\vert \ .
\end{equation}
Therefore, since%
\begin{equation}
\mu_{\mathbf{S}}^{\varepsilon}\left[  f\right]  =\frac{\mu_{\mathbf{R}%
}^{\varepsilon}\left(  \mathbf{t}\wedge M\right)  }{\mu_{\mathbf{R}%
}^{\varepsilon}\left(  \mathbf{t}\right)  }\frac{\mu_{\mathbf{R}}%
^{\varepsilon}\left[  \int_{0}^{\mathbf{t}\wedge M}dsf\circ\hat{\pi}\left(
\cdot,\cdot,s\right)  \right]  }{\mu_{\mathbf{R}}^{\varepsilon}\left(
\mathbf{t}\wedge M\right)  }+\frac{\mu_{\mathbf{R}}^{\varepsilon}\left[
\mathbf{1}_{\left(  M,\infty\right)  }\left(  \mathbf{t}\right)  \int
_{M}^{\mathbf{t}}dsf\circ\hat{\pi}\left(  \cdot,\cdot,s\right)  \right]  }%
{\mu_{\mathbf{R}}^{\varepsilon}\left(  \mathbf{t}\right)  }\ ,
\end{equation}
we obtain
\begin{align}
\left\vert \mu_{\mathbf{S}}^{\varepsilon}\left[  f\right]  -\frac
{\mu_{\mathbf{R}}^{\varepsilon}\left[  \int_{0}^{\mathbf{t}\wedge M}%
dsf\circ\hat{\pi}\left(  \cdot,\cdot,s\right)  \right]  }{\mu_{\mathbf{R}%
}^{\varepsilon}\left(  \mathbf{t}\wedge M\right)  }\right\vert  &
\leq\left\vert 1-\frac{\mu_{\mathbf{R}}^{\varepsilon}\left(  \mathbf{t}\wedge
M\right)  }{\mu_{\mathbf{R}}^{\varepsilon}\left(  \mathbf{t}\right)
}\right\vert \frac{\mu_{\mathbf{R}}^{\varepsilon}\left[  \int_{0}%
^{\mathbf{t}\wedge M}dsf\circ\hat{\pi}\left(  \cdot,\cdot,s\right)  \right]
}{\mu_{\mathbf{R}}^{\varepsilon}\left(  \mathbf{t}\wedge M\right)  }+\\
+\frac{\sup_{\left(  x,\omega,s\right)  \in\mathfrak{V}}\left\vert f\left(
x,\omega,s\right)  \right\vert }{\mu_{\mathbf{R}}^{\varepsilon}\left(
\mathbf{t}\right)  \wedge1}\epsilon &  \leq2\epsilon\frac{\sup_{\left(
x,\omega,s\right)  \in\mathfrak{V}}\left\vert f\left(  x,\omega,s\right)
\right\vert }{\mu_{\mathbf{R}}^{\varepsilon}\left(  \mathbf{t}\right)
\wedge1}\ .\nonumber
\end{align}
Moreover, by the same argument, we also get
\begin{equation}
\left\vert \mu_{\mathbf{R}_{0}}\left[  \int_{0}^{\mathbf{t}_{0}}dsf\circ
\hat{\pi}_{\bar{0}}\left(  \cdot,\cdot,s\right)  \right]  -\mu_{\mathbf{R}%
_{0}}\left[  \int_{0}^{\mathbf{t}_{0}\wedge M}dsf\circ\hat{\pi}_{\bar{0}%
}\left(  \cdot,\cdot,s\right)  \right]  \right\vert \leq\epsilon\sup_{\left(
x,\omega,s\right)  \in\mathfrak{V}}\left\vert f\left(  x,\omega,s\right)
\right\vert
\end{equation}
and
\begin{equation}
\left\vert \mu_{\mathbf{S}_{0}}\left[  f\right]  -\frac{\mu_{\mathbf{R}_{0}%
}\left[  \int_{0}^{\mathbf{t}_{0}\wedge M}dsf\circ\hat{\pi}\left(  \cdot
,\cdot,s\right)  \right]  }{\mu_{\mathbf{R}_{0}}\left(  \mathbf{t}_{0}\wedge
M\right)  }\right\vert \leq2\epsilon\frac{\sup_{\left(  x,\omega,s\right)
\in\mathfrak{V}}\left\vert f\left(  x,\omega,s\right)  \right\vert }%
{\mu_{\mathbf{R}_{0}}\left(  \mathbf{t}_{0}\right)  \wedge1}\ .
\end{equation}

Let $\mathbf{t}^{M}:=\mathbf{t}\wedge M,\mathbf{t}_{0}^{M}:=\mathbf{t}%
_{0}\wedge M$ and let $\left\{  \varepsilon_{m}\right\}  _{m\geq1}$ be any
sequence in $[0,1)$ converging to $0.$ Since $\mu_{\mathbf{R}}^{m}$ weakly
converges to $\mu_{\mathbf{R}_{0}},$ for any $\delta>0,$ there exists
$N_{\delta}>1$ such that, $\forall m\geq N_{\delta},$%
\begin{equation}
\left\vert \mu_{\mathbf{R}}^{m}\left(  \mathbf{t}^{M}\right)  -\mu
_{\mathbf{R}_{0}}\left(  \mathbf{t}^{M}\right)  \right\vert =\left\vert
\mu_{\mathbf{R}}^{m}\left(  \mathbf{t}^{M}\right)  -\mu_{\mathbf{R}_{0}%
}\left(  \mathbf{t}_{0}^{M}\right)  \right\vert \leq\delta\ .
\end{equation}
Moreover, since $\mathbf{t}^{M}$ is bounded, considering the linear map,
\begin{equation}
C_{\Omega}\left(  \mathfrak{V}\right)  \ni f\longmapsto\mathbf{E}_{M}\left(
f\right)  :=\int_{0}^{\mathbf{t}^{M}}dsf\circ\hat{\pi}\left(  \cdot
,\cdot,s\right)  \in L_{\mathbb{P}_{m}}^{1}\left(  \Omega,C_{b}\left(
\mathcal{M}\right)  \right)  \ ,
\end{equation}
from the linear space $C_{\Omega}\left(  \mathfrak{V}\right)  $ of bounded
measurable functions $f$ on $\mathfrak{V}$ such that\linebreak$\forall
\omega\in\Omega,f\left(  \cdot,\omega,\cdot\right)  \in C_{b}\left(
\mathcal{M}_{\tau_{\pi\left(  \omega\right)  }}\right)  $ to $L_{\mathbb{P}%
_{m}}^{1}\left(  \Omega,C_{b}\left(  \mathcal{M}\right)  \right)  ,$ for $m$
large enough, we get\linebreak$\left\vert \mu_{\mathbf{R}}^{m}\left[
\mathbf{E}_{M}\left(  f\right)  \right]  -\mu_{\mathbf{R}_{0}}\left[
\mathbf{E}_{M}\left(  f\right)  \right]  \right\vert \leq\delta.$ Therefore,
for $m$ sufficiently large,
\begin{align}
\left\vert \mu_{\mathbf{S}}^{m}\left[  f\right]  -\mu_{\mathbf{S}_{0}}\left[
f\right]  \right\vert  &  =\left\vert \frac{\mu_{\mathbf{R}}^{m}\left[
\mathbf{E}\left(  f\right)  \right]  }{\mu_{\mathbf{R}}^{m}\left[
\mathbf{t}\right]  }-\frac{\mu_{\mathbf{R}_{0}}\left[  \mathbf{E}\left(
f\right)  \right]  }{\mu_{\mathbf{R}_{0}}\left[  \mathbf{t}_{0}\right]
}\right\vert \\
&  \leq\left\vert \frac{\mu_{\mathbf{R}}^{m}\left[  \mathbf{E}_{M}\left(
f\right)  \right]  }{\mu_{\mathbf{R}}^{m}\left[  \mathbf{t}^{M}\right]
}-\frac{\mu_{\mathbf{R}_{0}}\left[  \mathbf{E}_{M}\left(  f\right)  \right]
}{\mu_{\mathbf{R}_{0}}\left[  \mathbf{t}_{0}^{M}\right]  }\right\vert
+4\epsilon\frac{\sup_{\left(  x,\omega,s\right)  \in\mathfrak{V}}\left\vert
f\left(  x,\omega,s\right)  \right\vert }{\mu_{\mathbf{R}_{0}}\left(
\mathbf{t}_{0}\right)  \wedge\mu\left(  \mathbf{t}\right)  \wedge
1}\ .\nonumber
\end{align}
and
\begin{align}
\left\vert \frac{\mu_{\mathbf{R}}^{m}\left[  \mathbf{E}_{M}\left(  f\right)
\right]  }{\mu_{\mathbf{R}}^{m}\left[  \mathbf{t}^{M}\right]  }-\frac
{\mu_{\mathbf{R}_{0}}\left[  \mathbf{E}_{M}\left(  f\right)  \right]  }%
{\mu_{\mathbf{R}_{0}}\left[  \mathbf{t}_{0}^{M}\right]  }\right\vert  &
\leq\left\vert \frac{\mu_{\mathbf{R}}^{m}\left[  \mathbf{E}_{M}\left(
f\right)  \right]  -\mu_{\mathbf{R}_{0}}\left[  \mathbf{E}_{M}\left(
f\right)  \right]  }{\mu_{\mathbf{R}_{0}}\left[  \mathbf{t}_{0}^{M}\right]
}\right\vert +\frac{\mu_{\mathbf{R}}^{m}\left[  \mathbf{E}_{M}\left(
\left\vert f\right\vert \right)  \right]  }{\mu_{\mathbf{R}}^{m}\left[
\mathbf{t}^{M}\right]  }\left\vert \frac{\mu_{\mathbf{R}}^{m}\left[
\mathbf{t}^{M}\right]  -\mu_{\mathbf{R}_{0}}\left[  \mathbf{t}_{0}^{M}\right]
}{\mu_{\mathbf{R}_{0}}\left[  \mathbf{t}_{0}^{M}\right]  }\right\vert \\
&  \leq\frac{1+\sup_{\left(  x,\omega,s\right)  \in\mathfrak{V}}\left\vert
f\left(  x,\omega,s\right)  \right\vert }{\mu_{\mathbf{R}_{0}}\left[
\mathbf{t}_{0}\right]  \wedge M}\delta\ .\nonumber
\end{align}

\end{proof}

For what concerns the weak convergence of the invariant measure of the flow
$\left(  \overline{\mathbf{S}}^{t},t\geq0\right)  $ to $\mu_{\mathbf{S}_{0}}$
we have the following result whose proof is identical to the preceding one and
so we omit it.

\begin{proposition}
Let $\mathbf{t}$ as in the previous proposition. If $\mu_{\overline
{\mathbf{R}}}$ weakly converges to $\mu_{\mathbf{R}_{0}},$ then $\mu
_{\overline{\mathbf{S}}}^{\varepsilon}$ weakly converges to $\mu
_{\mathbf{S}_{0}}.$
\end{proposition}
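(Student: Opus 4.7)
The plan is to imitate verbatim the three-step argument used in the preceding proposition, with $\mathbf{R}$ replaced by $\overline{\mathbf{R}}$, the roof $\mathbf{t}$ replaced by $\mathbf{\bar{t}}$, the canonical projection $\hat{\pi}$ replaced by $\check{\pi}$, and the suspension semi-flow $\mathbf{S}^{t}$ replaced by $\overline{\mathbf{S}}^{t}$. The hypotheses that $\mathbf{t}$ is bounded away from zero and $\mu_{\mathbf{R}}^{\varepsilon}$-integrable transfer, via (\ref{tK=t}) and $\mu_{\overline{\mathbf{R}}}^{\varepsilon}=\mathbf{K}_{\#}\mu_{\mathbf{R}}^{\varepsilon}$, to the statement that $\mathbf{\bar{t}}$ is bounded away from zero and $\mu_{\overline{\mathbf{R}}}^{\varepsilon}$-integrable, so formula (\ref{mu_Sbar}) makes sense for each $\varepsilon\geq 0$ and we may write
\begin{equation}
\mu_{\overline{\mathbf{S}}}^{\varepsilon}(f)=\frac{1}{\mu_{\overline{\mathbf{R}}}^{\varepsilon}(\mathbf{\bar{t}})}\,\mu_{\overline{\mathbf{R}}}^{\varepsilon}\!\left[\int_{0}^{\mathbf{\bar{t}}}\!f\circ\check{\pi}(\cdot,\cdot,s)\,ds\right]\;.
\end{equation}

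The first step is the truncation. Fix $f\in C_{b}(\overline{\mathfrak{V}})$ and, for $M\in\mathbb{N}$, set $\mathbf{\bar{t}}^{M}:=\mathbf{\bar{t}}\wedge M$ and $\mathbf{\bar{E}}_{M}(f)(x,\omega):=\int_{0}^{\mathbf{\bar{t}}^{M}(x,\omega)}f\circ\check{\pi}(x,\omega,s)\,ds$. Integrability of $\mathbf{\bar{t}}$ with respect to both $\mu_{\overline{\mathbf{R}}}^{\varepsilon}$ and $\mu_{\mathbf{R}_{0}}$ (with a uniform control provided by the lemma preceding Section \ref{SS}) gives, exactly as in the preceding proof,
\begin{equation}
\left|\mu_{\overline{\mathbf{S}}}^{\varepsilon}(f)-\frac{\mu_{\overline{\mathbf{R}}}^{\varepsilon}[\mathbf{\bar{E}}_{M}(f)]}{\mu_{\overline{\mathbf{R}}}^{\varepsilon}(\mathbf{\bar{t}}^{M})}\right|\vee\left|\mu_{\mathbf{S}_{0}}(f)-\frac{\mu_{\mathbf{R}_{0}}[\mathbf{\bar{E}}_{M}(f)]}{\mu_{\mathbf{R}_{0}}(\mathbf{\bar{t}}^{M})}\right|\leq C\|f\|_{\infty}\,\delta_{M}\;,
\end{equation}
with $\delta_{M}\to0$ uniformly in $\varepsilon$ as $M\to\infty$. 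This reduces the whole problem to showing that, for every fixed $M$, the ratio $\mu_{\overline{\mathbf{R}}}^{\varepsilon}[\mathbf{\bar{E}}_{M}(f)]/\mu_{\overline{\mathbf{R}}}^{\varepsilon}(\mathbf{\bar{t}}^{M})$ converges to $\mu_{\mathbf{R}_{0}}[\mathbf{\bar{E}}_{M}(f)]/\mu_{\mathbf{R}_{0}}(\mathbf{\bar{t}}^{M})$ as $\varepsilon\to0$.

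The second step is to apply the weak convergence $\mu_{\overline{\mathbf{R}}}^{\varepsilon}\Rightarrow\mu_{\mathbf{R}_{0}}$. Both $\mathbf{\bar{t}}^{M}$ and $\mathbf{\bar{E}}_{M}(f)$ are bounded (by $M$ and $M\|f\|_{\infty}$ respectively) and, for each fixed $\omega$, belong to $C_{b}(\mathcal{M})$ since $\phi_{\eta}$ depends smoothly on $\eta$ and $\check{\pi}$ is a canonical projection; hence both lie in the class $L^{1}_{\mathbb{P}_{\varepsilon}}(\Omega,C_{b}(\mathcal{M}))$ used as test functions in the proof of Theorem \ref{Th1}. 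Since the marginal of $\mu_{\mathbf{R}_{0}}$ on $\Omega$ is $\delta_{\bar{0}}$ and the convergence $\mathbb{P}_{\varepsilon}\Rightarrow\delta_{\bar{0}}$ allows us to test against such functions, we obtain
\begin{equation}
\lim_{\varepsilon\to0}\mu_{\overline{\mathbf{R}}}^{\varepsilon}[\mathbf{\bar{E}}_{M}(f)]=\mu_{\mathbf{R}_{0}}[\mathbf{\bar{E}}_{M}(f)]\;,\qquad\lim_{\varepsilon\to0}\mu_{\overline{\mathbf{R}}}^{\varepsilon}(\mathbf{\bar{t}}^{M})=\mu_{\mathbf{R}_{0}}(\mathbf{\bar{t}}^{M})\;,
\end{equation}
and, since $\mathbf{\bar{t}}$ is bounded away from zero, the denominators stay bounded below so the ratios converge as well.

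The last step is the identification of the limit. On the fibre $\omega=\bar{0}$, the diffeomorphism $\kappa_{0}$ is the identity, hence $\mathbf{\bar{t}}(x,\bar{0})=\bar{\tau}_{0}(x)=\tau_{0}(x)$ and $\check{\pi}(x,\bar{0},\cdot)$ coincides, through $\mathbf{\tilde{K}}$ applied at $\eta=0$, with the unperturbed canonical projection onto $\mathcal{V}_{0}$ used to construct $\mu_{S_{0}}$; this matches $\mu_{\mathbf{R}_{0}}[\mathbf{\bar{E}}_{M}(f)]/\mu_{\mathbf{R}_{0}}(\mathbf{\bar{t}}^{M})$ with the truncated version of $\mu_{\mathbf{S}_{0}}(f)$, and sending $M\to\infty$ concludes. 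The only delicate point, entirely analogous to the difficulty in the preceding proposition, is the uniform-in-$\varepsilon$ control of the tail $\mu_{\overline{\mathbf{R}}}^{\varepsilon}[\mathbf{\bar{t}}\,\mathbf{1}_{\{\mathbf{\bar{t}}>M\}}]$ needed to make the truncation argument work; this is precisely what the logarithmic bound on $\bar{\tau}_{\eta}$ together with the preceding lemma guarantees.
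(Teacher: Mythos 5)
Your proof is correct and follows essentially the same route the paper intends: the paper explicitly states that this proposition's proof is identical to the preceding one (for $\mu_{\mathbf{S}}^{\varepsilon}$), and your argument carries out exactly that translation — truncating the roof at level $M$, testing the weak convergence $\mu_{\overline{\mathbf{R}}}^{\varepsilon}\Rightarrow\mu_{\mathbf{R}_{0}}$ against the bounded functions $\mathbf{\bar t}^{M}$ and $\mathbf{\bar E}_{M}(f)$, and identifying the limit on the fibre $\omega=\bar 0$ via $\kappa_{0}=\mathrm{id}$. The only point worth keeping explicit, which you do note at the end, is that the tail estimate $\mu_{\overline{\mathbf{R}}}^{\varepsilon}[\mathbf{\bar t}\,\mathbf{1}_{\{\mathbf{\bar t}>M\}}]\to 0$ must be controlled uniformly in $\varepsilon$, which follows from the logarithmic bound on $\bar\tau_{\eta}$ together with a uniform bound on the density of $\mu_{\mathbf{T}}^{\varepsilon}$.
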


\subsection{Stochastic stability of the physical measure for the unperturbed
flow\label{SSPM}}

Here we will show that the stochastic stability of $\mu_{S_{0}}$ will imply
that of the physical measure.

Setting
\begin{equation}
\mathcal{M}\times\mathbb{R}^{+}\ni\left(  x,t\right)  \longmapsto\Psi_{\eta
}\left(  x,t\right)  :=\Phi_{\eta}^{t}\left(  x\right)  \in U\subset
\mathbb{R}^{3}\ ,
\end{equation}
where $U$ can be chosen to be independent of $\eta,$ we define the
diffeomorphism $\chi_{\eta}:\mathcal{V}_{\eta}\longrightarrow U$ relating the
original flow $\left(  \Phi_{\eta}^{t},t\geq0\right)  $ with its associated
suspension semiflow (\ref{ssf_omega-1}), i.e. such that
\begin{equation}
\chi_{\eta}\circ\tilde{\pi}_{\eta}\left(  \cdot,\cdot+t\right)  =\Phi_{\eta
}^{t}\circ\chi_{\eta} \label{diff_omega}%
\end{equation}
(see \cite{AP} par. 7.3.8).

Moreover, by (\ref{sSS}), for $n\geq2,$ we define
\begin{equation}
U\times\Omega\ni\left(  y,\omega\right)  \longmapsto\mathbf{\hat{s}}%
_{n}\left(  y,\omega\right)  :=\mathbf{\hat{s}}_{1}\left(  y,\omega\right)
+\mathbf{s}_{n-1}\left(  \Phi_{\pi\left(  \omega\right)  }^{\mathbf{\hat{s}%
}_{1}\left(  y,\omega\right)  }\left(  y\right)  ,\omega\right)  \in
\overline{\mathbb{R}^{+}}\ , \label{sn}%
\end{equation}
where $\mathbf{\hat{s}}_{1}$ is given in (\ref{s1}) and
\begin{equation}
U\times\Omega\ni\left(  y,\omega\right)  \longmapsto\bar{N}_{t}\left(
y,\omega\right)  :=\max\left\{  n\in\mathbb{Z}^{+}:\mathbf{\hat{s}}_{n}\left(
y,\omega\right)  \leq t\right\}  \in\mathbb{Z}^{+}\mathbb{\ }. \label{Nbar}%
\end{equation}
For any $\omega\in\Omega,$ we define the non autonomous phase field
$\mathbb{R}^{+}\ni t\longmapsto\bar{\phi}_{\omega}\left(  t,\cdot\right)  \in
C^{0}\left(  \mathbb{R}^{3},\mathbb{R}^{3}\right)  ,$ piecewise $C^{r}\left(
\mathbb{R}^{3},\mathbb{R}^{3}\right)  ,r\geq2,$ such that
\begin{align}
\mathbb{R}^{+}\times U  &  \ni\left(  t,y\right)  \longmapsto\bar{\phi
}_{\omega}\left(  t,y\right)  :=\phi_{\pi\left(  \theta^{\bar{N}_{t}\left(
y,\omega\right)  }\omega\right)  }\left(  y\right)  \in\mathbb{R}%
^{3}\label{fi_bar}\\
\phi_{\pi\left(  \theta^{\bar{N}_{t}\left(  y,\omega\right)  }\omega\right)
}  &  :=\phi_{\pi\left(  \omega\right)  }\left(  x\right)  \mathbf{1}%
_{[0,\mathbf{\hat{s}}_{1}\left(  y,\omega\right)  )}\left(  t\right)
+\sum_{n\geq1}\phi_{\pi\left(  \theta^{n}\omega\right)  }\mathbf{1}%
_{[\mathbf{\hat{s}}_{n}\left(  y,\omega\right)  ,\mathbf{\hat{s}}_{n+1}\left(
y,\omega\right)  )}\left(  t\right)
\end{align}
and denote by $\left(  \hat{\Phi}_{\omega}^{t,t_{0}},t>t_{0}\geq0\right)  $
the associated semiflow. Hence, because $\forall\eta\in\left[  0,\varepsilon
\right]  ,\Phi_{\eta}^{t}\left(  U\right)  \subseteq U$ it follows that
$\forall\omega\in\Omega,t>0,\hat{\Phi}_{\omega}^{t,0}\left(  U\right)
\subseteq U.$

Since by (\ref{MOmegat}) any $\mathbf{v}\in\mathfrak{V}$ can be represented as
a vector $\left(  x\left(  \mathbf{v}\right)  ,\omega\left(  \mathbf{v}%
\right)  ,s\left(  \mathbf{v}\right)  \right)  \in\left(  \mathcal{M}%
\times\Omega\right)  _{\mathbf{t}},$ let us consider the map%
\begin{equation}
\mathfrak{V}\ni\mathbf{v\longmapsto V}\left(  \mathbf{v}\right)  :=\left(
\hat{\Phi}_{\omega\left(  \mathbf{v}\right)  }^{s\left(  \mathbf{v}\right)
,0}\left(  x\left(  \mathbf{v}\right)  \right)  ,\omega\left(  \mathbf{v}%
\right)  \right)  \in U\times\Omega\ . \label{V}%
\end{equation}
Notice that, by the definition of $\left(  \hat{\Phi}_{\omega}^{t,0}%
,t\geq0\right)  ,\hat{\Phi}_{\omega\left(  \mathbf{v}\right)  }^{s\left(
\mathbf{v}\right)  ,0}\left(  x\left(  \mathbf{v}\right)  \right)  =\Phi
_{\pi\left(  \omega\left(  \mathbf{v}\right)  \right)  }^{s\left(
\mathbf{v}\right)  }\left(  x\left(  \mathbf{v}\right)  \right)  .$ Setting
\begin{equation}
U\times\Omega\times\mathbb{R}^{+}\ni\left(  u,\omega,t\right)  \longmapsto
X^{t}\left(  u,\omega\right)  :=\left(  \hat{\Phi}_{\omega}^{t,0}\left(
u\right)  ,\theta^{\bar{N}_{t}\left(  u,\omega\right)  }\omega\right)  \in
U\times\Omega\ , \label{X}%
\end{equation}
for $t\geq0,\mathbf{v\in}\mathfrak{V},$ by (\ref{V}), (\ref{Nbar}) and
(\ref{X}) we have
\begin{equation}
X^{t}\left(  \mathbf{V}\left(  \mathbf{v}\right)  \right)  =\left(  \hat{\Phi
}_{\omega\left(  \mathbf{v}\right)  }^{t,0}\left(  \hat{\Phi}_{\omega\left(
\mathbf{v}\right)  }^{s\left(  \mathbf{v}\right)  ,0}\left(  x\left(
\mathbf{v}\right)  \right)  \right)  ,\theta^{\bar{N}_{t}\left(  \hat{\Phi
}_{\omega\left(  \mathbf{v}\right)  }^{s\left(  \mathbf{v}\right)  ,0}\left(
x\left(  \mathbf{v}\right)  \right)  ,\omega\left(  \mathbf{v}\right)
\right)  }\omega\left(  \mathbf{v}\right)  \right)  \ .
\end{equation}
But, by (\ref{s1}), (\ref{sSS}) and (\ref{sn}),
\begin{align}
\mathbf{\hat{s}}_{1}\left(  \hat{\Phi}_{\omega\left(  \mathbf{v}\right)
}^{s\left(  \mathbf{v}\right)  ,0}\left(  x\left(  \mathbf{v}\right)  \right)
,\omega\left(  \mathbf{v}\right)  \right)   &  =\mathbf{\hat{s}}_{1}\left(
\Phi_{\pi\left(  \omega\left(  \mathbf{v}\right)  \right)  }^{s\left(
\mathbf{v}\right)  }\left(  x\left(  \mathbf{v}\right)  \right)
,\omega\left(  \mathbf{v}\right)  \right)  =\mathbf{t}\left(  x\left(
\mathbf{v}\right)  ,\omega\left(  \mathbf{v}\right)  \right)  -s\left(
\mathbf{v}\right) \\
\mathbf{s}_{n}\left(  \Phi_{\pi\left(  \omega\left(  \mathbf{v}\right)
\right)  }^{s\left(  \mathbf{v}\right)  }\left(  x\left(  \mathbf{v}\right)
\right)  ,\omega\left(  \mathbf{v}\right)  \right)   &  =\mathbf{s}_{n}\left(
\mathbf{R}\left(  x\left(  \mathbf{v}\right)  ,\omega\left(  \mathbf{v}%
\right)  \right)  ,\omega\left(  \mathbf{v}\right)  \right)  \ ,\;n\geq1\ ,
\end{align}
hence,
\begin{align}
\mathbf{\hat{s}}_{n}\left(  \hat{\Phi}_{\omega\left(  \mathbf{v}\right)
}^{s\left(  \mathbf{v}\right)  ,0}\left(  x\left(  \mathbf{v}\right)  \right)
,\omega\left(  \mathbf{v}\right)  \right)   &  =\mathbf{\hat{s}}_{n}\left(
\Phi_{\pi\left(  \omega\left(  \mathbf{v}\right)  \right)  }^{s\left(
\mathbf{v}\right)  }\left(  x\left(  \mathbf{v}\right)  \right)
,\omega\left(  \mathbf{v}\right)  \right) \\
&  =\mathbf{t}\left(  x\left(  \mathbf{v}\right)  ,\omega\left(
\mathbf{v}\right)  \right)  -s\left(  \mathbf{v}\right)  +\mathbf{s}%
_{n-1}\left(  \Phi_{\pi\left(  \omega\left(  \mathbf{v}\right)  \right)
}^{s\left(  \mathbf{v}\right)  }\left(  x\left(  \mathbf{v}\right)  \right)
,\omega\left(  \mathbf{v}\right)  \right) \nonumber\\
&  =\mathbf{t}\left(  x\left(  \mathbf{v}\right)  ,\omega\left(
\mathbf{v}\right)  \right)  -s\left(  \mathbf{v}\right)  +\mathbf{s}%
_{n}\left(  \mathbf{R}\left(  x\left(  \mathbf{v}\right)  ,\omega\left(
\mathbf{v}\right)  \right)  ,\omega\left(  \mathbf{v}\right)  \right)
\ ,\nonumber
\end{align}
which implies
\begin{equation}
\bar{N}_{t}\left(  \hat{\Phi}_{\omega\left(  \mathbf{v}\right)  }^{s\left(
\mathbf{v}\right)  ,0}\left(  x\left(  \mathbf{v}\right)  \right)
,\omega\left(  \mathbf{v}\right)  \right)  =\bar{N}_{t}\left(  \Phi
_{\pi\left(  \omega\left(  \mathbf{v}\right)  \right)  }^{s\left(
\mathbf{v}\right)  }\left(  x\left(  \mathbf{v}\right)  \right)
,\omega\left(  \mathbf{v}\right)  \right)  =N_{t}\left(  x\left(
\mathbf{v}\right)  ,\omega\left(  \mathbf{v}\right)  \right)
\end{equation}
and
\begin{align}
\hat{\Phi}_{\omega\left(  \mathbf{v}\right)  }^{t,0}\left(  \hat{\Phi}%
_{\omega\left(  \mathbf{v}\right)  }^{s\left(  \mathbf{v}\right)  ,0}\left(
x\left(  \mathbf{v}\right)  \right)  \right)   &  =\hat{\Phi}_{\omega\left(
\mathbf{v}\right)  }^{t,0}\left(  \Phi_{\pi\left(  \omega\left(
\mathbf{v}\right)  \right)  }^{s\left(  \mathbf{v}\right)  }\left(  x\left(
\mathbf{v}\right)  \right)  \right) \\
&  =\hat{\Phi}_{\omega\left(  \mathbf{v}\right)  }^{s\left(  \mathbf{v}%
\right)  +t,0}\left(  x\left(  \mathbf{v}\right)  \right)  \ .\nonumber
\end{align}
Therefore, by (\ref{defSS}) and (\ref{defSS1}),
\begin{align}
X^{t}\left(  \mathbf{V}\left(  \mathbf{v}\right)  \right)   &  =\left(
\hat{\Phi}_{\omega\left(  \mathbf{v}\right)  }^{s\left(  \mathbf{v}\right)
+t,0}\left(  x\left(  \mathbf{v}\right)  \right)  ,\theta^{N_{t}\left(
x\left(  \mathbf{v}\right)  ,\omega\left(  \mathbf{v}\right)  \right)  }%
\omega\left(  \mathbf{v}\right)  \right) \\
&  =\mathbf{V}\left(  \mathbf{S}^{t}\left(  x\left(  \mathbf{v}\right)
,\omega\left(  \mathbf{v}\right)  ,s\left(  \mathbf{v}\right)  \right)
\right) \nonumber\\
&  =\mathbf{V}\left(  \hat{\pi}\left(  x\left(  \mathbf{v}\right)
,\omega\left(  \mathbf{v}\right)  ,s\left(  \mathbf{v}\right)  +t\right)
\right)  \ ,\nonumber
\end{align}
that is
\begin{equation}
\mathbf{V}\circ\hat{\pi}\left(  \cdot,\cdot,\cdot+t\right)  =X^{t}%
\circ\mathbf{V}\;,\;t\geq0\ . \label{VpiXV}%
\end{equation}

By \cite{AP} Section 7.3.8 $\mu_{0}:=\left(  \Psi_{0}\right)  _{\#}\left(
\mu_{S_{0}}\right)  $ is the physical measure for $\left(  \Phi_{0}^{t}%
,t\geq0\right)  $ whose basin $B\left(  \mu_{0}\right)  $ covers a
neighborhood $V_{0}$ of the attractor of $\left(  \Phi_{0}^{t},t\geq0\right)
$ of full $\lambda^{3}$ measure which is a subset of $\chi_{0}\left(
\mathcal{V}_{0}\right)  \subseteq U.$ In fact, by the definition of
$\mathfrak{V},\forall\eta\in spt\lambda_{\varepsilon},\mathcal{V}_{\eta}%
\times\left\{  \bar{\eta}\right\}  \subset\mathfrak{V},$ and by (\ref{V})
$\mathbf{V}\left(  \mathcal{V}_{\eta}\times\left\{  \bar{\eta}\right\}
\right)  =\chi_{\eta}\left(  \mathcal{V}_{\eta}\right)  \times\left\{
\bar{\eta}\right\}  .$ Hence, setting $\mathcal{U}:=\mathbf{V}\left(
\mathfrak{V}\right)  ,\chi_{\eta}\left(  \mathcal{V}_{\eta}\right)  \subseteq
U_{0}=:p\left(  \mathcal{U}\right)  \subseteq U$ and in particular
$V_{0}\subset U_{0}.$

Let $\mu_{\mathbf{V}}^{\varepsilon}:=\mathbf{V}_{\#}\mu_{\mathbf{S}%
}^{\varepsilon}=\mu_{\mathbf{S}}^{\varepsilon}\circ\mathbf{V}^{-1}.$ By the
invariance of $\mu_{\mathbf{S}}^{\varepsilon}$ under the flow $\left(
\hat{\pi}\left(  \cdot,\cdot,\cdot+t\right)  ,t\geq0\right)  $ and
(\ref{VpiXV}) we get the invariance of $\mu_{\mathbf{V}}^{\varepsilon}$ under
the evolution given by $\left(  X^{t},t\geq0\right)  .$ Indeed, $\forall
A\subseteq\mathcal{U},$%
\begin{align}
\mu_{\mathbf{V}}^{\varepsilon}\left(  X^{t}\left(  A\right)  \right)   &
=\mu_{\mathbf{V}}^{\varepsilon}\left(  X^{t}\circ\mathbf{V}\left(
\mathbf{V}^{-1}\left(  A\right)  \right)  \right)  =\mu_{\mathbf{V}%
}^{\varepsilon}\left(  \mathbf{V}\circ\hat{\pi}\left(  \cdot,\cdot
,\cdot+t\right)  \left(  \mathbf{V}^{-1}\left(  A\right)  \right)  \right) \\
&  =\mu_{\mathbf{S}}^{\varepsilon}\left(  \hat{\pi}\left(  \cdot,\cdot
,\cdot+t\right)  \left(  \mathbf{V}^{-1}\left(  A\right)  \right)  \right)
=\mu_{\mathbf{S}}^{\varepsilon}\left(  \left(  \mathbf{V}^{-1}\left(
A\right)  \right)  \right)  =\mu_{\mathbf{V}}^{\varepsilon}\left(  A\right)
\ .\nonumber
\end{align}
Moreover, we have

\begin{proposition}
If $\mu_{S_{0}}$ is stochastically stable, then, as $\varepsilon$ tends to
$0,\mu_{\mathbf{V}}^{\varepsilon}$ weakly converges to $\mu_{0}\otimes
\delta_{\bar{0}}$ with $\mu_{0}$ the unperturbed physical measure.
\end{proposition}

\begin{proof}
Let $B\subseteq V_{0}\subset U_{0}.$ By (\ref{diff_omega}) $\chi_{0}%
^{-1}\left(  B\right)  \subset\mathcal{V}_{0}.$ Given $C\in\mathcal{F},$ we
set $A:=\chi_{0}^{-1}\left(  B\right)  \times C.$ By (\ref{defSS1}) $\hat{\pi
}\left(  A\right)  \subset\mathfrak{V}$ and by (\ref{Pp})
\begin{align}
\mu_{\mathbf{V}}^{\varepsilon}\left(  \mathbf{V}\circ\hat{\pi}\left(
A\right)  \right)   &  =\mu_{\mathbf{S}}^{\varepsilon}\left[  \hat{\pi}\left(
A\right)  \right]  \underset{\varepsilon\rightarrow0}{\longrightarrow}%
\mu_{\mathbf{S}_{0}}\left[  \hat{\pi}\left(  A\right)  \right]  =\mathbf{1}%
_{C}\left(  \bar{0}\right)  \mu_{S_{0}}\left[  \tilde{\pi}_{0}\circ p\left(
\chi_{0}^{-1}\left(  B\right)  \times\left\{  \bar{0}\right\}  \right)
\right] \\
&  =\mathbf{1}_{C}\left(  \bar{0}\right)  \mu_{S_{0}}\left[  \tilde{\pi}%
_{0}\left(  \chi_{0}^{-1}\left(  B\right)  \right)  \right]  \ .\nonumber
\end{align}
Since $\tilde{\pi}_{0}$ acts as the identity on $\mathcal{M}_{\tau_{0}}$ and
$\chi_{0}^{-1}\left(  B\right)  \subseteq\mathcal{M}_{\tau_{0}}$%
\begin{equation}
\mu_{S_{0}}\left[  \tilde{\pi}_{0}\left(  \chi_{0}^{-1}\left(  B\right)
\right)  \right]  =\mu_{S_{0}}\left[  \chi_{0}^{-1}\left(  B\right)  \right]
=\left(  \chi_{0}\right)  _{\#}\left(  \mu_{S_{0}}\right)  \left(  B\right)
\equiv\mu_{0}\left(  B\right)  \ .
\end{equation}

\end{proof}

\subsubsection{Proof of Theorem \ref{main}}

By construction $\mu_{\mathbf{V}}^{\varepsilon}$ is the physical measure of
$\left(  X^{t},t\geq0\right)  $ that is, for any bounded measurable function
$f$ on $U\times\Omega,\lim_{t\rightarrow\infty}\frac{1}{t}\int_{0}^{t}dsf\circ
X^{s}=\mu_{\mathbf{V}}^{\varepsilon}\left(  f\right)  .$ Moreover, the
projection on $U$ of the evolution $\left(  X^{t},t\geq0\right)  $ provides a
representation of the system evolution $\left(  \mathfrak{u}_{t}%
,t\geq0\right)  $ as it has been already shown in (\ref{X_t}). Therefore, the
thesis follows considering functions $U\times\Omega\ni\left(  y,\omega\right)
\longmapsto f\left(  y,\omega\right)  :=\tilde{f}\left(  y\right)
\in\mathbb{R}$ with $\tilde{f}$ bounded measurable on $U.$

\subsection{Stochastic stability of $\mu_{T_{0}}$\label{SST}}

In this section, to ease the notation, we will simply refer to the unperturbed
map $T_{0}$ as $T$ and consequently note $\mu_{T_{0}}$ as $\mu_{T}.$ Moreover,
for the same reason, since no confusion will arise, we will note $T_{\eta}$
for $\bar{T}_{\eta}.$ Furthermore, since as it is explained in the appendix in
the case $\mathcal{M}=\mathcal{M}^{\prime\prime}$ the invariant measure for
$T_{\eta}$ can be reconstructed from those of $\tilde{T}_{\eta},$ when
considering this case, here, with abuse of notation, we will refer to the
unperturbed map $\tilde{T}$ and to $\tilde{T}_{\eta}$ again as, respectively,
$T$ and $T_{\eta}$ unless differently specified.

As we stated in Section \ref{RDS}, the stochastic perturbation of a
one-dimensional map $T$ is realized through sequences of random
transformations. This means that we will compose maps as $T_{\eta_{k}}%
\circ\cdots\circ T_{\eta_{1}}$ with the $\{\eta_{j}\}_{j\in\mathbb{N}}\in
spt\lambda_{\varepsilon}$ taken independently from each other and with the
same distribution $\lambda_{\varepsilon}.$ This implies that the invariant
measure $\mu_{\mathbf{T}}$ of the skew system (\ref{defTT}) factorizes in the
direct product of $\mathbb{P}_{\varepsilon}:=\lambda_{\varepsilon}%
^{\mathbb{N}}$ times the so-called stationary measure $\nu_{1}^{\varepsilon}$
(see Remark \ref{Rem1}) which will be the stationary measure of the Markov
chain with transition probability
\begin{equation}
\mathcal{Q}(x,A):=\lambda_{\varepsilon}\{\eta\in\left[  -1,1\right]  :T_{\eta
}(x)\in A\}\ .
\end{equation}
where $x$ and $A$ are respectively a point and a Borel subset of the interval.
It is well known that whenever the stationary measure is absolutely continuous
with respect to the Lebesgue measure, its density will be a fixed point of the
random transfer operator which we are going to define together with the
strategy to prove stochastic stability of $\mu_{T}.$

We denote by $\mathcal{L}$ the transfer operator of the unperturbed map $T,$
by $\mathcal{L}_{\varepsilon}$ the random transfer operator defined by the
formula $\mathcal{L}_{\varepsilon}f=\int_{\left[  -1,1\right]  }%
d\lambda_{\varepsilon}\left(  \eta\right)  \mathcal{L}_{\eta}f,$ where $f$
belongs to some Banach space $\mathbb{B}\subset L^{1}:=L^{1}\left(
I,\lambda\right)  $ and by $\mathcal{L}_{\eta}$ is the transfer operator
associated to the perturbed map $T_{\eta}.$ Let us suppose that:

\begin{description}
\item[A1] The unperturbed transfer operator $\mathcal{L}$ verifies the
so-called Lasota-Yorke inequality, namely there exists constants
$0<\varkappa<1,D>0,$ such that for any $f\in\mathbb{B}$ we have
\begin{equation}
\left\Vert \mathcal{L}f\right\Vert _{\mathbb{B}}\leq\varkappa\left\Vert
f\right\Vert _{\mathbb{B}}+D\left\Vert f\right\Vert _{1}\ . \label{LY0}%
\end{equation}

\item[A2] The map $T$ preserve only one absolutely continuous invariant
probability measure $\mu$ with density $h,$ which therefore will be also
ergodic and mixing.

\item[A3] The random transfer operator $\mathcal{L}_{\varepsilon}$ verifies a
similar Lasota-Yorke inequality which, for sake of simplicity, we will assume
to hold with the same parameters $\varkappa$ and $D.$

\item[A4] There exits a measurable function $\left[  -1,1\right]
\ni\varepsilon\longmapsto\upsilon^{\prime}(\varepsilon)\in\mathbb{R}^{+}$
tending to zero when $\varepsilon\rightarrow0$ such that for $f\in
\mathbb{B}:$
\begin{equation}
|||\mathcal{L}f-\mathcal{L}_{\varepsilon}f|||\leq\upsilon^{\prime}%
(\varepsilon).
\end{equation}
where the norm $|||\cdot|||$ above is so defined: $|||L|||:=\sup_{\left\Vert
f\right\Vert _{\mathbb{B}}\leq1}\left\Vert Lf\right\Vert _{1},$ for a linear
operator $L:L^{1}\circlearrowleft.$
\end{description}

Besides, we add two very natural assumptions on the Markov chain given by our
random transformations, namely

\begin{description}
\item[A5] The transition probability $\mathcal{Q}(x,A)$ admits a density
$\mathfrak{q}_{\varepsilon}(x,y),$ namely: $\mathcal{Q}(x,A)=\int
_{A}\mathfrak{q}_{\varepsilon}(x,y)dy;$

\item[A6] $spt\mathcal{Q}(x,\cdot)=B_{\varepsilon}(Tx),$ for any $x$ in the
interval, where $B_{\varepsilon}(z)$ denotes the ball of center $z$ and radius
$\varepsilon.$
\end{description}

Assumptions A1-A3 on the transfer operators together with assumptions A5 and
A6 on the Markov chain defined by the random transformations, by Corollary 1
in \cite{BHV} guarantee that there will be only one absolutely continuous
stationary measure $\mu_{\varepsilon}$ with density $h_{\varepsilon}.$ At this
point, assumption A4 allow us to invoke the perturbation theorem of \cite{KL}
to assert that the norm $|||\cdot|||$ of the difference of the spectral
projections of the operators $\mathcal{L}$ and $\mathcal{L}_{\varepsilon}$
associated with the eigenvalue $1$ goes to zero when $\varepsilon
\rightarrow0.$ Since the corresponding eigenspace have dimension $1,$ we
conclude that $h_{\varepsilon}\rightarrow h$ in the $L^{1}$ norm and we have
proved the stochastic stability in the strong sense.

We will use as $\mathbb{B}$ the Banach space of quasi-H\"{o}lder functions. We
start by defining, for all functions $h\in L^{1}$ and $0<\alpha\leq1$ the
seminorm
\begin{equation}
|h|_{\alpha}:=\sup_{0<\varepsilon_{1}\leq\varepsilon_{0}}\frac{1}%
{\varepsilon_{1}^{\alpha}}\int\text{osc}(h,B_{\varepsilon_{1}}(x))dx\ ,
\label{halfa}%
\end{equation}
where, for any measurable set $A,\text{ osc}(h,A):=\text{Essup}_{x\in
A}h(x)-\text{Essinf}_{x\in A}h(x).$ We say that $h$ belong to the set
$V_{\alpha}\subseteq L^{1}$ if $|h|_{\alpha}<\infty.V_{\alpha}$ does not
depend on $\varepsilon_{0}$ and equipped with the norm
\begin{equation}
\left\Vert h\right\Vert _{\alpha}:=\left\vert h\right\vert _{\alpha
}+\left\Vert h\right\Vert _{1}%
\end{equation}
is a Banach space and from now on $V_{\alpha}$ will denote the Banach space
$\mathbb{B}:=(V_{\alpha},\left\Vert \cdot\right\Vert _{\alpha}).$ Furthermore,
it can be proved that $\mathbb{B}$ is continuously injected into $L^{\infty}$
and in particular $||h||_{\infty}\leq C_{s}||h||_{\alpha}$ where $C_{s}%
=\frac{\max(1,\varepsilon_{0}^{\alpha})}{\varepsilon_{0}^{n}},$ \cite{Sa}. The
value of $\alpha$ could be chosen equal to $1$ thanks to the horizontally
closeness hypothesis given below.

We now describe how the one-dimensional map $T$ is perturbed. From now on we
will suppose that $spt\lambda_{\varepsilon}\subset(-\varepsilon,\varepsilon)$
and choose the maps $T_{\eta}$ with absolutely continuous invariant
distribution $\mu_{\eta}$ in such a way they are close to $T$ in the following sense:

\begin{itemize}
\item denoting by $g=\frac{1}{|T^{\prime}|}$ and $g_{\eta}=\frac{1}{|T_{\eta
}^{\prime}|}$ the potentials of the two maps defined everywhere but in the
discontinuity, or critical, points $x_{0}$ and $x_{0,\eta}$ respectively, we
have that $g$ and $g_{\eta}$ satisfy the H\"{o}lder conditions, with the same
constant and exponent (we can always reduce to this case by choosing
$\varepsilon$ sufficiently small):
\begin{equation}
|g(x)-g(y)|\leq C_{h}|x-y|^{\epsilon}\ ;\ |g_{\eta}(x)-g_{\eta}(y)|\leq
C_{h}|x-y|^{\epsilon}\ ,
\end{equation}
where $(x,y)$ belong to the two domains on injectivity of the maps excluding
the critical points. We will call these domains $I_{1},I_{2}$ and $I_{1,\eta
},I_{2,\eta}$ respectively assuming that the domain labelled with $i=1$ is the leftmost.

\item The branches are \emph{horizontally close}, namely for any $z\in I$ we
have:
\begin{equation}
|T_{j}^{-1}(z)-T_{j,\eta}^{-1}(z)|\leq\upsilon(\varepsilon)\ ;\ |T^{\prime
}(T_{j}^{-1}(z))-T_{\eta}^{\prime}(T_{j,\eta}^{-1}(z))|\leq\upsilon
(\varepsilon),\ j=1,2\ ,
\end{equation}
where $T_{j}^{-1},T_{j,\eta}^{-1}$ denote the inverse branches of the two maps
and in the comparison of the derivatives we exclude $z=1.$ Here and in a few
other forthcoming bounds, where we compare close quantities, we will simply
write $\upsilon(\varepsilon)$ as the error term, meaning that such a function
goes to zero when $\varepsilon\rightarrow0$ and it is bounded as
$\upsilon(\varepsilon)\leq\varepsilon,$ with the explicit form of
$\upsilon(\varepsilon)$ which could change from an inequality to another
\footnote{Of course we could ask for bounds of the type $\upsilon
(\varepsilon)\leq C\varepsilon,$ where $C$ is a constant independent of
$\upsilon;$ the presence of the constant will simply modify some factor in the
next bounds and it will be irrelevant for our purposes.}.
\end{itemize}

With these assumptions, and those listed in Section \ref{assT}, if uniformly
in $\eta\in spt\lambda_{\varepsilon}$ the $L^{\infty}$ norm $g_{\eta}$ is
bounded by a constant in $\left(  0,1\right)  ,$ it follows from Butterley's
work \cite{Bu} that the map $T$ and each $T_{\eta}$ verify a Lasota-Yorke
inequality with the same constants (these constants are in fact explicitly
given and basically depend on the $L^{\infty}$ norm of $g_{\eta}$ and on the
constants $\lambda$ and $C_{\delta}$ appearing Theorems 4.1 and 4.2 in the
just cited Butterley's paper).

\begin{remark}
It is important to stress at this point that the uniform expandingness of our
maps $T_{\eta}$ is essential to prove the quasi-compactness of the associated
transfer operators. Therefore what just stated does not apply directly to the
one-dimensional Lorenz-cusp type map $\tilde{T}$ appearing in our previous
paper \cite{GMPV}. Nevertheless, making use of Theorem 2 in \cite{Pi}, we can
consider in place of the $\tilde{T}_{\eta}$'s the family of uniformly
expanding maps $\left\{  \overline{T}_{\eta}\right\}  _{\eta\in spt\lambda
_{\varepsilon}}$ such that $\overline{T}_{\eta}\circ W=W\circ\tilde{T}_{\eta
},$ with $W$ a given function defined in section \ref{Ltcm} of the appendix.
Indeed, these maps are uniformly expanding, more precisely, by construction,
we have $\inf_{\eta\in spt\lambda_{\varepsilon}}\inf\left\vert \overline
{T}_{\eta}^{\prime}\right\vert >1,$ which implies that the conditions A1 and
A3 given above are met. A2 is also met by the uniqueness of $\mu_{\tilde
{T}_{\eta}}$ which we proved in \cite{GMPV}, since $\mu_{\overline{T}_{\eta}%
}=\mu_{\tilde{T}_{\eta}}\circ W^{-1},$ while the validity of conditions A5 and
A6 follows by direct computation under the assumption of $\varepsilon$ being
sufficiently small.
\end{remark}

We now add two more assumptions for future purposes:

\begin{description}
\item[A7] \textbf{Vertical closeness of the derivatives} For any $\eta\in
spt\lambda_{\varepsilon}$ let $k_{\eta}:=\inf\left\{  k\in\mathbb{N}%
:x_{0,\eta}\in B_{k\eta}\left(  x_{0}\right)  \right\}  $ be the the smallest
integer $k$ for $k\eta$ be the radius of a ball centered in $x_{0}$ containing
the critical point of $T_{\eta}.$ We then assume that there exists a positive
constant $C$ such that
\begin{equation}
\sup_{\eta\in spt\lambda_{\varepsilon}}\sup_{x\in B_{k_{\eta}\eta}^{c}(x_{0}%
)}\{|T_{\eta}^{\prime}(x)-T^{\prime}(x)|\}\leq C\upsilon(\varepsilon)\ .
\end{equation}

\item[A8] \textbf{Translational similarity of the branches} We suppose that,
for any $\eta\in spt\lambda_{\varepsilon},$ the branches $T_{i}%
:=T\upharpoonleft_{I_{i}}$ and $T_{i,\eta}:=T_{\eta}\upharpoonleft_{I_{i,\eta
}}$ corresponding to the same value of the index $i=1,2$ will not intersect
each other, but in $x=0,1.$
\end{description}

The introduction of assumptions A7 and A8, as one can see by looking at Figure
2 below, which is taken from our previous work \cite{GMPV}, are motivated by
the change in the shape of $T_{\eta}$ w.r.t. that of $T$ an additive
perturbation of order $\eta$ to the phase velocity field produces. In
particular, A7, which was also already used in \cite{BR}, requires that
outside a small neighborhood of the abscissa of the cusp of the unperturbed
map $T,$ the derivative of $T$ and of all its perturbations $T_{\eta}$ are
$\varepsilon$ close. Assumption A8 requires that the left (resp. right)
branches of $T$ and of its perturbations $T_{\eta}$ can only meet in $0$
(resp. $1$).

\begin{theorem}
\label{SSST}For any realization of the noise $\eta\in spt\lambda_{\varepsilon
},$ let $T_{\eta}$ satisfy the assumptions A1-A8. Then, $\mu_{T}$ is strongly
stochastically stable.
\end{theorem}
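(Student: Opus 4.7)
The plan is to apply the Keller--Liverani perturbation theorem \cite{KL} to the unperturbed transfer operator $\mathcal{L}$ and the averaged random transfer operator $\mathcal{L}_\varepsilon$, both acting on the quasi-H\"older space $\mathbb{B}=V_\alpha$. The abstract hypotheses of Keller--Liverani are: (i) a uniform Lasota--Yorke inequality; (ii) an isolated simple eigenvalue at $1$ for the unperturbed operator; and (iii) a closeness estimate $|||\mathcal{L}-\mathcal{L}_\varepsilon|||\to 0$ in the mixed operator norm that compares $\|\cdot\|_\alpha$ with $\|\cdot\|_1$. Assumptions A1 and A3 deliver (i) with explicit uniform constants $\varkappa<1$ and $D$, assumption A2 delivers (ii), and assumption A4 delivers (iii).

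What remains is to promote the existence of an $\mathcal{L}_\varepsilon$-fixed density in $V_\alpha$, which follows from A3 together with compactness of bounded sets of $V_\alpha$ in $L^1$, to uniqueness for each small $\varepsilon$. This is exactly where the absolute continuity A5 and the full-support assumption A6 on the one-step transition kernel intervene: combined with A1 and A3, Corollary~1 of \cite{BHV} yields a unique absolutely continuous stationary density $h_\varepsilon\in V_\alpha$, so $1$ is a simple isolated eigenvalue of $\mathcal{L}_\varepsilon$ on $V_\alpha$ as well. In situations where A4 itself must be verified (rather than postulated), the translational similarity A8 permits the branch-by-branch comparison
\begin{equation}
\mathcal{L}f-\mathcal{L}_\eta f=\sum_{i=1,2}\bigl[g(T_i^{-1}x)f(T_i^{-1}x)-g_\eta(T_{i,\eta}^{-1}x)f(T_{i,\eta}^{-1}x)\bigr],
\end{equation}
horizontal closeness controls the arguments of $g$ and $f$ up to an error $\upsilon(\varepsilon)$, and A7 together with the H\"older continuity of $g,g_\eta$ controls $|g-g_\eta|$ off a $k_\eta\eta$-neighborhood of the critical point; the oscillation of $f$ at scale $\upsilon(\varepsilon)$ is then controlled in $L^1$ by $|f|_\alpha$ via the seminorm \eqref{halfa}, producing the required bound $\upsilon'(\varepsilon)$.

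The Keller--Liverani theorem now applies and yields $|||\Pi_\varepsilon-\Pi_0|||\to 0$, where $\Pi_0$ and $\Pi_\varepsilon$ are the one-dimensional spectral projectors onto the eigenspaces at $1$ of $\mathcal{L}$ and $\mathcal{L}_\varepsilon$ respectively. Since the constant function $\mathbf{1}$ belongs to $V_\alpha$ (its oscillation seminorm vanishes) and $\Pi_0\mathbf{1}$, $\Pi_\varepsilon\mathbf{1}$ are scalar multiples of $h$ and $h_\varepsilon$ with the same normalization $\int_I \cdot\,dx$ inherited from the invariance of Lebesgue under $\mathcal{L},\mathcal{L}_\varepsilon^\ast$, one concludes $\|h_\varepsilon-h\|_1\to 0$, which is precisely the strong stochastic stability of $\mu_T$ as discussed in Remark \ref{Rem2}. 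The main obstacle, if A4 had to be established from scratch, would be the branch-by-branch bound sketched above, where the translational similarity A8 and the vertical closeness A7 are essential to avoid an $O(1)$ contribution coming from the mismatch between the critical points of $T$ and $T_\eta$; given A4 as a hypothesis, the proof reduces to a direct application of the Keller--Liverani spectral perturbation machinery.
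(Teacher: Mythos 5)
Your proof takes essentially the same approach as the paper's: BHV Corollary~1 for uniqueness of the stationary density (from A1--A3, A5--A6) together with the Keller--Liverani perturbation theorem (from A4) yielding $L^{1}$ convergence of the invariant densities, and your middle paragraph correctly sketches the branch-by-branch operator-norm estimate that is the actual written content of the paper's proof. The only difference is emphasis: the paper's proof is devoted entirely to deriving the bound $|||\mathcal{L}-\mathcal{L}_{\varepsilon}|||=O(\varepsilon)$ from horizontal closeness, A7 and A8 (the surrounding machinery having been set up in the preceding text), whereas you foreground the spectral-projector argument and treat the operator-closeness verification as a secondary check.
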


\begin{proof}
If we were able to prove that the transfer operator for $T$ and for $T_{\eta}$
are close in the norm $|||\cdot|||$ uniformly in $\eta,$ we would get desired
result no matter of the probability distribution of the noise $\lambda
_{\varepsilon}.$ We therefore begin to compare the two operators, first of all
we have for any $h\in\mathbb{B}$%
\begin{equation}
(\mathcal{L}h-\mathcal{L}_{\eta}h)(x)=\sum_{i=1,2}h(T_{i}^{-1}x)g(T_{i}%
^{-1}x)-\sum_{i=1,2}h(T_{i,\eta}^{-1}x)g_{\omega}(T_{i,\eta}^{-1}x)
\end{equation}
With the usual adding and subtracting procedure, we can regroup the r.h.s. of
the previous expression in the following blocks:
\begin{equation}
(\mathcal{L}h-\mathcal{L}_{\eta}h)(x)=\sum_{i=1,2}[h(T_{i}^{-1}x)-h(T_{i,\eta
}^{-1}x)]g(T_{i}^{-1}x)+\sum_{i=1,2}h(T_{i,\eta}^{-1}x)[g(T_{i}^{-1}%
x)-g_{\eta}(T_{i,\eta}^{-1}x)].
\end{equation}
We denote with (I) and (II) the first and the second term on the r.h.s.. The
second one can be further decomposed as
\begin{equation}
(II)=\sum_{i=1,2}h(T_{i,\eta}^{-1}x)[g(T_{i}^{-1}x)-g(T_{i,\eta}^{-1}%
x)]+\sum_{i=1,2}h(T_{i,\eta}^{-1}x)[g(T_{i,\eta}^{-1}x)-g_{\eta}(T_{i,\eta
}^{-1}x)]
\end{equation}
and we call (III) and (IV) the two terms on the r.h.s.. We now begin to
estimate them.

\begin{description}
\item[(I)] We have by the horizontal closeness
\begin{equation}
\sum_{i=1,2}|h(T_{i}^{-1}x)-h(T_{i,\eta}^{-1}x)|g(T_{i}^{-1}x)\leq\sum
_{i=1,2}\text{osc}(h,B_{\varepsilon}(T_{i}^{-1}x))g(T_{i}^{-1}x)=\mathcal{L}%
[\text{osc}(h,B_{\varepsilon}(\cdot)]\ .
\end{equation}
By integrating and using duality on the transfer operator we get
\begin{equation}
\int|(I)|dx\leq\int\text{osc}(h,B_{\varepsilon}(x))dx\leq\varepsilon^{\alpha
}|h|_{\alpha}\ .
\end{equation}

\item[(III)] Since $g$ is H\"{o}lder we immediately have:
\begin{equation}
\int|(III)|dx\leq2\varepsilon C_{h}||h||_{\infty}\leq2\varepsilon^{\iota}%
C_{h}C_{s}|h|_{\alpha}\ .
\end{equation}

\item[(IV)] We rewrite the difference of the potential as
\begin{equation}
|g(T_{i,\eta}^{-1}x)-g_{\eta}(T_{i,\eta}^{-1}x)|\leq\frac{|T_{\eta}^{\prime
}(T_{i,\eta}^{-1}x)-T^{\prime}(T_{i,\eta}^{-1}x)|}{|T_{\eta}^{\prime
}(T_{i,\eta}^{-1}x)||T^{\prime}(T_{i,\eta}^{-1}x)|}\ .
\end{equation}
Let $y_{\eta}:=\inf_{x\in B_{k_{\eta}\eta}\left(  x_{0}\right)  }T_{\eta
}\left(  x\right)  .$ Assumption A8 implies $\lim_{\eta\rightarrow0}y_{\eta
}=1.$ Now, we first compute the integral $\int|\mathcal{L}h-\mathcal{L}_{\eta
}h|dx$ removing the interval $[y_{+},1],$ where $y_{+}:=\inf_{\eta\in
spt\lambda_{\varepsilon}}y_{\eta}.$ Clearly the estimate of $(I)$ and $(III)$
remain unchanged and, by the assumption A7, $(IV)$ immediately gives
\begin{equation}
\int|(IV)|dx\leq2C_{s}C\varepsilon|h|_{\alpha}\ .
\end{equation}
Therefore, we are left with the estimate of the error term $\int_{\Delta
}|\mathcal{L}h-\mathcal{L}_{\eta}h|dx,$ where $\Delta:=[y_{+},1].$%
\begin{align}
\int_{\Delta}|\mathcal{L}h-\mathcal{L}_{\eta}h|dx  &  \leq\int\mathcal{L}%
(|h|)\mathbf{1}_{\Delta}dx+\int\mathcal{L}_{\eta}(|h|)\mathbf{1}_{\Delta
}dx\leq\\
\int(|h|)\mathbf{1}_{\Delta}\circ Tdx+\int(|h|)\mathbf{1}_{\Delta}\circ
T_{\eta}dx  &  \leq2C_{s}|h|_{\alpha}[\text{Leb}(T^{-1}\Delta)+\text{Leb}%
(T_{\eta}^{-1}\Delta)]\leq\nonumber\\
&  16C_{s}|h|_{\alpha}\varepsilon\ .\nonumber
\end{align}
By collecting all the bounds just got, we conclude that $||\mathcal{L}%
-\mathcal{L}_{\varepsilon}||_{1}\leq O(\varepsilon)||f||_{\alpha}.$
\end{description}
\end{proof}

The proof we just gave refers to the case where $T$ and its perturbations are
respectively the Lorenz cusp-type map studied in \cite{GMPV}.

The same technique can be used to show the stochastic stability of the
classical Lorenz-type map again under the uniformly expandingness assumption.
In this case we do not need the vertical closeness of the derivatives; instead
we have to add the additional hypothesis that the largest elongations between
$|T(0)-T_{\eta}(0)|$ and $|T(1)-T_{\eta}(1)|$ are of order $\varepsilon$ for
any $\eta$ and moreover $|T_{1}^{-1}(T_{\eta}(0))|$ and $1-|T_{2}^{-1}%
(T_{\eta}(1))|$ are also of order $\varepsilon,$ where the last two quantities
are the size of the intervals whose images contains points that have only one
preimage when we apply simultaneously the maps $T$ and $T_{\eta}.$ Hence they
must be removed when we compare the associate transfer operators. The proof
then follows the same lines of the previous one and therefore is omitted.

\part{The semi-Markov description of the process}

In this part of the paper we will discuss the stochastic stability of the
unperturbed physical measure in the framework of PDMP.

\section{The associated semi-Markov Process in $\mathbb{R}^{3}$\label{SMRE}}

Let $\left\{  \mathfrak{x}_{n}\right\}  _{n\in\mathbb{Z}^{+}}$ be the
(homogeneous) Markov chain on $\left(  \Omega,\mathcal{F},\mathbb{P}\right)  $
with values in $\mathcal{M}$ such that, by (\ref{def_tSS}), for any
$A\in\mathcal{B}\left(  \mathcal{M}\right)  ,n\in\mathbb{N},$%
\begin{equation}
\mathbb{P}\left\{  \omega\in\Omega:\mathfrak{x}_{n}\left(  \omega\right)  \in
A|\mathfrak{F}_{n-1}^{\mathfrak{x}}\right\}  =\mathbb{P}\left\{  \omega
\in\Omega:\Phi_{\pi\left(  \theta^{n}\omega\right)  }^{\mathbf{t}\left(
\mathfrak{x}_{n-1},\theta^{n}\omega\right)  }\left(  \mathfrak{x}%
_{n-1}\right)  \in A|\mathfrak{x}_{n-1}\right\}  \;\mathbb{P}-a.s.\;,
\end{equation}
whose transition probability measure is therefore
\begin{equation}
\mathbb{P}\left\{  \mathfrak{x}_{1}\in dz|\mathfrak{x}_{0}\right\}
=\mathbb{\lambda}_{\varepsilon}\left\{  \eta\in\left[  -1,1\right]  :R_{\eta
}\left(  \mathfrak{x}_{0}\right)  \in dz\right\}  \ .
\end{equation}
Consequently, we define the random sequence $\left\{  \mathfrak{s}%
_{n}\right\}  _{n\in\mathbb{Z}^{+}}$ such that
\begin{align}
\Omega &  \ni\omega\longmapsto\mathfrak{s}_{0}\left(  \omega\right)
:=\mathbf{t}\left(  \mathfrak{x}_{0}\left(  \omega\right)  ,\omega\right)
\ ,\\
\Omega &  \ni\omega\longmapsto\mathfrak{s}_{n+1}\left(  \omega\right)
:=\mathfrak{s}_{n}\left(  \omega\right)  +\mathbf{t}\left(  \mathfrak{x}%
_{n}\left(  \omega\right)  ,\omega\right)  \in\mathbb{R}^{+}\ ,\ n\geq0\ ,
\end{align}
and accordingly the counting process $\left(  \mathbf{N}_{t},t\geq0\right)  $
such that
\begin{equation}
\mathbf{N}_{t}:=\sup\left\{  n\in\mathbb{Z}^{+}:\mathfrak{s}_{n}\leq
t\right\}  \ .
\end{equation}
We remark that for $\varepsilon$ sufficiently small $\lambda_{\varepsilon
}\left\{  \eta\in\left[  -1,1\right]  :\inf_{x\in\mathcal{M}}\tau_{\eta
}\left(  x\right)  >0\right\}  =1$ which imply that for any $t>0,\mathbb{P}%
\left\{  \omega\in\Omega:\mathbf{N}_{t}\left(  \omega\right)  <\infty\right\}
=1.$

The sequence $\left\{  \left(  \mathfrak{x}_{n},\mathbf{t}_{n}\right)
\right\}  _{n\in\mathbb{Z}^{+}}$ such that $\mathbf{t}_{0}:=\mathfrak{s}%
_{0},\mathbf{t}_{n}:=\mathfrak{s}_{n+1}-\mathfrak{s}_{n},n\geq0$ is a Markov
renewal process, since by construction, $\forall A\in\mathcal{B}\left(
\mathcal{M}\right)  ,t>0,n\geq0,$%
\begin{align}
\mathbb{P}\left\{  \mathfrak{x}_{n+1}\in A,\mathbf{t}_{n+1}\leq t|\mathfrak{x}%
_{n},\mathbf{t}_{n}\right\}   &  =\mathbb{P}\left\{  \mathfrak{x}_{n+1}\in
A,\mathbf{t}_{n+1}\leq t|\mathfrak{x}_{n}\right\}  \;\mathbb{P}-a.s.\ ,\\
\mathbb{P}\left\{  \mathfrak{x}_{1}\in A,\mathbf{t}_{1}\leq t|\mathfrak{x}%
_{0}\right\}   &  =\mathbb{\lambda}_{\varepsilon}\left\{  \eta\in\left[
-1,1\right]  :R_{\eta}\left(  \mathfrak{x}_{0}\right)  \in A,\tau_{\eta
}\left(  \mathfrak{x}_{0}\right)  \leq t\right\} \nonumber
\end{align}
and
\begin{equation}
\mathbb{P}\left\{  \mathbf{t}_{n+1}\leq t|\left\{  \mathfrak{x}_{n}\right\}
_{n\in\mathbb{Z}^{+}}\right\}  =\mathbb{P}\left\{  \mathbf{t}_{n+1}\leq
t|\mathfrak{x}_{n},\mathfrak{x}_{n+1}\right\}  \;\mathbb{P}-a.s.\ .
\end{equation}
Therefore $\left(  \mathfrak{x}_{t},t\geq0\right)  $ such that $\mathfrak{x}%
_{t}:=\mathfrak{x}_{\mathbf{N}_{t}}$ is the associated semi-Markov process
\cite{As}, \cite{KS}.

Let us set
\begin{equation}
U\times\Omega\ni\left(  y,\omega\right)  \longmapsto\mathbf{\hat{s}}%
_{1}\left(  y,\omega\right)  :=\inf\left\{  t>0:\Phi_{\pi\left(
\omega\right)  }^{t}\left(  y\right)  \in\mathcal{M}\right\}  \in
\mathbb{R}^{+}\ . \label{s1}%
\end{equation}
Then, we introduce the random process $\left(  \mathfrak{u}_{t}\left(
y_{0}\right)  ,t\geq0\right)  $ started at $y_{0}\in U,$ such that
\begin{align}
\Omega\ni\omega\longmapsto\mathfrak{u}_{t}\left(  y_{0}\right)  \left(
\omega\right)   &  :=\left(  1-\mathbf{1}_{\mathcal{M}}\left(  y_{0}\right)
\right)  \Phi_{\pi\left(  \omega\right)  }^{t}\left(  y_{0}\right)
\mathbf{1}_{[0,\mathbf{\hat{s}}_{1}\left(  y_{0},\omega\right)  )}\left(
t\right)  +\label{u_s}\\
&  +\mathbf{1}_{\{\Phi_{\pi\left(  \omega\right)  }^{\mathbf{\hat{s}}%
_{1}\left(  y_{0},\omega\right)  \left(  1-\mathbf{1}_{\mathcal{M}}\left(
y_{0}\right)  \right)  }\left(  y_{0}\right)  \}}\left(  \mathfrak{x}%
_{0}\right)  \Phi_{\pi\left(  \theta^{\left(  1-\mathbf{1}_{\mathcal{M}%
}\left(  y_{0}\right)  \right)  }\omega\right)  }^{t}\left(  \mathfrak{x}%
_{0}\right)  \mathbf{1}_{[\left(  1-\mathbf{1}_{\mathcal{M}}\left(
y_{0}\right)  \right)  \mathbf{\hat{s}}_{1}\left(  y_{0},\omega\right)
,\mathfrak{s}_{1}\left(  \omega\right)  )}\left(  t\right)  +\nonumber\\
&  +\sum_{n\geq1}\Phi_{\pi\left(  \theta^{n+\left(  1-\mathbf{1}_{\mathcal{M}%
}\left(  y_{0}\right)  \right)  }\omega\right)  }^{t-\mathfrak{s}_{n}\left(
\omega\right)  }\left(  \mathfrak{x}_{n}\right)  \mathbf{1}_{[\mathfrak{s}%
_{n}\left(  \omega\right)  ,\mathfrak{s}_{n+1}\left(  \omega\right)  )}\left(
t\right)  \in U\ .\nonumber
\end{align}
Setting $\left(  \mathfrak{l}_{t},t\geq0\right)  $ such that $\mathfrak{l}%
_{t}:=t-\mathfrak{s}_{\mathbf{N}_{t}},$ we have that $\left(  \mathfrak{u}%
_{t},t\geq0\right)  ,$ with $\mathfrak{u}_{t}\left(  \cdot\right)  =\left(
\Phi_{\pi\circ\theta^{\mathbf{N}_{t}}}^{\mathfrak{l}_{t}}\circ\mathfrak{x}%
_{t}\right)  \left(  \cdot\right)  ,$ is a semi-Markov random evolution
\cite{KS}.

\section{Stochastic stability of the unperturbed physical measure}

The process $\left(  \mathfrak{v}_{t},t\geq0\right)  $ such that
$\mathfrak{v}_{t}:=\left(  \mathfrak{x}_{t},\mathbf{N}_{t},\mathfrak{l}%
_{t}\right)  $ is a homogeneous Markov process as well as the process $\left(
\mathfrak{w}_{t},t\geq0\right)  $ such that $\mathfrak{w}_{t}:=\left(
\mathfrak{x}_{t},\mathfrak{l}_{t}\right)  .$ Moreover $\overline{\mathcal{F}%
}_{t}^{\mathfrak{w}}\subseteq\overline{\mathcal{F}}_{t}^{\mathfrak{v}}$ and it
follows from \cite{Da} Theorem A2.2 that these $\sigma$algebras are both right continuous.

By setting $z=0$ in formula (3.9) in \cite{Al} Corollary 1, (see also
\cite{Al} Theorem 3) we have that for any $x\in\mathcal{M},v\geq0$ and any
measurable set $A\subseteq\mathcal{M},$%
\begin{equation}
\lim_{t\rightarrow\infty}\mathbb{P}\left\{  \mathfrak{x}_{t}\in A,\mathfrak{l}%
_{t}>z|\mathfrak{x}_{0}=x,\mathfrak{l}_{0}=v\right\}  =\frac{\int
_{\mathcal{M}}\nu_{2}\left(  dx\right)  \left[  \mathbf{1}_{A}\left(
x\right)  \int_{z}^{\infty}ds\left(  1-F_{\tau}^{\varepsilon}\left(
s;x\right)  \right)  \right]  }{\int_{\mathcal{M}}\nu_{2}\left(  dx\right)
\left[  \int_{0}^{\infty}ds\left(  1-F_{\tau}^{\varepsilon}\left(  s;x\right)
\right)  \right]  }\;,\;\mathbb{P}\text{-a.s.\ ,}%
\end{equation}
where for any $x\in\mathcal{M},t\geq0,$%
\begin{equation}
F_{\tau}^{\varepsilon}\left(  t;x\right)  :=\mathbb{P}\left\{  \omega\in
\Omega:\mathbf{t}\left(  x,\omega\right)  \leq t\right\}  =\lambda
_{\varepsilon}\left\{  \eta\in\left[  -1,1\right]  :\tau_{\eta}\left(
x\right)  \leq t\right\}
\end{equation}
and (see Remark \ref{Rem2}) $\nu_{2}\in\mathfrak{P}\left(  \mathcal{M}\right)
$ is stationary for the Markov chain $\left\{  \mathfrak{x}_{n}\right\}
_{n\in\mathbb{Z}^{+}}.$

\begin{proposition}
For any bounded measurable function $f$ on $U$ and any $y_{0}\in U,$%
\begin{equation}
\lim_{t\rightarrow\infty}\frac{1}{t}\int_{0}^{t}dsf\circ\mathfrak{u}%
_{s}\left(  y_{0}\right)  =\frac{\int_{\left[  -1,1\right]  }\lambda
_{\varepsilon}\left(  d\eta\right)  \int_{\mathcal{M}}\nu_{2}\left(
dx\right)  \int_{0}^{\tau_{\eta}\left(  x\right)  }dsf\left(  \Phi_{\eta}%
^{s}\left(  x\right)  \right)  }{\int_{\mathcal{M}}\nu_{2}\left(  dx\right)
\left[  \int_{0}^{\infty}ds\left(  1-F_{\tau}^{\varepsilon}\left(  s;x\right)
\right)  \right]  }\;,\;\mathbb{P}\text{-a.s.}%
\end{equation}

\end{proposition}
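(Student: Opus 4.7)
The plan is to rewrite $\int_{0}^{t}f\circ\mathfrak{u}_{s}ds$ as a renewal--reward sum indexed by the successive crossings $\{\mathfrak{s}_{n}\}$ of $\mathcal{M}$ and then apply a Birkhoff-type theorem for the Markov renewal chain built in Section \ref{SMRE}. Since $f$ is bounded and since $\mathfrak{s}_{0}$ and $\tau_{\tilde{\eta}_{\mathbf{N}_{t}}}(\mathfrak{x}_{\mathbf{N}_{t}})$ are $\mathbb{P}$-a.s.\ finite, the boundary pieces
\begin{equation*}
\int_{0}^{\mathfrak{s}_{0}}f\circ\mathfrak{u}_{s}ds\qquad\text{and}\qquad\int_{\mathfrak{s}_{\mathbf{N}_{t}}}^{t}f\circ\mathfrak{u}_{s}ds
\end{equation*}
contribute $o(t)$ after division by $t$. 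Writing $\tilde{\eta}_{n}:=\pi(\theta^{n+(1-\mathbf{1}_{\mathcal{M}}(y_{0}))}\omega)$ for the noise governing the flow on the $n$-th inter-arrival interval $[\mathfrak{s}_{n},\mathfrak{s}_{n+1})$, the definition (\ref{u_s}) gives $\mathfrak{u}_{s}=\Phi_{\tilde{\eta}_{n}}^{s-\mathfrak{s}_{n}}(\mathfrak{x}_{n})$ on that interval, so setting $F(x,\eta):=\int_{0}^{\tau_{\eta}(x)}f(\Phi_{\eta}^{r}(x))dr$ one obtains
\begin{equation*}
\int_{\mathfrak{s}_{n}}^{\mathfrak{s}_{n+1}}f\circ\mathfrak{u}_{s}ds=F(\mathfrak{x}_{n},\tilde{\eta}_{n})\,,\qquad\int_{0}^{t}f\circ\mathfrak{u}_{s}ds=\sum_{n=0}^{\mathbf{N}_{t}-1}F(\mathfrak{x}_{n},\tilde{\eta}_{n})+o(t)\,.
\end{equation*}

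\textbf{Ergodic theorem for the augmented chain.} The sequence $\{\tilde{\eta}_{n}\}_{n\geq0}$ is i.i.d.\ of law $\lambda_{\varepsilon}$ and by construction $\tilde{\eta}_{n}$ is independent of the past $\sigma$-algebra given $\mathfrak{x}_{n}$, so $(\mathfrak{x}_{n},\tilde{\eta}_{n})_{n\geq0}$ is a homogeneous Markov chain on $\mathcal{M}\times[-1,1]$ with stationary probability measure $\nu_{2}^{\varepsilon}\otimes\lambda_{\varepsilon}$, $\nu_{2}^{\varepsilon}$ being the (unique, cf.\ Remark \ref{Rem1} and Remark \ref{Rem2}) stationary measure for $\{\mathfrak{x}_{n}\}$. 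Both $F$ and $(x,\eta)\mapsto\tau_{\eta}(x)$ are integrable against $\nu_{2}^{\varepsilon}\otimes\lambda_{\varepsilon}$, since $|F(x,\eta)|\leq\|f\|_{\infty}\tau_{\eta}(x)$ and integrability of $\tau$ follows from $\mathbf{t}\in L_{\mu_{\mathbf{R}}^{\varepsilon}}^{1}$ (proved in the lemma preceding Section \ref{SS}, combined with the factorisation $\mu_{\mathbf{R}}^{\varepsilon}=\nu_{2}^{\varepsilon}\otimes\mathbb{P}_{\varepsilon}$). Birkhoff's theorem for this chain therefore yields $\mathbb{P}$-a.s.
\begin{align*}
\frac{1}{N}\sum_{n=0}^{N-1}F(\mathfrak{x}_{n},\tilde{\eta}_{n}) & \longrightarrow\int_{\mathcal{M}}\nu_{2}^{\varepsilon}(dx)\int_{[-1,1]}\lambda_{\varepsilon}(d\eta)\,F(x,\eta)\,,\\
\frac{\mathfrak{s}_{N}}{N}=\frac{1}{N}\sum_{n=0}^{N-1}\tau_{\tilde{\eta}_{n}}(\mathfrak{x}_{n}) & \longrightarrow\mathbb{E}_{\tau}:=\int_{\mathcal{M}}\nu_{2}^{\varepsilon}(dx)\int_{[-1,1]}\lambda_{\varepsilon}(d\eta)\,\tau_{\eta}(x)\,,
\end{align*}
and the sandwich $\mathfrak{s}_{\mathbf{N}_{t}}\leq t<\mathfrak{s}_{\mathbf{N}_{t}+1}$ gives the standard renewal asymptotic $\mathbf{N}_{t}/t\rightarrow1/\mathbb{E}_{\tau}$.

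\textbf{Conclusion and matching of denominators.} Putting these estimates together,
\begin{equation*}
\frac{1}{t}\int_{0}^{t}f\circ\mathfrak{u}_{s}ds=\frac{\mathbf{N}_{t}}{t}\cdot\frac{1}{\mathbf{N}_{t}}\sum_{n=0}^{\mathbf{N}_{t}-1}F(\mathfrak{x}_{n},\tilde{\eta}_{n})+o(1)\longrightarrow\frac{1}{\mathbb{E}_{\tau}}\int_{[-1,1]}\lambda_{\varepsilon}(d\eta)\int_{\mathcal{M}}\nu_{2}^{\varepsilon}(dx)\,F(x,\eta)\,,
\end{equation*}
which is the claimed numerator. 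The denominator matches by the Fubini identity $\int_{0}^{\infty}(1-F_{\tau}^{\varepsilon}(s;x))ds=\int_{[-1,1]}\lambda_{\varepsilon}(d\eta)\,\tau_{\eta}(x)$, valid since $\tau_{\eta}(x)\geq0$. I expect the main obstacle to be establishing enough ergodicity of the chain $(\mathfrak{x}_{n},\tilde{\eta}_{n})$ on the uncountable state space $\mathcal{M}\times[-1,1]$ to apply Birkhoff's theorem for every starting point $y_{0}\in U$ (rather than only $\nu_{2}^{\varepsilon}$-a.e.); this should rest on the uniqueness of $\nu_{2}^{\varepsilon}$ derived earlier via the skew-product structure of $R_{\eta}$ together with the attracting character of the unperturbed attractor captured by $U$, rather than on a direct minorisation argument for the chain.
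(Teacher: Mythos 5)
Your proof is correct and follows essentially the same strategy as the paper's: decompose the time integral into renewal cycles at the crossings of $\mathcal{M}$, reduce to a ratio of stationary averages for the embedded chain, and couple this with the renewal asymptotic $\mathbf{N}_{t}/t\to 1/\mathbb{E}_{\tau}$. The only superficial difference is that you carry out the Birkhoff step explicitly for the augmented chain $(\mathfrak{x}_{n},\tilde{\eta}_{n})$ where the paper instead notes the semi-regenerative structure of $(\mathfrak{u}_{t})$ and cites Asmussen's Theorem VI.3.1 as a black box, with both arguments relying on the same underlying positive recurrence / uniqueness of $\nu_{2}^{\varepsilon}$ to pass from stationarity to $\mathbb{P}$-a.s.\ convergence from an arbitrary $y_{0}\in U$ (precisely the point you flag at the end).
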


\begin{proof}
Given any bounded measurable function $f$ on $U,$ by (\ref{u_s})
\begin{align}
\int_{0}^{t}dsf\circ\mathfrak{u}_{s}\left(  y_{0}\right)   &  =\left(
1-\mathbf{1}_{\mathcal{M}}\left(  y_{0}\right)  \right)  \int_{0}%
^{\mathbf{\hat{s}}_{1}\left(  y_{0},\cdot\right)  }dsf\left(  \Phi_{\pi}%
^{s}\left(  y_{0}\right)  \right)  +\\
&  +\mathbf{1}_{\{\Phi_{\pi}^{\mathbf{\hat{s}}_{1}\left(  y_{0},\cdot\right)
\left(  1-\mathbf{1}_{\mathcal{M}}\left(  y_{0}\right)  \right)  }\left(
y_{0}\right)  \}}\left(  \mathfrak{x}_{0}\right)  \int_{\mathbf{\hat{s}}%
_{1}\left(  y_{0},\cdot\right)  \left(  1-\mathbf{1}_{\mathcal{M}}\left(
y_{0}\right)  \right)  }^{\mathfrak{s}_{1}}dsf\left(  \Phi_{\pi\circ
\theta^{\left(  1-\mathbf{1}_{\mathcal{M}}\left(  y_{0}\right)  \right)  }%
}^{s-\mathbf{\hat{s}}_{1}\left(  y_{0},\cdot\right)  }\left(  \mathfrak{x}%
_{0}\right)  \right)  +\nonumber\\
&  +\sum_{n=1}^{\mathbf{N}_{t}-1}\int_{\mathfrak{s}_{n}}^{\mathfrak{s}_{n+1}%
}dsf\left(  \Phi_{\pi\circ\theta^{n+\left(  1-\mathbf{1}_{\mathcal{M}}\left(
y_{0}\right)  \right)  }}^{s-\mathfrak{s}_{n}}\left(  \mathfrak{x}_{n}\right)
\right)  +\int_{\mathfrak{s}_{\mathbf{N}_{t}}}^{t}dsf\left(  \Phi_{\pi
\circ\theta^{\mathbf{N}_{t}+\left(  1-\mathbf{1}_{\mathcal{M}}\left(
y_{0}\right)  \right)  }}^{s-\mathfrak{s}_{\mathbf{N}_{t}}}\left(
\mathfrak{x}_{t}\right)  \right)  \ .\nonumber
\end{align}
By definition the process $\left(  \mathfrak{u}_{t},t\geq0\right)  $ is
semi-regenerative with imbedded Markov renewal process $\left\{  \left(
\mathfrak{x}_{n},\mathbf{t}_{n}\right)  \right\}  _{n\in\mathbb{N}},$ that is
$\left(  \mathfrak{u}_{t},t\geq0\right)  $ is regenerative with imbedded
renewal process $\left\{  \mathfrak{s}_{n}\right\}  _{n\geq1}.$ Indeed,
$\forall n\geq1$ the post-process $\left(  \left(  \mathfrak{u}%
_{t+\mathfrak{s}_{n}},t\geq0\right)  ,\left\{  \mathbf{t}_{n+k}\right\}
_{k\geq1}\right)  $ is independent of the random vector $\left(
\mathbf{\hat{s}}_{1}\left(  y_{0},\cdot\right)  ,\mathfrak{s}_{1}%
,..,\mathfrak{s}_{n}\right)  $ (\cite{As} Section VII.5). It is enough to
restrict ourselves to the nondelayed case, that is $y_{0}\in\mathcal{M},$
since $\mathbb{E}\left[  \mathbf{\hat{s}}_{1}\left(  y_{0},\cdot\right)
\right]  ,\sup_{x\in\mathcal{M}}\lambda_{\varepsilon}\left(  \tau_{\eta
}\left(  x\right)  \right)  <\infty.$ By (\ref{def_tSS}) and (\ref{sSS})
\begin{align}
\lim_{n\rightarrow\infty}\frac{\mathfrak{s}_{n}}{n}  &  =\lim_{n\rightarrow
\infty}\frac{1}{n}\sum_{k=1}^{n}\mathbf{t}\left(  \mathfrak{x}_{n}%
,\cdot\right)  =\lim_{n\rightarrow\infty}\frac{1}{n}\sum_{k=1}^{n}\tau_{\pi
}\left(  \mathbf{R}^{k}\left(  y_{0},\cdot\right)  \right) \\
&  =\mathbb{P}\otimes\nu_{2}\left[  \tau_{\pi}\right]  =\int\nu_{2}\left(
dx\right)  \left[  \int_{0}^{\infty}ds\left(  1-F_{\tau}^{\varepsilon}\left(
s;x\right)  \right)  \right]  \;,\;\mathbb{P}\text{-a.s.}\ .\nonumber
\end{align}
Moreover, by renewal theory (see e.g. \cite{As} Section V)
\begin{equation}
\lim_{t\rightarrow\infty}\frac{t}{\mathbf{N}_{t}}=\nu_{2}\left[  \int
_{0}^{\infty}ds\left(  1-F_{\tau}^{\varepsilon}\left(  s;\cdot\right)
\right)  \right]  \;,\;\mathbb{P}\text{-a.s.}\ ,
\end{equation}
therefore,
\begin{align}
\lim_{t\rightarrow\infty}\left\vert \int_{\mathfrak{s}_{\mathbf{N}_{t}}}%
^{t}dsf\left(  \Phi_{\pi\circ\theta^{\mathbf{N}_{t}+\left(  1-\mathbf{1}%
_{\mathcal{M}}\left(  y_{0}\right)  \right)  }}^{s-\mathfrak{s}_{\mathbf{N}%
_{t}}}\left(  \mathfrak{x}_{t}\right)  \right)  \right\vert  &  \leq
\lim_{t\rightarrow\infty}\left\Vert f\right\Vert _{\infty}\frac{\mathfrak{l}%
_{t}}{t}=\\
&  =\lim_{t\rightarrow\infty}\left\Vert f\right\Vert _{\infty}\left(
1-\frac{\mathfrak{s}_{\mathbf{N}_{t}}}{\mathbf{N}_{t}}\frac{\mathbf{N}_{t}}%
{t}\right)  =0\;,\;\mathbb{P}\text{-a.s.}\ ,\nonumber
\end{align}
and the thesis follows from \cite{As} Theorem VI.3.1.
\end{proof}

Defining
\begin{equation}
\mu_{\varepsilon}\left(  f\right)  :=\frac{\int_{\left[  -1,1\right]  }%
\lambda_{\varepsilon}\left(  d\eta\right)  \int_{\mathcal{M}}\nu_{2}\left(
dx\right)  \int_{0}^{\tau_{\eta}\left(  x\right)  }ds}{\int\nu_{2}\left(
dx\right)  \left[  \int_{0}^{\infty}ds\left(  1-F_{\tau}^{\varepsilon}\left(
s;x\right)  \right)  \right]  }f\circ\Phi_{\eta}^{s}\left(  x\right)  \ ,
\end{equation}
by the stochastic stability of $\mu_{R_{0}},$ since for any bounded
real-valued measurable function $\varphi$ on $\mathcal{M}\times\mathbb{R}%
^{+},$%
\begin{gather}
\lim_{\varepsilon\rightarrow0}\frac{1}{\nu_{2}\left[  \int_{0}^{\infty
}ds\left(  1-F_{\tau}^{\varepsilon}\left(  s;\cdot\right)  \right)  \right]
}\int_{\mathcal{M}}\nu_{2}^{\varepsilon}\left(  dx\right)  \int_{0}%
^{\tau_{\eta}\left(  x\right)  }ds\varphi\left(  x,s\right)  =\\
=\int_{\mathcal{M}}\mu_{R_{0}}\left(  dx\right)  \int_{0}^{\tau_{0}\left(
x\right)  }ds\frac{1}{\mu_{R_{0}}\left[  \tau_{0}\right]  }\varphi\left(
x,s\right)  =\mu_{S_{0}}\left(  \varphi\right)  \ ,\nonumber
\end{gather}
we get
\begin{equation}
\lim_{\varepsilon\rightarrow0}\mu_{\varepsilon}\left(  f\right)  =\mu_{S_{0}%
}\left(  f\circ\Phi_{0}^{\cdot}\right)  =\int_{\mathcal{M}}\mu_{R_{0}}\left(
dx\right)  \int_{0}^{\tau_{0}\left(  x\right)  }ds\frac{1}{\mu_{R_{0}}\left[
\tau_{0}\right]  }f\circ\Phi_{0}^{s}\left(  x\right)  \ ,
\end{equation}
that is the proof of the following result.

\begin{theorem}
If $\nu_{2}^{\varepsilon}$ weakly converges to $\mu_{R_{0}},$ then
$\mu_{\varepsilon}$ weakly converges to the unperturbed physical measure.
\end{theorem}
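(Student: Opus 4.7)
The plan is to unfold both factors in the definition of $\mu_\varepsilon(f)$ and pass to the limit using the weak convergence hypothesis together with the known convergence $\lambda_\varepsilon\rightharpoonup\delta_0$. The numerator is
\[
N(\varepsilon,f):=\int_{[-1,1]}\lambda_\varepsilon(d\eta)\int_{\mathcal{M}}\nu_2^\varepsilon(dx)\int_0^{\tau_\eta(x)}ds\,f\circ\Phi_\eta^s(x),
\]
and the denominator is $D(\varepsilon):=\int_{\mathcal{M}}\nu_2^\varepsilon(dx)\int_0^\infty(1-F_\tau^\varepsilon(s;x))\,ds=\int\lambda_\varepsilon(d\eta)\int\nu_2^\varepsilon(dx)\,\tau_\eta(x)$ by Fubini. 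I would treat the two limits separately and then divide.

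For the numerator, I would first fix $f\in C_b(U)$ and rewrite $N(\varepsilon,f)=\int \nu_2^\varepsilon(dx)\,\Psi_\varepsilon(x)$ with $\Psi_\varepsilon(x):=\int\lambda_\varepsilon(d\eta)\int_0^{\tau_\eta(x)}f(\Phi_\eta^s(x))\,ds$. Using continuity of $\eta\mapsto \tau_\eta(x)$ and of $(\eta,s)\mapsto\Phi_\eta^s(x)$ away from the stable leaf $\Gamma_0$ of the singularity (which is a $\mu_{R_0}$-null set by the non-atomic nature of $\nu_1$, hence of $\bar\nu_2=\mu_{R_0}$), together with $\lambda_\varepsilon\rightharpoonup\delta_0$, one gets pointwise $\Psi_\varepsilon(x)\longrightarrow\int_0^{\tau_0(x)}f(\Phi_0^s(x))\,ds=:\Psi_0(x)$ for $\mu_{R_0}$-a.e.\ $x$. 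Since $|\Psi_\varepsilon(x)|\leq\|f\|_\infty\,\sup_{\eta\in\mathrm{spt}\lambda_\varepsilon}\tau_\eta(x)\leq\|f\|_\infty\,C_1\log|q(x)-\hat u_0|^{-1}$ by the uniform logarithmic bound on the roof function recalled in Section 7 of the excerpt, and this dominating function is $\mu_{R_0}$-integrable by the same lemma used for $\mu_{\overline{\mathbf R}}(\bar{\mathbf t})<\infty$, the weak convergence $\nu_2^\varepsilon\rightharpoonup\mu_{R_0}$ combined with a standard uniform-integrability/Portmanteau argument on the truncations $\Psi_\varepsilon\wedge M$ (then letting $M\to\infty$) yields $N(\varepsilon,f)\longrightarrow\int\mu_{R_0}(dx)\int_0^{\tau_0(x)}f(\Phi_0^s(x))\,ds$.

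The denominator is handled by the same argument applied to $f\equiv 1$, giving $D(\varepsilon)\longrightarrow\int\mu_{R_0}(dx)\,\tau_0(x)=\mu_{R_0}[\tau_0]$, which is finite and strictly positive (since $\tau_0$ is bounded away from zero on the support of $\mu_{R_0}$). Dividing yields
\[
\lim_{\varepsilon\to 0}\mu_\varepsilon(f)=\frac{1}{\mu_{R_0}[\tau_0]}\int\mu_{R_0}(dx)\int_0^{\tau_0(x)}f\circ\Phi_0^s(x)\,ds,
\]
and by the formula (\ref{mu_S}) specialised to $\eta=0$ the right-hand side equals $\mu_{S_0}(f\circ\Phi_0^\cdot)$. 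Finally, using the conjugation $\chi_0$ of (\ref{diff_omega}) and the definition $\mu_0=(\Psi_0)_\#\mu_{S_0}$ recalled in Section \ref{SSPM}, this coincides with $\mu_0(f)$, proving the weak convergence on $C_b(U)$ and in particular on the basin of $\mu_0$.

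The main obstacle is the justification of the interchange of $\lim_{\varepsilon\to 0}$ with the $x$-integration against $\nu_2^\varepsilon$: the integrand $\Psi_\varepsilon$ is unbounded because of the logarithmic blow-up of $\tau_\eta$ near $\Gamma_\eta$, and $\Gamma_\eta$ depends on $\eta$. Controlling this requires both the uniform $\log|q(x)-\hat u_0|^{-1}$ bound (which is $\eta$-independent) and the absolute continuity of $\nu_1^\varepsilon$ with uniformly $L^\infty$-bounded densities — the same ingredient used in the integrability lemma for $\bar{\mathbf t}$ — so that tails of $\tau_\eta$ under $\nu_2^\varepsilon$ are small uniformly in $\varepsilon$. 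Once this uniform integrability is in place, the rest is a routine Portmanteau-plus-truncation argument.
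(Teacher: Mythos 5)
Your proof is correct and follows essentially the same route as the paper: split $\mu_\varepsilon(f)$ into numerator and denominator, pass to the limit in each using $\nu_2^\varepsilon\rightharpoonup\mu_{R_0}$ together with $\lambda_\varepsilon\rightharpoonup\delta_0$, and identify the resulting ratio with $\mu_{S_0}(f\circ\Phi_0^{\cdot})=\mu_0(f)$. The paper states the two limits tersely under the heading ``by the stochastic stability of $\mu_{R_0}$'', implicitly relying on the truncation and integrability control already set up for the preceding proposition on the stochastic stability of $\mu_{S_0}$; you spell out that control explicitly (logarithmic bound on $\tau_\eta$, uniform $L^\infty$-boundedness of the densities, truncation $\Psi_\varepsilon\wedge M$ plus Portmanteau), which is exactly the detail the paper leaves to the reader.
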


\begin{remark}
This last result provides another proof of the stochastic stability of the
physical measure already given in Section \ref{SSPM}. Notice that, by
(\ref{u_s}) and by the definition $\left(  \hat{\Phi}_{\omega}^{t,t_{0}%
},t>t_{0}\geq0\right)  $ given at the beginning of that section, for any,
$u_{0}\in U,\omega\in\Omega,$ the associated trajectory $\left\{  \left(
u,t\right)  \in U\times\mathbb{R}^{+}:u=\mathfrak{u}_{t}\left(  u_{0}\right)
\left(  \omega\right)  \right\}  $ of $\left(  \mathfrak{u}_{t}\left(
u_{0}\right)  ,t\geq0\right)  ,$ that is the process $\left(  \mathfrak{u}%
_{t},t\geq0\right)  $ started at $u_{0},$ coincides with $\hat{\Phi}_{\omega
}^{t,0}\left(  u_{0}\right)  .$
\end{remark}

Therefore we are left with the proof of the existence of $\nu_{2}%
^{\varepsilon}$ and of its weak convergence to $\mu_{R_{0}}$ in the limit of
$\varepsilon$ tending to $0,$ i.e. of the stochastic stability of the
invariant measure for the unperturbed Poincar\'{e} map $R_{0}.$

We show that in this framework the existence of the invariant measure
$\bar{\nu}_{2}^{\varepsilon}$ for the transition operator $P_{\overline{R}},$
and its weak converge to $\mu_{R_{0}}$ can be proven following the same
argument which led to the existence and the strong stochastic stability of
$\nu_{1},$ the invariant measure for the transition operator $P_{T},$ given in
Section \ref{SST}.

Since $\mathcal{M}$ is foliated by the invariant stable foliation of the
unperturbed flow and that the leaves of the foliation can be rectified because
the regularity of the foliation is higher that $C^{1},$ any connected
component of $\mathcal{M}$ can be represented as
\begin{equation}
\mathcal{O}\ni\left(  u,v\right)  \longmapsto\mathbf{r}\left(  u,v\right)
:=\left(  y_{1}\left(  u,v\right)  ,y_{2}\left(  u,v\right)  ,y_{3}\left(
u,v\right)  \right)  \in\mathbb{R}^{3}\ , \label{M_loc_ch}%
\end{equation}
where $\mathcal{O}$ is a regular open subset of $\mathbb{R}^{2}$ and
$\mathbf{r}\in C^{1}\left(  \mathcal{O},\mathbb{R}^{3}\right)  \cap C\left(
\overline{\mathcal{O}},\mathbb{R}^{3}\right)  $ is such that, setting $\bar
{I}:=\left\{  u\in\mathbb{R}:\exists v\in\mathbb{R}\ s.t.\ \left(  u,v\right)
\in\mathcal{O}\right\}  ,\forall u\in\bar{I},\mathbf{r}\left(  u,\cdot\right)
\cap\mathcal{M}$ is an invariant stable leaf. Making the identification of
$\mathcal{M}$ with $\overline{\mathcal{O}}$ and of $I$ with $\bar{I},$ we also
identify $q:\mathcal{M}\longrightarrow I$ with $\tilde{q}:\mathcal{O}%
\longrightarrow\bar{I}\footnote{If $\bar{\iota}:\bar{I}\longrightarrow I,$
then $\bar{\iota}\circ\tilde{q}=q\circ\mathbf{r}.$}$ as well as, for any $%
\eta\in spt\lambda_\varepsilon,$ the map $\bar{R}_\eta:\mathcal{M}%
\circlearrowleft$ defined in (\ref{Rk=kR}) with the skew-product
\begin{equation}
\mathcal{O}\ni\left(  u,v\right)  \longmapsto\left(  \bar{T}_{\eta}\left(
u\right)  ,\Upsilon_{\eta}\left(  u,v\right)  \right)  \in\mathcal{O}^{\prime
}\;,\;\mathcal{O}^{\prime}\subseteq\mathcal{O\ }.
\end{equation}

Hence, denoting by $\overline{\mathcal{O}}\ni\left(  u,v\right)
\longmapsto\mathbf{m}\left(  u,v\right)  \in\mathbb{R}^{+}$ the
Radon-Nikod\'{y}m derivative w.r.t. $\lambda^{2}$ of the uniform probability
distribution $\lambda_{\mathcal{M}}$ on $\mathcal{M},$ if $\bar{h}\in
L^{1}\left(  \mathcal{M},\lambda_{\mathcal{M}}\right)  ,$ let $h:=\bar{h}%
\circ\mathbf{r}\in L^{1}\left(  \overline{\mathcal{O}},\mathbf{m}\lambda
^{2}\right)  .$

\begin{proposition}
If, for any $\eta\in spt\lambda_{\varepsilon},\mathcal{L}_{\eta}$ satisfies
the Lasota-Yorke inequality (\ref{LY0}), $T_{0}$ preserves only one invariant
measure a.c.w.r.t. $\lambda$ and the transition operator $P_{\overline{R}}$
satisfies the assumption A5 given in section \ref{SST}, then $\mu_{R_{0}}$ is
strongly stochastically stable.
\end{proposition}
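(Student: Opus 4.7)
The plan is to reduce the claim to Theorem \ref{SSST} applied to the one-dimensional family $\{\bar{T}_\eta\}_{\eta\in\operatorname{spt}\lambda_\varepsilon}$ in the quotient space $I$, and then to propagate the result to the Poincar\'e map via the skew-product structure and the machinery already developed in Section \ref{SS}. First I would verify that $\{\bar{T}_\eta\}$ satisfies the hypotheses A1--A6 of Theorem \ref{SSST}. Assumptions A1 and A3 are granted by the first hypothesis together with the conjugation $\bar{T}_\eta\circ\iota_\eta=\iota_\eta\circ T_\eta$ of \eqref{Ti=iT} via the $C^{1}$ diffeomorphism $\iota_\eta$, which transports Lasota-Yorke inequalities between $\mathcal{L}_\eta$ and the transfer operator of $\bar{T}_\eta$ with the same constants, up to a factor involving $\|\iota_\eta^{\pm 1}\|_{C^{1}}$ uniformly bounded in $\eta$. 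Hypothesis A2 is precisely the second assumption. Hypotheses A5 and A6 for the Markov chain driven by $\{\bar{T}_\eta\}$ follow by projection from the corresponding properties of the chain driven by $\{\bar{R}_\eta\}$: the transition density $\mathfrak{q}_\varepsilon$ of $P_{\overline{R}}$ given by the third hypothesis pushes forward under $q$ to a density on $I$, and the support property is preserved because $q$ is the quotient along the unperturbed stable foliation.

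The delicate hypothesis is A4. For this I would repeat the argument of the proof of Theorem \ref{SSST}, splitting the difference of transfer operators into the three blocks (I), (III) and (IV) exactly as in that proof. Horizontal closeness of the inverse branches of $\bar{T}_\eta$ and $\bar{T}_0$ follows from the horizontal closeness of the perturbed phase fields $\phi_\eta$ to $\phi_0$, transported through the $C^{1}$ diffeomorphisms $\kappa_\eta$ and $\iota_\eta$ of \eqref{iq=qk}; vertical closeness of the derivatives away from the critical point follows from the corresponding estimate on $T_\eta$, which is itself a consequence of the $C^{1+\epsilon}$ regularity of the stable foliation uniform in $\eta$. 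The error integral on a neighbourhood of the critical value is then bounded as in the proof of Theorem \ref{SSST}, using the uniform Lasota-Yorke constants and the $L^\infty$ estimate $\|h\|_\infty\leq C_s\|h\|_\alpha$ coming from the continuous injection of the quasi-H\"older space into $L^\infty$.

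With A1--A6 in place, Theorem \ref{SSST} yields $\nu_1^\varepsilon\to\mu_{T_0}$ in $L^{1}(I,d\lambda)$, which in particular implies weak convergence. Theorem \ref{Th1} then gives the weak convergence of $\bar{\nu}_2^\varepsilon$ to $\mu_{R_0}$, and Corollary \ref{C1} upgrades this to the stochastic stability of $\mu_{R_0}$ in the sense of Definition \ref{defSS_TR}. The qualifier \emph{strong} in the statement refers to the fact that the underlying convergence at the level of the one-dimensional marginal $\nu_1^\varepsilon$ of $\bar{\nu}_2^\varepsilon$, which by Remark \ref{Rem1} completely determines $\bar{\nu}_2^\varepsilon$, takes place in the total variation distance on $I$ rather than merely weakly.

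The main obstacle will be the verification of A4 for $\bar{T}_\eta$. While horizontal closeness and the uniform Lasota-Yorke bound follow from standard transport arguments through the diffeomorphisms $\iota_\eta$ and $\kappa_\eta$, the vertical closeness of the derivatives and the control of the critical-value neighbourhood require a quantitative version of the $\eta$-dependence of the foliation $\mathcal{I}_\eta$. This rests on the assumed uniform $C^{1+\epsilon}$ regularity of $\mathcal{I}_\eta$ and on making quantitative the arguments of \cite{AM} and \cite{APPV} establishing the existence of such a foliation for $C^{1}$ perturbations of $\phi_0$; checking that the resulting modulus $\upsilon'(\varepsilon)$ is indeed infinitesimal as $\varepsilon\to 0$ is where the technical core of the proof lies.
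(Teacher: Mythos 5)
Your route is genuinely different from the paper's, and it is worth pointing out where the two diverge. You propagate the strong stability result upward: first you verify the hypotheses of Theorem \ref{SSST} for the quotient maps $\bar T_\eta$ (deriving A1--A3 from the $\mathcal L_\eta$ Lasota--Yorke inequality and $\iota_\eta$, A2 from the uniqueness assumption, A5--A6 by projecting the assumed density and support properties of $P_{\overline R}$ through $q$, and A4 by re-running the block decomposition (I), (III), (IV)), obtain $\nu_1^\varepsilon\to\mu_{T_0}$ in $L^1$, and then invoke Theorem \ref{Th1} and Corollary \ref{C1} to conclude. The paper does not pass through these theorems at all; instead it works directly in the dual Banach space $\mathbb M$ of signed measures on $\mathcal M$ whose image under $q$ is absolutely continuous, normed by $\Vert\mu\Vert=\Vert h_\mu\Vert_{L^1(I,\lambda)}$, together with the stronger space $\mathbb B_1:=\{\mu\in\mathbb M: h_\mu\in\mathbb B\}$. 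The intertwining $q\circ\bar R_\eta=\bar T_\eta\circ q$ forces $h_{(\bar R_\eta)_\#\mu}=\mathcal L_\eta h_\mu$, so the Lasota--Yorke inequality for $(\bar R_\eta)_\#$ on $(\mathbb B_1,\Vert\cdot\Vert_{\mathbb B_1})\hookrightarrow(\mathbb M,\Vert\cdot\Vert)$, and the operator-norm closeness of $(\bar R_0)_\#$ and $P_{\overline R}$, literally reduce to the corresponding statements for $\mathcal L_\eta,\mathcal L_\varepsilon$ on $\mathbb B\hookrightarrow L^1(I,\lambda)$, after which \cite{BHV} applies directly in this functional setting and Lemma \ref{Lf} transfers the conclusion from $\overline{\mathbf R}$ to $\mathbf R$. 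What the paper's formulation buys is that the strong (quotient-$L^1$) convergence of $\bar\nu_2^\varepsilon$ to $\mu_{R_0}$ is the \emph{direct} output: the Banach norm on $\mathbb M$ makes precise what you correctly flag only at the end, namely that the ``strong'' stability of $\mu_{R_0}$ can only mean $L^1$ convergence of the quotient densities $h_{\bar\nu_2^\varepsilon}\to h_{\mu_{R_0}}$, since $\bar\nu_2^\varepsilon$ itself is singular with respect to Lebesgue on $\mathcal M$ (Remark \ref{Rem1}). What your modular route buys is a cleaner separation of concerns, at the cost of invoking Theorem \ref{SSST}, whose hypotheses (you should say A1--A8 plus the horizontal/vertical closeness conditions, not A1--A6) are strictly stronger than those of the present proposition: in particular A7, A8 and the branch-closeness conditions are not among the proposition's hypotheses, so this dependence needs to be either assumed or reduced, as you indicate, to quantitative foliation regularity estimates that neither proof spells out.
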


\begin{proof}
Let us set $\mathfrak{M}:=\mathfrak{M}\left(  \mathcal{M}\right)  .$ For any
$\mu\in\mathfrak{M},g\in M_{b}\left(  \mathcal{M}\right)  $ and any
sub$\sigma$algebra $\mathcal{B}^{\prime}$ of $\mathcal{B}\left(
\mathcal{M}\right)  ,$%
\begin{align}
\mu\left(  g\right)   &  =\hat{\mu}_{+}\left(  \mathcal{M}\right)  \bar{\mu
}_{+}\left(  g\right)  -\mu_{-}\left(  \mathcal{M}\right)  \hat{\mu}%
_{-}\left(  g\right)  =\mu_{+}\left(  \mathcal{M}\right)  \hat{\mu}_{+}\left(
\hat{\mu}_{+}\left(  g|\mathcal{B}_{\mu_{+}}^{\prime}\right)  \right)
-\mu_{-}\left(  \mathcal{M}\right)  \hat{\mu}_{-}\left(  \hat{\mu}_{-}\left(
g|\mathcal{B}_{\mu_{-}}^{\prime}\right)  \right) \\
&  =\mu\left(  \mathcal{E}_{\mu}\left(  g|\mathcal{B}^{\prime}\right)
\right)  \ ,\nonumber
\end{align}
where $\mathcal{B}_{\mu_{\pm}}^{\prime}$ is the trace $\sigma$algebra of
$\mathcal{B}^{\prime}$ on $spt\mu_{\pm},$ namely $\left\{  A\subseteq
spt\mu_{\pm}:\exists B\in\mathcal{B}^{\prime}\ s.t.\ A=B\cap spt\mu_{\pm
}\right\}  $ and, since $\mu_{\pm}\left(  \hat{\mu}_{\mp}\left(
g|\mathcal{B}_{\mu_{\mp}}^{\prime}\right)  \right)  =0$ because $spt\hat{\mu
}_{\pm}\left(  g|\mathcal{B}_{\mu_{\pm}}^{\prime}\right)  \subseteq
spt\mu_{\pm},$%
\begin{equation}
\mathcal{E}_{\mu}\left(  g|\mathcal{B}^{\prime}\right)  :=\hat{\mu}_{+}\left(
g|\mathcal{B}_{\mu_{+}}^{\prime}\right)  +\hat{\mu}_{-}\left(  g|\mathcal{B}%
_{\mu_{-}}^{\prime}\right)  \ .
\end{equation}
Given $\mu\in\mathfrak{M}$ and $\mathcal{B}^{\prime}$ sub$\sigma$algebra of
$\mathcal{B}\left(  \mathcal{M}\right)  ,$ for any $g\in M_{b}\left(
\mathcal{M}\right)  \mathfrak{,}$%
\begin{equation}
\left\vert \mathcal{E}_{\mu}\left(  g|\mathcal{B}^{\prime}\right)  \right\vert
\leq\hat{\mu}_{+}\left(  \left\vert g\right\vert |\mathcal{B}_{\mu_{+}%
}^{\prime}\right)  +\hat{\mu}_{-}\left(  \left\vert g\right\vert
|\mathcal{B}_{\mu_{-}}^{\prime}\right)  =\mathcal{E}_{\mu}\left(  \left\vert
g\right\vert |\mathcal{B}^{\prime}\right)  \leq2\left\Vert g\right\Vert
_{\infty}\ .
\end{equation}
Hence, $\mathcal{E}_{\mu}\left(  \cdot|\mathcal{B}^{\prime}\right)  $ is a
bounded positivity preserving linear operator from $M_{b}\left(
\mathcal{M}\right)  $ to\linebreak$\left\{  g\in M_{b}\left(  \mathcal{M}%
\right)  :g\text{ is }\mathcal{B}^{\prime}\text{-measurable}\right\}  .$

If $\mathcal{B}^{\prime}=\mathcal{B}_{I}:=q^{-1}\left(  \mathcal{B}\left(
I\right)  \right)  ,$ for any $\mu\in\mathfrak{M},g\in M_{b}\left(
\mathcal{M}\right)  ,$ there exists $\varphi_{\mu,g}\in M_{b}\left(  I\right)
$ such that $\mathcal{E}_{\mu}\left(  g|\mathcal{B}_{I}\right)  =\varphi
_{\mu,g}\circ q\ \mu-a.e..$ In particular, for any $g\in M_{b}\left(
\mathcal{M}\right)  $ such that $g=f\circ q$ with $f\in M_{b}\left(  I\right)
,\varphi_{\mu,g}=f$ for any $\mu\in\mathfrak{M}.$

Let $\mathbb{M}$ be the set of $\mu\in\mathfrak{M}$ such that, for any $f\in
M_{b}\left(  I\right)  ,\mu\left(  f\circ q\right)  =\lambda\left(  h_{\mu
}f\right)  ,$ with $h_{\mu}\in L^{1}\left(  I,\lambda\right)  .$ Clearly, if
$\mathfrak{M}^{\sim}:\mathfrak{M}/\sim$ is the set of equivalence classes of
the elements of $\mathfrak{M}$ w.r.t. the equivalence relation $\sim$ on
$\mathfrak{M}$ such that, for any $\mathcal{B}_{I}$-measurable $g\in
M_{b}\left(  \mathcal{M}\right)  ,$%
\begin{equation}
\mu\sim\nu\Longleftrightarrow\mu\left(  g\right)  =\nu\left(  g\right)  \ ,
\end{equation}
$\mathbb{M}$ is the subset of $\mathfrak{M}^{\sim}$ whose elements are a.c.
w.r.t. $\lambda.$ Since $\mathbf{1}_{\mathcal{M}}=\mathbf{1}_{I}\circ q,$ for
any $\mu$ in $\mathbb{M},\left\Vert \mu\right\Vert =\left\vert \mu\right\vert
\left(  \mathbf{1}_{\mathcal{M}}\right)  =\left\Vert h_{\mu}\right\Vert
_{L^{1}\left(  I,\lambda\right)  },$ hence $\forall\mu,\nu\in\mathbb{M}%
,\left\Vert \mu-\nu\right\Vert =\left\Vert h_{\mu}-h_{\nu}\right\Vert
_{L^{1}\left(  I,\lambda\right)  }.$ Therefore, if $\left\{  \mu_{n}\right\}
_{n\geq1}$ is a Cauchy sequence, then $\left\{  h_{\mu_{n}}\right\}  _{n\geq
1}$ is a Cauchy sequence in $L^{1}\left(  I,\lambda\right)  $ which implies
that $\mathbb{M}$ is a Banach space.

Let $\mathbb{B}_{1}$ be the Banach space $\left\{  \mu\in\mathbb{M}:h_{\mu}%
\in\mathbb{B}\right\}  .$ Then, if $\forall\eta\in spt\lambda_{\varepsilon},$%
\begin{align}
\left\Vert \left(  \bar{R}_{\eta}\right)  _{\#}\mu\right\Vert _{\mathbb{B}%
_{1}}  &  =\left\Vert \mathcal{L}_{\eta}h_{\mu}\right\Vert _{\mathbb{B}}%
\leq\varkappa\left\Vert h_{\mu}\right\Vert _{\mathbb{B}}+D\left\Vert h_{\mu
}\right\Vert _{L^{1}\left(  I,\lambda\right)  }\\
&  =\varkappa\left\Vert \mu\right\Vert _{\mathbb{B}_{1}}+D\left\Vert
\mu\right\Vert \ ,\nonumber
\end{align}
with $\varkappa$ and $D$ as in (\ref{LY0}),%
\begin{equation}
\left\Vert \mu P_{\overline{R}}\right\Vert _{\mathbb{B}}=\left\Vert
\mathcal{L}_{\varepsilon}h_{\mu}\right\Vert _{\mathbb{B}}\leq\varkappa
\left\Vert \mu\right\Vert _{\mathbb{B}_{1}}+D\left\Vert \mu\right\Vert \ .
\end{equation}
Moreover, for any $\mu\in\mathbb{B}_{1},$%
\begin{equation}
\left\Vert \left(  \bar{R}_{0}\right)  _{\#}\mu-\mu P_{\overline{R}%
}\right\Vert =\left\Vert \left(  \mathcal{L}_{0}-\mathcal{L}_{\varepsilon
}\right)  h_{\mu}\right\Vert _{L^{1}\left(  I,\lambda\right)  }\leq O\left(
\varepsilon\right)  \left\Vert h_{\mu}\right\Vert _{\mathbb{B}}=O\left(
\varepsilon\right)  \left\Vert \mu\right\Vert _{\mathbb{B}_{1}}\ .
\end{equation}

Therefore, all the assumptions A1-A6 in section \ref{SST} are satisfied and
the thesis follows from Corollary 1 in \cite{BHV} and Lemma \ref{Lf}.
\end{proof}

\subsection{Constant additive random type forcing\label{CF}}

We consider the special case of random perturbations of $\left(  \Phi_{0}%
^{t},t\geq0\right)  $ previously analysed realized by the addition to the
unperturbed phase vector field of a constant random term, namely
\begin{equation}
\phi_{\eta}:=\phi_{0}+\eta H\ ,\;\eta\in spt\lambda_{\varepsilon}\ ,
\end{equation}
with $H$ as in (\ref{ex}).

We will show that in this particular case the stochastic stability of the
unperturbed physical measure will follow directly from that of the
Poincar\'{e} map defined on a given Poincar\'{e} surface.

In \cite{PP} it has been shown that the Casimir function for the (+)
Lie-Poisson brackets associated to the $so\left(  3\right)  $ algebra formula
is a Lyapunov function for the ODE system (\ref{L1}). Namely, assuming
additive perturbations of the phase vector field as those given in (\ref{ex})
we can by rewrite formula (35) of \cite{PP} in our notation so that, for any
realization of the noise $\eta\in spt\lambda_{\varepsilon},$ by \cite{GMPV}
Section 2.1 we get
\begin{equation}
\left(  C\circ\Phi_{\eta}^{t}\right)  \left(  y\right)  \leq C\left(
y\right)  e^{-t\min\left(  1,\zeta,\beta\right)  }+\frac{\left\Vert H_{\eta
}\right\Vert ^{2}}{\left(  \min\left(  1,\zeta,\beta\right)  \right)  ^{2}%
}\left(  1+e^{-t\min\left(  1,\zeta,\beta\right)  }\right)  \ ,
\end{equation}
where $\mathbb{R}^{3}\ni y\longmapsto C\left(  y\right)  :=\left\langle
y,y\right\rangle =\left\Vert y\right\Vert ^{2}\in\mathbb{R}^{+}$ and $H_{\eta
}:=\eta H+H_{0}\in\mathbb{R}^{3},$ with $H_{0}:=\left(  0,0,-\beta\left(
\zeta+\gamma\right)  \right)  .$ Hence, choosing $t=\tau_{\eta}\left(
y\right)  $ we obtain
\begin{equation}
C\circ R_{\eta}\left(  y\right)  \leq a_{\varepsilon}C\left(  y\right)
+K_{\varepsilon}\left(  1+a_{\varepsilon}\right)  \ , \label{LY}%
\end{equation}
where
\begin{align}
a_{\varepsilon}  &  :=e^{-\min\left(  1,\zeta,\beta\right)  \inf_{\eta\in
spt\lambda_{\varepsilon}}\inf_{u\in\mathcal{M}}\tau_{\eta}\left(  u\right)
}\in\left(  0,1\right)  \;,\\
K_{\varepsilon}  &  :=\frac{\sup_{\eta\in spt\lambda_{\varepsilon}}\left\Vert
H_{\eta}\right\Vert ^{2}}{\left(  \min\left(  1,\zeta,\beta\right)  \right)
^{2}}>0\;.
\end{align}
Moreover, for any $\varsigma>0,$ (\ref{LY}) implies
\begin{align}
\left(  1+\varsigma C\right)  \circ R_{\eta}\left(  y\right)   &
\leq1+\varsigma a_{\varepsilon}C\left(  y\right)  +\varsigma K_{\varepsilon
}\left(  1+a_{\varepsilon}\right) \nonumber\\
&  =a_{\varepsilon}\left(  1+\varsigma C\left(  y\right)  \right)  +\bar
{K}_{\varepsilon}\ , \label{LY1}%
\end{align}
where $\bar{K}_{\varepsilon}:=\left(  1-a_{\varepsilon}\right)  +\varsigma
K_{\varepsilon}\left(  1+a_{\varepsilon}\right)  ,$ which entails for $P_{R}$
the weak drift condition
\begin{equation}
P_{R}\left(  1+\varsigma C\right)  \left(  y\right)  \leq a_{\varepsilon
}\left(  1+\varsigma C\left(  y\right)  \right)  +\bar{K}_{\varepsilon}\ .
\label{wd}%
\end{equation}

\begin{lemma}
$P_{R}$ admits an invariant probability measure.
\end{lemma}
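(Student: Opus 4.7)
The plan is to obtain $\mu\in\mathfrak{P}(\mathcal{M})$ with $\mu P_{R}=\mu$ via the Krylov--Bogolyubov construction, where the weak drift condition (\ref{wd}) supplies the tightness needed to extract a weak limit. Iterating (\ref{wd}), for any $y\in\mathcal{M}$ and $n\geq 1$,
\begin{equation}
P_{R}^{n}(1+\varsigma C)(y)\leq a_{\varepsilon}^{n}(1+\varsigma C(y))+\bar{K}_{\varepsilon}\sum_{k=0}^{n-1}a_{\varepsilon}^{k}\leq a_{\varepsilon}^{n}(1+\varsigma C(y))+\frac{\bar{K}_{\varepsilon}}{1-a_{\varepsilon}}\ ,
\end{equation}
since $a_{\varepsilon}\in(0,1)$. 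Hence the Cesàro averages $\mu_{n}:=\frac{1}{n}\sum_{k=0}^{n-1}\delta_{y_{0}}P_{R}^{k}$ (for an arbitrarily fixed $y_{0}\in\mathcal{M}$) satisfy the uniform bound $\mu_{n}(C)\leq C(y_{0})+\frac{\bar{K}_{\varepsilon}}{\varsigma(1-a_{\varepsilon})}$ for all $n\geq 1$.

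Next I would use that $C(y)=\|y\|^{2}$ is proper on $\mathcal{M}$, i.e. the sublevel sets $\{C\leq M\}\cap\mathcal{M}$ are relatively compact in $\mathcal{M}$ (which is a cross-section sitting in $\mathbb{R}^{3}$). By Markov's inequality, for every $M>0$,
\begin{equation}
\mu_{n}(\{C>M\})\leq\frac{1}{M}\left(C(y_{0})+\frac{\bar{K}_{\varepsilon}}{\varsigma(1-a_{\varepsilon})}\right)\ ,
\end{equation}
uniformly in $n$, so the family $\{\mu_{n}\}_{n\geq 1}$ is tight on $\mathcal{M}$. Prokhorov's theorem then yields a subsequence $\mu_{n_{k}}$ converging weakly to a probability measure $\mu_{*}\in\mathfrak{P}(\mathcal{M})$.

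The remaining (and main) step is to pass to the limit in $\mu_{n}P_{R}-\mu_{n}=\frac{1}{n}(\delta_{y_{0}}P_{R}^{n}-\delta_{y_{0}})$, which tends to zero in total variation, in order to conclude $\mu_{*}P_{R}=\mu_{*}$. This is where care is needed: the bound above forces any weak limit to concentrate on $\mathcal{M}$, but applying it to test functions $\psi\in C_{b}(\mathcal{M})$ requires $P_{R}\psi$ itself to be continuous $\mu_{*}$-a.e., whereas $R_{\eta}$ has singularities along the leaves $\Gamma_{\eta}$. The plan here is to exploit the averaging over $\eta$: because $\lambda_{\varepsilon}$ has a density (Assumption A5 transported to the three-dimensional setting via the skew-product structure of $R_{\eta}$), the map $y\longmapsto P_{R}\psi(y)=\int\lambda_{\varepsilon}(d\eta)\,\psi(R_{\eta}(y))$ is continuous at every $y$ that does not lie simultaneously on $\Gamma_{\eta}$ for a set of $\eta$'s of positive $\lambda_{\varepsilon}$-measure, and such exceptional $y$'s form a $\lambda^{3}$-null set inside $\mathcal{M}$.

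The key obstacle will therefore be to verify that the weak limit $\mu_{*}$ gives zero mass to this exceptional set, so that the standard Krylov--Bogolyubov closing argument goes through. This can be handled by noting that the same drift estimate applied to $\mu_{n}P_{R}$ preserves the tightness along the subsequence and that the (fibered) absolute continuity produced by $\lambda_{\varepsilon}$ percolates through $P_{R}$: any weak limit of $\mu_{n}$ inherits absolute continuity (or at least non-concentration on $\Gamma_{\pi(\omega)}$-type sets) from the smoothing provided by one application of $P_{R}$, after replacing, if necessary, $\mu_{n}$ by $\mu_{n}P_{R}$ (which has the same weak limit). With this, $\mu_{*}(P_{R}\psi)=\lim_{k}\mu_{n_{k}}(P_{R}\psi)=\lim_{k}\mu_{n_{k}+1}(\psi)=\mu_{*}(\psi)$ for every $\psi\in C_{b}(\mathcal{M})$, giving $\mu_{*}P_{R}=\mu_{*}$ and hence the claimed invariant probability measure.
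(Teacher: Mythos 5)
Your approach—Krylov–Bogolyubov with Cesàro averages and tightness extracted from the drift bound via Markov's inequality—is essentially the paper's. In fact your tightness argument is precisely the one the paper relegates to a footnote as the fallback when $\mathcal{M}$ is not assumed compact (in the body of the proof the paper simply invokes compactness of $\mathcal{M}$ to get tightness for free). The extra layer in the paper's proof, namely packaging the drift inequality as a Doeblin–Fortet inequality on the dual Banach spaces $\mathbb{B}_{0}\supseteq\mathbb{B}_{\varsigma}$, is not essential for mere existence of an invariant measure; your more elementary phrasing is fine on this point.

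The Feller issue you flag is genuine and the paper does gloss over it: $P_{R}$ is explicitly defined to map $C_{b}(\mathcal{M})$ into $M_{b}(\mathcal{M})$, so the passage to the limit in the Cesàro identity silently treats $P_{R}\psi$ as a legitimate test function. However, the fix you sketch is not sound. One application of $P_{R}$ to a point mass $\delta_{y}$ produces a measure supported on the one-dimensional curve $\{R_{\eta}(y):\eta\in\mathrm{spt}\,\lambda_{\varepsilon}\}$, which still has zero $\lambda_{\mathcal{M}}$-measure inside the two-dimensional section; hence $\mu_{n}P_{R}$ is generically mutually singular with Lebesgue on $\mathcal{M}$, and no amount of replacing $\mu_{n}$ by $\mu_{n}P_{R}$ makes the weak limit absolutely continuous. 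Moreover, weak limits do not in general inherit non-concentration on a given null set from the approximating measures unless one has a uniform quantitative bound, which you do not produce. The cleaner route is simpler than what you attempt: instead of arguing that $\mu_{*}$ gives no mass to the discontinuity set of $P_{R}\psi$, verify the pointwise condition that $\lambda_{\varepsilon}\{\eta:y\in\Gamma_{\eta}\}=0$ for \emph{every} $y\in\mathcal{M}$, not merely for $\lambda_{\mathcal{M}}$-a.e.\ $y$. This is the genuinely relevant condition, and it follows from mild non-degeneracy of $\eta\mapsto\Gamma_{\eta}$ together with $\lambda_{\varepsilon}$ having a density. Once it holds, dominated convergence shows $P_{R}\psi$ is continuous at every $y$ (for $y_{m}\to y$, $\psi(R_{\eta}(y_{m}))\to\psi(R_{\eta}(y))$ for $\lambda_{\varepsilon}$-a.e.\ $\eta$, with the bound $\|\psi\|_{\infty}$), i.e.\ $P_{R}$ is in fact Feller, and the standard Krylov–Bogolyubov closing argument goes through without any reference to absolute continuity of $\mu_{*}$.
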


\begin{proof}
Let $\mathbb{B}_{0}$ be the dual space of $C\left(  \mathcal{M}\right)  $ and
$\mathbb{B}_{\varsigma}$ be the dual space of $C_{\varsigma}\left(
\mathcal{M}\right)  $: the Banach space of real-valued functions on
$\mathcal{M}$ such that $\sup_{x\in\mathcal{M}}\frac{\left\vert \psi\left(
x\right)  \right\vert }{1+\varsigma C\left(  x\right)  }<\infty.\mathbb{B}%
_{\varsigma}\subseteq\mathbb{B}_{0}$ and (\ref{LY1}), (\ref{wd}) are
respectively equivalent to the Doeblin-Fortet conditions, namely, for any
$\mu\in\mathbb{B}_{\varsigma}$%
\begin{align}
\left\Vert \left(  R_{\eta}\right)  _{\#}\mu\right\Vert _{\varsigma}  &  \leq
a_{\varepsilon}\left\Vert \mu\right\Vert _{\varsigma}+\bar{K}_{\varepsilon
}\left\Vert \mu\right\Vert _{0}\ ,\\
\left\Vert \mu P_{R}\right\Vert _{\varsigma}  &  \leq a_{\varepsilon
}\left\Vert \mu\right\Vert _{\varsigma}+\bar{K}_{\varepsilon}\left\Vert
\mu\right\Vert _{0}\ , \label{wd1}%
\end{align}
where $\left\Vert \cdot\right\Vert _{0},\left\Vert \cdot\right\Vert
_{\varsigma}$ denote the norm of $\mathbb{B}_{0}$ and $\mathbb{B}_{\varsigma
}.$

Let $\mu\in\mathbb{B}_{\varsigma}$ such that $\left\Vert \mu\right\Vert
_{0}=1.$ By (\ref{wd1}) $P_{R}:\mathbb{B}_{\varsigma}\circlearrowleft$ and
$\forall n\geq1,$%
\begin{equation}
\left\Vert \mu P_{R}^{n}\right\Vert _{\varsigma}\leq a_{\varepsilon}%
^{n}\left\Vert \mu\right\Vert _{\varsigma}+\bar{K}_{\varepsilon}%
\frac{1-a_{\varepsilon}^{n}}{1-a_{\varepsilon}}\leq\left(  a_{\varepsilon}%
^{n}+\frac{\bar{K}_{\varepsilon}}{1-a_{\varepsilon}}\right)  \left\Vert
\mu\right\Vert _{\varsigma}\ . \label{wd2}%
\end{equation}
Moreover, since $\mathcal{M}$ is compact $\mathbb{B}_{0}$ is
tight\footnote{Anyway, if $\mathcal{M}$ were not compact, the tightness of the
sequence $\left\{  \mu_{n}\right\}  _{n\in\mathbb{N}}$ such that $\mu_{n}=\mu
P_{R}^{n},\mu\in\mathbb{B}_{\varsigma},$ would follow by (\ref{wd2}) since
$\forall\epsilon>0,\exists L_{\epsilon}>0$ s. t.$\forall L>L_{\epsilon},$
\[
\mu_{n}\left\{  \left(  1+\varsigma C\right)  >L\right\}  \leq\frac{1+\bar
{K}_{\varepsilon}}{L}<\epsilon\ .
\]
See also Lemma 4 in \cite{GHL}.}. Therefore, setting $\mu_{0}:=\mu$ and for
$k\geq1$ $\mu_{k}:=\mu P_{R}^{k},$ the sequence $\left\{  \nu_{n}\right\}
_{n\in\mathbb{Z}^{+}}$ such that $\nu_{0}:=\mu,\nu_{n}:=\frac{1}{n}\sum
_{k=0}^{n-1}\mu_{k},n\geq1,$ admits a weakly convergent subsequence whose
limit $\nu$ is $P_{R}$ invariant since, $\forall\psi\in C\left(
\mathcal{M}\right)  \subseteq C_{\varsigma}\left(  \mathcal{M}\right)  ,$%
\begin{equation}
\nu_{n}\left(  P_{R}\psi\right)  =\nu_{n}\left(  \psi\right)  +\frac{\mu
_{n+1}\left(  \psi\right)  -\mu\left(  \psi\right)  }{n}\ ,
\end{equation}
but
\begin{align}
\left\vert \mu_{n+1}\left(  \psi\right)  -\mu\left(  \psi\right)  \right\vert
&  \leq\left(  \left\Vert \mu_{n+1}\right\Vert _{\varsigma}+\left\Vert
\mu\right\Vert _{\varsigma}\right)  \sup_{x\in\mathcal{M}}\frac{\left\vert
\psi\left(  x\right)  \right\vert }{1+\varsigma C\left(  x\right)  }\\
&  \leq\left(  2+\frac{\bar{K}_{\varepsilon}}{1-a_{\varepsilon}}\right)
\left\Vert \mu\right\Vert _{\varsigma}\left\Vert \psi\right\Vert _{\infty
}\ .\nonumber
\end{align}

\end{proof}

The stochastic stability of $\mu_{R_{0}}$ then follows from Corollary
\ref{C1}, via Theorem \ref{Th1} and Theorem \ref{SSST}.

\part{Appendix}

Here we give examples of the cross-section $\mathcal{M}$ and of the maps
$T_{\eta}$ and $R_{\eta}$ discussed in the paper, as well as some comments on
the results achieved in our previous paper \cite{GMPV}. We also present the
proof of Proposition \ref{Prop1}.

\section{The Poincar\'{e} section $\mathcal{M}$}

Although what stated in Part I and Part II of the paper are not directly
affected by a particular choice of $\mathcal{M},$ to set up the problem in a
way easy to visualize we found useful to refer to the following examples.

Let us consider (\ref{L1}) with the parameter $\gamma,\zeta,\beta$ defining
the classical Lorenz flow and let $c_{0}:=\left(  0,0,-\left(  \gamma
+\zeta\right)  \right)  $ be the hyperbolic equilibrium point of (\ref{L1}).
If $O:\mathbb{R}^{3}\circlearrowleft$ is such that $O^{\text{t}}D\Phi_{0}%
^{t}\left(  c_{0}\right)  O$ is diagonal, we can distinguish between two cases:

\begin{enumerate}
\item in the first case we choose $\mathcal{M}\equiv\mathcal{M}^{\prime},$
where
\begin{equation}
\mathcal{M}^{\prime}:\mathcal{=}\left\{  y\in\mathbb{R}^{3}:\left\vert \left(
O^{\text{t}}y\right)  _{1}\right\vert ,\left\vert \left(  O^{\text{t}%
}y\right)  _{2}\right\vert \leq\frac{1}{2},\left(  O^{\text{t}}y\right)
_{3}=y_{3}=1-\left(  \gamma+\zeta\right)  \right\}  \ ;
\end{equation}

\item in the second, we choose $\mathcal{M}$ to be the Poincar\'{e} section
for the Lorenz'63 flow given in (\ref{L1}) constructed in \cite{GMPV}, namely
$\mathcal{M}:=\mathcal{M}^{\prime\prime},$ where
\begin{align}
\mathcal{M}^{\prime\prime}  &  :=\left\{  y\in\mathbb{R}^{3}:\left\vert
O^{\text{t}}y_{1}\right\vert ,\left\vert O^{\text{t}}y_{2}\right\vert
\leq\frac{1}{2},y_{3}\in\left[  -\left(  \gamma+\zeta\right)  ,1-\left(
\gamma+\zeta\right)  \right]  \ ;\right. \label{M2}\\
&  \left.  \left\langle \phi_{0}\left(  y\right)  ,\nabla\left\Vert
y\right\Vert ^{2}\right\rangle =0\ ,\ \left\langle \phi_{0}\left(  y\right)
,\nabla\left\langle \phi_{0}\left(  y\right)  ,\nabla\left\Vert y\right\Vert
^{2}\right\rangle \right\rangle \leq0\right\}  \ ,\nonumber
\end{align}
with $\phi_{0}$ given by (\ref{L1}), which is given by the union of two
$C^{2}$ compact manifolds $\mathcal{M}_{1},\mathcal{M}_{2}$ intersecting at
$c_{0}$ only and such that, if
\begin{equation}
\mathbb{R}^{3}\ni\left(  y_{1},y_{2},y_{3}\right)  \longmapsto\mathbf{P}%
\left(  y_{1},y_{2},y_{3}\right)  :=\left(  -y_{1},-y_{2},y_{3}\right)  \ ,
\label{P}%
\end{equation}
$\mathbf{P}\mathcal{M}_{1}=\mathcal{M}_{2}.$
\end{enumerate}

\subsection{The Poincar\'{e} map for $\mathcal{M}^{\prime\prime}\label{M"}$}

Since no confusion will arise, here we will drop the subscript $0$ to refer to
the unperturbed one-dimentional maps.

In Section 2.2.2 in \cite{GMPV} we showed that the Poincar\'{e} surface
$\mathcal{M}^{\prime\prime}$ defined in (\ref{M2}) is foliated by curves given
by the intersection of the spheres $\left\{  y\in\mathbb{R}^{3}:\left\Vert
y\right\Vert ^{2}=\mathfrak{r}\right\}  ,\mathfrak{r}\in\left[  \mathfrak{r}%
^{\ast},y_{3}^{2}\left(  c_{0}\right)  \right]  ,$ for some $\mathfrak{r}%
^{\ast}>0,$ with the surface
\begin{equation}
\left\{  y\in\mathbb{R}^{3}:\left\langle \phi_{0}\left(  y\right)
,\nabla\left\Vert y\right\Vert ^{2}\right\rangle =0,\left\langle \phi
_{0}\left(  y\right)  ,\nabla\left\langle \phi_{0}\left(  y\right)
,\nabla\left\Vert y\right\Vert ^{2}\right\rangle \right\rangle \leq0\right\}
\ ,
\end{equation}
where $\phi_{0}$ is defined in (\ref{L1}). By (\ref{P}), $\mathbf{P}$ defines
an equivalence relation between the points of $\mathcal{M}^{\prime\prime}$ and
we can identify $\mathcal{M}_{1}$ with the set $\mathcal{M}_{\mathbf{P}}$ of
the corresponding equivalence classes. Moreover, we can identify the interval
$\left[  \mathfrak{r}^{\ast},y_{3}^{2}\left(  c_{0}\right)  \right]  $ with
the collection of the equivalence classes of the points of $\mathcal{M}_{1},$
and so of $\mathcal{M}_{\mathbf{P}},$ having the same squared Euclidean
distance from the origin, i.e. those beloning to the same leaf of the just
mentioned foliation which we denote by $\mathfrak{C}.$ In \cite{PM} it has
been shown by numerical simultations that $\mathfrak{C}$ is invariant
exhibiting an automorphism $\hat{T}:\left[  \mathfrak{r}^{\ast},y_{3}%
^{2}\left(  c_{0}\right)  \right]  \circlearrowleft.$ By construction, the
Lorenz-type cusp map of the interval given in \cite{GMPV} fig.1, which we
denote by $\tilde{T},$ is the representation of $\hat{T}$ as a map of the
interval $\left[  0,1\right]  .$ Furthermore, if $c_{i}$ is the critical point
of $\phi_{0}$ different from $c_{0}$ having minimal Euclidean distance from
the component $\mathcal{M}_{i},i=1,2,$ in Section B of \cite{PM} it has also
been shown that the $k$-th branch of the induced map of $\tilde{T}$ on
$\left[  u_{0},1\right]  ,$ with $u_{0}:=\tilde{T}^{-1}\left(  1\right)  ,$
refers to trajectories of the system started at $\mathcal{M}_{i}$ that wind
$k$ times around $c_{j},i\neq j,$ before returning on $\mathcal{M}_{i},$ while
the trajectories of the points of $\mathcal{M}_{i}$ winding just around
$c_{i}$ before returning on $\mathcal{M}_{i}$ correspond to the branch
$\tilde{T}\upharpoonleft_{\left[  0,u_{0}\right]  }$ of $\tilde{T}$ (see
\cite{PM} fig.11). Therefore, from these last observations, the map $T$ (i.e.
$\bar{T}_{\eta}:\left[  -1,1\right]  \circlearrowleft$ in (\ref{Tbar}) for
$\eta=0$) can be reconstructed from $\tilde{T}$ and consequently also its
invariant measure. As a matter of fact, describing $\mathcal{M}_{1}$ as in
(\ref{M_loc_ch}), setting $\mathcal{O}\ni\left(  u,v\right)  \longmapsto
\mathbf{\bar{P}}\left(  u,v\right)  :=\left(  \mathbf{p}\left(  u\right)
,\mathbf{p}\left(  v\right)  \right)  ,$ with $\mathbb{R}\ni w\longmapsto
\mathbf{p}\left(  w\right)  :=-w\in\mathbb{R},$ and identifying the
unperturbed Poincar\'{e} map $R_{0}:\mathcal{M}^{\prime\prime}\circlearrowleft
$ with the skew-product $\mathcal{O}\bigvee\mathbf{\bar{P}}\mathcal{O}%
\ni\left(  u,v\right)  \longmapsto\left(  \bar{T}_{0}\left(  u\right)
,\Upsilon_{0}\left(  u,v\right)  \right)  \in\mathcal{O}\bigvee\mathbf{\bar
{P}}\mathcal{O},$ it follows that $\mathbf{P}\circ R_{0}=R_{0}\circ
\mathbf{P},$ hence, since $\mathbf{P}$ is an involution, $\tilde{T}%
=\mathbf{p}\circ\bar{T}_{0}\circ\mathbf{p\upharpoonleft}_{\left[  0,1\right]
}$ and, setting $\overline{\Upsilon}:=\mathbf{p}\circ\Upsilon_{0}%
\circ\mathbf{\bar{P}},$ we get the map $\hat{R}_{0}:\mathcal{M}_{\mathbf{P}%
}\circlearrowleft,$ which can be identified with the continuous skew-product
map $\mathcal{O}\ni\left(  u,v\right)  \longmapsto\left(  \tilde{T}\left(
u\right)  ,\overline{\Upsilon}\left(  u,v\right)  \right)  \in\mathcal{O}.$
The same considerations apply to perturbations of the phase velocity field
that preserves the same symmetry of the system under $\mathbf{P}$ (see
\cite{GMPV} Example 8). In this case rather than (\ref{Tbar}) we would have
had
\begin{align}
\left[  -1,1\right]  \ni u\longmapsto T_{\eta}\left(  u\right)   &
:=\mathbf{1}_{\left[  -1,-u_{0,\eta}\right]  }\left(  u\right)  \tilde
{T}_{\eta}\left(  -u\right)  -\mathbf{1}_{\left[  -u_{0,\eta},0\right]
}\left(  u\right)  \tilde{T}_{\eta}\left(  -u\right)  +\\
&  +\mathbf{1}_{\left[  0,u_{0,\eta}\right]  }\left(  u\right)  \tilde
{T}_{\eta}\left(  u\right)  -\mathbf{1}_{\left[  u_{0,\eta},1\right]  }\left(
u\right)  \tilde{T}_{\eta}\left(  u\right)  \in\left[  -1,1\right] \nonumber
\end{align}

On the other hand, if the perturbed phase velocity field $\phi_{\eta}$ is not
invariant under $\mathbf{P},$ the maps of the interval $\tilde{T}_{1}$ and
$\tilde{T}_{2},$ representing respectively the automorphisms, associated with
the pertubed flow, of the collections of the equivalence classes of the points
of $\mathcal{M}_{1}$ and $\mathcal{M}_{2}$ belonging to the leaves of
$\mathfrak{C},$ can be thought as perturbations of $\tilde{T}$ fitting into
the perturbing scheme given in Section \ref{SST}, if $\eta$ is sufficiently
small (see \cite{GMPV} Example 9).

\section{The one-dimensional map $T_{\eta}$\label{assT}}

In \cite{AMV} and \cite{HM} it has been proven that, in the case we choose
$\mathcal{M}:=\mathcal{M}^{\prime},$ identifying $I$ with $\left[  -\frac
{1}{2},\frac{1}{2}\right]  $ and, with abuse of notation, still denoting by
$\bar{T}_{\eta}:\left[  -\frac{1}{2},\frac{1}{2}\right]  \backslash\left\{
0\right\}  \longrightarrow\left[  -\frac{1}{2},\frac{1}{2}\right]  $ the
corresponding transitive, piecewise continuous map of the interval, there
exists $\alpha\in\left(  0,1\right)  ,G_{\eta}\in C^{\epsilon\alpha}\left(
\left[  -\frac{1}{2},\frac{1}{2}\right]  \right)  $ such that $\bar{T}_{\eta}$
is locally $C^{1+\alpha}$ on $\left[  -\frac{1}{2},\frac{1}{2}\right]
\backslash\{0\}$ and
\begin{equation}
\left[  -\frac{1}{2},\frac{1}{2}\right]  \backslash\left\{  0\right\}  \ni
u\longmapsto\bar{T}_{\eta}^{\prime}\left(  u\right)  :=\left\vert u\right\vert
^{-1+\alpha}G_{\eta}\left(  u\right)  \in\left[  -\frac{1}{2},\frac{1}%
{2}\right]  \ .
\end{equation}
Moreover, $\bar{T}_{\eta}\left(  0^{\mp}\right)  =\pm\frac{1}{2}.$ Namely, in
this case, $\bar{T}_{\eta}$ is the classical Lorenz-type map (see e.g. Fig.
3.24 in \cite{AP} for a sketch).

\begin{figure}[ptbh]
\centering
\resizebox{0.75\textwidth}{!}{\includegraphics{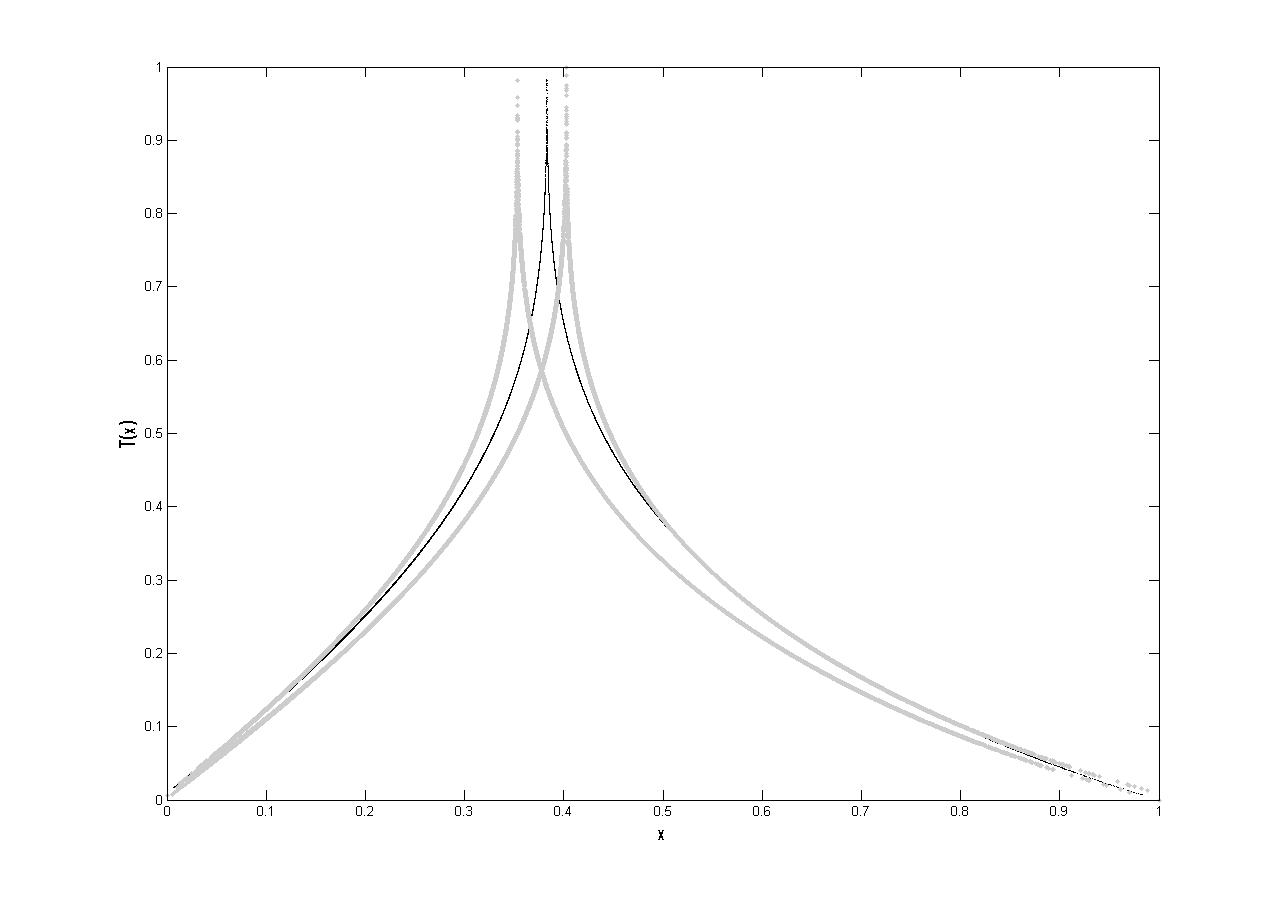}}
%If not, use
%\vspace{5cm}       % Give the correct figure height in cm
%Give a unique label
\caption{experimental plots of the unperturbed map $\tilde{T}_{0}$ (in black)
and of its perturbations (in grey).}%
\label{fig:2}%
\end{figure}

In the case $\mathcal{M}:=\mathcal{M}^{\prime\prime},\Gamma_{0}=\left\{
c_{0}\right\}  .$ Hence, we identify $I$ with $\left[  -1,1\right]  $ and,
again with abuse of notation, we denote by $\bar{T}_{\eta}:\left[
-1,1\right]  \circlearrowleft$ the map
\begin{align}
\left[  -1,1\right]  \ni u\longmapsto\bar{T}_{\eta}\left(  u\right)   &
:=\mathbf{1}_{\left[  -1,-u_{0,\eta}^{2}\right]  }\left(  u\right)  \tilde
{T}_{\eta,2}\left(  -u\right)  -\mathbf{1}_{\left[  -u_{0,\eta}^{2},0\right]
}\left(  u\right)  \tilde{T}_{\eta,2}\left(  -u\right)  +\label{Tbar}\\
&  +\mathbf{1}_{\left[  0,u_{0,\eta}^{1}\right]  }\left(  u\right)  \tilde
{T}_{\eta,1}\left(  u\right)  -\mathbf{1}_{\left[  u_{0,\eta}^{1},1\right]
}\left(  u\right)  \tilde{T}_{\eta,2}\left(  u\right)  \in\left[  -1,1\right]
\ ,\nonumber
\end{align}
where, for $i=1,2,\tilde{T}_{\eta,i}:\left[  0,1\right]  \circlearrowleft$ is
a transitive, continuous Lorenz-like cusp map of the interval of the type
studied in \cite{GMPV}, with two branches and a point $u_{0,\eta}^{i}%
\in\left[  0,1\right]  $ such that $\tilde{T}_{\eta,i}\left(  \left(
u_{0,\eta}^{i}\right)  ^{-}\right)  =\tilde{T}_{\eta,i}\left(  \left(
u_{0,\eta}^{i}\right)  ^{+}\right)  =1.$

In fact, in \cite{PM}, the paper that inspired our previous work \cite{GMPV},
the authors showed that the invariant measure for $\bar{T}_{\eta}$ can be
deduced directly from those of the $\tilde{T}_{\eta,i}$'s, whose local
behaviour is therefore the following (compare formulas (52)-(55) in
\cite{GMPV} and Figure 2):%
\begin{equation}
\tilde{T}_{\eta,i}\left(  u\right)  =\left\{
\begin{array}
[c]{ll}%
a_{\eta,i}u+b_{\eta,i}u^{1+c_{\eta,i}}+o(u^{1+c_{\eta,i}})\ ;\ a_{\eta
,i}\ ,\ c_{\eta,i}>1,b_{\eta,i}>0 & u\rightarrow0^{+}\\
1-A_{\eta,i}(u_{0,\eta}-u)^{B_{\eta,i}}+o((u_{0,\eta}-u)^{B_{\eta,i}%
})\ ;\ A_{\eta,i}>0,B_{\eta,i}\in\left(  0,1\right)  & u\rightarrow\left(
u_{0,\eta}^{i}\right)  ^{-}\\
1-A_{\eta,i}^{\prime}(u-u_{0,\eta})^{B_{\eta,i}^{\prime}}+o((u-u_{0,\eta
})^{B_{\eta,i}^{\prime}})\ ;\ A_{\eta,i}^{\prime}>0,B_{\eta,i}^{\prime}%
\in\left(  0,1\right)  & u\rightarrow\left(  u_{0,\eta}^{i}\right)  ^{+}\\
a_{\eta,i}^{\prime}\left(  1-u\right)  +b_{\eta,i}^{\prime}\left(  1-u\right)
^{1+c_{\eta,i}^{\prime}}+o(\left(  1-u\right)  ^{1+c_{\eta,i}^{\prime}%
})\ ;\ a_{\eta,i}^{\prime}\in\left(  0,1\right)  ,b_{\eta,i}^{\prime
}>0,c_{\eta,i}^{\prime}>1 & u\rightarrow1^{-}%
\end{array}
\right.  \ . \label{a1T}%
\end{equation}

We remark that to prove the stochastic stability of the invariant measure for
the evolution defined by the unperturbed map $T_{0}$ we needed supplementary
assumptions on $T_{0};$ see Section \ref{SST}.

In particular, in the case $\mathcal{M}:=\mathcal{M}^{\prime\prime},$ by
construction the stochastic stability of $T_{0}$ will follow from that of
$\tilde{T}_{0}.$

\section{Existence of invariant measures for the Lorenz-type cusp
map\label{Ltcm}}

In our previous paper \cite{GMPV} the one-dimensional Lorenz-cusp type map $T$
($\tilde{T}$ in the present paper) had a branch with first derivative less
than one on a open set but still bounded from below by a positive number. We
were unable to show that the derivative became globally larger that one for a
suitable power of the map and therefore we proceeded differently to prove the
statistical stability of the unperturbed invariant measure; namely we induced
and we showed that on a (lot of) induced set(s), the derivative of the first
return map was uniformly larger than one.

Anyway, the existence of an invariant measure for $T$ follows combining
Theorem 2 in \cite{Pi} and the results in section 4.2 of \cite{Bu} since one
can check by direct computation that the map
\begin{equation}
I\ni u\longmapsto\overline{T}\left(  u\right)  :=W\circ T\circ W^{-1}\left(
u\right)  \in I\ ,
\end{equation}
where $W$ is the distribution function associated to the probability measure
on $\left(  \left[  0,1\right]  ,\mathcal{B}\left(  \left[  0,1\right]
\right)  \right)  $ with density
\begin{equation}
\left[  0,1\right]  \ni x\longmapsto W^{\prime}\left(  x\right)
:=N_{\bar{\gamma},\bar{\beta}}e^{-\bar{\gamma}x}x^{\bar{\beta}}\left(
1-x\right)  ^{\bar{\beta}}%
\end{equation}
(see formulas (83) and (84) in \cite{GMPV}) for suitably chosen parameters
$\bar{\gamma},\bar{\beta}>0$ is such that $\inf\left\vert \overline{T}%
^{\prime}\right\vert >1.$

In particular, by (\ref{a1T}), for any $\eta\in spt\lambda_{\varepsilon},$
setting $B_{\eta}^{\ast}:=B_{\eta}\vee B_{\eta}^{\prime}$ and choosing
$0<\bar{\beta}<\inf_{\eta\in spt\lambda_{\varepsilon}}\frac{1}{B_{\eta}^{\ast
}}-1,\bar{\gamma}>\sup_{\eta\in spt\lambda_{\varepsilon}}\frac{\bar{\beta}%
+1}{1-x_{0,\eta}}\log\frac{1}{a_{\eta}^{\prime}},$ for any $\eta\in
spt\lambda_{\varepsilon},$ we get $\inf_{\eta\in spt\lambda_{\varepsilon}}%
\inf\left\vert \overline{T}_{\eta}^{\prime}\right\vert >1.$ H\"{o}lder
continuity of $\frac{1}{\overline{T}_{\eta}^{\prime}}$ follows from (\ref{H}).

\section{Statistical stability for Lorenz-like cusp maps}

We take the chance to rectify an incorrect statement we made in \cite{GMPV}
about the regularity properties of the one-dimensional map $T.$

Therefore, in this section, we will use the same notation we used in
\cite{GMPV}.

In that paper we state that the map $T$ was $C^{1+\iota},$ for some $\iota
\in\left(  0,1\right)  ,$ on the union of the two sets $(0,x_{0}),(x_{0},1),$
where the map was $1$ to $1.$ This is incorrect. What is true is that $T^{-1}$
is $C^{1+\iota},$ for some $\iota\in\left(  0,1\right)  ,$ on each open
interval $(0,x_{0}),(x_{0},1).$ Indeed, by the result in \cite{AM}, the stable
foliation for the classical Lorenz flow is $C^{1+\alpha}$ for some $\alpha
\in\left(  0.278,1\right)  ,$ which means, by (54) and (55) in \cite{GMPV},
that, for any $x\in(0,x_{0}),T^{^{\prime}}\left(  x\right)  =\left\vert
x_{0}-x\right\vert ^{1-B^{\prime}}\left[  1+G_{1}\left(  x\right)  \right]  $
with $G_{1}\in C^{\alpha B^{\prime}}(0,x_{0})$ and, for any $x\in
(x_{0},1),T^{\prime}\left(  x\right)  =\left\vert x-x_{0}\right\vert
^{1-B}\left[  1+G_{2}\left(  x\right)  \right]  $ with $G_{2}\in C^{\alpha
B}(x_{0},1).$ In particular this implies that for any couple of points $x,y$
belonging either to $(0,x_{0})$ or to $(x_{0},1)$
\begin{equation}
|T^{\prime}(x)-T^{\prime}(y)|\leq C_{h}\left\vert T^{\prime}(x)\right\vert
\left\vert T^{\prime}(y)\right\vert \left\vert x-y\right\vert ^{\iota}\ ,
\label{H}%
\end{equation}
where $\iota\in(0,1-B^{\ast}],$ with $B^{\ast}:=B\vee B^{\prime},$ and the
constant $C_{h}$ is independent of the location of $x$ and $y.$\footnote{In
\cite{APPV} section 5.3 is stated that the H\"{o}lder continuity of $\frac
{1}{T^{\prime}}$ on any domain $I_{i}$ of bijectivity of $T$ \ follows from
the H\"{o}lder continuity of $T^{\prime}\upharpoonleft_{I_{i}}.$ This cannot
be true in general, as one can see looking at the expression of $T^{\prime}$
given in \cite{HM} Proposition 2.6 for the geometric Lorenz flow. On the other
hand, in this and in similar cases the H\"{o}lder continuity of $\frac
{1}{T^{\prime}}\upharpoonleft_{I_{i}}$ can be directly proved (see also
\cite{AP} section 7.3.2).}

We now detail the modifications that these corrections induce on some of the
proofs of the results given in \cite{GMPV}, all the statements of our results
remaining unchanged.

\begin{description}
\item[Distortion] The proof of the boundedness of the distortion was sketched
in the footnote (1) of \cite{GMPV} by using arguments given in \cite{CHMV}. In
particular, in the initial formula (5) in \cite{CHMV} we need now to replace
the term $\left\vert \frac{D^{2}T(\xi)}{DT(\xi)}\right\vert |T^{q}\left(
x\right)  -T^{q}\left(  y\right)  |,$ where $\xi$ is a point between
$T^{q}\left(  x\right)  $ and $T^{q}\left(  y\right)  ,$ with $\frac
{1}{|DT(\xi)|}C_{h}|DT(T^{q}\left(  x\right)  )||DT(T^{q}\left(  y\right)
)||T^{q}\left(  x\right)  -T^{q}\left(  y\right)  |^{\iota}$ which is smaller
than $C_{h}\left(  |DT(T^{q}\left(  x\right)  )|\vee|DT(T^{q}\left(  y\right)
)|\right)  |T^{q}\left(  x\right)  -T^{q}\left(  y\right)  |^{\iota}$ by
monotonicity of $\left\vert DT\right\vert .$ The key estimate (11) in
\cite{CHMV} will reduce in our case to the bound of the quantity $\sup_{\xi
\in\lbrack b_{i+1},b_{i}]}|DT\left(  \xi\right)  ||b_{i}-b_{i+1}|.$ By using
for $DT$ the expressions given in the formulas (54) and (55) of \cite{GMPV},
and for the $b_{i}$ the scaling given in formula (75) of the same paper, we
immediately get that the above quantity is of order $\frac{1}{(\alpha^{\prime
})^{i}},$ which is enough to pursue the argument about the estimate of the
distortion presented in \cite{CHMV}.

\item[Perturbation] In order to prove the statistical stability of the
invariant measure $\mu_{T}$ for the evolution given by the map $T,$ the
perturbed map $T_{\epsilon}$ must satisfy at least the same regularity
properties required for $T.$ Therefore, in \cite{GMPV} Section 3.2:

\begin{itemize}
\item Assumption A should be replaced by the assumption that there exists
$\iota_{\epsilon}\in\left(  0,1\right)  $\ such that $T\upharpoonleft
_{(0,x_{\epsilon,0})},T\upharpoonleft_{(x_{\epsilon,0},1)}$ are $C^{1+\iota
_{\epsilon}}$ rather than assuming the stronger requirement that $T_{\epsilon
}$ is $C^{1+\iota_{\epsilon}}$ on $(0,x_{\epsilon,0})\cup(x_{\epsilon,0},1);$

\item Assumption C should be replaced by the requirement that the
multiplicative H\"{o}lder constant $C_{h}^{\epsilon}$ of $D\left(
T_{\epsilon}^{-1}\right)  $ will converge to $C_{h}$ when $\epsilon
\rightarrow0.$
\end{itemize}

\item We have then to modify the bounds (92), (99) and (114) in \cite{GMPV}
which are all of the form $|DT_{\epsilon}(a)-DT_{\epsilon}(a_{\epsilon})|,$
with $a$ $\epsilon$-close to $a_{\epsilon}.$ We have $|DT_{\epsilon
}(a)-DT_{\epsilon}(a_{\epsilon})|\leq C_{h}^{\epsilon}|DT_{\epsilon
}(a)||DT_{\epsilon}(a_{\epsilon})||a-a_{\epsilon}|.$ By the continuity and the
monotonicity of $DT_{\epsilon}$ we can replace $a_{\epsilon}$ in
$|DT_{\epsilon}(a_{\epsilon})|$ with $a$ or with another given point between
$a$ and $x_{0};$ finally we use the limit (88) in Assumption B to conclude.
\end{description}

\section{Proof of Proposition \ref{Prop1}}

\begin{proof}
The invariance of $\mu_{\overline{\mathbf{R}}}$ under $\overline{\mathbf{R}}$
follows by (\ref{mu_R}), since
\begin{equation}
\mu_{\overline{\mathbf{R}}}\left(  \psi\circ\overline{\mathbf{R}}\right)
:=\lim_{n\rightarrow\infty}\int\mu_{\mathbf{T}}\left(  du,d\omega\right)
\inf_{x\in q^{-1}\left(  u\right)  }\psi\circ\overline{\mathbf{R}}%
^{n+1}\left(  x,\omega\right)  =\mu_{\overline{\mathbf{R}}}\left(
\psi\right)  \ . \label{invmu_R}%
\end{equation}
Hence, since
\begin{align}
\lim_{n\rightarrow\infty}\int\mu_{\mathbf{T}}\left(  du,d\omega\right)
\inf_{x\in q^{-1}\left(  u\right)  }\psi\circ\overline{\mathbf{R}}^{n}\left(
x,\omega\right)   &  \leq\lim_{n\rightarrow\infty}\int\mu_{\mathbf{T}}\left(
du,d\omega\right)  \left(  \left(  \mathbf{1}_{q^{-1}\left(  u\right)  }\circ
p\right)  \psi\right)  \circ\overline{\mathbf{R}}^{n}\left(  x,\omega\right)
\\
&  \leq\lim_{n\rightarrow\infty}\int\mu_{\mathbf{T}}\left(  du,d\omega\right)
\sup_{x\in q^{-1}\left(  u\right)  }\psi\circ\overline{\mathbf{R}}^{n}\left(
x,\omega\right)  \ ,\nonumber
\end{align}
it is enough to prove that
\begin{equation}
\lim_{n\rightarrow\infty}\int\mu_{\mathbf{T}}\left(  du,d\omega\right)
\inf_{x\in q^{-1}\left(  u\right)  }\psi\circ\overline{\mathbf{R}}^{n}\left(
x,\omega\right)  =\lim_{n\rightarrow\infty}\int\mu_{\mathbf{T}}\left(
du,d\omega\right)  \sup_{x\in q^{-1}\left(  u\right)  }\psi\circ
\overline{\mathbf{R}}^{n}\left(  x,\omega\right)  \ . \label{mu+=mu-}%
\end{equation}
By (\ref{defQ}), (\ref{ass1}) and the definition of $\bar{R}_{\pi\left(
\omega\right)  },\forall\omega\in\Omega,$%
\begin{equation}
\overline{\mathbf{R}}\left(  Q^{-1}\left(  u,\omega\right)  \right)  \subset
Q^{-1}\left(  \mathbf{T}\left(  u,\omega\right)  \right)  \ . \label{contr}%
\end{equation}
Therefore,
\begin{align}
\sup_{x\in q^{-1}\left(  u\right)  }\psi\circ\overline{\mathbf{R}}%
^{n+k}\left(  x,\omega\right)   &  =\sup_{\left(  x,\omega^{\prime}\right)
\in Q^{-1}\left(  u,\omega\right)  }\psi\circ\overline{\mathbf{R}}%
^{n+k}\left(  x,\omega^{\prime}\right) \label{supRn-}\\
&  \leq\sup_{\left(  x,\omega^{\prime}\right)  \in Q^{-1}\left(
\mathbf{T}^{k}\left(  u,\omega\right)  \right)  }\psi\circ\overline
{\mathbf{R}}^{n}\left(  x,\omega^{\prime}\right) \nonumber\\
&  =\sup_{\left(  x,\omega^{\prime}\right)  \in\left\{  \left(  y,\omega
^{\prime\prime}\right)  \in\mathcal{M}\times\Omega\ :\ Q\left(  y,\omega
^{\prime\prime}\right)  =\mathbf{T}^{k}\left(  u,\omega\right)  \right\}
}\psi\circ\overline{\mathbf{R}}^{n}\left(  x,\omega^{\prime}\right) \nonumber
\end{align}
\ and
\begin{align}
\inf_{x\in q^{-1}\left(  u\right)  }\psi\circ\overline{\mathbf{R}}%
^{n+k}\left(  x,\omega\right)   &  =\inf_{\left(  x,\mathbf{\omega}^{\prime
}\right)  \in Q^{-1}\left(  u,\omega\right)  }\psi\circ\overline{\mathbf{R}%
}^{n+k}\left(  x,\omega^{\prime}\right) \label{infRn+}\\
&  \geq\inf_{\left(  x,\omega^{\prime}\right)  \in Q^{-1}\left(
\mathbf{T}^{k}\left(  u,\omega\right)  \right)  }\psi\circ\overline
{\mathbf{R}}^{n}\left(  x,\omega^{\prime}\right) \nonumber\\
&  =\inf_{\left(  x,\omega^{\prime}\right)  \in\left\{  \left(  y,\omega
^{\prime\prime}\right)  \in\mathcal{M}\times\Omega\ :\ Q\left(  y,\omega
^{\prime\prime}\right)  =\mathbf{T}^{k}\left(  u,\omega\right)  \right\}
}\psi\circ\overline{\mathbf{R}}^{n}\left(  x,\omega^{\prime}\right)
\ .\nonumber
\end{align}
Hence, by the invariance of $\mu_{\mathbf{T}}$ under $\mathbf{T},$%
\begin{gather}
\int\mu_{\mathbf{T}}\left(  du,d\omega\right)  \sup_{x\in q^{-1}\left(
u\right)  }\psi\circ\overline{\mathbf{R}}^{n+k}\left(  x,\omega\right)
\leq\int\mu_{\mathbf{T}}\left(  du,d\omega\right)  \sup_{\left(
x,\omega^{\prime}\right)  \in\left\{  \left(  y,\omega^{\prime\prime}\right)
\in\mathcal{M}\times\Omega\ :\ Q\left(  y,\omega^{\prime\prime}\right)
=\mathbf{T}^{k}\left(  u,\omega\right)  \right\}  }\psi\circ\overline
{\mathbf{R}}^{n}\left(  x,\omega^{\prime}\right) \label{mupsi+-}\\
=\int\left(  \mathbf{T}_{\#}^{k}\mu_{\mathbf{T}}\right)  \left(
du,d\omega\right)  \sup_{\left(  x,\omega^{\prime}\right)  \in\left\{  \left(
y,\omega^{\prime\prime}\right)  \in\mathcal{M}\times\Omega\ :\ Q\left(
y,\omega^{\prime\prime}\right)  =\left(  u,\omega\right)  \right\}  }\psi
\circ\overline{\mathbf{R}}^{n}\left(  x,\omega^{\prime}\right) \nonumber\\
=\int\mu_{\mathbf{T}}\left(  du,d\omega\right)  \sup_{\left(  x,\omega
^{\prime}\right)  \in Q^{-1}\left(  u,\omega\right)  }\psi\circ\overline
{\mathbf{R}}^{n}\left(  x,\omega^{\prime}\right) \nonumber\\
=\int\mu_{\mathbf{T}}\left(  du,d\omega\right)  \sup_{x\in q^{-1}\left(
u\right)  }\psi\circ\overline{\mathbf{R}}^{n}\left(  x,\omega\right) \nonumber
\end{gather}
so that the sequence $\left\{  \int\mu_{\mathbf{T}}\left(  du,d\omega\right)
\sup_{x\in q^{-1}\left(  u\right)  }\psi\circ\overline{\mathbf{R}}^{n}\left(
x,\omega\right)  \right\}  _{n\geq1}$ is decreasing. On the other hand,
\begin{gather}
\int\mu_{\mathbf{T}}\left(  du,d\omega\right)  \inf_{x\in q^{-1}\left(
u\right)  }\psi\circ\overline{\mathbf{R}}^{n+k}\left(  x,\omega\right)
\geq\int\mu_{\mathbf{T}}\left(  du,d\omega\right)  \inf_{\left(
x,\omega^{\prime}\right)  \in\left\{  \left(  y,\omega^{\prime\prime}\right)
\in\mathcal{M}\times\Omega\ :\ Q\left(  y,\omega^{\prime\prime}\right)
=\mathbf{T}^{k}\left(  u,\omega\right)  \right\}  }\psi\circ\overline
{\mathbf{R}}^{n}\left(  x,\omega^{\prime}\right) \label{mupsi-+}\\
=\int\left(  \mathbf{T}_{\#}^{k}\mu_{\mathbf{T}}\right)  \left(
du,d\omega\right)  \inf_{\left(  x,\omega^{\prime}\right)  \in\left\{  \left(
y,\omega^{\prime\prime}\right)  \in\mathcal{M}\times\Omega\ :\ Q\left(
y,\omega^{\prime\prime}\right)  =\left(  u,\omega\right)  \right\}  }\psi
\circ\overline{\mathbf{R}}^{n}\left(  x,\omega^{\prime}\right) \nonumber\\
=\int\mu_{\mathbf{T}}\left(  du,d\omega\right)  \inf_{\left(  x,\omega
^{\prime}\right)  \in Q^{-1}\left(  u,\omega\right)  }\psi\circ\overline
{\mathbf{R}}^{n}\left(  x,\omega^{\prime}\right) \nonumber
\end{gather}
so that $\left\{  \int\mu_{\mathbf{T}}\left(  du,d\omega\right)  \inf_{x\in
q^{-1}\left(  u\right)  }\psi\circ\overline{\mathbf{R}}^{n}\left(
x,\omega\right)  \right\}  _{n\geq1}$ is increasing. Since $\forall\omega
\in\Omega,\psi\left(  \cdot,\omega\right)  \in C_{b}\left(  \mathcal{M}%
\right)  $ and $\forall u\in I,q^{-1}\left(  u\right)  \subset\mathcal{M}$ is
compact, by (\ref{contr}), $\forall\varepsilon^{\prime}>0,\exists
\delta_{\varepsilon^{\prime}}>0,n_{\varepsilon^{\prime}}>0$ such that $\forall
n\geq n_{\varepsilon^{\prime}},\omega\in\Omega,u\in I,\operatorname*{diam}%
p\left(  \overline{\mathbf{R}}^{n}\left(  Q^{-1}\left(  u,\omega\right)
\right)  \right)  <\delta_{\varepsilon^{\prime}}$ and $\forall\left(
x,\omega^{\prime}\right)  ,\left(  y,\omega^{\prime}\right)  \in
\overline{\mathbf{R}}^{n}\left(  Q^{-1}\left(  u,\omega\right)  \right)
,$\linebreak$\left\vert \psi\left(  x,\omega^{\prime}\right)  -\psi\left(
y,\omega^{\prime}\right)  \right\vert <\varepsilon^{\prime},$ therefore
\begin{align}
&  \left\vert \int\mu_{\mathbf{T}}\left(  du,d\omega\right)  \sup_{x\in
q^{-1}\left(  u\right)  }\psi\circ\overline{\mathbf{R}}^{n}\left(
x,\omega\right)  -\int\mu_{\mathbf{T}}\left(  du,d\omega\right)  \inf_{x\in
q^{-1}\left(  u\right)  }\psi\circ\overline{\mathbf{R}}^{n}\left(
x,\omega\right)  \right\vert \\
&  \leq\int\mu_{\mathbf{T}}\left(  du,d\omega\right)  \left\vert \sup_{\left(
x,\omega^{\prime}\right)  \in Q^{-1}\left(  u,\omega\right)  }\psi
\circ\overline{\mathbf{R}}^{n}\left(  x,\omega^{\prime}\right)  -\inf_{\left(
x,\omega^{\prime}\right)  \in Q^{-1}\left(  u,\omega\right)  }\psi
\circ\overline{\mathbf{R}}^{n}\left(  x,\omega^{\prime}\right)  \right\vert
\leq\varepsilon^{\prime}\ ,\nonumber
\end{align}
that is (\ref{mu+=mu-}) holds.

Thus, the map
\begin{equation}
L_{\mathbb{P}}^{1}\left(  \Omega,C_{b}\left(  \mathcal{M}\right)  \right)
\ni\psi\longmapsto\hat{\mu}\left(  \psi\right)  :=\lim_{n\rightarrow\infty
}\int\mu_{\mathbf{T}}\left(  du,d\omega\right)  \left(  \left(  \mathbf{1}%
_{q^{-1}\left(  u\right)  }\circ p\right)  \psi\right)  \circ\overline
{\mathbf{R}}^{n}\left(  x,\omega\right)  \in\mathbb{R}%
\end{equation}
is a non negative linear functional such that $\hat{\mu}\left(  1\right)  =1$
and, by (\ref{mu+=mu-}),
\begin{equation}
\hat{\mu}\left(  \psi\right)  =\lim_{n\rightarrow\infty}\int\mu_{\mathbf{T}%
}\left(  du,d\omega\right)  \inf_{x\in q^{-1}\left(  u\right)  }\psi
\circ\overline{\mathbf{R}}^{n}\left(  x,\omega\right)  \ .
\end{equation}
Moreover, $\Omega$ is compact under the product topology, then the space of
quasi-local continuous functions $C_{\infty}\left(  \Omega,C_{b}\left(
\mathcal{M}\right)  \right)  $\footnote{$C_{\infty}\left(  \Omega,C_{b}\left(
\mathcal{M}\right)  \right)  $ is the uniform closure of the set of local
(also called cylinder) functions on $\Omega$ with values in $C_{b}\left(
\mathcal{M}\right)  .$ Since $\Omega$ is compact
\[
C_{\infty}\left(  \Omega,C_{b}\left(  \mathcal{M}\right)  \right)  =C\left(
\Omega,C_{b}\left(  \mathcal{M}\right)  \right)  =C_{K}\left(  \Omega
,C_{b}\left(  \mathcal{M}\right)  \right)
\]
the last term being the Banach space of continuous $C_{b}\left(
\mathcal{M}\right)  $-valued functions on $\Omega$ with compact support, which
is dense in $L_{\mathbb{P}}^{1}\left(  \Omega,C_{b}\left(  \mathcal{M}\right)
\right)  .$} is dense in $L_{\mathbb{P}}^{1}\left(  \Omega,C_{b}\left(
\mathcal{M}\right)  \right)  ,$ therefore, by the Riesz-Markov-Kakutani
theorem there exists a unique Radon measure $\mu_{\overline{\mathbf{R}}}$ on
$\left(  \mathcal{M}\times\Omega,\mathcal{B}\left(  \mathcal{M}\right)
\otimes\mathcal{F}\right)  $ such that $\mu_{\overline{\mathbf{R}}}=\hat{\mu
}\upharpoonleft_{C_{K}\left(  \Omega,C_{b}\left(  \mathcal{M}\right)  \right)
}.$

The injectivity of the correspondence $\mu_{\mathbf{T}}\longmapsto
\mu_{\overline{\mathbf{R}}}$ follows from the fact that,\linebreak%
\ $\forall\varphi\in L_{\mathbb{P}}^{1}\left(  \Omega,C_{b}\left(  I\right)
\right)  ,\varphi\circ Q\in L_{\mathbb{P}}^{1}\left(  \Omega,C_{b}\left(
\mathcal{M}\right)  \right)  $ and
\begin{gather}
\int\mu_{\mathbf{T}}\left(  du,d\omega\right)  \inf_{x\in q^{-1}\left(
u\right)  }\varphi\circ Q\circ\overline{\mathbf{R}}^{n}\left(  x,\omega
\right)  =\int\mu_{\mathbf{T}}\left(  du,d\omega\right)  \inf_{x\in
q^{-1}\left(  u\right)  }\varphi\circ\mathbf{T}^{n}\circ Q\left(
x,\omega\right) \\
=\int\mu_{\mathbf{T}}\left(  du,d\omega\right)  \inf_{x\in q^{-1}\left(
u\right)  }\varphi\circ\mathbf{T}^{n}\left(  q\left(  x\right)  ,\omega
\right)  =\mu_{\mathbf{T}}\left(  \varphi\circ\mathbf{T}^{n}\right)
=\mu_{\mathbf{T}}\left(  \varphi\right)  \ .\nonumber
\end{gather}
Therefore, if there exist $\mu_{\mathbf{T}}^{\prime}$ invariant under
$\mathbf{T}$ such that
\begin{equation}
\mu_{\overline{\mathbf{R}}}\left(  \psi\right)  :=\lim_{n\rightarrow\infty
}\int\mu_{\mathbf{T}}^{\prime}\left(  du,d\omega\right)  \inf_{x\in
q^{-1}\left(  u\right)  }\psi\circ\overline{\mathbf{R}}^{n}\left(
x,\omega\right)  \ ,
\end{equation}
then $\mu_{\mathbf{T}}^{\prime}\left(  \varphi\right)  =\mu_{\mathbf{T}%
}\left(  \varphi\right)  ,$ hence $\mu_{\mathbf{T}}^{\prime}=\mu_{\mathbf{T}%
}.$

The proof of the ergodicity of $\mu_{\overline{\mathbf{R}}}$ under the
hypotesis of the ergodicity of $\mu_{\mathbf{T}}$ is identical to that of
Corollary 7.25 in Section 7.3.4 of \cite{AP}.
\end{proof}

\end{document}